\pgfplotsset{compat=1.16}
\numberwithin{equation}{section} 
\numberwithin{figure}{section}
\numberwithin{table}{section} 
\theoremstyle{plain}
\newtheorem{theorem}{Theorem}[section] 
\newtheorem{lemma}[theorem]{Lemma} 
\newtheorem{corollary}[theorem]{Corollary} 
\newtheorem{conjecture}[theorem]{Conjecture}
\theoremstyle{definition} 
\newtheorem*{definition}{Definition} 
\newtheorem{example}[theorem]{Example}
\DeclareMathOperator{\mre}{Re}
\DeclareMathOperator{\B}{B} 
\DeclareMathOperator{\sinc}{sinc} 
\DeclareMathOperator{\dist}{dist}
\begin{document} 
\title{Point evaluation in Paley--Wiener spaces} 
\date{\today} 

\author[O. F. Brevig]{Ole Fredrik Brevig} 
\address{Department of Mathematics, University of Oslo, 0851 Oslo, Norway} 
\email{obrevig@math.uio.no}

\author[A. Chirre]{Andr\'{e}s Chirre} 
\address{Department of Mathematics, University of Rochester, Rochester, NY 14627, USA} 
\email{cchirrec@math.rochester.edu}

\author[J. Ortega-Cerd\`{a}]{Joaquim Ortega-Cerd\`{a}} 
\address{Department de Matem\`{a}tiques i Inform\`{a}tica, Universitat de Barcelona, Barcelona, Spain and Centre de Recerca Matem\`{a}tica, Barcelona, Spain}
\email{jortega@ub.edu}

\author[K. Seip]{Kristian Seip} 
\address{Department of Mathematical Sciences, Norwegian University of Science and Technology (NTNU), 7491 Trondheim, Norway} 
\email{kristian.seip@ntnu.no}

\thanks{Ortega-Cerd\`{a} was supported in part by the Spanish Ministerio de Ciencia, Innovaci\'on, project PID2021-123405NB-I00 and by the AGAUR grant 2021 SGR 00087. Seip was supported in part by the Research Council of Norway grant 275113}

\begin{abstract}
	We study the norm of point evaluation at the origin in the Paley--Wiener space $PW^p$ for $0 < p < \infty$, i.~e., we search for the smallest positive constant $C$, called $\mathscr{C}_p$, such that the inequality $|f(0)|^p \leq C \|f\|_p^p$ holds for every $f$ in $PW^p$. We present evidence and prove several results supporting the following monotonicity conjecture: The function $p\mapsto \mathscr{C}_p/p$ is strictly decreasing on the half-line $(0,\infty)$. Our main result implies that $\mathscr{C}_p <p/2$ for $2<p<\infty$, and we verify numerically that $\mathscr{C}_p > p/2$ for $1 \leq p < 2$. We also estimate the asymptotic behavior of $\mathscr{C}_p$ as $p \to \infty$ and as $p \to 0^+$. Our approach is based on expressing $\mathscr{C}_p$ as the solution of an extremal problem. Extremal functions exist for all $0<p<\infty$; they are real entire functions with only real zeros, and the extremal functions are known to be unique for $1\leq p < \infty$. Following work of H\"{o}rmander and Bernhardsson, we rely on certain orthogonality relations associated with the zeros of extremal functions, along with certain integral formulas representing respectively extremal functions and general functions at the origin. We also use precise numerical estimates for the largest eigenvalue of the Landau--Pollak--Slepian operator of time--frequency concentration. A number of qualitative and quantitative results on the distribution of the zeros of extremal functions are established. In the range $1<p<\infty$, the orthogonality relations associated with the zeros of the extremal function are linked to a de Branges space. We state a number of conjectures and further open problems pertaining to $\mathscr{C}_p$ and the extremal functions. 
\end{abstract}

\subjclass[2020]{Primary 30D15. Secondary 41A17, 41A44, 42A05.}

\maketitle

\setcounter{tocdepth}{1} 

\tableofcontents

\section{Introduction} This paper studies the following problem on time--frequency localization: What is the smallest positive constant $C$, to be called $\mathscr{C}_p$ in what follows, such that the inequality 
\begin{equation}\label{eq:pointeval} 
	|f(0)|^p \leq C \|f\|_p^p 
\end{equation}
holds for every $f$ in the Paley--Wiener space $PW^p$ with $0 < p < \infty$? Here $PW^p$ is the subspace of $L^p(\mathbb{R})$ consisting of entire functions of exponential type at most $\pi$, and $\|f\|_p^p$ denotes the $L^p$ integral of $f$. By the translation invariance of $PW^p$, we may replace $|f(0)|$ by $|f(x)|$ for any real number $x$ in \eqref{eq:pointeval} without affecting the value of $\mathscr{C}_p$. Consequently, the constant $\mathscr{C}_p^{1/p}$ also represents the norm of the embedding of $PW^p$ into $PW^\infty$.

We observe that $\mathscr{C}_2 = 1$ from the reproducing kernel formula 
\begin{equation}\label{eq:2rpk} 
	f(0) = \int_{-\infty}^\infty f(x)\,\sinc{\pi x}\,dx, 
\end{equation}
valid for all $f$ in $PW^2$, by use of the Cauchy--Schwarz inequality. In \eqref{eq:2rpk} and in what follows, we employ the notation
\[\sinc{x} := \frac{\sin{x}}{x}.\]
There seem to be few nontrivial results for $p\neq 2$, with a notable exception for the case $p=1$ which has been studied by several authors (see e.g. \cite{AKP96,CMS19,Gorbachev05}) owing largely to its relevance for problems in analytic number theory. The best numerical estimates in this case, 
\begin{equation}\label{eq:hb} 
	0.5409288219 \leq \mathscr{C}_1 \leq 0.5409288220, 
\end{equation}
were found by H\"ormander and Bernhardsson \cite{HB93}, whose interest in $\mathscr{C}_1$ was motivated by its appearance in a Bohr-type estimate for the Cauchy--Riemann operator in $\mathbb R^2$. 

We will present evidence and prove several results supporting what we will refer to as the \emph{monotonicity conjecture}, namely that $p\mapsto \mathscr{C}_p/p$ is a strictly decreasing function on the half-line $(0,\infty)$. Our first result to that effect reads as follows.
\begin{theorem}\label{thm:korevaar} 
	If $2\leq p \leq 4$, then
	\[\mathscr{C}_p \leq \frac{p}{2}\left(1- 2(p-2)\int_1^\infty (\sinc{ \pi x})^2 \, \frac{4x+p-2}{(2x+p-2)^2}\,dx\right).\]
\end{theorem}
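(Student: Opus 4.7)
The plan is to leverage the representation of $|f(0)|^p$ as the squared modulus of an entire function of exponential type $\pi p/2$ at the origin, and to improve the resulting $L^2$ estimate by an orthogonality relation specific to the structure of the problem. For $f \in PW^p$ with $p \ge 2$, a standard argument based on the Akhiezer factorization (or equivalently the outer-function representation) produces an entire function $g$ of exponential type $\pi p/2$ in $L^2(\mathbb R)$ with $|g(x)|^2 = |f(x)|^p$ on $\mathbb R$; in particular $\|g\|_2^2 = \|f\|_p^p$ and $|g(0)|^2 = |f(0)|^p$. Writing $PW^2_{\pi p/2}$ for the Paley--Wiener space of functions of exponential type $\pi p/2$ in $L^2$, its reproducing kernel at $0$ is $K_0(x) = (p/2)\sinc(\pi p x/2)$, with $\|K_0\|_2^2 = K_0(0) = p/2$, so Cauchy--Schwarz gives the baseline $\mathscr{C}_p \le p/2$. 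This is sharp at $p=2$ and recovers the statement of the theorem if the correction integral is dropped.

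To improve this, I would exploit that $g$ is not an arbitrary element of $PW^2_{\pi p/2}$: the condition $|g|^{2/p} = |f|$ on $\mathbb R$ with $f$ of exponential type $\pi$ is a genuine nonlinear restriction. The plan is to encode this restriction via an orthogonality relation $\langle g, \Phi\rangle = 0$ for a suitably chosen $\Phi \in PW^2_{\pi p/2}$, allowing one to replace $K_0$ by $K_0 - c\Phi$ in the Cauchy--Schwarz step without affecting $g(0) = \langle g, K_0\rangle$. Optimizing $c = \Phi(0)/\|\Phi\|_2^2$ (using the reproducing identity $\langle K_0, \Phi\rangle = \Phi(0)$) yields
\[\mathscr{C}_p \le \frac{p}{2} - \frac{|\Phi(0)|^2}{\|\Phi\|_2^2}.\]
The content of the theorem is then that a $\Phi$ exists with $|\Phi(0)|^2/\|\Phi\|_2^2 = p(p-2) I(p)$, where $I(p)$ denotes the stated integral.

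The natural candidate for $\Phi$ is a symmetric combination of shifted reproducing kernels,
\[\Phi(x) = K_1(x) + K_{-1}(x) - \beta K_0(x),\]
with $\beta$ fixed by the orthogonality requirement. The shifts $\pm 1$ are suggested by the first zeros of the $p=2$ extremal function $\sinc(\pi x)$. The identity $K_a(b) = (p/2) \sinc(\pi p (b-a)/2)$ introduces arguments of the form $\pi p(x \pm 1)/2$, and after the change of variables $y = p(x \mp 1)/2$, combined with the symmetrization arising from pairing the two shifts, one obtains $\sinc(\pi y)$ with linear combinations leading to the numerator $4x+p-2$ and the denominator $(2x+p-2)^2$. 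The localization of the integral to $[1,\infty)$ should then emerge from the cancellation that occurs in the central region together with the $x \mapsto -x$ symmetrization.

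The principal obstacle is the uniform verification of $\langle g, \Phi\rangle = 0$ for all $f \in PW^p$. Since the correspondence $f \mapsto g$ is nonlinear, this orthogonality cannot be derived from any linear identity on products $f^2$; indeed, the only $\Phi$ in $PW^{p/2}_{2\pi}$ orthogonal to all $F = f^2$ with $f \in PW^p$ is the zero function, so the relation must genuinely use the outer-factor representation rather than just the squaring. I would attack the orthogonality through a contour-integral representation of $\langle g, \Phi\rangle$, deforming the integration line into the upper half-plane where the outer-factor growth of $g$ and the decay of the $\sinc$-combination $\Phi$ combine to reduce the integral to a residue computation that vanishes thanks to the particular choice of $\beta$ and of the shifts $\pm 1$. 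A complementary route, probably needed to handle the case $p$ close to $2$, is a perturbation argument starting from the known extremal $\sinc(\pi x)$ at $p=2$ and tracking how the orthogonality relation deforms, with the shift $(p-2)/2$ appearing in the denominator of the integrand as a first-order consequence of the rescaling from exponential type $\pi$ to $\pi p/2$. Once the orthogonality has been secured, the explicit evaluation of $|\Phi(0)|^2/\|\Phi\|_2^2$ is a direct Parseval computation that produces the stated integral.
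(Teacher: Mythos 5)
Your proposal fails at the very first step. For $2 < p < 4$ with $p/2 \notin \mathbb{Z}$, there is no entire function $g$ of exponential type $\pi p/2$ satisfying $|g(x)|^2 = |f(x)|^p$ on $\mathbb{R}$ when $f$ has a simple real zero. Indeed, if $f(t) = 0$ with $f'(t) \neq 0$, then $|f(x)|^p \sim |f'(t)|^p |x-t|^p$ near $t$, whereas any entire $g$ vanishing at $t$ satisfies $|g(x)|^2 \sim c|x-t|^{2m}$ for an integer $m \geq 1$; matching forces $p = 2m$, an even integer. The Akhiezer factorization theorem requires the nonnegative weight to extend to an entire function of finite exponential type, and $|f|^p$ has no such extension for fractional $p$. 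This is not a technicality that an outer-function argument can repair: outer functions in $H^2$ of a half-plane need not be entire, and the whole point is that they are not when $p/2$ is not an integer. The paper makes this obstruction explicit in Section~\ref{sec:prologue} ("Taking non-integer powers of entire functions can in general only be done locally") and constructs Theorem~\ref{thm:intrep} precisely to circumvent it.

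Beyond this fatal gap, the proposal is also off-target in a structural way. The paper does not bound $|f(0)|^p$ uniformly over $f \in PW^p$ by a linear functional argument on some auxiliary $g$. Instead it applies the integral representation of Theorem~\ref{thm:intrep} directly to the \emph{extremal} function $\varphi_p$ with $q = p/2$, then uses Cauchy--Schwarz against a kernel $K$ (Corollary~\ref{cor:kplus}) built explicitly from the zeros $(t_n)$ of $\varphi_p$. The quantitative zero-separation bounds of Theorem~\ref{thm:sep} (requiring $t_1 \geq 2/\pi$ and $t_{n+1}-t_n \geq 2/3$) then feed into a combinatorial analysis of $K_+$ (Lemma~\ref{lem:collection} and the "level/arrival/departure" bookkeeping) to produce the stated integral. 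Nothing resembling your proposed orthogonality relation $\langle g,\Phi\rangle = 0$ appears, and as you note yourself, no nonzero $\Phi$ can be orthogonal to all $g$; your proposal contains no mechanism for deriving such a relation. You would need to replace the factorization step with the contour-integral representation and obtain hard information on the zero distribution of $\varphi_p$ before the rest of the argument could even begin.
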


We have not been able to find a similar explicit bound for $\mathscr{C}_p$ in the range $1<p<2$. We will however offer some numerical evidence for the monotonicity conjecture also here. To this end, let $\B$ denote the beta function and set 
\begin{equation}\label{eq:fp} 
	f_p(z) := \frac{2}{\B(1/2,2/p)} \int_{-\pi}^\pi \left(1-\frac{\xi^2}{\pi^2}\right)^{\frac{2}{p}-1}\,e^{iz\xi}\,\frac{d\xi}{2\pi}. 
\end{equation}
The normalization factor is chosen so that $f_p(0)=1$. The lower bound in \eqref{eq:hb} is obtained by a small perturbation of
\begin{equation} \label{eq:HBR} f_1(z) = \frac{3\sin{\pi z}-3\pi z\cos{\pi z}}{\pi^3 z^3}\end{equation}
while $f_2(z) = \sinc{\pi z}$ is the extremal function for $p=2$. In general, $f_p$ can be expressed as a confluent hypergeometric function or in terms of Bessel functions of the first kind. 

Figure~\ref{fig:plot} exhibits numerical evidence for the monotonicity conjecture in the range $1\leq p \leq 4$, obtained using the \texttt{integrate} and \texttt{special.hyp0f1} packages from SciPy. At $p=1$, the lower bound arising from \eqref{eq:HBR} differs from H\"{o}rmander and Bernhardsson's bounds only in the fourth decimal place. We believe that the blue curve in Figure~\ref{fig:plot} in essence represents the true value of $\mathscr{C}_p$ for $1\leq p \leq 4$. In Section~\ref{subsec:conv}, we will present further evidence for this claim in the case $p=4$. However, as we will see below, the case $p=1$ is particularly favorable, and it seems hard to get numerical bounds of similarly high precision for $p>1$.

\begin{figure}
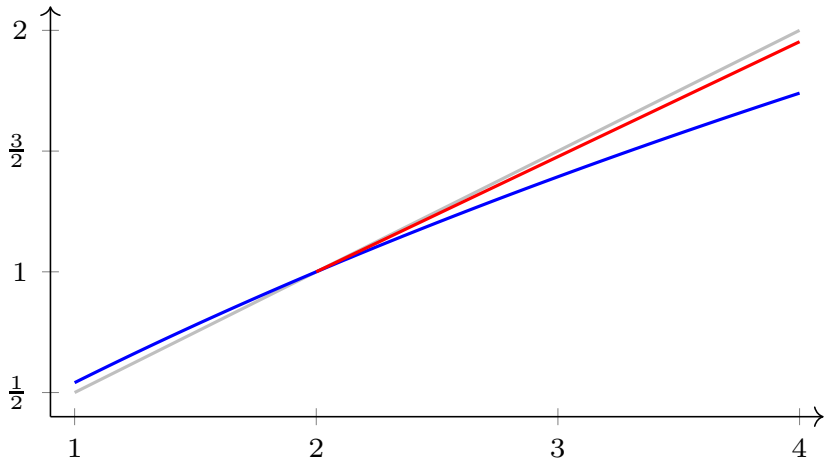

	\centering
	\begin{tikzpicture}[scale=1.5]
		\begin{axis}
			[axis equal image,
			axis lines=middle,
			axis line style=thin,
			xmin=1-2*0.05,
			xmax=4+2*0.05,
			xtick={1,2,3,4},
			xticklabels={$\scriptstyle 1$,$\scriptstyle 2$,$\scriptstyle 3$,$\scriptstyle 4$},
			ymin=0.5-0.1,
			ymax=2+0.1,
			ytick={0.5,1,1.5,2,2.5,3.5,4},
			yticklabels={$\scriptstyle \frac{1}{2}$,$\scriptstyle 1$,$\scriptstyle \frac{3}{2}$, $\scriptstyle 2$ },
			every axis x label/.style={ at={(ticklabel* cs:1.025)}, 
				anchor=west,},
			every axis y label/.style={ at={(ticklabel* cs:1.025)}, 
				anchor=south,},
			axis line style={->}]
			\addplot[thick,draw opacity=0.5,color=gray] coordinates {(1,0.5) (4,2)};
			\input{bessel.tex}
			\input{upper2p4.tex}
		\end{axis}
	\end{tikzpicture}
	\caption{Plot of the {\color{blue} lower bound for $\mathscr{C}_p$} obtained numerically by testing with the function $f_p$ from \eqref{eq:fp} for $1 \leq p \leq 4$ and the {\color{red} upper bound for $\mathscr{C}_p$} from Theorem~\ref{thm:korevaar} for $2 \leq p \leq 4$. The line {\color{gray} $p/2$} can be seen in the background.}
	\label{fig:plot}
\end{figure}

By the power trick (see Lemma~\ref{lem:powertrick} below), which implies that $\mathscr{C}_{2p} \leq 2\mathscr{C}_p$, Theorem~\ref{thm:korevaar} and these numerics yield bounds for $\mathscr{C}_p$ in the whole range $0<p<\infty$. In particular, we find that $\mathscr{C}_p<p/2$ for all $p>2$. This is an improvement on the best previously known estimate $\mathscr{C}_p \leq \lceil p/2 \rceil$ for all $p>2$, which follows from $\mathscr{C}_2=1$ and the power trick (see e.g. \cite{Gorbachev21,Ibragimov60,Korevaar49}). We may also obtain a uniform bound
\[\mathscr{C}_p\leq p/2-A(p-2)\]
for $2<p<\infty$ by an elaboration of the method used to prove Theorem~\ref{thm:korevaar}. Of greater interest, however, is the asymptotic behavior of $\mathscr{C}_p$ as $p\to \infty$ and $p \to 0^+$, respectively. In the former case, we obtain the following precise result. 
\begin{theorem}\label{thm:Cpinfty} 
	There exist two positive constants $A$ and $B$ such that
	\[- A \frac{\log p}{\sqrt{p}} \leq \mathscr{C}_p - \sqrt{\frac{\pi p}{2}} \leq - B \frac{\log p}{\sqrt{p}}\]
	for all sufficiently large $p$. 
\end{theorem}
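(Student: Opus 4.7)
My plan is to establish the matching asymptotic upper and lower bounds on $\mathscr{C}_p$ by complementary constructions, both driven by the heuristic that the extremal function in $PW^p$ for point evaluation at the origin should, for large $p$, approximate the Gaussian $e^{-\pi^2 x^2/2}$. Although this Gaussian has infinite exponential type, the identity $\int_{-\infty}^\infty e^{-p\pi^2 x^2/2}\,dx=\sqrt{2/(\pi p)}$ accounts directly for the leading constant $\sqrt{\pi p/2}$.

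For the \emph{lower bound} $\mathscr{C}_p \geq \sqrt{\pi p/2} - A\log p/\sqrt{p}$, I would exhibit a test function $f \in PW^p$ with $f(0)=1$ and $\|f\|_p^p$ as close as possible to $\sqrt{2/(\pi p)}$. A natural choice is the inverse Fourier transform $f(x)=\int_{-\pi}^\pi \psi(\xi)\, e^{ix\xi}\, d\xi/(2\pi)$ of a smoothly truncated Gaussian $\psi(\xi)=C_p\, e^{-\xi^2/(2\pi^2)}\phi(\xi)$, where $\phi$ is a smooth cut-off supported in $[-\pi,\pi]$ (equal to $1$ away from a small boundary layer) and $C_p$ is chosen so that $f(0)=1$. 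A Laplace-method evaluation of $\|f\|_p^p$ (whose mass concentrates near $x=0$, where $|f|^p \approx e^{-p\pi^2 x^2/2}$), combined with a careful analysis of the correction to $f(0)$ introduced by the cut-off, should yield the claimed lower bound once the boundary-layer scale of $\phi$ is optimised in $p$.

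For the \emph{upper bound} $\mathscr{C}_p \leq \sqrt{\pi p/2} - B\log p/\sqrt{p}$, I would turn to the dual formulation: whenever $k \in L^{p'}(\mathbb R)$ satisfies $\int k(x)\, e^{-i\xi x}\, dx = 2\pi$ for every $\xi \in [-\pi,\pi]$ (where $p'=p/(p-1)$), the identity $f(0)=(2\pi)^{-1}\int f(x)\,k(x)\, dx$ holds for all $f \in PW^p$, and H\"older's inequality then gives $\mathscr{C}_p^{1/p} \leq \|k\|_{p'}/(2\pi)$. I would construct an explicit $k$ whose Fourier transform equals $2\pi$ on $[-\pi,\pi]$ and decays smoothly to zero across a boundary layer outside $[-\pi,\pi]$, and then analyse $\|k\|_{p'}$ as $p'\to 1^+$ by combining Laplace's method in the ``Gaussian interior'' with control of the oscillatory $O(1/|x|)$ remainder of $k$ at infinity; optimising the boundary width in $p$ should deliver the matching bound.

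The principal obstacle will be obtaining the \emph{precise} sub-leading term $\log p/\sqrt p$ in both directions. This demands a careful matching of the Gaussian heuristic at sub-leading order: the boundary smoothing in the Fourier-side construction must be balanced against the tail of the corresponding time-domain function, and the constants $A$ and $B$ must emerge from the same optimisation. In view of the paper's stated reliance on precise asymptotics for the largest eigenvalue of the Landau--Pollak--Slepian operator and on orthogonality relations for the zeros of the extremal function, I expect the actual proof to exploit these tools to anchor the sub-leading analysis and pin down the correct constants.
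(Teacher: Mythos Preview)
Your proposal is a sketch rather than a proof, and both halves diverge from what the paper actually does.

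For the \emph{lower bound} you are on the right track strategically (test with a well-chosen function), but the paper does not use a truncated Gaussian on the Fourier side. Instead it uses the explicit family
\[
g_\alpha(z)=\frac{\Gamma^2(\alpha)}{\Gamma(\alpha-z)\Gamma(\alpha+z)}
\]
with $\alpha=\tfrac12+\tfrac{1}{\log 4}\,\tfrac{\log p}{p}$. These functions have exponential type exactly $\pi$, explicitly known zeros at $\pm(n+\alpha-1)$, and satisfy clean pointwise bounds (Lemma~7.1) such as $|g_\alpha(x)|\le\cos(\pi x/(2\alpha))$ on $[0,\alpha]$. This makes $\|g_\alpha\|_p^p$ directly estimable and yields the $\log p/\sqrt p$ correction without any Laplace-method balancing of a boundary layer.

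For the \emph{upper bound} your dual/kernel approach is not what the paper does, and it is not clear it can deliver the required precision: you would need $\|k_p\|_{p'}/(2\pi)\le 1+\tfrac{\log p}{2p}(1+o(1))$, which forces $\|k_p\|_1$ essentially equal to $2\pi$ (hence $k_p\ge 0$) together with very tight control of $\|k_p\|_{p'}-\|k_p\|_1$ as $p'\to 1$. The paper instead works directly with the extremal function $f$ (real, even, $\|f\|_\infty=f(0)=1$) and exploits a dichotomy driven by Bernstein's inequality. Writing $\|f''\|_\infty=\pi^2-\delta\pi$, either $\delta>c\,\tfrac{\log p}{p}$, in which case the Taylor bound $|f(x)|\ge 1-\tfrac{\pi^2-\delta\pi}{2}x^2$ gives a strictly wider central bump and hence extra mass in $\|f\|_p^p$; or $\delta\le c\,\tfrac{\log p}{p}$, in which case the Riesz interpolation formula
\[
f'(x_0)=\frac{4}{\pi}\sum_{n\in\mathbb Z}(-1)^n\frac{f(x_0+n+\tfrac12)}{(2n+1)^2}
\]
forces $|f|$ to be within $2\delta$ of $1$ at a second point $|y|\ge 1$, producing an additional bump that again contributes $\gtrsim p^{-1/2-2c}$ to $\|f\|_p^p$. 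Choosing $c<\tfrac12$ makes the first case the bottleneck and yields the stated bound. This Bernstein/Riesz dichotomy is the missing idea in your sketch.

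Finally, your closing expectation is off: the proof of this theorem uses neither the Landau--Pollak--Slepian eigenvalues nor the orthogonality relations for the zeros. Those tools are used elsewhere in the paper (for the separation estimates in Theorems~1.4 and~1.5), not here.
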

In the case $p\to 0^+ $ which is of a rather different nature, our result reads as follows: 

\pagebreak 

\begin{theorem}\label{thm:Cp0} 
	\mbox{} 
	\begin{enumerate}
		\item[(a)] There is a positive number $c_0$ such that
		\[\lim_{p \to 0^+} \frac{2}{p}\, \mathscr{C}_p = c_0.\]
		\item[(b)] The number $c_0$ lies in the interval $[1.1393830,1.1481785]$. 
	\end{enumerate}
\end{theorem}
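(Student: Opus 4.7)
My plan is to deduce existence of the limit $c_0$ in part~(a) from the power trick, and to obtain the numerical bounds in part~(b) from explicit test functions and dual kernels.

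For part~(a), if $f \in PW^p$ with $f(0)=1$, then $g(z) := f(z/k)^k$ belongs to $PW^{p/k}$, has $g(0)=1$, and satisfies $\|g\|_{p/k}^{p/k} = k\,\|f\|_p^p$. Taking the supremum over $f$ yields $\mathscr{C}_{p/k} \geq \mathscr{C}_p/k$, equivalently $\mathscr{C}_{p/k}/(p/k) \geq \mathscr{C}_p/p$, for every positive integer $k$. Hence for each fixed $p_0 > 0$ the sequence $\{\mathscr{C}_{p_0/k}/(p_0/k)\}_{k\ge 1}$ is nondecreasing, so it converges in $(0,\infty]$; combined with the continuity of $p \mapsto \mathscr{C}_p$ established earlier in the paper, this shows that the limit $c_0/2 := \lim_{p\to 0^+}\mathscr{C}_p/p$ exists and equals $\sup_{p>0}\mathscr{C}_p/p$. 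The principal obstacle for part~(a) is \emph{finiteness} of this limit, i.e., a uniform upper bound $\mathscr{C}_p \leq C p$ for all small $p$; I plan to obtain it from the integral formula and orthogonality relations for the extremal function $F_p$ developed earlier in the paper, combined with quantitative control of the zeros of $F_p$ as $p \to 0^+$.

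For the lower bound in part~(b), test with the family $f_n(z) := \sinc(\pi z/n)^n$, which belongs to $PW^{s/n}$ whenever $s := np > 1$, has $f_n(0)=1$, and satisfies
\[
\|f_n\|_p^p \;=\; n \int_{-\infty}^{\infty}|\sinc(\pi y)|^s\,dy.
\]
This gives $\mathscr{C}_p/p \geq 1/\bigl(s \int_{-\infty}^{\infty}|\sinc(\pi y)|^s\,dy\bigr)$ whenever $p = s/n$. Letting $n \to \infty$ with $s$ sampling $(1,\infty)$ densely, and invoking the monotonicity from~(a),
\[
c_0 \;\geq\; \sup_{s>1}\,\frac{2}{\,s \int_{-\infty}^{\infty}|\sinc(\pi y)|^s\,dy\,};
\]
a rigorous numerical minimization of the denominator (e.g.\ by certified quadrature for the $\sinc$-integral combined with bracketing of the minimum in $s$) yields $c_0 \geq 1.1393830$.

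For the upper bound in part~(b), my plan is to use an integral representation of $f(0)$ for $f \in PW^p$ tested against a near-optimal dual kernel --- the same representation used to secure finiteness of $c_0$ in part~(a), but now evaluated explicitly in the limit $p \to 0^+$ --- to obtain $c_0 \leq 1.1481785$ by numerical quadrature. The hard part of the whole theorem, and what governs both part~(a)'s finiteness and part~(b)'s upper bound, is securing enough quantitative control of the extremal problem in the small-$p$ regime; once the appropriate formulas and zero-spacing estimates are in hand, the remaining work reduces to certified quadrature and a one-parameter optimization.
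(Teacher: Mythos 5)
Your plan for part~(a) has a concrete error. The power trick $\mathscr{C}_{p/k}\ge \mathscr{C}_p/k$ compares $p$ with $p/k$; it does \emph{not} compare $p_0/k$ with $p_0/(k+1)$, since $(k+1)/k$ is not an integer. So the sequence $\{\mathscr{C}_{p_0/k}/(p_0/k)\}_{k\ge 1}$ is \emph{not} known to be nondecreasing in $k$ --- it is only monotone along divisibility chains ($k_1\mid k_2$). Moreover, you invoke ``continuity of $p\mapsto\mathscr{C}_p$ established earlier in the paper,'' but no such continuity result is proved in this paper, and even if it were, convergence along one subsequence plus continuity of a function would not give convergence of the full limit as $p\to0^+$. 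The paper sidesteps both problems: it defines $c_0:=\sup_{k\ge1}2k\mathscr{C}_{1/k}$, shows finiteness via Lemma~\ref{lem:limsupok} (which in turn rests on Theorem~\ref{thm:intrep} and Lemma~\ref{lem:jensen}), extracts a divisibility chain $p_j$ along which $\frac{2}{p_j}\mathscr{C}_{p_j}\to c_0$, and then sandwiches $\frac{2}{p}\mathscr{C}_p$ for small $p$ between $(c_0-\varepsilon)\frac{q}{q+p}$ and $c_0\frac{q}{q-p}$ using only the power trick and the elementary monotonicity of $p\mapsto\mathscr{C}_p$. Your further assertion that the limit equals $\sup_{p>0}\mathscr{C}_p/p$ is essentially the monotonicity conjecture, which the paper leaves open.

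For the lower bound in part~(b), the $\sinc$-power family does not reach the stated constant. With $f_n(z)=\sinc(\pi z/n)^n$ and $s=np$ you get $\frac{2}{p}\mathscr{C}_p\ge 2/(s\int_{\mathbb R}|\sinc(\pi y)|^s\,dy)$, and the paper itself notes (Section~\ref{subsec:gamma}, after Corollary~\ref{cor:evenok}) that the choice $s=2$ only yields $c_0\ge1$. Optimizing over $s$ does somewhat better, but not to $1.1393830$: one needs test functions with \emph{simple} zeros that mimic the extremal function, and the paper achieves this with the Gamma family $g_\alpha(z)=\Gamma^2(\alpha)/(\Gamma(\alpha-z)\Gamma(\alpha+z))$, $\alpha=\tfrac12+\gamma/p$, together with the precise pointwise bounds of Lemma~\ref{lem:galphaest}. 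The multiplicity-$n$ zeros of $\sinc(\pi z/n)^n$ make that family strictly worse. For the upper bound, your sketch (``integral representation tested against a near-optimal dual kernel'') does not contain the key steps: the restriction to even functions via Corollary~\ref{cor:evenok}, the representation of Theorem~\ref{thm:intrep} applied with exponent $p/q$ for a tunable $q>1$, the Jensen-formula zero bound of Lemma~\ref{lem:jensen}~(b) (needed to estimate the oscillating kernel), and the H\"older optimization of Theorem~\ref{thm:Cp0upper}. Without these your argument for part~(b) is not a proof.
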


Our approach is based on expressing $\mathscr{C}_p$ as the solution of an extremal problem, namely 
\begin{equation}\label{eq:extremalproblem} 
	\frac{1}{\mathscr{C}_p} = \inf_{f \in PW^p} \left\{\|f\|_p^p \,:\, f(0) = 1\right\}. 
\end{equation}
This extremal problem was studied by Levin and Lubinsky \cite{LL15}, who found that the quantity $\mathscr{C}_p^{-1}$ appears as a scaling limit for certain Christoffel functions. See also Lubinsky's survey \cite{Lubinsky17}, where Problems 4 and 5 specifically ask for estimates of $\mathscr{C}_p$ and for information on the solutions of \eqref{eq:extremalproblem}. Our upper bounds for $\mathscr{C}_p$ imply new estimates for the Nikolskii constants for trigonometric polynomials by \cite[Theorem~1]{GM18}.

A compactness argument shows that the extremal problem \eqref{eq:extremalproblem} has solutions for any $0<p<\infty$, and a rescaling argument shows that the type of these extremal functions is exactly $\pi$. The analysis of the particularly simple case $p=2$ above also shows that the corresponding extremal problem has the unique solution $\varphi_2(z) = \sinc{\pi z}$. In this case, all zeros are simple, and the zero set consists of the nonzero integers. Our first main result pertaining to the solutions of \eqref{eq:extremalproblem} is a qualitative analogue of this observation. 

To state that result, we introduce the following notation and terminology. Given an entire function $\varphi$, we let $\mathscr{Z}(\varphi)$ denote its zero set. We define the \emph{separation constant} of a set of real numbers $\Lambda$ as
\[\sigma(\Lambda) := \inf\big\{ |\lambda-\mu|\,:\, \lambda, \mu\in \Lambda\,\text{ and }\, \lambda\neq \mu\big\}.\]
We say that $\Lambda$ is \emph{uniformly discrete} if $\sigma(\Lambda)>0$ and that $\Lambda$ is \emph{uniformly dense} if there exists a positive number $L$ such that every interval of length $L$ contains at least one element from $\Lambda$.
\begin{theorem}\label{thm:zeroset} 
	Fix $0<p<\infty$ and suppose that $\varphi$ is a solution of the extremal 
problem \eqref{eq:extremalproblem}. The zeros of $\varphi$ are all real. Moreover,
	\begin{enumerate}
		\item[(a)] $\mathscr{Z}(\varphi)$ is a finite union of uniformly discrete sets; 
		\item[(b)] if $p\geq \frac{1}{2}$, then $\mathscr{Z}(\varphi)$ is uniformly discrete; 
		\item[(c)] if $p\geq1$, then $\mathscr{Z}(\varphi)$ is uniformly dense. 
	\end{enumerate}
\end{theorem}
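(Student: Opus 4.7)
The plan is a symmetrization reduction followed by a series of local perturbation arguments: starting from a generic extremal $\varphi$, each claim will be proved by showing that if it fails, then a suitable modification of $\varphi$ yields a strictly smaller value of $\|f\|_p^p/|f(0)|^p$, contradicting extremality. Replacing $\varphi$ with $\tfrac{1}{2}(\varphi(z)+\overline{\varphi(\bar z)})$ — which preserves $\varphi(0)=1$, the exponential type, and does not increase $\|\cdot\|_p$ — we may assume $\varphi$ is real entire, so non-real zeros occur in conjugate pairs. To rule out such a pair $a\pm ib$ with $b>0$, I would write $\varphi(z)=((z-a)^2+b^2)h(z)$ and study the one-parameter family $\varphi_s(z)=((z-a)^2+s)h(z)$ of entire functions of exponential type $\pi$. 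Using the pointwise inequality $|(x-a)^2+s|\leq (x-a)^2+b^2$ valid for $s\in[-b^2,b^2]$, together with a direct variational computation, one shows that $R(s) := \|\varphi_s\|_p^p/|\varphi_s(0)|^p$ satisfies $R(s)<R(b^2)$ for some $s<b^2$, contradicting the extremality of $\varphi_{b^2}=\varphi$; driving $s<0$ actually produces a pair of real zeros $a\pm\sqrt{-s}$. Simplicity of real zeros is handled analogously, by splitting a double zero at $t$ via $(z-t)^2\mapsto(z-t)^2-\epsilon^2$ and checking that the first variation in $\epsilon$ is nontrivial.

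For part (a), I would use that $\varphi\in PW^p$ implies $\|\varphi\|_\infty\leq \mathscr{C}_p^{1/p}\|\varphi\|_p$ via the translation-invariant form of \eqref{eq:pointeval}. Bernstein's inequality then gives $\|\varphi'\|_\infty\leq\pi\|\varphi\|_\infty$; applying Jensen's formula on a disc of fixed radius around a real point, combined with the exponential-type bound $|\varphi(x+iy)|\leq\|\varphi\|_\infty e^{\pi|y|}$ on vertical lines, yields a uniform upper bound on the number of zeros of $\varphi$ in any unit interval, which is precisely the decomposition into finitely many uniformly discrete sets.

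For parts (b) and (c), the key tool is the orthogonality relation obtained by perturbing a single zero. For each zero $t_n$, the function $\varphi(z)(z-t_n-\tau)/(z-t_n)$ lies in $PW^p$ for $\tau$ small, and differentiating $\log(\|f\|_p/|f(0)|)$ at $\tau=0$ yields
\[\int_{-\infty}^\infty \frac{x\,|\varphi(x)|^p}{x-t_n}\,dx = 0.\]
For (b), if $t_{n+1}-t_n<\delta$, then near the close pair $|\varphi(x)|^p\asymp |x-t_n|^p|x-t_{n+1}|^p$ times a slowly varying factor, and the threshold $p\geq 1/2$ is precisely what ensures enough local integrability so that the contribution of the pair to the identity above dominates, after suitable rescaling, the contributions from the rest of the real line (which are controlled by part (a)), forcing $\delta$ to be bounded below. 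For (c), a zero-free interval of length $L$ permits multiplication of $\varphi$ by a factor of small exponential type that vanishes in the gap, compensated by an adjustment elsewhere to preserve the type budget $\pi$; the triangle inequality (where $p\geq 1$ enters) then gives a strict decrease in the $L^p$/value ratio. The main obstacle is the real-zero statement for $p<1$: when $p\geq 1$, uniqueness of extremals combined with the Blaschke trick $\varphi(z)\mapsto \varphi(z)(z-\bar w)/(z-w)$ immediately forces $w=\bar w$, but for $p<1$ uniqueness is not available and the variational analysis of the family $\varphi_s$ above must be pushed to second order to conclude.
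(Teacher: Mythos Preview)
Your outline has the right instinct that everything flows from variational identities, but several of the concrete mechanisms you propose do not work, and the paper's arguments are quite different in (a) and (c).

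\textbf{Real and simple zeros.} Your family $\varphi_s(z)=((z-a)^2+s)h(z)$ does not fix the value at $0$, so you are forced into a first/second-order analysis of $R(s)$ and, as you note, this leaves a gap for $p<1$. The paper avoids this entirely: it takes
\[
\psi(z)=\frac{(z-a)^2+b^2-\varepsilon z^2}{(z-a)^2+b^2}\,\varphi(z),
\]
which satisfies $\psi(0)=1$ exactly and $|\psi(x)|<|\varphi(x)|$ pointwise for $x\neq 0$, giving an immediate contradiction for every $p>0$. For simplicity of a real zero $t$, the paper applies its variational Lemma (your single-zero identity is a special case) with $r(z)=z^2/(z-t)^2$: this yields $\int|\varphi(x)|^p x^2/(x-t)^2\,dx=0$, which is absurd since the integrand is nonnegative. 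Your proposed split $(z-t)^2\mapsto(z-t)^2-\epsilon^2$ has \emph{zero} first variation in $\epsilon$ by symmetry, so ``checking that the first variation in $\epsilon$ is nontrivial'' is not correct as stated.

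\textbf{Part (a).} The Jensen argument fails: Jensen's formula centered at $x_0$ only bounds the zero count in terms of $-\log|\varphi(x_0)|$, and you have no lower bound on $|\varphi(x_0)|$ for $x_0$ far from the origin. The paper instead uses the two-zero orthogonality relation
\[
\int_{-\infty}^\infty \frac{|\varphi(x)|^p\,x^2}{(x-t_n)(x-t_{n+k-1})}\,dx=0,
\]
which forces the integral over $I=[t_n,t_{n+k-1}]$ to equal the integral over $\mathbb{R}\setminus I$; comparing the two (using Bernstein to control $|\varphi|/\prod(x-t_j)$ near its maximum) gives a lower bound on $|I|$ when $k\ge\max(2,4/p)$.

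\textbf{Part (c).} An entire function of small exponential type that ``vanishes in the gap'' is identically zero, so the construction you describe does not exist. The paper's proof is of a completely different nature: it uses the reproducing formula $f(0)=\int f(x)\,|\varphi_p(x)|^{p-2}\varphi_p(x)\,dx/\|\varphi_p\|_p^p$ (valid for $p\ge 1$) to compare the mass of a well-chosen test function on a long inter-zero interval $I_n$ against its mass on the complement, and then invokes the Landau--Pollak--Slepian concentration eigenvalue $\lambda_0$ to bound the latter. For $p>1$ one must additionally control where $|\varphi_p|^{p-1}$ peaks on $I_n$, which requires several preparatory lemmas on the logarithmic derivative of $|\varphi_p|^p$.
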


It is not difficult to see that the extremal problem \eqref{eq:extremalproblem} has a unique solution for fixed $p$ in the convex range $1 \leq p < \infty$. This extremal function $\varphi_p$ must then necessarily be even and consequently of the form
\[\varphi_p(z) = \prod_{n=1}^\infty \left(1-\frac{z^2}{t_n^2}\right),\]
where $(t_n)_{n\geq1}$ is a strictly increasing sequence of positive numbers. Our second main result on solutions of \eqref{eq:extremalproblem} and a crucial ingredient in the proof of Theorem~\ref{thm:korevaar} is the following quantitative version of Theorem~\ref{thm:zeroset}~(b).

\pagebreak 

\begin{theorem}\label{thm:sep} 
	If $2 \leq p \leq 4$, then 
	\begin{enumerate}
		\item[(a)] $t_1 \geq \frac{2}{\pi}$; 
		\item[(b)] $t_{n+1} - t_n \geq \frac{2}{3}$ for every $n\geq1$. 
	\end{enumerate}
\end{theorem}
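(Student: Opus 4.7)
The plan is to exploit the first-order optimality conditions from the extremal problem~\eqref{eq:extremalproblem}. For $p \ge 2$, any admissible perturbation $\varphi_p + \varepsilon g$ with $g \in PW^p$ and $g(0)=0$ must leave $\|\varphi_p\|_p^p$ stationary, yielding the fundamental orthogonality
\[\int_{-\infty}^\infty |\varphi_p(x)|^{p-2}\varphi_p(x)\,g(x)\,dx = 0.\]
A particularly useful family of test functions arises from perturbing the symmetric pair of zeros $\pm t_n$ while renormalizing the value at the origin to stay equal to $1$. Differentiating the competitor
\[\Phi_s(z) \;=\; \varphi_p(z)\cdot\frac{t_n^2\bigl(z^2-(t_n+s)^2\bigr)}{(t_n+s)^2\,(z^2-t_n^2)}\]
in $s$ at $s=0$ (a logarithmic derivative computation) produces the admissible variation $g_n(z) = -2z^2\varphi_p(z)/[t_n(z^2-t_n^2)]$, which lies in $PW^p$ (the poles at $\pm t_n$ are cancelled by the simple zeros of $\varphi_p$) and vanishes at $0$. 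Feeding this into the orthogonality gives, for every $n \ge 1$,
\[\int_{-\infty}^\infty \frac{x^2|\varphi_p(x)|^p}{t_n^2-x^2}\,dx = 0, \qquad\text{equivalently}\qquad t_n^2\int_{-\infty}^\infty \frac{|\varphi_p(x)|^p}{t_n^2-x^2}\,dx \;=\; \frac{1}{\mathscr{C}_p},\]
with absolute integrability at $\pm t_n$ guaranteed by $p \ge 1$ and the simplicity of the zeros from Theorem~\ref{thm:zeroset}.

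For part~(a), I specialize to $n=1$ and combine with the fact that $\|\varphi_p\|_\infty = 1$, which is an immediate consequence of extremality since $|\varphi_p(x)|^p \le \mathscr{C}_p\|\varphi_p\|_p^p = 1$. This activates the sharp Bernstein inequality $|\varphi_p'(x)|^2 + \pi^2\varphi_p(x)^2 \le \pi^2$. Integrating from $0$ and from $t_1$ respectively yields the lower envelope $\varphi_p(x) \ge \cos(\pi x)$ on $[0,\min(t_1,1/2)]$ (which by itself gives only the weaker bound $t_1 \ge 1/2$) and the local upper envelope $|\varphi_p(x)| \le |\sin(\pi(x-t_1))|$ on $[t_1-1/2,\,t_1+1/2]$. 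Plugging these envelopes into the stationarity identity for $n=1$ and splitting the integral into the regions $|x|<t_1$ and $|x|>t_1$ should deliver the sharper bound $t_1 \ge 2/\pi$ after a careful but elementary estimation. Using the local sinusoidal envelope near $\pm t_1$ rather than the crude bound $|\varphi_p| \le 1$ is essential, since otherwise the kernel $1/(t_1^2-x^2)$ produces a logarithmic divergence at the endpoints.

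Part~(b) is more delicate because the identity above couples only one zero at a time. Subtracting the stationarity conditions for $t_n$ and $t_{n+1}$ yields the coupled identity
\[\int_{-\infty}^\infty \frac{x^2|\varphi_p(x)|^p}{(t_n^2-x^2)(t_{n+1}^2-x^2)}\,dx = 0,\]
whose sign structure (positive contribution from $|x| \notin [t_n,t_{n+1}]$, negative from the gaps $|x| \in (t_n,t_{n+1})$) forces the gap contribution to balance the two outer integrals. The plan is to bound the gap contribution from above using $\|\varphi_p\|_\infty \le 1$ together with the sharp Bernstein envelope attached to the zeros at both endpoints, and the outer contribution from below using the $L^p$ integrability of $\varphi_p$, to extract a lower bound on $t_{n+1}-t_n$. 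A complementary competition approach is also available: supposing $t_{n+1}-t_n < 2/3$, construct $\Psi \in PW^p$ with $\Psi(0)=1$ by replacing the two close factors $(1-z^2/t_n^2)(1-z^2/t_{n+1}^2)$ in the Hadamard product of $\varphi_p$ by a more spread-out pair (keeping the product value at $0$), and verify via sharp Bernstein on $(t_n,t_{n+1})$ that $\|\Psi\|_p^p < \|\varphi_p\|_p^p$, contradicting extremality.

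The main obstacle, in either part, is squeezing the exact constants $2/\pi$ and $2/3$ out of the identities rather than the weaker bound $1/2$ that classical Bernstein alone delivers. I expect the range $2 \le p \le 4$ to enter at two levels. The lower bound $p \ge 2$ ensures smooth differentiability of the nonlinearity $|\varphi_p|^{p-2}\varphi_p$, justifying the perturbation arguments rigorously and also guaranteeing enough local vanishing at the zeros for the integrals to converge without principal value. The upper bound $p \le 4$ presumably reflects a specific monotonicity in $p$ of the integrand in the stationarity identities, or a sharper control on the tail contribution, that breaks down for larger exponents. Identifying the precise step where the constraint $p \le 4$ becomes binding is the point I would expect to be the most technically demanding.
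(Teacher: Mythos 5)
Your starting point is correct: the orthogonality relations attached to the zeros (your $\int x^2|\varphi_p|^p/(t_n^2-x^2)\,dx = 0$, which is exactly Lemma~\ref{lem:orthogonality}~(b) with $s=-t$, and the reformulation $t_n^2\int |\varphi_p|^p/(t_n^2-x^2)\,dx = 1/\mathscr{C}_p$) are indeed the starting point of the paper's proof. But from there the proposal diverges, and the divergence leaves a genuine gap.

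The heart of the paper's argument is the Landau--Pollak--Slepian concentration operator: after applying H\"older's inequality (this is where $p>2$ enters, not differentiability of $|\varphi|^{p-2}\varphi$) to separate a pure $L^2$ factor, the proof bounds the fraction of $L^2$ energy of the auxiliary function $\psi$ that can live on a short interval by the largest eigenvalue $\lambda_0$ of the time--frequency concentration operator, and then invokes the sharp numerical values $1-\lambda_0(2)\geq 0.119$ and $1-\lambda_0(2\pi/3)>0.103$ from Table~\ref{table:lambda0}. These are razor-thin margins: for part~(a) the paper obtains $2\sqrt{B}/\pi\leq 0.118$ against $1-\lambda_0(2)\geq 0.119$, so a Bernstein-only approach is very unlikely to supply the slack. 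Your proposal replaces this entire apparatus with sinusoidal envelopes from the sharp Bernstein inequality, which---as the paper itself demonstrates via Lemma~\ref{lem:bernstein}~(a) and Theorem~\ref{thm:sepall}~(a)---delivers $t_1\geq\sqrt{2}/\pi\approx 0.45$ but not $2/\pi\approx 0.637$. The step ``plugging these envelopes into the stationarity identity\ldots should deliver the sharper bound\ldots after a careful but elementary estimation'' is precisely where the proof has to work hardest, and the paper's evidence is that an elementary local envelope cannot close this gap; one needs the global spectral information encoded in $\lambda_0$.

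For part~(b) the gap is larger still. The paper's proof requires Lemma~\ref{lem:iterateme}, a bootstrap lemma that iterates a refined version of the orthogonality estimate, carefully accounting for the \emph{already-established} separation $\delta_0$ and first-zero bound $\gamma$ (these feed into the function $\mathscr{G}_\alpha$, an infinite product compensating for nearby zeros), and then runs the iteration twice: once with $(\gamma,\delta_0)=(2/\pi,3/5)$ to upgrade the separation from $3/5$ to $2/\pi$, and a second time with $(\gamma,\delta_0)=(2/\pi,2/\pi)$ to reach $2/3$. Your ``sign structure'' and ``competition'' plans gesture at the right kinds of ideas but give no mechanism for extracting the numeric constant $2/3$; in particular the competition argument (replacing the factor $(1-z^2/t_n^2)(1-z^2/t_{n+1}^2)$ by a more spread-out pair) would itself require knowledge of where the other zeros sit to verify the norm decrease.

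Finally, you misplace the role of the upper bound $p\leq 4$. It is not ``smooth differentiability'' at play (that is guaranteed by $p\geq 2$ alone and is not even the bottleneck in this argument), and the upper bound is not about monotonicity of an integrand in $p$. In the paper's proof of part~(a), $p\leq 4$ is used to control the exponent $(4-p)/(p-2)\geq 0$ in the step
\[
\int_{\mathbb{R}\setminus J} |x|^{-\frac{2}{p-2}} |x-t_1|^{-\frac{2(p-1)}{p-2}}\,dx
\leq \sup_{x\in\mathbb{R}\setminus J}\bigl(|x|^{-1}|x-t_1|^{-1}\bigr)^{\frac{4-p}{p-2}}
\int_{\mathbb{R}\setminus J}|x|^{-1}|x-t_1|^{-3}\,dx,
\]
and later to apply H\"older's inequality with exponents $2/(4-p)$ and $2/(p-2)$; for $p>4$ these steps reverse direction. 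Pinning this down is part of what makes the argument work, and it is absent from your sketch.
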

We have verified that our methods allow us to improve both these bounds slightly; we could for instance have replaced $2/3$ by $0.6778$ in part (b). Here the point is, however, that $2/3$ is the exact bound required in the proof of Theorem~\ref{thm:korevaar}. The weaker bounds $t_1\geq1/2$ and $t_{n+1}-t_n\geq3/5$ will be shown to hold respectively in the full range $0<p<\infty$ and for $2<p<\infty$.

\subsection*{Outline of the paper} 
As a guide to the reading of this rather long paper, we now give an outline of the various sections and the main ideas involved in them. The reader may find it useful to consult the dependence relations between these sections shown in Figure~\ref{fig:dependence}.

The next section---the prologue of our paper---addresses what appears to be a main challenge to establish the monotonicity conjecture: How can we extend the inequality $\mathscr{C}_{2p}\leq 2\mathscr{C}_p$ to all pairs $p<q$ to get $\mathscr{C}_{q}/q\leq \mathscr{C}_p/p$? This question is reminiscent of the problems discussed in \cite{BOS18}, one of which was solved in a striking way in \cite{Ku22}. See also the preprint \cite{Ll22} which solves another problem from \cite{BOS18}. The basic question in all these problems is how to circumvent the obstacle that the familiar tools of interpolation theory are unavailable. 

Our partial remedy for this impasse is the topic of Section~\ref{sec:prologue}; it is an explicit integral formula for $|\varphi(0)|^q$ for any extremal function $\varphi$ and any $q>0$. This formula will be the starting point for the proof of Theorem~\ref{thm:korevaar}, similarly to how \eqref{eq:2rpk} was the starting point for the proof that $\mathscr{C}_2=1$. It also clarifies the need for precise information about the zeros of $\varphi$ and thus serves as a motivation for our detailed study of those zeros. 

The three subsections of Section~\ref{sec:prelim} present some preliminary results. Section~\ref{subsec:ortho} deals with the most basic properties of the zeros of extremal functions, including the important orthogonality relations that will be extensively used in subsequent sections. Roughly speaking, this subsection enunciates that the majority of the results of \cite{HB93} for $p=1$ extend to the full range $0<p<\infty$. Section~\ref{subsec:convex} deduces various consequences of convexity in the range $1\leq p < \infty$. Of particular importance is the following counterpart of \eqref{eq:2rpk},
\begin{equation} \label{eq:fat0} f(0) = \int_{-\infty}^\infty f(x) \,\frac{|\varphi_p(x)|^{p-2} \varphi_p(x)}{\|\varphi_p\|_p^p} \,dx \end{equation}
which holds for all $f$ in $PW^p$ when $1 \leq p < \infty$. The fact that $|\varphi_p|^{p-2}\varphi$ only takes values $\pm 1$ 
when $p=1$ played a crucial role in \cite{HB93}, and we will also see it used in Section~\ref{sec:zeroset}. The final Section~\ref{subsec:PS} discusses briefly the Landau--Pollak--Slepian operator of time--frequency concentration \cite{SP61, LP61, LP62}; the numerical value of its largest eigenvalue will be required in the proofs of both Theorem~\ref{thm:zeroset} (c) and Theorem~\ref{thm:sep}. 

Sections~\ref{sec:zeroset} and \ref{subsec:zeroset24}, giving the proofs of Theorem~\ref{thm:zeroset} and Theorem~\ref{thm:sep}, study respectively the geometry of the zeros sets of extremal functions and numerical bounds on the separation of such zero sets in the range $2<p<\infty$. In both cases, the main difficulty is that the zero sets are only indirectly accessible through the orthogonality relations alluded to above. Sections~\ref{sec:zeroset} and \ref{subsec:zeroset24} aim at bringing out as much explicit information as possible from these relations.

Section~\ref{sec:2p4} gives the proof of Theorem~\ref{thm:korevaar}. The idea is to put into effect the numerical bounds of Theorem~\ref{thm:sep} in the formula of Section~\ref{sec:prologue}. In carrying out this idea, we rely on somewhat intricate combinatorial arguments. 

Section~\ref{sec:asymp} presents the proof of Theorem~\ref{thm:Cpinfty} and Theorem~\ref{thm:Cp0}. The example functions
\[ g_{\alpha}(z):=\frac{\Gamma^2(\alpha)}{\Gamma(\alpha-z)\Gamma(\alpha+z)} \]
play a central role in our asymptotic analysis of the lower bounds for $\mathscr{C}_p$, because these functions seem to mimick the extremal functions for suitable choices of $\alpha$. We believe that $g_{1/2+1/p}$ is ``close'' to the extremal function $\varphi_p$ when $p$ is ``close'' to $2$. However, when $p\to \infty $ or $p\to 0^+$, we will see that this connection is more subtle. 

The two final sections of the paper present ideas for further developments. Section~\ref{sec:epilogue}---the epilogue of our paper---revisits two central notions used throughout the preceding sections, namely those of duality and orthogonality. We first prove that \eqref{eq:fat0} extends in a distributional sense beyond the convex regime to $1/2\le p<1$. We then show that there is a natural Hilbert space---more specifically a de Branges space---induced by the orthogonality relations associated with the zeros of $\varphi_p$ for $1<p<\infty$. The basic questions suggested by this section are both related to convexity: How to extend results more generally \emph{beyond} the convex regime, and how to take advantage of the theory of de Branges spaces \emph{in} the strictly convex case? The final Section~\ref{sec:conj} lists a number of conjectures and further open problems, suggested by our analysis and numerical experiments. 

We hope the final two sections of our paper may inspire further work.

\begin{figure}
	\centering
	\begin{tikzcd}
	 & & & 4 \arrow[r] & 8 \arrow[dr, bend left] &    \\
	 1 \arrow[r] & 2 \arrow[r] & 3 \arrow[ur, bend left] \arrow[r] \arrow[dr, bend right] & 5 \arrow[r] & 6 \arrow[r] &  9 \\
	 & & & 7 \arrow[urr, bend right] & & 
	\end{tikzcd}
	\caption{Dependence relations between sections of the present paper.}
	\label{fig:dependence}
\end{figure}
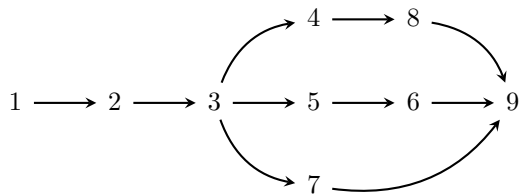

\subsection*{Acknowledgements} We thank the anonymous referee for making us aware of Lemma~\ref{lem:hormander}, which led to an improvement in Theorem~\ref{thm:sepall}~(a).

\section{Prologue: The power trick and its extension to positive powers} \label{sec:prologue} 
We will set the stage for our study by scrutinizing the well known \emph{power trick}, which in our context reads as follows. 
\begin{lemma}\label{lem:powertrick} 
	Fix $0<p<\infty$. If there is a positive integer $k$ such that $q = kp$, then
	\[\frac{\mathscr{C}_q}{q}\leq \frac{\mathscr{C}_p}{p}.\]
\end{lemma}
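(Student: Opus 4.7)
The plan is to use the infimum formulation \eqref{eq:extremalproblem} of $\mathscr{C}_p$ and to prove the equivalent statement $\mathscr{C}_q \leq k\,\mathscr{C}_p$: for every competitor $f \in PW^q$ with $f(0)=1$, I will produce a competitor $G \in PW^p$ with $G(0)=1$ whose $L^p$-norm is exactly $k\|f\|_q^q$.

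The natural candidate is the dilation-and-power $G(z) := f(z/k)^k$. Three easy checks are in order. First, since $f$ has exponential type at most $\pi$, the dilate $f(z/k)$ has type at most $\pi/k$, and so its $k$th power $G$ has type at most $\pi$; it is entire since $f$ is. Second, $G(0) = f(0)^k = 1$. Third, the change of variables $y = x/k$ gives
\[ \|G\|_p^p \;=\; \int_{-\infty}^{\infty} |f(x/k)|^{kp}\, dx \;=\; k \int_{-\infty}^{\infty} |f(y)|^q\, dy \;=\; k\,\|f\|_q^q, \]
and in particular $G \in L^p(\mathbb{R})$, so $G \in PW^p$.

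Consequently \eqref{eq:extremalproblem} yields $\mathscr{C}_p^{-1} \leq \|G\|_p^p = k\,\|f\|_q^q$, and taking the infimum over all admissible $f \in PW^q$ gives $\mathscr{C}_p^{-1} \leq k\,\mathscr{C}_q^{-1}$, equivalently $\mathscr{C}_q/q \leq \mathscr{C}_p/p$. There is no genuine obstacle here: the whole argument rests on the observation that the substitution $z \mapsto z/k$ trades a factor of $k$ in the exponential type for a factor of $k$ in the $L^p$-norm, and that raising to the $k$th power is precisely the operation that converts $q = kp$ into $p$ at the level of integrands. The true difficulty, as emphasized in the paper, is that this simple rescaling requires $q/p$ to be a \emph{positive integer}, which is why the resulting monotonicity is not immediate for arbitrary pairs $p<q$ and the full conjecture remains open.
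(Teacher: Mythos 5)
Your proof is correct and is essentially the paper's own argument: both rest on the substitution $G(z)=f(z/k)^k$, which trades a factor of $k$ in exponential type for a factor of $k$ in the $L^p$-norm. The only cosmetic difference is that you work via the infimum formulation \eqref{eq:extremalproblem} with the normalization $f(0)=1$, while the paper applies the defining inequality \eqref{eq:pointeval} directly to $g=f^k(\cdot/k)$.
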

\begin{proof}
	Consider $f$ in $PW^q$. Clearly, $f^k$ is entire and of exponential type at most $k\pi$. The function $g(z) = f^k(z/k)$ is then of exponential type at most $\pi$. Consequently,
	\[|f(0)|^q = |g(0)|^p \leq \mathscr{C}_p \int_{-\infty}^\infty |g(x)|^p \,dx = k\mathscr{C}_p \int_{-\infty}^\infty |f(x)|^q\,dx = k\mathscr{C}_p \|f\|_q^q.\]
	This implies that $\mathscr{C}_q \leq k \mathscr{C}_p$, which is equivalent to the asserted estimate. 
\end{proof}

We have already seen that $\mathscr{C}_2 = 1$, so the power trick implies that $\mathscr{C}_4 \leq 2$. It turns out that we can do better by a slight twist on the power trick. Indeed, suppose that $f$ is a function in $PW^4$. Then the function $g(z):=f(z/2)f^\ast(z/2)$ is in $PW^2$, where $f^\ast(z):=\overline{f(\overline{z})}$. Replacing $f$ by $g$ in the reproducing formula \eqref{eq:2rpk}, we find that 
\begin{equation}\label{eq:inspiration} 
	|f(0)|^2 = g(0) = \int_{-\infty}^\infty g(x) \, \sinc{\pi x}\,dx = 2 \int_{-\infty}^\infty |f(x)|^2 \,\sinc{2\pi x}\,dx. 
\end{equation}
We now obtain the following result. 
\begin{theorem}\label{thm:PW4} 
	$\displaystyle \mathscr{C}_4 \leq 2- \frac{1}{12}$. 
\end{theorem}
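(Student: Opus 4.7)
My plan is to square identity \eqref{eq:inspiration} and apply Cauchy--Schwarz, but first exploit the pointwise nonnegativity of $|f|^2$ to replace the kernel $\sinc(2\pi x)$ with its positive part $K_+(x) := \max\{\sinc(2\pi x), 0\}$. Since $|f|^2 \ge 0$, \eqref{eq:inspiration} upgrades to $|f(0)|^2 \le 2\int_{-\infty}^\infty |f(x)|^2 K_+(x)\,dx$, and Cauchy--Schwarz then yields
\[|f(0)|^4 \;\le\; 4\,\|f\|_4^4 \cdot \|K_+\|_2^2.\]
Writing $K_+^2 = K^2 - K_-^2$ with $K(x) = \sinc(2\pi x)$, and using the standard computation $\|\sinc(2\pi\,\cdot)\|_2^2 = 1/2$, the task reduces to showing that $\int_{\{\sinc(2\pi x)<0\}} \sinc^2(2\pi x)\,dx \ge 1/48$.

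To produce this lower bound, I would keep only the contribution from the two innermost negative lobes $(1/2,1)$ and $(-1,-1/2)$. After the substitutions $u = 2\pi x$ and then $v = u - \pi$ (using $\sin^2(v+\pi) = \sin^2 v$), the target becomes the single inequality
\[\int_0^\pi \frac{\sin^2 v}{(v+\pi)^2}\,dv \;\ge\; \frac{2}{9\pi}.\]
Here I would apply Jensen's inequality to the convex function $v \mapsto (v+\pi)^{-2}$ against the probability measure $d\mu(v) = (2/\pi)\sin^2 v\,dv$ on $[0,\pi]$. A short integration by parts gives $\int_0^\pi v\sin^2 v\,dv = \pi^2/4$, so the $\mu$-mean of $v$ equals $\pi/2$, and Jensen's inequality immediately produces the desired bound (the right-hand side being $(\pi/2)\cdot(3\pi/2)^{-2}$).

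Tracing the constants through the substitutions, one arrives at $\int_{\{\sinc(2\pi x)<0\}} \sinc^2(2\pi x)\,dx \ge 2/(9\pi^2)$, and therefore $\mathscr{C}_4 \le 2 - 8/(9\pi^2)$. The stated bound $\mathscr{C}_4 \le 2 - 1/12$ then follows from the elementary inequality $8/(9\pi^2) \ge 1/12$, equivalent to $\pi^2 \le 32/3$.

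The main obstacle is the tightness of this chain. A purely pointwise lower bound, such as replacing $(v+\pi)^{-2}$ by its minimum $(2\pi)^{-2}$ on $[0,\pi]$, yields only $\mathscr{C}_4 \le 2 - 1/(2\pi^2)$, which is insufficient. The convexity of $(v+\pi)^{-2}$ together with the happy coincidence that the $\sin^2$-weighted centroid on $[0,\pi]$ sits exactly at the midpoint $\pi/2$ supplies, via Jensen's inequality, precisely the additional gain needed to reach $1/12$.
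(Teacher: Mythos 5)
Your proof is correct, and the opening moves (replacing $\sinc(2\pi x)$ by its positive part $K_+$, applying Cauchy--Schwarz, reducing to a lower bound on $\int_{\{\sinc(2\pi x)<0\}}\sinc^2(2\pi x)\,dx$) coincide exactly with the paper's. Where you diverge is in how this ``negative lobe'' integral is estimated. The paper keeps \emph{all} negative lobes $(k+\tfrac12,k+1)$, $k\geq 0$, bounds $x^{-2}$ crudely by $(k+1)^{-2}$ on each lobe, evaluates $\int_{k+1/2}^{k+1}\sin^2(2\pi x)\,dx = \tfrac14$, and sums the resulting series $\sum_{k\geq 0}(k+1)^{-2} = \pi^2/6$ to land exactly on the constant $\tfrac{1}{12}$. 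You instead discard all but the first lobe and compensate by bounding the remaining integral more sharply: convexity of $v\mapsto(v+\pi)^{-2}$ together with the observation that the $\sin^2$-weighted centroid of $[0,\pi]$ is exactly $\pi/2$ lets Jensen squeeze out $\int_0^\pi \sin^2v\,(v+\pi)^{-2}\,dv \geq \tfrac{2}{9\pi}$. Tracing through, your single-lobe bound already yields $\int_{\{\sinc(2\pi x)<0\}}\sinc^2(2\pi x)\,dx \geq \tfrac{2}{9\pi^2}\approx 0.0225$, which is strictly better than the paper's all-lobe bound of $\tfrac{1}{48}\approx 0.0208$, and hence gives the marginally stronger conclusion $\mathscr{C}_4 \leq 2 - \tfrac{8}{9\pi^2}$. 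The trade-off is that the paper's computation produces the clean constant $\tfrac{1}{12}$ as an identity, whereas yours needs the additional numerical check $\pi^2 \leq 32/3$ to recover the stated form; on the other hand your argument avoids the series summation and exhibits an appealing self-contained convexity structure. Both approaches are perfectly sound.
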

\begin{proof}
	Starting from \eqref{eq:inspiration}, we obtain an upper bound for the right-hand side after replacing $\sinc{2 \pi x}$ by $\max\left(\sinc{2 \pi x},0\right)$. Squaring both sides and using the Cauchy--Schwarz inequality, we find that
	\[|f(0)|^4 \leq 4 \|f\|_4^4 \int_{-\infty}^\infty \max\left(\sinc{2 \pi x},0\right)^2 \,dx.\]
	We estimate
	\begin{align*}
		4 \int_{-\infty}^\infty \max\left(\sinc{2 \pi x},0\right)^2 \,dx &= 2 - 2\sum_{k=0}^\infty \int_{k+1/2}^{k+1} \frac{\sin^2(2\pi x)}{\pi^2 x^2}\,dx \\
		&\leq 2 - 2\sum_{k=0}^\infty \frac{1}{\pi^2 (k+1)^2} \int_{k+1/2}^{k+1} \sin^2(2\pi x)\,dx = 2-\frac{1}{12}
	\end{align*}
	to obtain the desired bound.
\end{proof}
In our context, we may in fact replace $g$ by $f^2(x/2)$ in the above argument, in view of the following observation. 
\begin{lemma}\label{lem:real} 
	Fix $0<p<\infty$. Any solution of \eqref{eq:extremalproblem} is real-valued on $\mathbb{R}$. 
\end{lemma}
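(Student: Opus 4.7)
The plan is to split $\varphi$ into its real and imaginary parts on $\mathbb{R}$ via the symmetrization $\varphi^\ast(z):=\overline{\varphi(\overline{z})}$, and exploit a trivial pointwise inequality to force the imaginary part to vanish.

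First I would set $u:=(\varphi+\varphi^\ast)/2$ and $v:=(\varphi-\varphi^\ast)/(2i)$, so that $\varphi=u+iv$ with $u,v$ entire of exponential type at most $\pi$ and real-valued on $\mathbb{R}$ (both $\varphi^\ast$ and $\varphi$ lie in $PW^p$, the former because $|\varphi^\ast(x)|=|\varphi(x)|$ for real $x$; since $PW^p$ is closed under sums for every $0<p<\infty$, we have $u,v\in PW^p$). Note that $u(0)=\mre\varphi(0)=1$ and $v(0)=\mim\varphi(0)=0$, so $u$ is a competitor in the extremal problem \eqref{eq:extremalproblem}.

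Next, for every real $x$ we have the pointwise bound
\[|u(x)|^p\leq\bigl(u(x)^2+v(x)^2\bigr)^{p/2}=|\varphi(x)|^p,\]
and integrating gives $\|u\|_p^p\leq\|\varphi\|_p^p$. Since $\varphi$ is extremal and $u(0)=1$, the reverse inequality $\|\varphi\|_p^p\leq\|u\|_p^p$ holds as well, and therefore equality must hold pointwise almost everywhere: $v(x)=0$ for almost every $x\in\mathbb{R}$. Since $v$ is entire, $v\equiv 0$, and consequently $\varphi=u$ is real-valued on $\mathbb{R}$.

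There is no serious obstacle here; the only delicate point is to make sure that the argument does not rely on convexity and therefore applies in the full range $0<p<\infty$, including the quasi-Banach range $0<p<1$. The pointwise inequality $|u|\leq|\varphi|$ used above has nothing to do with convexity, which is what allows the argument to work uniformly in $p$.
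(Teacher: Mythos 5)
Your proof is correct and follows essentially the same route as the paper: symmetrize via $\varphi^\ast$, compare $\psi=(\varphi+\varphi^\ast)/2$ with $\varphi$ pointwise, and invoke extremality. The only (cosmetic) difference is that you make the equality case explicit by introducing $v$ and noting that a nontrivial entire function cannot vanish a.e., whereas the paper simply asserts $\|\psi\|_p<\|\varphi\|_p$ when $\varphi$ is not real; both are correct.
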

\begin{proof}
	Suppose that $\varphi$ is a solution of \eqref{eq:extremalproblem} and define
	\[\psi(z) := \frac{\varphi(z)+\varphi^\ast(z)}{2},\]
	which is in $PW^p$ and satisfies $\psi(0)=1$ since $\varphi(0)=1$. If $x$ is a real number, then clearly $\psi(x) = \mre{\varphi(x)}$. Hence if $\varphi$ is not real-valued on $\mathbb{R}$, then $\|\psi\|_p < \|\varphi\|_p$ which contradicts the assumption that $\varphi$ is an extremal function. 
\end{proof}
We may now interpret what we gained in Theorem~\ref{thm:PW4} as follows. The map $f(x) \mapsto f^2(x/2)$ from the set of real-valued functions in $PW^4$ fails to be onto the set of real-valued functions in $PW^2$. This is quite obvious since for example $\sinc \pi x$ has alternating signs. Another way to understand this is to think of squaring $f$ as ``smoothing'' its Fourier transform, and there is no way we may arrange this so that such a ``smoothed'' Fourier transform equals a characteristic function.

To get access to $\mathscr{C}_p$ in the range $2<p<4$, we may ask if there is a way to make sense of the power trick for non-integer powers of our function. If this can be done, we may again think of powers larger than $1$ as corresponding to suitable ``smoothings'' of the Fourier transform and thus hope for a saving in accordance with the monotonicity conjecture.

Taking non-integer powers of entire functions can in general only be done locally. However, if the zeros are real, then we may make sense of such powers in respectively the lower and upper half-planes. Pursuing the idea of extending the power trick, we are thus led to the following representation formula. (Here and elsewhere we account for multiplicities in the usual way when listing the zeros of a given entire function.)
\begin{theorem}\label{thm:intrep} 
	Fix $0<p<\infty$. Suppose that $f$ is a function in $PW^p$ which does not vanish at the origin and has only real zeros $(t_n)_{n\in\mathbb{Z}\setminus\{0\}}$ ordered such that
	\[ \cdots \leq t_{-2}\leq t_{-1}<0<t_1 \leq t_2 \leq \cdots .\]
	For any $0<q<\infty$,
	\[|f(0)|^q = \sum_{n=0}^\infty \int_{t_{-n-1}}^{t_{-n}} |f(x)|^q \,\frac{\sin{\pi q(x+n)}}{\pi x}\,dx +\sum_{n=0}^\infty \int_{t_n}^{t_{n+1}} |f(x)|^q \,\frac{\sin{\pi q(x-n)}}{\pi x}\,dx,\]
	where we use the convention that $t_0:=0$. 
\end{theorem}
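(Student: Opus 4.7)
The key idea is to use the hypothesis that $f$ has only real zeros to build an analytic branch of $f(z)^q$ on the open upper half-plane $\mathbb{C}_+$, and then recover $|f(0)|^q$ by a Sokhotski--Plemelj-type contour integration. After replacing $f(z)$ with $e^{-i\beta}f(z)$, where $\beta=\arg f(0)$, we may assume $f(0)>0$. Since $f$ is zero-free on $\mathbb{C}_+$, the function $F_+(z):=\exp(q\log f(z))$ is well-defined and analytic on $\mathbb{C}_+$ once we fix the branch of $\arg f$ so that the boundary value $\arg_+ f(0):=\lim_{y\to 0^+}\arg f(iy)$ equals $0$. Introduce the auxiliary function
\[G_+(z):=F_+(z)\,e^{i\pi q z}.\]
Since $f\in PW^p$ has exponential type at most $\pi$, we have $|f(z)|\le Ce^{\pi\mim z}$ on $\mathbb{C}_+$; combined with $|e^{i\pi qz}|=e^{-\pi q\mim z}$ this yields the uniform bound $|G_+(z)|\le C^q$ on $\mathbb{C}_+$.

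Next, I would determine the boundary values of $G_+$ by tracking how $\arg_+ f$ jumps across each real zero. Near a zero $t_n$ of multiplicity $m_n$ we have $f(z)\sim c_n(z-t_n)^{m_n}$, and as $z$ traces a small upper semicircle from just above $t_n^-$ to just above $t_n^+$, $\arg(z-t_n)$ decreases from $\pi$ to $0$, so $\arg_+ f$ decreases by $m_n\pi$. Starting from $\arg_+ f\equiv 0$ on $(t_{-1},t_1)$ (the interval containing the origin) and summing the jumps (counted with multiplicity, so that intervals $(t_n,t_{n+1})$ with $t_n=t_{n+1}$ degenerate to the empty set), we find $\arg_+ f(x)=-n\pi$ on $(t_n,t_{n+1})$ and $\arg_+ f(x)=+n\pi$ on $(t_{-n-1},t_{-n})$ for each $n\ge 0$. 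Consequently,
\[\mim G_+(x)=|f(x)|^q\sin\pi q(x-n)\ \text{on}\ (t_n,t_{n+1}),\quad \mim G_+(x)=|f(x)|^q\sin\pi q(x+n)\ \text{on}\ (t_{-n-1},t_{-n}).\]

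The final step is the contour argument: integrating $G_+(z)/z$ along the real line with a small upper semicircular indentation around the origin, closed by a large upper semicircle of radius $R$ in $\mathbb{C}_+$, and letting $R\to\infty$, yields
\[G_+(0)=\frac{1}{i\pi}\,\mathrm{PV}\!\!\int_{-\infty}^\infty \frac{G_+(x)}{x}\,dx.\]
Since $G_+(0)=|f(0)|^q\in\mathbb{R}$, taking real parts gives $|f(0)|^q=\frac{1}{\pi}\int \mim G_+(x)/x\,dx$ (no principal value is needed, as $\sin\pi q x/(\pi x)$ is analytic at $0$), and substituting the boundary values from the preceding paragraph reproduces exactly the claimed identity.

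The main technical obstacle is justifying the vanishing of the large semicircle integral as $R\to\infty$, because $G_+$ is only bounded, not decaying, on $\mathbb{C}_+$. I would handle this by regularization: replace $G_+(z)$ with $G_+(z)e^{i\epsilon z}$ for $\epsilon>0$, which decays uniformly on upper semicircles and makes the contour argument immediate, prove the identity for every $\epsilon>0$, and then pass to the limit $\epsilon\to 0^+$. The convergence of the right-hand side and absolute convergence of the sum rely on $f\in L^p\cap L^\infty$ together with Bernstein-type bounds on the growth of $f$, combined with the oscillation of the sine factor.
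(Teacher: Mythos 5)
Your argument is correct, and it takes a genuinely different route from the paper's. The paper defines $f^q$ on the slit plane $\mathbb{C}\setminus\big((-\infty,t_{-1}]\cup[t_1,\infty)\big)$ and performs \emph{two} contour integrations: one along a contour in the lower half-plane (the rectangle $\Gamma_{T,\varepsilon}$ of Figure~\ref{fig:contour}, with a small detour into $\mathbb{C}_+$ that encloses the pole at the origin), and one along a rectangle entirely in the upper half-plane (which yields $0$). Adding \eqref{eq:cauchy2} and \eqref{eq:cauchy3} produces the factor $\sin(\pi qz)$ from the combination $e^{iq\pi z}-e^{-iq\pi z}$. You instead work exclusively in $\mathbb{C}_+$: you choose the principal branch of $f^q$ there, multiply by $e^{i\pi qz}$ to obtain the bounded function $G_+$, and extract the sine as $\mim G_+$ on the boundary. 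This is a clean shortcut---the sine appears from the imaginary part rather than from symmetrizing two integrals. The far-contour estimate is also handled differently: the paper establishes decay of the horizontal and vertical pieces of the lower rectangle via the Plancherel--P\'{o}lya theorem, H\"{o}lder's inequality (for $q<p$), and Carleson-measure bounds for vertical lines in $H^1$; you instead insert the damping factor $e^{i\epsilon z}$, invoke Jordan's lemma on the large semicircle, and remove $\epsilon$ at the end by dominated convergence. For the $\epsilon\to 0^+$ limit one should note that the difference $G_+(x)(e^{i\epsilon x}-1)/x$ is bounded near $0$ and dominated by $2|f(x)|^q/|x|$ away from $0$, and that the latter is integrable on $\{|x|\ge 1\}$ by H\"older with exponent $r=p/(p-q)>1$ when $q<p$ and trivially when $q\geq p$; this covers the convergence claim at the end of your sketch without needing the sine's oscillation. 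The branch-tracking step (argument drops by $m_n\pi$ per zero crossed through $\mathbb{C}_+$, so $\arg_+ f=-n\pi$ on $(t_n,t_{n+1})$ and $+n\pi$ on $(t_{-n-1},t_{-n})$) is correct and equivalent to the paper's direct computation of $f_\pm^q(x)=|f(x)|^qe^{\mp i\pi qn}$. Each approach has its merits: the paper's contour stays on one side of the slit at a time, avoiding any boundary-limit of $\log f$; yours avoids the Hardy-space machinery and the $O(\varepsilon)$ bookkeeping from the two short vertical segments near $t_{\pm1}/2$.
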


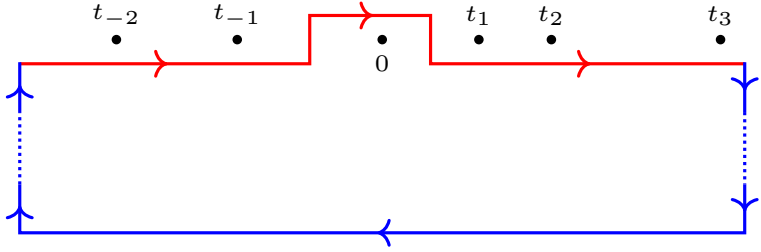
\begin{figure}
	\centering
	\begin{tikzpicture}[scale=1.5]
		\begin{axis}[
			axis equal image,
			axis lines = none,
			xmin = -16, 
			xmax = 16,
			ymin = -15, 
			ymax = 5]
			
			\addplot[thick,solid,color=red] coordinates {(-15,-1) (-3,-1) (-3,1) (2,1) (2,-1) (15,-1)};	
			\addplot[thick,solid,color=blue] coordinates {(15,-0.9355) (15,-3)};	
			\addplot[thick,densely dotted,color=blue] coordinates {(15,-3) (15,-6)};
			\addplot[thick,solid,color=blue] coordinates {(15,-6) (15,-8) (-15,-8) (-15,-6)};
			\addplot[thick,densely dotted,color=blue] coordinates {(-15,-6) (-15,-3)};
			\addplot[thick,solid,color=blue] coordinates {(-15,-3) (-15,-0.9355)};	

			\def\s{0.1} 	
			\draw[thick,solid,color=red,->] (axis cs:-9.01+\s,-1) -- (axis cs:-8.99+\s,-1);
			\draw[thick,solid,color=red,->] (axis cs:-0.51+\s,1) -- (axis cs:-0.49+\s,1);
			\draw[thick,solid,color=red,->] (axis cs:8.49+\s,-1) -- (axis cs:8.51+\s,-1);
			\draw[thick,solid,color=blue,->] (axis cs:15,-1.99-\s) -- (axis cs:15,-2.01-\s);
			\draw[thick,solid,color=blue,->] (axis cs:15,-6.99-\s) -- (axis cs:15,-7.01-\s);
			\draw[thick,solid,color=blue,->] (axis cs:0.1-\s,-8) -- (axis cs:-0.1-\s,-8);
			\draw[thick,solid,color=blue,->] (axis cs:-15,-2.01+\s) -- (axis cs:-15,-1.99+\s);
			\draw[thick,solid,color=blue,->] (axis cs:-15,-7.01+\s) -- (axis cs:-15,-6.99+\s);
			
			\addplot[only marks,mark=*,color=black,mark size=1pt] coordinates {(-11,0) (-6,0) (0,0) (4,0) (7,0) (14,0)};
			\node at (axis cs:-11,1) {\tiny $t_{-2}$};
			\node at (axis cs:-6,1) {\tiny $t_{-1}$};
			\node at (axis cs:0,-1) {\tiny $0$};
			\node at (axis cs:4,1) {\tiny $t_1$};
			\node at (axis cs:7,1) {\tiny $t_2$};
			\node at (axis cs:14,1) {\tiny $t_3$};			
			
		\end{axis}
	\end{tikzpicture}
	\caption{The contour $\Gamma_{T,\varepsilon}$ from the proof of Theorem~\ref{thm:intrep}. The contribution from the {\color{blue} lower part} vanishes as $T\to\infty$, while the {\color{red} upper part} becomes the contour $\Gamma_\varepsilon$ in the same limit.}
	\label{fig:contour}
\end{figure}

\begin{proof}
	Inspecting the stated formula, it is clear that we may assume that $f(0)=1$ without loss of generality. Combined with the assumption that $f$ is $PW^p$, we have (see e.g. \cite[Lecture~17]{Levin96}) the representation 
	\begin{equation}\label{eq:lucrep} 
		f(z) = \lim_{T\to\infty} \prod_{|t_n|<T}\left(1-\frac{z}{t_n}\right). 
	\end{equation}
	For every $q>0$, we may define $f^q$ in the slit plane $\mathbb{C} \setminus \big((-\infty,t_{-1}]\cup [t_1, \infty)\big)$. Let $T$ be a large positive number and $\varepsilon$ a small positive number. Consider the contour $\Gamma_{T,\varepsilon}$ defined as the line segments connecting the points
	\[-T-i \varepsilon, \quad \frac{t_{-1}}{2}-i\varepsilon, \quad \frac{t_{-1}}{2}+i\varepsilon, \quad \frac{t_1}{2}+i\varepsilon, \quad \frac{t_1}{2}-i\varepsilon, \quad T-i\varepsilon, \quad T-iT, \quad -T-iT,\]
	oriented clockwise. See Figure~\ref{fig:contour} for an illustration. From Cauchy's formula, we conclude that
	\[1 = f^q(0) = -\frac{1}{2\pi i} \int_{\Gamma_{T,\varepsilon}} f^q(z) \frac{e^{-q \pi i z}}{z}\,dz.\]
	Our goal is next to show that the contribution of the part of the contour in the lower half-plane vanishes as $T\to\infty$. We begin with the horizontal part of the contour. If $q \geq p$, then $f$ is in $PW^q$ and we estimate
	\[\left|\int_{T-iT}^{-T-i T} f^q(z) \frac{e^{-q \pi i z}}{z}\,dz \right| \leq \frac{e^{-q \pi T}}{T} \int_{-\infty}^\infty |f(x-i T)|^q \, dx \leq \frac{\|f\|_q^q}{T},\]
	where the final inequality is the Plancherel--P\'{o}lya theorem (see e.g. \cite[Lecture~7]{Levin96}). If $q<p$, then we first use H\"older's inequality to the effect that
	\[\left|\int_{T-iT}^{-T-i T} f^q(z) \frac{e^{-q \pi i z}}{z}\,dz \right| \leq \left(e^{-p \pi T} \int_{-\infty}^\infty |f(x-iT)|^p\,dx\right)^{\frac{q}{p}} \left(\int_{-\infty}^\infty \frac{dx}{|x-iT|^r} \right)^{\frac{1}{r}},\]
	where $r = p/(p-q)>1$ so the final term vanishes as $T\to \infty$. The rest of the argument is similar. For the vertical part of the contour, we argue as follows. If $q \geq p$, then $f$ is in $PW^q$ and consequently $g(z) := f^q (z) e^{-q \pi i z}$ is in the Hardy space $H^1$ of the lower half-plane. Since integration along a vertical line is a Carleson measure for the latter space (see e.g.~\cite[Chapter~2]{Garnett07}), we conclude that there is a constant $C>0$ such that
	\[\left|\int_{T-i\varepsilon}^{T-iT} f^q(z) \frac{e^{-q \pi i z}}{z}\,dz\right| \leq \frac{1}{T} \int_0^\infty |g(T-iy)|\,dy \leq \frac{C}{T} \|g\|_1 = \frac{C}{T} \|f\|_q^q.\]
	If $q<p$, then we first use H\"older's inequality as above and argue similarly. We conclude from this that 
	\begin{equation}\label{eq:cauchy2} 
		1 = -\frac{1}{2\pi i } \int_{\Gamma_{\varepsilon}} f^q(z) \frac{e^{-q \pi i z}}{z}\,dz, 
	\end{equation}
	where $\Gamma_\varepsilon$ is the contour obtained from the line segments connecting
	\[-\infty-i \varepsilon, \qquad \frac{t_{-1}}{2}-i\varepsilon, \qquad \frac{t_{-1}}{2}+i\varepsilon, \qquad \frac{t_1}{2}+i\varepsilon, \qquad \frac{t_1}{2}-i\varepsilon, \qquad \infty-i\varepsilon.\]
	A similar argument involving a rectangular contour of integration oriented counter-clockwise in the upper half-plane shows that 
	\begin{equation}\label{eq:cauchy3} 
		0 = \frac{1}{2\pi i} \int_{-\infty+i\varepsilon}^{\infty+i\varepsilon} f^q(z) \frac{e^{q i \pi z}}{z}\,dz. 
	\end{equation}
	Adding \eqref{eq:cauchy2} and \eqref{eq:cauchy3}, we find that 
	\begin{align*}
		1 = \int_{t_{-1}/2+i\varepsilon}^{t_1/2+i\varepsilon} f^q(z) \frac{\sin{\pi q z}}{\pi z}\,dz &+ \left(\int_{-\infty+i\varepsilon}^{t_{-1}/2+i\varepsilon}+\int_{t_1/ 2+i\varepsilon}^{\infty+i \varepsilon} \right) f^q(z) \frac{e^{q i \pi z}}{2 \pi i z}\,dz \\
		&-\left(\int_{-\infty-i \varepsilon}^{t_{-1}/2-i \varepsilon}+\int_{t_1/2-i \varepsilon}^{\infty-i \varepsilon} \right)f^q(z) \frac{e^{-q i \pi z}}{2 \pi i z}\,dz + O(\varepsilon). 
	\end{align*}
	The remainder term $O(\varepsilon)$ accounts for the contribution from integration along the two segments $[t_{-1}/2-i\varepsilon,t_{-1}/2+i\varepsilon]$ and $[t_1/2+i\varepsilon,t_1/2-i\varepsilon]$ in \eqref{eq:cauchy2}. We now wish to take the limit $\varepsilon \to 0^+$. Let us define
	\[f_{\pm}^q(x) := \lim_{\varepsilon \to 0^+} f^q(x\pm i\varepsilon).\]
	If $t_{-1}<x<t_1$, then evidently $f_+^q(x) = f_-^q(x) = f^q(x) = |f(x)|^q$. This means that the total contribution from these $x$ as $\varepsilon\to0^+$ is
	\[\int_{t_{-1}}^{t_1} f^q(x)\, \frac{e^{i\pi qx}-e^{-i\pi qx}}{2\pi i x }\,dx = \int_{t_{-1}}^{t_1} |f(x)|^q\, \frac{\sin{q \pi x}}{\pi x}\,dx.\]
	If $t_n<x<t_{n+1}$ for some $n\geq1$, then $f_{\pm}^q(x) = |f(x)|^q e^{\mp i \pi q n}$ in view of \eqref{eq:lucrep}. Consequently, for these $x$ we get as $\varepsilon\to0^+$ the contribution
	\[\int_{t_n}^{t_{n+1}} f_+^q(x) \, \frac{e^{q i \pi x}}{2i \pi x}\,dx - \int_{t_{n}}^{t_{n+1}} f_-^q(x) \,\frac{e^{-q i \pi x}}{2i \pi x}\,dx= \int_{t_n}^{t_{n+1}} |f(x)|^q \,\frac{\sin{q\pi (x-n)}}{\pi x}\,dx.\]
	The case when $t_{-n-1}<x<t_{-n}$ for some $n\geq1$ is similar. 
\end{proof}
We will see in the next section that the zeros of any solution of \eqref{eq:extremalproblem} are real, so Theorem~\ref{thm:intrep} could possibly replace the power trick. Indeed, setting $q=p/2$ in the theorem, we could hope to proceed in a similar way as in the proof of Theorem~\ref{thm:PW4}, using the Cauchy--Schwarz inequality and accounting suitably for both the size and the sign of the kernel that $|f|^q$ is integrated against. Note that when $q=2$, the location of the zeros plays no role, and we are back to \eqref{eq:inspiration}.

To succeed with this approach and thus prove Theorem~\ref{thm:korevaar}, we need to have quite detailed information about the location of the zeros of the extremal functions. A good part of our paper will therefore study the zero sets of those functions, and we will be particularly interested in precise results in the range $2<p\leq 4$. The reader may notice that if it were known that the zeros lie in suitable neighborhoods of the nonzero integers, then Theorem~\ref{thm:korevaar} would be a trivial consequence of Theorem~\ref{thm:intrep}. While this kind of location is likely to be true, our results are quite far from verifying that. Fortunately, however, Theorem~\ref{thm:sep} is sufficiently strong, though barely so, for the above proof idea to work. 

The representation formula of Theorem~\ref{thm:intrep} was, as just described, established to prove Theorem~\ref{thm:korevaar} by a non-integer version of the power trick. We will see, however, that it will also be a convenient tool in the study of the asymptotic behavior of $\mathscr{C}_p$ when $p\to 0^+$. As to the asymptotics when $p\to \infty$, we observe that the integrals in Theorem~\ref{thm:intrep} become increasingly oscillating as $q=p/2$ gets large, in accordance with the ``phase shift'' in the order of $\mathscr{C}_p/p$ exhibited by Theorem~\ref{thm:Cpinfty}.

\section{Preliminaries} \label{sec:prelim} 
This section compiles some preliminary results, most of which are already known. In Section~\ref{subsec:ortho} we work in the full range $0<p<\infty$, while in Section~\ref{subsec:convex}, we restrict our attention to the convex range $1 \leq p < \infty$. Section~\ref{subsec:PS} is concerned with the Landau--Pollak--Slepian problem, where the relevant theory has only been developed for $p=2$.

\subsection{Zeros and associated orthogonality relations} \label{subsec:ortho} What follows is largely inspired by the variational arguments used in the proof of \cite[Theorem~2.6]{HB93}. Here and in what follows, we say that $f$ is a \emph{real} entire function if $f$ is real-valued on $\mathbb{R}$. Similarly, we say that a rational function is \emph{real} if it is the ratio of two real polynomials. 
\begin{lemma}\label{lem:variational} 
	Fix $0<p<\infty$ and let $\varphi$ be a solution of \eqref{eq:extremalproblem}. If $r=r_1/r_2$ is a real rational function such that $\deg(r_1) \leq \deg(r_2)$, $r(0)=0$, $\varphi r$ is an entire function, and $|\varphi|^pr$ is integrable, then
	\begin{equation} \label{eq:zerocond} \int_{-\infty}^\infty |\varphi(x)|^p r(x) \, dx = 0. \end{equation}
\end{lemma}
\begin{proof}
	For every real number $\varepsilon$, the function $\psi (z) := \varphi(z) + \varepsilon \varphi(z) r(z)$ belongs to $PW^p$ and satisfies $\psi(0)=1$. If we set
	\[F(\varepsilon) := \|\psi\|_p^p = \int_{-\infty}^\infty \left|\varphi(x)+ \varepsilon \varphi(x) r(x)\right|^p \,dx,\]
	then the assumption that $\varphi$ is a solution of the extremal problem implies that $F(\varepsilon) \geq F(0)$ for all $\varepsilon$. We will show that this can only hold if \eqref{eq:zerocond} is true. 
	
	Let $x_1,x_2,\ldots, x_k$ be the real poles of $r$ with multiplicities $m_1,m_2,\ldots,m_k$. Then the zero of $\varphi$ at $x_j$ has multiplicity $n_j\geq m_j$, and we also have
	\begin{equation} \label{eq:multip} 
		pn_j>m_j-1
	\end{equation} 
	since $|\varphi|^p r$ is assumed to be integrable. Fix a small number $\delta>0$ and set 
	\[ E_j\coloneqq \big\{x\,:\, \ |x-x_j|\leq \delta |\varepsilon|^{1/m_j}\big\}\]
	for $j=1,2,\ldots,k$, and let $E\coloneqq E_1 \cup E_2 \cup \cdots \cup E_k$. Then 
	\[ \int_{\mathbb R \setminus E} \left|\varphi(x)+ \varepsilon \varphi(x) r(x)\right|^p \,dx = \|\varphi\|_p^p+ \left( \int_{-\infty}^{\infty} |\varphi(x)|^pr(x) \, dx +O(\delta^m)\right)\varepsilon+o(|\varepsilon|).\]
	On the other hand, we find that 
	\[\int_{E_j} \left|\varphi(x)+ \varepsilon \varphi(x) r(x)\right|^p \,dx=O\big(|\varepsilon|^{p+\frac{1}{m_j}(p(n_j-m_j)+1)}\big)=O(|\varepsilon|^{1+\eta_j})\]
	with $\eta_j\coloneqq  (pn_j+1)/m_j-1>0$ by \eqref{eq:multip}. Fixing a sufficiently small $\delta$, we see that we may obtain
$F(\varepsilon)<F(0)$ for some small $\varepsilon$ should 
\[ \int_{-\infty}^\infty |\varphi(x)|^p r(x) \, dx \neq 0. \qedhere\]
\end{proof}

We have two applications of Lemma~\ref{lem:variational}. The first reads as follows.

\begin{lemma}\label{lem:zeros} 
	Fix $0<p<\infty$. The zeros of a solution of \eqref{eq:extremalproblem} are real. Moreover,
	\begin{enumerate}
		\item[(a)] if $p\geq 1/2$, then the zeros are simple;
		\item[(b)] if $p<1/2$, then the zeros have multiplicity at most $1/p$.
	\end{enumerate}
\end{lemma}
\begin{proof}
	Let $\varphi$ be a solution of \eqref{eq:extremalproblem}. It follows from Lemma~\ref{lem:real} that if $a+bi$ is a zero of $\varphi$ and $b \neq0$, then $a-bi$ is also a zero of $\varphi$. Pick $\varepsilon>0$ so small that
	\[(x-a)^2 + b^2 > (x-a)^2 +b^2 - \varepsilon x^2 > 0\]
	for every real number $x\neq0$. The function
	\[\psi(z) := \frac{(z-a)^2 +b^2-\varepsilon z^2}{(z-a)^2 + b^2} \varphi(z)\]
	is in $PW^p$ and satisfies $\psi(0)=1$. However, since $|\psi(x)|<|\varphi(x)|$ for every real number $x\neq0$ by our choice of $\varepsilon$, this contradicts the assumption that $\varphi$ is an extremal function. Consequently, all zeros of $\varphi$ must be real.
	Suppose that $p>1/2$ and that $t$ is a real zero of $\varphi$ of order $2$ or more. Since $\varphi(0)=1$, we can exclude the possibility that $t=0$. We apply Lemma~\ref{lem:variational} to the function $r(z) = \frac{z^2}{(z-t)^2}$, where $|\varphi|^p r$ is integrable since $p>1/2$, and we obtain that
	\[ \int_{-\infty}^\infty \frac{|\varphi(x)|^p x^2}{(x-t)^2}\, dx = 0, \]
	which yields a contradiction. The same proof works if $0<p<1/2$ and the multiplicity of the zero of $\varphi$ at $t$ is strictly larger than $1/p$ because in this case $|\varphi|^p r$ is again integrable.

	Next we deal with the case $p=1/2$. We can no longer apply Lemma~\ref{lem:variational} as above, since $|\varphi|^{1/2} r$ may not be integrable. Suppose nevertheless that $\varphi$ has a double zero at $t$ and introduce
	\[\varphi_\varepsilon(x) := \frac{\big(1-\frac{x}{t-\varepsilon}\big)\big(1-\frac{x}{t+\varepsilon}\big)}{(1-x/t)^2} \varphi(x)\]
	for $0<\varepsilon<1$. Our plan is to show that there exists a positive constant $c$ such that
	\[\|\varphi_{\varepsilon}\|_{1/2}^{1/2}=\|\varphi\|_{1/2}^{1/2}-c\varepsilon\log\frac{1}{\varepsilon}+O(\varepsilon)\]
	as $\varepsilon \to 0^+$, contradicting the assumption that $\varphi$ solves the extremal problem. We set $x=t+\xi$ and see that
	\begin{equation} \label{eq:perturb} 
		|\varphi_\varepsilon(t+\xi)|=\frac{|\xi^2-\varepsilon^2|}{\xi^2|1- \varepsilon^2/t^2|}|\varphi(t+\xi)|. 
	\end{equation}
	This entails that
	\[ \int_{|\xi|>1} |\varphi_{\varepsilon}(t+\xi)|^{1/2}\, dx= \int_{|\xi|>1} |\varphi (t+\xi)|^{1/2}\, dx+O(\varepsilon^2). \]
	Since $\varphi(t+\xi)=O(\xi^2)$ for small $\xi$, we also find that both 
	\[ \int_{|\xi|<\varepsilon} |\varphi_{\varepsilon}(t+\xi)|^{1/2} \, dx=O(\varepsilon^2) \qquad \text{and} \qquad  \int_{|\xi|<\varepsilon} |\varphi(t+\xi)|^{1/2}\, dx=O(\varepsilon^2). \]
	Since $\varphi(t+\xi)=a\xi^2 + O(\xi^3)$, it is however clear from \eqref{eq:perturb} that 
	\[\int_{\varepsilon\leq |\xi|\leq 1} |\varphi_{\varepsilon}(t+\xi)|^{1/2}\, dx= \int_{\varepsilon\leq |\xi|\leq 1} |\varphi(t+\xi)|^{1/2}\, dx - c \varepsilon \log \frac{1}{\varepsilon}+O(\varepsilon). \qedhere\]
\end{proof}

The second application of Lemma~\ref{lem:variational} is the following orthogonality relations between the zeros of the extremal functions, which will be the workhorse of Sections~\ref{sec:zeroset}, \ref{subsec:zeroset24}, and \ref{sec:epilogue}. 
\begin{lemma}\label{lem:orthogonality} 
	Fix $0<p<\infty$, and let $\varphi$ be a solution of \eqref{eq:extremalproblem}. 
	\begin{enumerate}
		\item[(a)] If $t$ is a zero of $\varphi$, then
		\[\int_{-\infty}^\infty \frac{|\varphi(x)|^p x}{x-t}\,dx = 0.\]
		\item[(b)] If $t$ and $s$ are distinct zeros of $\varphi$, then
		\[\int_{-\infty}^\infty \frac{|\varphi(x)|^p x^2}{(x-t)(x-s)}\,dx = 0.\]
	\end{enumerate}
\end{lemma}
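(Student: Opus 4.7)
The plan is to deduce both orthogonality relations directly from the variational identity in Lemma~\ref{lem:variational}, using Lemma~\ref{lem:zeros} to guarantee that the rational functions we build have the required regularity. Since $\varphi(0)=1$, any zero $t$ of $\varphi$ satisfies $t\neq 0$; by Lemma~\ref{lem:zeros}, $t$ is also real and simple.

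For part (a), I would test Lemma~\ref{lem:variational} with
\[ r(z) = \frac{z}{z-t}. \]
This is a real rational function because $t\in\mathbb{R}$, it vanishes at the origin, and it has $\deg(r_1)=\deg(r_2)=1$. Moreover, since $t$ is a simple zero of $\varphi$, the product $\varphi(z) r(z) = z\cdot\varphi(z)/(z-t)$ is entire. All hypotheses of Lemma~\ref{lem:variational} are met, so
\[ \int_{-\infty}^\infty |\varphi(x)|^p\,\frac{x}{x-t}\,dx = 0, \]
which is the claim.

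For part (b), I would similarly take
\[ r(z) = \frac{z^2}{(z-t)(z-s)}. \]
Again $r(0)=0$, $r$ is real since $t,s\in\mathbb{R}$, and $\deg(r_1)=\deg(r_2)=2$. The simplicity and distinctness of the zeros $t$ and $s$ (from Lemma~\ref{lem:zeros}) ensure that $\varphi(z)/((z-t)(z-s))$ is entire, hence $\varphi r$ is entire. Applying Lemma~\ref{lem:variational} yields
\[ \int_{-\infty}^\infty |\varphi(x)|^p\,\frac{x^2}{(x-t)(x-s)}\,dx = 0, \]
as required.

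There is no real obstacle in this argument; the only subtlety is verifying the three hypotheses of Lemma~\ref{lem:variational} (reality of the rational function, vanishing at the origin, and entireness of $\varphi r$), and these are built into the construction once we invoke Lemma~\ref{lem:zeros}. The choice of the factor $z$ in (a) and $z^2$ in (b) is forced: we need the matching degree condition in the numerator and denominator and the vanishing at $0$, so these are the minimal-degree test functions that do the job.
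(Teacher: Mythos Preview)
Your proof is correct and follows exactly the same approach as the paper: apply Lemma~\ref{lem:variational} with $r(z)=z/(z-t)$ for part~(a) and $r(z)=z^2/((z-t)(z-s))$ for part~(b), invoking Lemma~\ref{lem:zeros} to ensure the zeros are real and simple so that $\varphi r$ is entire. Your write-up is actually more explicit than the paper's one-line proof in checking the hypotheses of Lemma~\ref{lem:variational}.
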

\begin{proof}
	The zeros are real by Lemma~\ref{lem:zeros}. To prove (a), we apply Lemma~\ref{lem:variational} with $r(z) = \frac{z}{z-t}$, and to establish (b), we use Lemma~\ref{lem:variational} with $r(z) = \frac{z^2}{(z-t)(z-s)}$. 
\end{proof}

We will also use the following estimate, which is due to H\"ormander \cite[p.~26]{Hormander55}.
\begin{lemma}\label{lem:hormander}
	Suppose that $f$ is a real function in $PW^\infty$ and that $|f(\xi)|=\|f\|_\infty$. Then
	\[|f(x)| \geq \|f\|_\infty \cos{\pi (x-\xi)}\]
	for $|x-\xi|\leq 1/2$.
\end{lemma}

\subsection{Consequences of convexity} \label{subsec:convex} The convexity of $L^p(\mathbb{R})$ for $1 \leq p < \infty$ can be employed to obtain additional information about the solutions of \eqref{eq:extremalproblem} compared to what we obtained for the full range $0<p<\infty$ above. 

The strict convexity of $L^p(\mathbb{R})$ for $1<p<\infty$ implies that the extremal problem \eqref{eq:extremalproblem} has a unique solution. It is also known that \eqref{eq:extremalproblem} has a unique solution for $p=1$. We refer to \cite[Lemma~6.1]{LL15} for a clean proof of this fact. A slightly different proof can be found in \cite[Section~3.1]{CMS19}. The same argument can also be extracted from the proof of \cite[Theorem~2.6]{HB93}.

It is plain that if $\varphi$ is a solution of \eqref{eq:extremalproblem}, then so is $\psi(z) = \varphi(-z)$. Hence the uniqueness of the extremal function forces it to be even. Consequently, we obtain the following result from Lemma~\ref{lem:zeros}~(a) and the canonical product representation of functions in Paley--Wiener spaces (see e.g.~\cite[Lecture~17]{Levin96}). 
\begin{lemma}\label{lem:unique} 
	Fix $1 \leq p < \infty$. The extremal problem \eqref{eq:extremalproblem} has a unique solution of the form
	\[\varphi_p(z) = \prod_{n=1}^\infty \left(1-\frac{z^2}{t_n^2}\right),\]
	where $(t_n)_{n\geq1}$ is a strictly increasing sequence of positive numbers. 
\end{lemma}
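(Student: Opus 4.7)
The plan has three steps: first establish uniqueness of the extremal, then deduce the evenness of the extremal from the translation/reflection symmetries of the problem, and finally read off the asserted product form from the preceding zero-set results combined with standard Paley--Wiener factorization.

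For the uniqueness step, I would split on $p$. When $1<p<\infty$, the space $L^p(\mathbb{R})$ is strictly convex, so if $\varphi$ and $\psi$ are both extremals with $\varphi(0)=\psi(0)=1$ and $\|\varphi\|_p=\|\psi\|_p=\mathscr{C}_p^{-1/p}$, then the convex combination $\tfrac{1}{2}(\varphi+\psi)$ is admissible for \eqref{eq:extremalproblem} and has $L^p$-norm no larger than the extremal value; strict convexity forces $\varphi=\psi$ pointwise a.e., hence everywhere by continuity. For $p=1$ strict convexity fails, but uniqueness is already established in the literature: I would simply invoke \cite[Lemma~6.1]{LL15} (and note the alternative proofs in \cite[Section~3.1]{CMS19} and in the argument of \cite[Theorem~2.6]{HB93}).

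For the evenness step, the reflection $z\mapsto -z$ preserves both $PW^p$ and the $L^p$-norm, and preserves the value at the origin. Hence if $\varphi_p$ solves \eqref{eq:extremalproblem}, then so does $\psi(z):=\varphi_p(-z)$; uniqueness from the previous step forces $\varphi_p(z)=\varphi_p(-z)$, so $\varphi_p$ is even. By Lemma~\ref{lem:real}, $\varphi_p$ is also real on $\mathbb{R}$, hence a real even entire function.

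For the product form, Lemma~\ref{lem:zeros} tells us that every zero of $\varphi_p$ is real and simple. Since $\varphi_p$ is even and $\varphi_p(0)=1\ne 0$, its zeros come in pairs $\pm t_n$ with $t_n>0$, and by simplicity the $t_n$ are distinct. We may enumerate them as a strictly increasing sequence $(t_n)_{n\geq 1}$ of positive numbers. It remains to write $\varphi_p$ as the claimed product. This is where a tiny bit of care is needed: since $\varphi_p\in PW^p\subset PW^\infty$ is entire of exponential type at most $\pi$, the standard canonical product representation for Paley--Wiener functions \cite[Lecture~17]{Levin96}, applied as in \eqref{eq:lucrep}, gives
\[\varphi_p(z)=\lim_{T\to\infty}\prod_{|t_n|<T}\Bigl(1-\frac{z}{t_n}\Bigr),\]
and grouping each zero $t_n>0$ with its partner $-t_n$ collapses this to the absolutely convergent product
\[\varphi_p(z)=\prod_{n=1}^\infty\Bigl(1-\frac{z^2}{t_n^2}\Bigr),\]
as claimed.

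The only step that requires any genuine thought is the uniqueness at $p=1$, but since that is already in the literature the proof is essentially a bookkeeping exercise. The mild subtlety in assembling the product is that the ordering of the pairing $\{-t_n,t_n\}$ matters for conditionally convergent products, but evenness makes the grouping unambiguous.
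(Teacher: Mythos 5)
Your proof is correct and follows essentially the same approach as the paper: strict convexity of $L^p$ for $1<p<\infty$ plus the literature for $p=1$ gives uniqueness, the reflection symmetry $z\mapsto -z$ plus uniqueness gives evenness, and Lemma~\ref{lem:zeros} together with the canonical product representation for Paley--Wiener functions gives the displayed factorization. You simply spell out a few more of the details (the convex-combination argument, the grouping of $\pm t_n$) than the paper's terse two-sentence treatment.
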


In the range $1 \leq p < \infty$, we have in Lemma~\ref{lem:unique} adopted the notation $\varphi_p$ for the unique solution of \eqref{eq:extremalproblem}. Before stating our next result, we set 
\begin{equation}\label{eq:holderfriend} 
	\Phi(x) := \frac{|\varphi(x)|^{p-2}}{\|\varphi\|_p^p} \varphi(x) 
\end{equation}
for any nontrivial function $\varphi$ in $PW^p$. Another variational argument shows that the unique extremal function gives rise to a reproducing kernel formula for $PW^p$. In the case $p=1$, the same argument may be found in \cite[Section~3.2]{CMS19} or in \cite[Section~2]{HB93}.
\begin{theorem}\label{thm:prpk} 
	If $1 \leq p < \infty$, then $\varphi = \varphi_p$ if and only if 
	\begin{equation}\label{eq:prpk} 
		f(0) = \int_{-\infty}^\infty f(x)\, \Phi(x)\,dx 
	\end{equation}
	holds for every $f$ in $PW^p$. 
\end{theorem}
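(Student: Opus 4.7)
The plan is to handle the two implications separately, with the forward direction being the substantive one. For the ``if'' direction, assume the integral identity holds for every $f\in PW^p$ with $\varphi$ in place of $\varphi_p$. First test the identity with $f=\varphi$ itself: the right-hand side becomes $\|\varphi\|_p^p/\|\varphi\|_p^p = 1$, forcing $\varphi(0)=1$. Then for any $f\in PW^p$ with $f(0)=1$, apply H\"older's inequality with conjugate exponents $p$ and $q=p/(p-1)$ (taking $q=\infty$ when $p=1$). A short computation gives $\|\Phi\|_q = \|\varphi\|_p^{-1}$, so that
\[ 1 = |f(0)| \leq \|f\|_p\,\|\Phi\|_q = \frac{\|f\|_p}{\|\varphi\|_p}. \]
Thus $\|f\|_p \geq \|\varphi\|_p$, meaning $\varphi$ attains the infimum in \eqref{eq:extremalproblem}, and by the uniqueness stated just before Lemma~\ref{lem:unique} we conclude $\varphi = \varphi_p$.

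For the ``only if'' direction, the idea is a variational argument in the spirit of Lemma~\ref{lem:variational}, but with a non-rational perturbation. Given $f\in PW^p$, set $g(z) := f(z) - f(0)\varphi_p(z)$. Then $g\in PW^p$ and $g(0)=0$, so for every $\varepsilon\in\mathbb{R}$ the perturbation $\psi_\varepsilon := \varphi_p + \varepsilon g$ still lies in $PW^p$ and satisfies $\psi_\varepsilon(0)=1$. By extremality of $\varphi_p$, the function $F(\varepsilon) := \|\psi_\varepsilon\|_p^p$ has a minimum at $\varepsilon=0$. If we can differentiate under the integral sign, then
\[ 0 = F'(0) = p\int_{-\infty}^\infty |\varphi_p(x)|^{p-2}\varphi_p(x)\,\mre(g(x))\,dx = p\|\varphi_p\|_p^p \int_{-\infty}^\infty \mre(g(x))\,\Phi(x)\,dx. \]
Repeating with $ig$ in place of $g$ kills the imaginary part as well, yielding $\int g(x)\Phi(x)\,dx = 0$. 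Since $\int \varphi_p\,\Phi\,dx = 1$, this rearranges to the claimed formula $f(0) = \int f\,\Phi\,dx$.

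The main obstacle is justifying the differentiation under the integral sign, and this is where the two ranges $p>1$ and $p=1$ must be separated. For $1<p<\infty$ the map $t\mapsto |t|^p$ is of class $C^1$, and the pointwise derivative of $|\psi_\varepsilon(x)|^p$ is the quantity displayed above. Using the elementary convexity inequality $||a+\varepsilon b|^p - |a|^p| \leq C_p\,|\varepsilon|\,(|a|+|\varepsilon b|)^{p-1}|b|$ together with H\"older's inequality (and the fact that both $\varphi_p$ and $g$ lie in $PW^p$), one produces an integrable dominating function for the difference quotients $(|\psi_\varepsilon|^p-|\varphi_p|^p)/\varepsilon$ uniformly for $|\varepsilon|\leq 1$, which legitimizes dominated convergence. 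For $p=1$ the integrand $|\psi_\varepsilon(x)|$ fails to be differentiable in $\varepsilon$ precisely at those $x$ where $\psi_0(x)=\varphi_p(x)=0$; however, by Lemma~\ref{lem:zeros} the real zeros of $\varphi_p$ are simple and hence form a discrete (in particular null) set, so the pointwise derivative still equals $\operatorname{sgn}(\varphi_p(x))\,\mre(g(x))$ almost everywhere, and the simpler dominant $|g(x)|\in L^1$ suffices. Once the differentiation is justified, the remainder of the argument proceeds as above.
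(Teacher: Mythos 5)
Your proof is correct and follows essentially the same route as the paper's: H\"older's inequality plus uniqueness for the ``if'' direction, and a first-variation argument (with the perturbation $g = f - f(0)\varphi_p$ followed by taking real and imaginary parts) for the ``only if'' direction. The one place you add genuine content is the justification for differentiating under the integral sign, which the paper dismisses in one sentence; your dominated-convergence treatment (the elementary convexity estimate for $1<p<\infty$, and the observation that the zero set of $\varphi_p$ is null for $p=1$) is the detail the paper leaves to the reader, and it is correct.
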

\begin{proof}
	Suppose first that \eqref{eq:prpk} holds for every $f$ in $PW^p$. If $f(0)=1$, then H\"{o}lder's inequality applied to \eqref{eq:prpk} yields $\|\varphi\|_p \leq \|f\|_p$. This shows that $\varphi=\varphi_p$ by Lemma~\ref{lem:unique}. Conversely, suppose that $\varphi=\varphi_p$. Let $g$ be a real function in $PW^p$ which satisfies $g(0)=0$. Mimicking the proof of Lemma~\ref{lem:variational}, we begin by setting $\psi := \varphi_p + \varepsilon g$ and
	\[G(\varepsilon) := \|\psi\|_p^p = \int_{-\infty}^\infty |\varphi_p(x)+\varepsilon g(x)|^p\,dx.\]
	Since $1 \leq p < \infty$, we may differentiate under the integral sign without worrying about the zeros of $\varphi_p$ (which we know are simple by Lemma~\ref{lem:zeros}~(a)). The assumption that $\varphi_p$ is an extremal function shows that 
	\begin{equation}\label{eq:G0} 
		0 = G'(0) = \int_{-\infty}^\infty |\varphi_p(x)|^{p-2} \varphi_p(x) g(x)\,dx. 
	\end{equation}
	For $f$ in $PW^p$, we set $g(z):=f(z)-\varphi_p(z) f(0)$ and write
	\[\int_{-\infty}^\infty |\varphi_p(x)|^{p-2} \varphi_p(x) f(x)\,dx = \int_{-\infty}^\infty |\varphi_p(x)|^{p-2} \varphi_p(x) g(x)\,dx + \int_{-\infty}^\infty |\varphi_p(x)|^p f(0)\,dx .\]
	Since $\varphi_p(0)=1$ and since $\varphi_p$ is real by Lemma~\ref{lem:real}, we see that \eqref{eq:G0} implies \eqref{eq:prpk} under the additional assumption that $f$ is real. This additional assumption can be removed by the linearity of the integral and the fact that every $f$ in $PW^p$ can be written as $f = g+ih$ for functions $g$ and $h$ in $PW^p$ that are real. 
\end{proof}

We will discuss a further interesting consequence of convexity in Section~\ref{subsec:deBranges}, where the following simple consequence of Theorem~\ref{thm:prpk} will have a role to play.
\begin{corollary}\label{cor:x2infty} 
	If $1 \leq p < \infty$, then 
	\begin{equation}\label{eq:growth} 
		\int_{-\infty}^{\infty} x^2|\varphi_p(x)|^p\,dx =\infty. 
	\end{equation}
\end{corollary}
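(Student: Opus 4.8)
The plan is to argue by contradiction: suppose $\int_{-\infty}^\infty x^2 |\varphi_p(x)|^p\,dx < \infty$. The idea is to exploit the reproducing formula \eqref{eq:prpk} from Theorem~\ref{thm:prpk} together with a growth estimate on $PW^p$ functions to produce a contradiction. First I would observe that if the integral in \eqref{eq:growth} is finite, then $\Phi(x) = |\varphi_p(x)|^{p-2}\varphi_p(x)/\|\varphi_p\|_p^p$ satisfies $\int_{-\infty}^\infty x^2 |\Phi(x)|\,dx < \infty$, since $|\Phi(x)| = |\varphi_p(x)|^{p-1}/\|\varphi_p\|_p^p$ and $|\varphi_p|^{p-1} \le C(|\varphi_p|^{p} + 1)$ pointwise while $\varphi_p \in L^p$ — actually more carefully, $|\varphi_p|^{p-1} = |\varphi_p|^{p-1}$, and on the set where $|\varphi_p| \le 1$ we have $|\varphi_p|^{p-1} \le |\varphi_p|^{p-2}|\varphi_p|$... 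Let me instead use that $x^2|\Phi(x)| \le x^2|\varphi_p(x)|^p$ wherever $|\varphi_p(x)| \ge 1$ and that $\{x : |\varphi_p(x)| \ge 1, |x| \ge 1\}$ has finite measure (by $\varphi_p \in L^p$) with $|\varphi_p|$ bounded there, so the remaining contribution is controlled; hence $\int_{|x|\ge 1} x^2 |\Phi(x)|\,dx < \infty$, and the integral near the origin is harmless.

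Next I would choose a test function $f \in PW^p$ for which $\int f \Phi$ is forced to diverge or which violates \eqref{eq:prpk}. A natural candidate is $f(z) = \sinc(\pi z/k)^k \cdot (\text{polynomial in } z)$ arranged so that $f \in PW^p$ but $f(x)\Phi(x)$ is not integrable, or alternatively a function like $f(z) = z \cdot (\text{something in } PW^p)$: indeed, if $\int x^2|\Phi(x)|\,dx < \infty$ then $\int x\,\Phi(x)\,dx$ converges absolutely, and applying \eqref{eq:prpk} to the odd function $f(z) = z\, g(z)$ for suitable $g$, or more directly testing with $f_\lambda(z) = \sinc(\pi(z-\lambda))$ and letting $\lambda \to \infty$, should give the contradiction. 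The cleanest route: apply \eqref{eq:prpk} to $f(z) = (z/\lambda)\sin(\pi(z/\lambda))/\sin(\pi z /\lambda)$-type bumps, or simply note $1 = f_\lambda(0) = \int_{-\infty}^\infty \sinc(\pi(x-\lambda))\Phi(x)\,dx$ cannot hold for all $\lambda$ if $x\Phi(x) \in L^1$, because then the right side would tend to $0$ as $|\lambda| \to \infty$ (the functions $\sinc(\pi(\cdot - \lambda))$ converge weakly to $0$ against an $L^1$ weight decaying like $|x|^{-2}$, by dominated convergence after splitting off the region $|x - \lambda| \le \sqrt{|\lambda|}$ where $\sinc \le 1$ but $\Phi$ is small, and $|x-\lambda| > \sqrt{|\lambda|}$ where $\sinc$ decays).

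I expect the main obstacle to be making the "right side $\to 0$" step rigorous, i.e. showing $\int_{-\infty}^\infty \sinc(\pi(x-\lambda))\Phi(x)\,dx \to 0$ as $|\lambda| \to \infty$ under only the hypothesis $\int x^2 |\Phi(x)|\,dx < \infty$. The estimate $|\Phi(x)| = |\varphi_p(x)|^{p-1}/\|\varphi_p\|_p^p$ combined with $\int x^2|\varphi_p|^p < \infty$ does force $|\varphi_p(x)| \to 0$ along most of $\mathbb{R}$, but one must control the full integral; splitting at $|x-\lambda| \le \sqrt{|\lambda|}$ and using Cauchy--Schwarz on $\int_{|x-\lambda|\le\sqrt{|\lambda|}} |\Phi|$ against the finiteness of $\int x^2|\Phi|$ should work, since $x^2 \gtrsim \lambda^2$ there forces $\int_{|x-\lambda|\le\sqrt{|\lambda|}}|\Phi| \lesssim \lambda^{-2}$, while on $|x-\lambda| > \sqrt{|\lambda|}$ one has $|\sinc(\pi(x-\lambda))| \lesssim |\lambda|^{-1/2}$ and $\int|\Phi| < \infty$ (which itself follows from $\int x^2|\Phi| < \infty$ plus local integrability of $|\varphi_p|^{p-1}$). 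Once this is established, \eqref{eq:prpk} applied to $f = \sinc(\pi(\cdot-\lambda))$ gives $1 = o(1)$, the desired contradiction. A possible simplification, worth checking first, is to test instead with a fixed clever $f$ — e.g. $f(z)=\bigl(\sinc(\pi z/2)\bigr)^2$ scaled and shifted — so as to avoid the limiting argument entirely, but the weak-convergence argument above seems the most robust.
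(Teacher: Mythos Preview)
There are two genuine gaps.

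First, the step from $\int x^2|\varphi_p|^p < \infty$ to $\int x^2|\Phi| < \infty$ fails. Since $\varphi_p \in PW^p$ with $p < \infty$, we have $\varphi_p(x)\to 0$ as $|x|\to\infty$; thus $|\varphi_p(x)|<1$ for all large $|x|$, and there $|\Phi(x)|=|\varphi_p(x)|^{p-1}/\|\varphi_p\|_p^p \ge |\varphi_p(x)|^p/\|\varphi_p\|_p^p$. The inequality points the wrong way, so finiteness of $\int x^2|\varphi_p|^p$ does not yield finiteness of $\int x^2|\varphi_p|^{p-1}$. Your case split does not help: on $\{|\varphi_p|<1\}$ the only available bound is $|\Phi|\le C$, giving $x^2|\Phi|\le Cx^2$, which is not integrable. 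For $p=1$ this is starker still, since $|\Phi|$ is constant. Second, your test function satisfies $f_\lambda(0)=\sinc(\pi\lambda)$, not $1$; the reproducing formula then reads $\sinc(\pi\lambda)=\int\sinc(\pi(x-\lambda))\Phi(x)\,dx$, and both sides tend to $0$ as $|\lambda|\to\infty$, so no ``$1=o(1)$'' contradiction arises. (For $p=1$, moreover, $\sinc(\pi\cdot)\notin PW^1$.)

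The paper sidesteps the first obstacle by building $\varphi_p$ into the test function: it applies the reproducing formula to $f_\varepsilon(z)=z^2\sinc^2(\pi\varepsilon z)\,\varphi_p\big((1-2\varepsilon)z\big)$, which has exponential type $\pi$, lies in $L^p$, and vanishes at $0$. Then $f_\varepsilon(x)\Phi(x)$ involves $\varphi_p((1-2\varepsilon)x)\,|\varphi_p(x)|^{p-2}\varphi_p(x)$, which tends to $|\varphi_p(x)|^p$ as $\varepsilon\to 0$; passing to the limit under the hypothesis $\int x^2|\varphi_p|^p<\infty$ gives $0=\int x^2|\varphi_p|^p$, the desired absurdity. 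The key idea you are missing is to include a factor of $\varphi_p$ in the test function so that the integrand against $\Phi$ is dominated by $x^2|\varphi_p|^p$ rather than by $x^2|\varphi_p|^{p-1}$.
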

\begin{proof}
	We prove this by first noting that if $0<\varepsilon<1/2$, then
	\[ \int_{-\infty}^{\infty} x^2 \sinc^2(\pi \varepsilon x) \varphi_p((1-2\varepsilon)x) |\varphi_p(x)|^{p-2}\varphi_p(x)\, dx=0 \]
	by Theorem~\ref{thm:prpk}. If the integral on the left-hand side of \eqref{eq:growth} is finite, then we may let $\varepsilon \to 0^+$ and reach the absurd conclusion that this integral vanishes. 
\end{proof}
Let us sketch a different proof of Theorem~\ref{thm:prpk}. Consider the bounded linear functional $\mathscr{L}$ defined on $PW^p$ by $\mathscr{L}(f) = f(0)$. By the Hahn--Banach theorem and the Riesz representation theorem, we deduce that there is some $\Psi$ in $L^q(\mathbb{R})$ such that $\mathscr{L}(f)= \langle f, \Psi \rangle$. By Lemma~\ref{lem:unique}, we know that $\|\mathscr{L}\| = \|\varphi_p\|_p^{-1}$. It follows from this and H\"older's inequality that $\Psi = \Phi_p$ (in the case $p=1$, we use that $\varphi_1$ vanishes on a set of measure $0$). This means that the bounded linear functional generated by $\Phi_p$ on $L^p(\mathbb{R})$ when restricted to the subspace $PW^p$ yields the reproducing kernel formula at the origin. In Section~\ref{subsec:traces}, we will explore what traces of this duality remain in the range $p<1$.

In the special case $p=2$, we may use Theorem~\ref{thm:prpk} to solve the extremal problem \eqref{eq:extremalproblem} without knowledge of the reproducing kernel formula \eqref{eq:2rpk}. We will give yet another proof based on Theorem~\ref{thm:intrep} in Example~\ref{ex:2} below. Note that Corollary~\ref{cor:C2} in combination with \eqref{eq:holderfriend} and Theorem~\ref{thm:prpk} recovers the reproducing kernel formula \eqref{eq:2rpk}, which we have then established by a variational argument.
\begin{corollary}\label{cor:C2} 
	$\mathscr{C}_2=1$, and the unique extremal function is $\varphi_2(z)=\sinc{\pi z}$. 
\end{corollary}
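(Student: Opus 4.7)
The plan is to apply Theorem~\ref{thm:prpk} in the special case $p=2$ and then recover $\varphi_2$ by passing to Fourier variables. With $p=2$, the weight $\Phi$ in \eqref{eq:holderfriend} reduces to $\varphi_2/\|\varphi_2\|_2^2$, so Theorem~\ref{thm:prpk} says that $\varphi_2/\|\varphi_2\|_2^2$ is a reproducing kernel at the origin for $PW^2$. Note that $\varphi_2$ is real on $\mathbb{R}$ by Lemma~\ref{lem:real} and unique by Lemma~\ref{lem:unique}.

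I would then pass to the Fourier side via the Paley--Wiener characterization of $PW^2$ as the image under Fourier inversion of $L^2([-\pi,\pi])$. With the normalization $f(x) = \frac{1}{2\pi}\int_{-\pi}^{\pi} \hat{f}(\xi) e^{ix\xi}\,d\xi$, so that $f(0) = \frac{1}{2\pi}\int_{-\pi}^{\pi} \hat{f}(\xi)\,d\xi$ and $\|f\|_2^2 = \frac{1}{2\pi}\|\hat{f}\|_2^2$, Plancherel's identity converts the reproducing formula of Theorem~\ref{thm:prpk} into
\[ \frac{1}{2\pi}\int_{-\pi}^{\pi} \hat{f}(\xi)\,\frac{\overline{\hat{\varphi}_2(\xi)}}{\|\varphi_2\|_2^2}\,d\xi \;=\; \frac{1}{2\pi}\int_{-\pi}^{\pi} \hat{f}(\xi)\,d\xi. \]
Since this identity holds for every $\hat{f}$ in $L^2([-\pi,\pi])$, one concludes that $\hat{\varphi}_2(\xi) \equiv \|\varphi_2\|_2^2$ on $[-\pi,\pi]$.

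Finally, I would invert the Fourier transform to obtain $\varphi_2(z) = \|\varphi_2\|_2^2\,\sinc(\pi z)$, and then impose the normalization $\varphi_2(0)=1$ to conclude $\|\varphi_2\|_2^2 = 1$ and $\varphi_2(z) = \sinc(\pi z)$. Since $1/\mathscr{C}_2 = \|\varphi_2\|_2^2$ by \eqref{eq:extremalproblem}, this gives $\mathscr{C}_2 = 1$, while Lemma~\ref{lem:unique} already guarantees that $\varphi_2$ is the unique extremal. There is no real obstacle here: once Theorem~\ref{thm:prpk} is in hand, the argument is a short application of Plancherel together with some bookkeeping of Fourier normalizations.
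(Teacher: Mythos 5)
Your argument is correct, but it takes a genuinely different route from the paper's, and it is worth noting that the difference is deliberate on the paper's side. The paper's proof of this corollary never passes to the Fourier side at all: it fixes the smallest positive zero $t_1$, feeds specially chosen test functions of the form $g(z)\frac{z}{z-t_1}$ into Theorem~\ref{thm:prpk}, deduces the functional identity $z\varphi_2(z) = C(z-t_1)\varphi_2(z-t_1)$, and then identifies the zero set as $t_1\mathbb{Z}$ via the canonical product representation, so that $\varphi_2(z)=\sinc(\pi z/t_1)$ and finally $t_1=1$ from the exponential type. The point of doing it this way is stated just before the corollary: the paper wants to solve the extremal problem for $p=2$ \emph{without} invoking the reproducing formula \eqref{eq:2rpk}, so that \eqref{eq:2rpk} emerges as a consequence of the variational theorem rather than being presupposed. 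Your proof, in contrast, moves to the Fourier side: you combine Theorem~\ref{thm:prpk} with Plancherel and the Paley--Wiener duality $f(0)=\int_{-\pi}^\pi\widehat f\,\frac{d\xi}{2\pi}$, concluding $\widehat{\varphi}_2\equiv\|\varphi_2\|_2^2$ on $(-\pi,\pi)$ and hence $\varphi_2(z)=\|\varphi_2\|_2^2\sinc(\pi z)$. This is shorter and cleaner, but the Plancherel step is mathematically equivalent to \eqref{eq:2rpk}, so you are in effect proving the corollary \emph{assuming} the classical reproducing kernel of $PW^2$ rather than recovering it. Both arguments are valid; yours buys brevity at the cost of the self-containment the authors were after, while the paper's buys independence from \eqref{eq:2rpk} at the cost of the more intricate translation-of-zeros computation.
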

\begin{proof}
	By Lemma~\ref{lem:unique} we know that there is a strictly increasing sequence of positive numbers $(t_n)_{n\geq1}$ such that
	\[\varphi_2(z) = \prod_{n=1}^\infty \left(1-\frac{z^2}{t_n^2}\right).\]
	Suppose that $g$ is a function in $PW^2$ such that $g(t_1)=0$ and apply Theorem~\ref{thm:prpk} to the function $g(z) \frac{z}{z-t_1}$. Multiplying both sides by $\|\varphi_2\|_2^2$, we infer that 
	\begin{equation}\label{eq:phi2int} 
		0 = \int_{-\infty}^\infty g(x) \frac{x}{x-t_1} \, \varphi_2(x)\,dx. 
	\end{equation}
	Let $f$ be any function in $PW^2$ and define
	\[g(z) := f(z) - \frac{f(t_1)}{t_1 \varphi_2'(t_1)} \frac{x \varphi_2(x)}{x-t_1},\]
	where $\varphi_2'(t_1)\neq0$ by Lemma~\ref{lem:zeros}~(a). Since $\varphi_2(t_1)=0$, we find that $g(t_1)=0$. It therefore follows from \eqref{eq:phi2int} that
	\[0 = \int_{-\infty}^\infty f(x) \frac{x}{x-t_1} \, \varphi_2(x)\,dx - \frac{f(t_1)}{t_1 \varphi_2'(t_1)} \int_{-\infty}^\infty \left| \frac{x}{x-t_1}\varphi_2(x)\right|^2\,dx,\]
	which we rewrite as 
	\begin{equation}\label{eq:rpklambda1} 
		f(t_1) = \int_{-\infty}^\infty f(x) \frac{\psi_2(x)}{\|\psi_2\|_2^2}\,dx, 
	\end{equation}
	where $\psi_2(z) = \frac{1}{t_1 \varphi_2'(t_1)} \frac{z}{z-t_1} \varphi_2(z)$. Since $f$ is any function in $PW^2$, we conclude from Theorem~\ref{thm:prpk} and \eqref{eq:rpklambda1} that
	\[\frac{\psi_2(z)}{\|\psi_2\|_2^2} = \frac{\varphi_2(z-t_1)}{\|\varphi_2\|_2^2} \qquad \iff \qquad z \varphi_2(z) = C (z-t_1) \varphi_2(z-t_1),\]
	where $C$ is a nonzero constant. Since these two entire functions are equal, their zero set must be $(nt_1)_{n\in \mathbb{Z}}$ and consequently $\varphi_2(z) = \sinc{\pi z /t_1}$. Since the extremal function is of exponential type $\pi$ and $t_1>0$, we must have $t_1=1$. We conclude that $\mathscr{C}_2 = 1$ by the well known fact that the $L^2$ integral of $\sinc{\pi x}$ is $1$. 
\end{proof}

It is of interest to interpret Theorem~\ref{thm:prpk} in terms of the Fourier transform of $|\varphi|^{p-2}\varphi$. 
To this end, we begin by fixing the following normalization: 
\[\widehat{f}(\xi) = \int_{-\infty}^\infty f(x) \,e^{-ix\xi}\,dx \qquad \text{and} \qquad f(x) = \int_{-\infty}^\infty \widehat{f}(\xi) \,e^{i \xi x}\,\frac{d\xi}{2\pi}.\]
By the $L^p$ version of the Paley--Wiener theorem (see e.~g. \cite[Theorem~4]{NAB14}), we have another formula for the functional of point evaluation in $PW^p$, namely 
\begin{equation}\label{eq:PWpoint} 
	f(0) = \int_{-\pi}^\pi \widehat{f}(\xi)\,\frac{d\xi}{2\pi}. 
\end{equation}
If $1 \leq p \leq 2$, then it follows from the Hausdorff--Young inequality that in fact $\widehat{f}$ is a function in $L^q([-\pi,\pi])$. In the range $2<p<\infty$, we interpret $\widehat{f}$ in \eqref{eq:PWpoint} as a tempered distribution which is supported on $[-\pi,\pi]$. If we define $\Phi$ as in \eqref{eq:holderfriend}, a similar argument shows that $\widehat{\Phi}$ is in $L^p(\mathbb{R})$ if $\varphi$ is in $PW^p$ for $2 \leq p < \infty$, while in the range $1 \leq p < 2$ we can only interpret $\widehat{\Phi}$ as a tempered distribution. 

At any rate, we obtain the following result from Theorem~\ref{thm:prpk}, \eqref{eq:PWpoint}, and an approximation argument.
\begin{corollary}\label{cor:otherside} 
	If $1 \leq p < \infty$, then $\varphi = \varphi_p$ if and only if
	\[\widehat{\Phi} \chi_{(-\pi,\pi)} = \chi_{(-\pi,\pi)}\]
	in the sense of distributions and where $\Phi$ is as in \eqref{eq:holderfriend}.
\end{corollary}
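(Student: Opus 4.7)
The strategy is straightforward: the corollary is the Fourier-side translation of Theorem~\ref{thm:prpk}. That theorem characterizes $\varphi = \varphi_p$ by the reproducing-kernel identity $f(0) = \int_{-\infty}^\infty f(x)\Phi(x)\,dx$ for every $f \in PW^p$, while the Paley--Wiener formula \eqref{eq:PWpoint} writes $f(0) = (2\pi)^{-1}\int_{-\pi}^\pi \hat f(\xi)\,d\xi$. The plan is to convert the identification of these two expressions for $f(0)$ into a Fourier-side condition on $\hat\Phi$, and to recognise the resulting condition as the claimed distributional identity.

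The central calculation will be performed by testing against functions $f := \psi^\vee$, where $\psi \in C_c^\infty((-\pi,\pi))$ and $\psi^\vee(x) := \int \psi(\xi)\,e^{ix\xi}\,d\xi/(2\pi)$. Such an $f$ is Schwartz and of exponential type at most $\pi$, hence lies in $PW^p$ for every $1\le p<\infty$, and satisfies $\hat f = \psi$ and $f(0) = (2\pi)^{-1}\int \psi\,d\xi$. Since $\Phi \in L^q(\mathbb{R}) \subset \mathcal{S}'$, its Fourier transform $\hat\Phi$ is a well-defined tempered distribution. Using $\widehat{\check\psi}(x) = 2\pi\,\psi^\vee(x) = 2\pi f(x)$ together with the standard duality $\langle \hat\Phi,\phi\rangle = \langle \Phi,\hat\phi\rangle$ (valid because $\hat\phi \in \mathcal{S}$ whenever $\phi \in \mathcal{S}$), a short computation gives
\[\int_{-\infty}^\infty f(x)\Phi(x)\,dx = \frac{1}{2\pi}\,\langle \hat\Phi, \check\psi\rangle, \qquad \check\psi(\xi) := \psi(-\xi).\]
The reproducing-kernel identity evaluated at this $f$ is therefore equivalent to $\langle \hat\Phi,\check\psi\rangle = \int \check\psi\,d\xi$, and since $\psi \mapsto \check\psi$ is a bijection of $C_c^\infty((-\pi,\pi))$ onto itself, this is precisely the assertion $\widehat\Phi\,\chi_{(-\pi,\pi)} = \chi_{(-\pi,\pi)}$ in the sense of distributions.

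The ``only if'' direction then follows at once from Theorem~\ref{thm:prpk} by specialising $f = \psi^\vee$. For the converse, assuming the distributional identity, the same computation read in reverse yields $f(0) = \int f\,\Phi\,dx$ for every $f = \psi^\vee$ with $\psi \in C_c^\infty((-\pi,\pi))$. Both sides of this identity are continuous linear functionals on $PW^p$ (the left by \eqref{eq:pointeval}, the right by H\"older's inequality, since $\Phi \in L^q$), so it suffices to verify that $\{\psi^\vee : \psi \in C_c^\infty((-\pi,\pi))\}$ is dense in $PW^p$. This is the ``approximation argument'' alluded to in the statement: one dilates $f \in PW^p$ to $f_\lambda(z) := f(\lambda z)$ with $\lambda \to 1^-$ to shrink the Fourier support strictly inside $(-\pi,\pi)$ (invoking $L^p$-continuity of dilations), then multiplies by a Schwartz factor $G_\varepsilon$ satisfying $G_\varepsilon(0)=1$ and $\hat G_\varepsilon \in C_c^\infty((-\delta,\delta))$ with $\delta < \pi(1-\lambda)$, producing an approximant whose Fourier transform is smooth and compactly supported inside $(-\pi,\pi)$. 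A final appeal to Theorem~\ref{thm:prpk} then yields $\varphi = \varphi_p$. The only subtlety to watch is the distributional nature of $\hat\Phi$ when $p<2$, but this causes no real difficulty since $\hat\Phi$ is only ever paired against smooth, compactly supported test functions.
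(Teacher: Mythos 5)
Your proposal is correct and realizes precisely the proof sketched by the paper: Theorem~\ref{thm:prpk} supplies the reproducing identity, \eqref{eq:PWpoint} converts point evaluation to the Fourier side, and the dilation-plus-bump-function step is the ``approximation argument'' the paper alludes to. The computation with $\check\psi$ and the density of $\{\psi^\vee : \psi \in C_c^\infty((-\pi,\pi))\}$ in $PW^p$ are both carried out correctly.
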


By the convolution theorem for Fourier transforms, the condition of Corollary~\ref{cor:otherside} can be reformulated as $\Phi \ast \varphi_2 = \varphi_2$, where $\varphi_2(z) = \sinc{\pi z}$. In the range $1<p<\infty$, this reformulation makes sense pointwise. The fact that $\varphi_2$ is not in $PW^1$, means that for $p=1$, the reformulation must also be interpreted in the sense of distributions. This means that if $p=1$ in the following result, then $\|\varphi_2-\Phi \ast \varphi_2\|_\infty$ should be interpreted as the norm of the (possibly unbounded) linear functional on $\mathscr{S}\cap L^1(\mathbb{R})$, where $\mathscr{S}$ denotes the Schwartz class.
\begin{theorem}\label{thm:prehormander} 
	Fix $1 \leq p < \infty$. If $\varphi$ is a function in $PW^p$ with $\varphi(0)=1$, then 
	\begin{equation}\label{eq:prehormander} 
		\frac{1}{\|\varphi\|_p} \leq \mathscr{C}_p^{1/p} \leq \frac{1}{\|\varphi\|_p} + \|\varphi_2-\Phi \ast \varphi_2\|_q, 
	\end{equation}
	where $\Phi$ is as in \eqref{eq:holderfriend} and $p^{-1}+q^{-1}=1$. 
\end{theorem}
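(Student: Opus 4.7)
The lower bound is immediate from \eqref{eq:extremalproblem}: since $\varphi$ is an admissible competitor with $\varphi(0)=1$, we have $\|\varphi\|_p^p\geq 1/\mathscr{C}_p$, which rearranges to $\mathscr{C}_p^{1/p}\geq 1/\|\varphi\|_p$.

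For the upper bound, fix $f\in PW^p$ with $f\not\equiv 0$, and write
\[ f(0) = \int_{-\infty}^{\infty} f(x)\,\Phi(x)\,dx + \left(f(0) - \int_{-\infty}^{\infty} f(x)\,\Phi(x)\,dx\right). \]
The first term is controlled by H\"{o}lder's inequality: $\left|\int f\Phi\right|\leq\|f\|_p\|\Phi\|_q$, and a direct computation with the definition \eqref{eq:holderfriend} and the conjugate exponent identity $p-pq=-q$ gives $\|\Phi\|_q=1/\|\varphi\|_p$.

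The bulk of the argument is to recognize the remainder as a pairing against $\varphi_2-\Phi\ast\varphi_2$. Since $\widehat{\varphi_2}=\chi_{(-\pi,\pi)}$ and since $\widehat{f}$ is supported in $[-\pi,\pi]$, the Paley--Wiener point evaluation formula \eqref{eq:PWpoint} can be rewritten as $f(0)=\int f(x)\varphi_2(x)\,dx$. A parallel Parseval computation, again exploiting that $\widehat{f}$ lives on $[-\pi,\pi]$ so that convolving $\Phi$ with $\varphi_2$ cuts off its spectrum harmlessly, yields $\int f\Phi\,dx=\int f(x)(\Phi\ast\varphi_2)(x)\,dx$. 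Subtracting, the remainder equals
\[ \int_{-\infty}^{\infty} f(x)\bigl(\varphi_2(x)-(\Phi\ast\varphi_2)(x)\bigr)\,dx, \]
and H\"{o}lder's inequality delivers the claimed bound by $\|f\|_p\,\|\varphi_2-\Phi\ast\varphi_2\|_q$. Taking the supremum over $f$ with $\|f\|_p=1$ and using the characterization $\mathscr{C}_p^{1/p}=\sup\{|f(0)|/\|f\|_p\}$ completes the argument.

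The main subtlety lies in justifying the Fourier manipulations across the full range $1\leq p<\infty$. For $1<p<2$ the functions $\Phi$ and $\varphi_2-\Phi\ast\varphi_2$ are genuine $L^q$ functions and everything is classical. For $p\geq 2$, one must interpret $\widehat{f}$ as a tempered distribution supported in $[-\pi,\pi]$, and the pairing $\int f\cdot(\varphi_2-\Phi\ast\varphi_2)$ is well defined since $\varphi_2-\Phi\ast\varphi_2$ is in $PW^q$. At the endpoint $p=1$ (so $q=\infty$) the function $\varphi_2$ is not in $PW^1$, and one instead reads the identities as the action of the functional $f\mapsto\int f(\varphi_2-\Phi\ast\varphi_2)\,dx$ on the dense subclass $\mathscr{S}\cap L^1(\mathbb{R})$, whose operator norm is by convention what the right-hand side of \eqref{eq:prehormander} denotes in this case. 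An approximation argument, e.g. by convolving $f$ with a suitable mollifier or by testing against functions with compactly supported Fourier transform, extends the bound from this dense subclass to all of $PW^1$.
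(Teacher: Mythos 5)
Your proof is correct and is essentially the same argument as the paper's, just repackaged: where you decompose $f(0)=\int f\,\Phi + \int f(\varphi_2-\Phi\ast\varphi_2)$ and then apply H\"older to each summand, the paper introduces the single kernel $F:=\varphi_2+(\Phi-\Phi\ast\varphi_2)$, verifies $F\ast f=f$ (using $\varphi_2\ast f=f$ on $PW^2$, which is your Fourier identity $\int f\,\Phi=\int f\,(\Phi\ast\varphi_2)$ in convolution form), and then bounds $\|F\|_q\leq\|\Phi\|_q+\|\varphi_2-\Phi\ast\varphi_2\|_q$. The computation $\|\Phi\|_q=1/\|\varphi\|_p$ and the handling of the $p=1$ endpoint via the operator-norm reading of the right-hand side also match the paper's conventions.
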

\begin{proof}
	The lower bound in \eqref{eq:prehormander} is trivial. For the upper bound, consider the function $F := \varphi_2 + (\Phi-\Phi\ast \varphi_2)$ and let $f$ be a function in $\mathscr{S} \cap PW^2$. Computing using \eqref{eq:2rpk}, we find that $F \ast f = f$. By H\"older's inequality and density, it follows that
	\[\mathscr{C}_p^{1/p} \leq \|F\|_q \leq \|\Phi\|_q + \|\varphi_2-\Phi\ast\varphi_2\|_q = \frac{1}{\|\varphi\|_p} + \|\varphi_2-\Phi\ast\varphi_2\|_q. \qedhere\]
\end{proof}
The above result should be compared to the second part of \cite[Theorem~2.6]{HB93}. Indeed, if we add the assumption that
the Fourier transform of $\Phi$ restricted to $(-\pi,\pi)$ be in $L^p(-\pi,\pi)$, then we get from \eqref{eq:prehormander} to  
\begin{equation} \label{eq:hormHY} \frac{1}{\|\varphi\|_p} \leq \mathscr{C}_p^{1/p} \leq \frac{1}{\|\varphi\|_p} + \left(\int_{-\pi}^\pi \big|1-\widehat{\Phi}(\xi)\big|^p\,\frac{d\xi}{2\pi}\right)^{\frac{1}{p}}\end{equation}
by the Hausdorff--Young inequality when $1 \leq p \leq 2$. Weaker than \eqref{eq:prehormander}, \eqref{eq:hormHY} is more manageable from a numerical point view as we integrate over a finite interval. Since $|\Phi|$ is constant and $\Phi$ changes sign at the zeros of $\varphi$ when $p=1$, we may compute $\Phi$ and thus $\widehat{\Phi}$ once we know the zeros of a given function $\varphi$. This fact enabled H\"{o}rmander and Bernhardsson to use \eqref{eq:hormHY} rather than \eqref{eq:prehormander} to bound $\mathscr{C}_1$ from above.

If $p$ is a positive even integer, then we can compute $\widehat{\Phi}$ by the convolution theorem (see Section~\ref{subsec:conv}). However, in this case we cannot use the Hausdorff--Young inequality and Theorem~\ref{thm:prehormander} to obtain an upper bound for $\mathscr{C}_p$.

\subsection{Eigenvalues of prolate spheroidal wave functions} \label{subsec:PS} In a series of seminal papers, Landau, Pollak, and Slepian studied in the early 1960s the eigenvectors and eigenvalues of the time--frequency concentration operator. We will only need certain specific values of these eigenvalues, but we would like to place our application in context by recapitulating the basic results of \cite{SP61, LP61, LP62}. 

The concentration operator is defined on $PW^2_\Omega$, which consists of the $L^2$ functions with Fourier transform supported on $[-\Omega,\Omega]$. We define two operators, the band-limiting operator $\mathbf{B}_\Omega$ which is the orthogonal projection from $L^2(\mathbb{R})$ to $PW^2_\Omega$ and the time-limiting operator $\mathbf{D}_T$ which is the orthogonal projection of functions in $L^2(\mathbb{R})$ to the subspace of functions supported in $[-T/2,T/2]$. Finally, the concentration operator $\mathbf{C}_{\Omega, T} \colon PW^2_\Omega \to PW^2_\Omega$ is the composition $\mathbf{C}_{\Omega, T}:=\mathbf{B}_{\Omega} \mathbf{D}_T$. 

Landau, Pollak, and Slepian studied the eigenfunctions and eigenvalues of $\mathbf{C}_{\Omega, T}$ with a view to applications in signal processing. The concentration operator $\mathbf{C}_{\Omega, T}$ is compact and self-adjoint, and by the spectral theorem, $PW^2_\Omega$ has an orthonormal basis of eigenfunctions with real eigenvalues. Clearly, since $\mathbf{C}_{\Omega, T}$ is a composition of two projection operators, all its eigenvalues lie between $0$ and $1$. The eigenvalues are simple and denoted by $\lambda_0, \lambda_1, \ldots$ and ordered by descending magnitude so that $1>\lambda_0 > \lambda_1> \cdots$. What is important for us and in many applications, is that the largest eigenvalue $\lambda_0$ satisfies the extremal property 
\begin{equation} \label{eq:exLPS}
	\lambda_0 = \sup_{f\in PW^2_\Omega} \left\{\int_{-T/2}^{T/2} |f(x)|^2\, dx\,:\, \int_{-\infty}^\infty |f(x)|^2\,dx = 1\right\}
\end{equation}
and the unimodular scalar multiples of the corresponding eigenfunction $\psi_0$ are the only functions that achieve this supremum. The eigenvalues depend only on the product of $T$ and $\Omega$, and the same holds for the eigenfunctions, up to a trivial scaling. It is therefore customary to introduce the parameter $c := \Omega T/2$ and write $\lambda_j = \lambda_j(c)$ and $\psi_j(t) = \psi_j(c,t)$ for the eigenfunctions associated with $\Omega=c$ and $T=2$.
 
Specifically, we will apply this as follows: If $f$ is a function in $PW^2 = PW^2_\pi$, then
\[\int_{I} |f(x)|^2 \,dx \leq \lambda_0\left(\frac{\pi |I|}{2}\right) \|f\|_2^2.\]
It is therefore important for us to have good estimates for $\lambda_0$. A remarkable fact proved by Landau, Pollak, and Slepian is that the concentration operator $\mathbf{C}_{\Omega, T}$ commutes with the differential operator 
\[\mathbf{L}(x) := \frac{d}{dx}(1 -x^2)\frac{d}{dx} -c x^2.\]
This ``lucky accident'' shows that the eigenfunctions of the concentration operator are also eigenfunctions of the corresponding Sturm--Liouville operator associated with $\mathbf{L}$. This helps identifying them because this equation arises in the study of the three-dimensional wave equation in prolate spheroidal coordinates, and there is an abundant literature on them (see e.g.~\cite{ORX13}). The eigenfunctions are the prolate spheroidal angular functions, and the corresponding eigenvalues are given by the prolate spheroidal radial function $R_{0,n}$. In particular,
\[\lambda_0(c) = \frac{2\pi}{c} R_{0,0}^2(c,1).\]
The prolate spheroidal functions $R_{0,n}$ can be computed by an expansion in Bessel functions. We have used a Fortran implementation of the algorithm by Zhang and Jin in \cite[p.~536]{ZhangJin} to compute the values in Table~\ref{table:lambda0} that we require in our analysis of $\mathscr{C}_p$. We have also verified the table using Mathematica.

\begin{table}
	\begin{center}
		\begin{tabular}
			{c|c} $c$ & $\lambda_0(c)$ \\
			\hline $2\pi/3$ & $0.896107188059$ \\
			$2$ & $0.880559922317$ \\
			$3\pi/5$ & $0.858990907475$ \\
			$1.080420803046 \pi/4$ & $0.500000000028$ 
		\end{tabular}
	\end{center}
	\caption{Principal eigenvalue of $\mathbf{C}_{c,2}$.}
	\label{table:lambda0} 
\end{table}

\section{Geometric properties of the zero sets of extremal functions} \label{sec:zeroset} 
The purpose of this section is to establish Theorem~\ref{thm:zeroset}. We will enumerate the zero set $\mathscr{Z}(\varphi)=(t_n)_{n\in\mathbb{Z}\setminus\{0\}}$ according to the convention that
\[ \cdots < t_{-2} < t_{-1} < 0 < t_1 < t_2 < \cdots .\]
The proof splits naturally into two parts that will be treated in the subsequent subsections.

\subsection{Proof of Theorem~\ref{thm:zeroset}~(a) and~(b)} Recall that the first assertion, namely that the zeros of any solution of \eqref{eq:extremalproblem} are real, has already been established in Lemma~\ref{lem:zeros} above. The parts (a) and (b) have similar proofs, but the latter is somewhat more refined (and relies on the former). 

\begin{proof}[Proof of Theorem~\ref{thm:zeroset} (a)]
	By symmetry, it is sufficient to consider zeros $t$ satisfying $t\geq1$. Consider $k \geq \max(2,4/p)$ consecutive zeros repeated according to their 
multiplicity, say
	\[1 \leq t_n \leq t_{n+1}\leq \cdots \leq t_{n+k-1}.\]
	We may assume that $t_{n+k-1} - t_n \leq 1/3$, because otherwise we would be done and $t_{n+k-1} > t_n$ because, by Lemma~\ref{lem:zeros}~(a) and (b), the multiplicity of $t_n$ is bounded above by $1/p$. Using Lemma~\ref{lem:orthogonality}~(b) with $t = t_n$ and $s=t_{n+k-1}$, we find that 
	\begin{equation}\label{eq:interval} 
		\int_{-\infty}^\infty \frac{|\varphi(x)|^p x^2}{(x-t_n)(x-t_{n+k-1})}\,dx = 0. 
	\end{equation}
	Setting
	\[\psi(x) := \varphi(x) \prod_{j=0}^{k-1} \frac{1}{x-t_{n+j}} \quad \text{and} \quad \Psi(x) := \frac{|\psi(x)|^p \, x^2}{|(x-t_n)(x-t_{n+k-1})|}\prod_{j=0}^{k-1} |x-t_{n+j}|^p,\]
	we rewrite \eqref{eq:interval} as 
	\begin{equation}\label{eq:equ} 
		\int_{\mathbb{R}\setminus I} \Psi(x)\,dx = \int_I \Psi(x)\,dx, 
	\end{equation}
	where $I:=[t_n,t_{n+k-1}]$. Estimating the right-hand side of \eqref{eq:equ} crudely, we find that 
	\begin{align}\label{eq:11:30} 
		\begin{split}
			\int_I \Psi(x)\,dx & \leq \|\psi\|_\infty^p t_{n+k-1}^2 |I|^{p(k-2)} \int_I (x-t_n)^{p-1} (t_{n+k-1}-x)^{p-1}\,dx \\
			& \qquad\qquad\qquad\qquad\qquad\qquad\qquad= \|\psi\|_\infty^p t_{n+k-1}^2 |I|^{pk-1} \B(p,p). 
		\end{split}
	\end{align}
	On the other hand, if $\xi$ is a point at which $|\psi|$ attains its maximum, then
	\[ |\psi(x)| \geq\frac{\| \psi \|_{\infty}}{2},\]
	when $|x-\xi|\leq 1/3$ by Lemma~\ref{lem:hormander}. Hence, since $|I|\leq 1/3$, there exists an interval of length $1/12$ in which all points $x$ are at distance at least $1/12$ from $I$ and $|\psi(x)|\geq\| \psi \|_\infty/2$. Restricting the integration over $\mathbb R \setminus I$ to this interval and using that $kp-4\geq0$, we find by an elementary argument that 
	\begin{equation}\label{eq:otherside} 
		\int_{\mathbb{R}\setminus I} \Psi(x)\,dx\geq\frac{2^{-p}}{12} C^{kp-2} t_{n+k-1}^2 \| \psi \|_\infty^p 
	\end{equation}
	for an absolute constant $C>0$. Now plugging \eqref{eq:11:30} and \eqref{eq:otherside} into \eqref{eq:equ}, we find that $|I| \geq \delta$ for an absolute constant $\delta>0$. This leads to the conclusion that $\mathscr{Z}(\varphi)$ can be expressed as the union of $\lfloor 4/p \rfloor+2$ or fewer uniformly discrete sets. 
\end{proof}

In the proof of Theorem~\ref{thm:zeroset}~(a), we used Lemma~\ref{lem:hormander} to extract information about the behavior of certain functions in $PW^\infty$ near their global maxima. The following result allows us to say something about the local behavior of $\psi$ under an assumption that any extremal function will satisfy in light of Theorem~\ref{thm:zeroset}~(a). The basic idea is to consider the local symmetrization of $\psi$ in a point. The result will always be employed in conjunction with the arithmetic--geometric mean inequality.
\begin{lemma}\label{lem:local} 
	Suppose that $\psi$ is in $PW^p$ and that $\mathscr{Z}(\psi)$ is a finite union of uniformly discrete subsets of $\mathbb R$. Let $\xi$ be point in $\mathbb{R}\setminus \mathscr{Z}(\psi)$ and let $0<\eta\leq \dist(\xi,\mathscr{Z}(\psi))$. Then, for $|y|\leq \eta/2$ we have
	\[|\psi(\xi-y)\psi(\xi+y)| \geq \frac{3}{4}\left(\frac{2}{\pi}\right)^{2m} |\psi(\xi)|^2,\]
	where $m$ denotes the maximal number of zeros that $\psi$ has in an interval of length $\eta$. 
\end{lemma}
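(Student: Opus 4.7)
The plan is to symmetrize $\psi$ about $\xi$ and exploit the resulting even function's factorization. I would define $G(y) := \psi(\xi+y)\psi(\xi-y)$, which is entire and even in $y$. Since $\mathscr{Z}(\psi) \subset \mathbb{R}$ (by hypothesis) and $\eta \leq \dist(\xi, \mathscr{Z}(\psi))$, the function $G$ has no zeros in the disk $|y| < \eta$, and its zero set in general is $\{\pm(t - \xi) : t \in \mathscr{Z}(\psi)\}$.

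Applying the canonical product representation for Paley--Wiener functions (as in \eqref{eq:lucrep}) to the translate $\psi(\cdot + \xi)$, which lies in $PW^p$ and does not vanish at the origin, I would obtain the absolutely convergent factorization
\[
\frac{G(y)}{\psi(\xi)^2} = \prod_n \left(1 - \frac{y^2}{(t_n - \xi)^2}\right),
\]
where $(t_n)$ enumerates $\mathscr{Z}(\psi)$. Setting $s_n := t_n - \xi$, each factor is real and strictly positive whenever $|y| \leq \eta/2$, since then $y^2/s_n^2 \leq 1/4$.

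Next I would bound the product from below using the zero-counting assumption. Grouping the $s_n$ according to which dyadic-type interval they fall in: for each integer $k \geq 1$, at most $m$ of the $s_n$ lie in $[k\eta, (k+1)\eta)$ and at most $m$ in $[-(k+1)\eta, -k\eta)$, by the definition of $m$. For any such $s_n$ we have $y^2/s_n^2 \leq 1/(4k^2)$, so the corresponding factor is at least $1 - 1/(4k^2)$. Therefore
\[
\prod_n \left(1 - \frac{y^2}{s_n^2}\right) \geq \prod_{k=1}^\infty \left(1 - \frac{1}{4k^2}\right)^{2m} = \left(\frac{\sin(\pi/2)}{\pi/2}\right)^{2m} = \left(\frac{2}{\pi}\right)^{2m},
\]
by Euler's product formula for $\sin(\pi z)/(\pi z)$ evaluated at $z = 1/2$. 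Since $(2/\pi)^{2m} \geq \frac{3}{4}(2/\pi)^{2m}$, multiplying through by $|\psi(\xi)|^2$ gives the stated bound (with room to spare; the factor $3/4$ seems to arise naturally if one instead isolates the closest layer of zeros, using $1 - y^2/\eta^2 \geq 3/4$, and handles only the tail $k\geq 2$ with the sine product).

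The main obstacle is justifying the Hadamard factorization in exactly this form, without extra exponential convergence factors. This relies on the fact that for $\psi\in PW^p$ the zero counting function grows linearly and the zeros admit the symmetric principal-value product representation recalled in \eqref{eq:lucrep}; after applying this to the translated function $\psi(\cdot+\xi)/\psi(\xi)$ and multiplying the series for $\psi(\xi+y)/\psi(\xi)$ and $\psi(\xi-y)/\psi(\xi)$, the cross-terms cancel to yield the even product above, and absolute convergence is then automatic from $\sum_n 1/(t_n-\xi)^2 < \infty$.
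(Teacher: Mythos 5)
Your proof follows the same route as the paper's: symmetrize to get the even product over $\mathscr{Z}(\psi)$, sort the zeros by distance from $\xi$ into layers of width $\eta$ each holding at most $m$ of them, and recognize the Euler product $\prod_{k\geq 1}\bigl(1-\tfrac{1}{4k^2}\bigr)=2/\pi$. Your attention to why the symmetric principal-value product \eqref{eq:lucrep} for the translate $\psi(\cdot+\xi)/\psi(\xi)$ multiplies out to an absolutely convergent even product (the paired cross-terms cancel and $\sum_n (t_n-\xi)^{-2}<\infty$) is a good instinct, and more explicit than the paper's terse ``Using the canonical factorization.''

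One bookkeeping slip keeps your stated intermediate bound $\prod_n\bigl(1-y^2/s_n^2\bigr)\geq(2/\pi)^{2m}$ from quite following as written. Your negative intervals $[-(k+1)\eta,-k\eta)$ for $k\geq 1$ collectively cover $(-\infty,-\eta)$ and miss the endpoint $s=-\eta$, so a zero of $\psi$ at $t=\xi-\eta$ slips through the grouping entirely. This is exactly what forces the paper's extra factor $3/4$: there $I_{-1}=\mathscr{Z}(\psi)\cap[\xi-\eta,\xi)$ can contain the single boundary point $\xi-\eta$, handled separately by the factor $1-y^2/\eta^2\geq 3/4$. Your parenthetical about ``isolating the closest layer'' is essentially this observation. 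The cleanest repair is to orient the negative intervals the other way, taking $(-(k+1)\eta,-k\eta]$: then $s\in(-(k+1)\eta,-k\eta]$ still gives $|s|\geq k\eta$, each interval still has length $\eta$ (hence at most $m$ zeros), and every $s_n$ with $|s_n|\geq\eta$ is now covered. With that fix your bound $(2/\pi)^{2m}$ is correct and strictly stronger than what the lemma claims; the $3/4$ in the statement reflects the paper's uniformly half-open convention for the $I_j$ rather than anything essential.
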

\begin{proof}
	Using the canonical factorization of $\psi$, we find that 
	\begin{equation*}
		\frac{\psi(\xi-y)\psi(\xi+y)}{\psi^2(\xi)} = \prod_{t \in \mathscr{Z}(\psi)} \left(1-\frac{y^2}{(\xi-t)^2}\right). 
	\end{equation*}
	For each $j$ in $ \mathbb{Z}$, we set $I_j:=\mathscr{Z}(\psi)\cap [\xi+\eta j, \xi+\eta(j+1))$. Note that $I_{0}=\emptyset$, $I_{-1}\subseteq \{\xi-\eta\}$, and each set $I_j$ contains at most $m$ points. Therefore, if $|y| \leq \eta$, then we get 
	\begin{multline*}
		\prod_{t \in \mathscr{Z}(\psi)} \left(1-\frac{y^2}{(\xi-t)^2}\right)=\prod_{j\in \mathbb{Z}\setminus \{0\}}\prod_{\,\,t\in I_j} \left(1-\frac{y^2}{(\xi-t)^2}\right) \\
		\geq \left(1-\frac{y^2}{\eta^2}\right) \prod_{j \in \mathbb{Z} \setminus \{0\}} \left(1-\frac{y^2}{\eta^2 j^2} \right)^{m}. 
	\end{multline*}
	Restricting further to $|y| \leq {\eta}/{2}$, we obtain the asserted estimate. 
\end{proof}

Lemma~\ref{lem:local} will find several applications in the proof of Theorem~\ref{thm:zeroset}~(c). In the proof of Theorem~\ref{thm:zeroset}~(b), we will use a slightly different version of the same argument. The same idea will also be used in the proof of Theorem~\ref{thm:sep}~(b) in Section~\ref{sec:2p4}.

When we estimated the right-hand side of \eqref{eq:equ} in the proof of Theorem~\ref{thm:zeroset}~(a), we got the factor $|I|^{kp-1}$. For the conclusion at the end of the argument to hold, we must have $kp-1 \geq 0$. To establish that the zero set is uniformly discrete, we must be able to take $k=2$ here. This is what forces us to require $p \geq 1/2$ in Theorem~\ref{thm:zeroset}~(b).

\begin{proof}[Proof of Theorem~\ref{thm:zeroset}~(b)] 
	Our plan is to assume that the zero set fails to be uniformly discrete and show that this leads to a contradiction. To this end, let $(t_{n_j})_{j\geq1}$ be a sequence of zeros of $\varphi_p$ such that $\delta_j:=t_{n_j+1}-t_{n_j} \to 0$. We may assume without loss of generality that $t_{n_j}\to \infty$ and also that $\delta_j\leq 1/4$ for all $j\geq1$. Since $\varphi$ has exponential type $\pi$, we know that $t_n/n \to 1$ as $n\to\infty$. We may therefore also assume that both 
	\begin{equation}\label{eq:mono} 
		t_{n_j}-t_{n_j-1} \geq \delta_j \qquad \text{and} \qquad t_{n_j+2}-t_{n_j+1}\geq \delta_j. 
	\end{equation}
	We will again invoke Lemma~\ref{lem:orthogonality}~(b), now with $t=t_{n_{j}}$ and $s=t_{n_j+1}$ so that $k=2$ in the argument used in the proof of Theorem~\ref{thm:zeroset}~(a). We set
	\[\psi_j(x) := \frac{\varphi(x)}{(x-t_{n_j})(x-t_{n_j+1})} \qquad \text{and} \qquad \Psi_j(x) := \frac{|\psi_j(x)|^p x^2}{|x-t_{n_j}|^{1-p} |x-t_{n_j+1}|^{1-p}},\]
	to find that 
	\begin{equation}\label{eq:eqpsik} 
		\int_{\mathbb{R}\setminus I_j} \Psi_j(x)\,dx = \int_{I_j} \Psi_j(x)\,dx, 
	\end{equation}
	where $I_j := [t_{n_j},t_{n_j+1}]$. Let $\xi_j$ a point where $|\psi_j|$ takes its maximum on $I_j$.
	
	The right-hand side of \eqref{eq:eqpsik} is estimated as before, so that we get 
	\begin{equation}\label{eq:sepup1} 
		\int_{I_j} \Psi_j(x)\,dx \leq t_{n_j+1}^2 \delta_j^{2p-1} \B(p,p) |\psi_j(\xi_j)|^p. 
	\end{equation}
	
	To estimate the left-hand side of \eqref{eq:eqpsik}, we first use the arithmetic--geometric mean inequality to see that
	\[\Psi_j(\xi_j-y)+\Psi_j(\xi_j+y) \geq 2 \sqrt{\Psi_j(\xi_j-y)\Psi_j(\xi_j+y)}.\]
	If we restrict our attention to $2\delta_j \leq y \leq 1$, then
	\[\sqrt{\Psi_j(\xi_j-y)\Psi_j(\xi_j+y)} \geq (t_{n_j}^2-1) \frac{y^{2(p-1)}}{2^p} |\psi_j(\xi_j)|^p |g_j(y)|^{p/2},\]
	for
	\[g_j(y):=\frac{\psi_j(\xi_j-y)\psi_j(\xi_j+y)}{\psi_j^2(\xi_j)} = \prod_{t \in \mathscr{Z}(\psi_j)} \left(1-\frac{y^2}{(\xi_j-t)^2}\right).\]
	It follows that 
	\begin{equation}\label{eq:lowerdis} 
		\int_{\mathbb{R}\setminus I_j} \Psi_j(x)\,dx \geq 2^{1-p} (t_{n_j}^2-1) |\psi_j(\xi_j)|^p \int_{2\delta_j}^1 y^{2(p-1)} |g_j(y)|^{p/2}\,dy. 
	\end{equation}
	We separate out the zeros of $\varphi$ that are close to our pair of zeros by defining 
	\begin{equation}\label{eq:Tj} 
		T_j := \left\{t \in \mathscr{Z}(\varphi) \setminus \{t_{n_j},t_{n_j+1}\} :\, \dist(t,\{t_{n_j},t_{n_j+1}\}) \leq 2 \right\}. 
	\end{equation}
	It follows from Theorem~\ref{thm:zeroset} (a) that there is a positive integer $d$ such that $T_j$ has at most $d$ elements for every $j \geq1$. Mimicking the proof of Lemma~\ref{lem:local}, we find that there is a constant $C$ which only depends on $\varphi$ such that
	\[|g_j(y)|\geq C \prod_{t \in T_j} \left|1-\frac{y^2}{(\xi_j-t)^2}\right|\]
	for $0\leq y \leq 1$. For every $t$ in $T_j$, we have that $\delta_j \leq |\xi_j-t| \leq 2+\delta_j$ by, respectively, \eqref{eq:mono} and \eqref{eq:Tj}. Set
	\[E_j = \bigcup_{t \in T_j} \bigg\{y \in [0,1]\,:\, 1-\frac{1}{10d} \leq \frac{y}{|\xi_j-t|} \leq 1 + \frac{1}{10d}\bigg\}.\]
	By construction, $|g_j|^p$ is bounded below by a constant which does not depend on $j$ when $y$ is in $[0,1] \setminus E_j$. Thus restricting the integration to $[2\delta_j,1]\setminus E_j$ when we estimate the right-hand side of \eqref{eq:lowerdis} from below, we find that
	\[\int_{I_j} \Psi_j(x)\,dx \geq (t_{n_j}^2-1) |\psi_j(\xi_j)|^p 
	\begin{cases}
		C_p, & \text{if } \frac{1}{2}<p<\infty; \\
		C_{\frac{1}{2}} \log{\frac{1}{\delta_j}}, & \text{if } p=\frac{1}{2}, 
	\end{cases}
	\]
	for a constant $C_p$ which only depends on $p$ and $\varphi$. We reach a contradiction as $j\to \infty$, since these bounds are in conflict with \eqref{eq:eqpsik} and \eqref{eq:sepup1}. 
\end{proof}

\subsection{Proof of Theorem~\ref{thm:zeroset} (c)} \label{sec:uppersep} We will now employ the reproducing formula of Theorem~\ref{thm:prpk}. As observed both in the introduction and in connection with the work of 
H\"{o}rmander and Bernhardsson \cite{HB93} discussed in Section~\ref{subsec:convex}, this formula is particularly transparent when $p=1$ since then $|\varphi_p|^{p-2} \varphi_p$ takes the values $\pm 1$ and changes sign at the zeros $t_n$. This makes the proof of part (c) rather simple when $p=1$. We therefore begin with this case, which still embodies the idea to be used for general $p\geq 1$.

\begin{proof}[Proof of Theorem~\ref{thm:zeroset}~(c): The case $p=1$]
	If $f$ is a function in $PW^1$ which is nonnegative on $\mathbb{R}$, then the case $p=1$ of Theorem~\ref{thm:prpk} implies that
	\begin{equation} \label{eq:cp1}
		\int_{I_n} f(x)\,dx \leq \|\varphi_1\|_1 f(0) + \int_{\mathbb{R}\setminus I_n} f(x)\,dx,
	\end{equation}
	where as usual $I_n := [t_n,t_{n+1}]$. Suppose that $0<T \leq |I_n|$ and let $\psi_0$ be the solution of the extremal problem \eqref{eq:exLPS} with $\Omega=\pi/2$. Choosing \[f(z) := \psi_0(z-\mu_n) \psi_0^\ast(z-\mu_n)\] for $\mu_n := (t_n+t_{n+1})/2$ in \eqref{eq:cp1}, we see that
	\[\lambda_0\left(\frac{\pi T}{4}\right) \leq \|\varphi_1\|_1|\psi_0(-\mu_n)|^2 + \left(1-\lambda_0\left(\frac{\pi T}{4}\right)\right).\]
	Since $|\psi_0(x)| \to 0$ when $|x|\to \infty$, we are led to the bound
	\[\limsup_{n\to\infty} \big(t_{n+1}-t_n\big) \leq T_0,\]
	where $T_0$ is the number such that $\lambda_0(\pi T_0/4)=1/2$. We extract from Table~\ref{table:lambda0} that $T_0 \leq 1.0805$.
\end{proof}

We will make use of the fact that $|\varphi_p|^{p-2} \varphi_p$ has constant sign on $(t_n,t_{n+1})$ also when $1<p<\infty$, but now the size of $|\varphi_p|$ matters as well. This makes the argument more complicated, and a somewhat elaborate preparation is required before we can proceed as in the case $p=1$. We state the first result in a slightly more general form than needed in the sequel.
\begin{lemma}\label{lem:monotonicity} 
	Let $(t_n)_{n\in \mathbb{Z}}$ be a strictly increasing sequence of real numbers with no finite accumulation point and let $(\alpha_n)_{n\in \mathbb{Z}}$ be an associated sequence of positive numbers. If
	\[ \sum_{n=-\infty}^{\infty} \frac{\alpha_n}{1+t_n^2} < \infty \]
	and $a$ is any real number, then the function
	\[ \Psi(x):=e^{a x} \prod_{|t_n|\leq 1} |x-t_n|^{\alpha_n} \prod_{|t_n|>1} \left|1-\frac{x}{t_n}\right|^{\alpha_n} e^{\alpha_n x/t_n} \]
	is well defined on $\mathbb{R}$. The function $\Psi'/\Psi$ is strictly decreasing on each interval $(t_n,t_{n+1})$. In particular, $\Psi$ has exactly one maximum on each interval $(t_n,t_{n+1})$. 
\end{lemma}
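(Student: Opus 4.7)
My plan is to reduce everything to a termwise analysis of $(\log \Psi)' = \Psi'/\Psi$, since strict monotonicity of this ratio on $(t_n,t_{n+1})$ will immediately yield uniqueness of the critical point and hence of the maximum.

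First I would dispose of well-definedness. Because $(t_n)$ has no finite accumulation point, only finitely many $t_n$ lie in $[-1,1]$, so the first product is just a finite product of continuous functions. For the tail product, I would expand
\[
\log\bigl|1-x/t_n\bigr| + \frac{x}{t_n} = -\frac{x^2}{2 t_n^2} + O\!\bigl(|x|^3/|t_n|^3\bigr)\qquad (|t_n|\to\infty),
\]
so absolute convergence of $\sum_{|t_n|>1}\alpha_n\bigl(\log|1-x/t_n|+x/t_n\bigr)$, uniformly on compacta of $\mathbb{R}$, follows from $\sum \alpha_n/(1+t_n^2)<\infty$. This defines $\Psi$ and shows $\log\Psi$ is real analytic on $\mathbb{R}\setminus\{t_n\}$.

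Next I would differentiate the series termwise. On a compact subset of $\mathbb{R}\setminus\{t_n\}$, the derivative of the $n$-th Weierstrass summand is
\[
\frac{1}{x-t_n} + \frac{1}{t_n} \;=\; \frac{x}{t_n(x-t_n)} \;=\; O\!\bigl(1/t_n^2\bigr),
\]
so termwise differentiation is legitimate and gives
\[
\frac{\Psi'(x)}{\Psi(x)} \;=\; a \;+\; \sum_{|t_n|\leq 1}\frac{\alpha_n}{x-t_n} \;+\; \sum_{|t_n|>1}\alpha_n\!\left(\frac{1}{x-t_n}+\frac{1}{t_n}\right).
\]

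Finally, the monotonicity step is immediate: on any interval $(t_m,t_{m+1})$ none of the $t_n$ appear as singularities, and every individual summand $\alpha_n/(x-t_n)$ with $\alpha_n>0$ is strictly decreasing there. Thus $\Psi'/\Psi$ is strictly decreasing on $(t_m,t_{m+1})$. Isolating the two singular contributions, $\alpha_m/(x-t_m)\to+\infty$ as $x\to t_m^+$ while the remaining sum stays bounded, and similarly $\alpha_{m+1}/(x-t_{m+1})\to-\infty$ as $x\to t_{m+1}^-$. Strict monotonicity plus these limits force a unique zero of $\Psi'/\Psi$ in $(t_m,t_{m+1})$; since $\Psi>0$ there, this is the unique critical point of $\Psi$, and the sign change from $+$ to $-$ makes it the unique maximum.

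The only nonroutine item is justifying the termwise differentiation above; everything else is bookkeeping. I expect no further obstacles, since the hypothesis $\sum \alpha_n/(1+t_n^2)<\infty$ is exactly what is needed to make both the product and its logarithmic derivative converge locally uniformly off the zero set.
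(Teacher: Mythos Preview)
Your proposal is correct and follows essentially the same route as the paper: logarithmic differentiation of $\Psi$ and the observation that each summand $\alpha_n/(x-t_n)$ is strictly decreasing. The paper is terser---it simply writes $(\Psi'/\Psi)'=-\sum_n \alpha_n/(x-t_n)^2$ and deduces existence of the maximum from $\Psi(t_n)=0$ rather than from your $\pm\infty$ endpoint limits---but the substance is identical, and your extra care with convergence and termwise differentiation only makes the argument more complete.
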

\begin{proof}
	The convergence of the infinite product to a continuous function is immediate. We clearly have $\Psi(t_n)=0$, and the remaining part of the lemma follows by logarithmic differentiation: 
	\begin{equation}\label{eq:logderiv} 
		\frac{d}{dx} \frac{\Psi'(x)}{\Psi(x)}=- \sum_{n=-\infty}^{\infty} \frac{\alpha_n}{(x-t_n)^2}. \qedhere 
	\end{equation}
\end{proof}

We will use both the orthogonality relation of Lemma~\ref{lem:orthogonality}~(b) and Theorem~\ref{thm:zeroset}~(b), which we have just established. As before, we consider the function
\[\Psi_n(x) := \frac{x^2 |\varphi_p(x)|^p }{|x-t_{n}|\,|x-{t_{n+1}}|}.\]
Since now $\Psi_n(t_n)=\Psi_n(t_{n+1})=0$ for $1<p<\infty$, Lemma~\ref{lem:monotonicity} shows that there exists a unique point $\xi_n$ in $(t_n,t_{n+1})$ such that $\Psi_n$ increases on $[t_n,\xi_n]$ and decreases on $[\xi_n,t_{n+1}]$. We begin by showing that $\xi_n$ cannot be arbitrarily close to the endpoints of this interval.
\begin{lemma}\label{lem:xiendpoints} 
	Fix $1 < p < \infty$ and let $(t_n)_{n\geq1}$ and $(\xi_n)_{n\geq1}$ be as above. Then
	\[\inf_{n\geq1} \dist\big(\xi_n,\{t_n,t_{n+1}\}\big)>0.\]
\end{lemma}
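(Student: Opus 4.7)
The plan is to argue by contradiction using the critical-point equation for $\xi_n$ derived from Lemma~\ref{lem:monotonicity}. By the symmetry between the endpoints of $[t_n, t_{n+1}]$, it is enough to suppose that $\xi_n - t_n \to 0$ along some subsequence of indices. Let $\sigma > 0$ denote the uniform separation constant of $\mathscr{Z}(\varphi_p)$ supplied by Theorem~\ref{thm:zeroset}~(b), so that $\delta_n := t_{n+1} - t_n \geq \sigma$ for every $n \geq 1$. Since $t_n$ and $t_{n+1}$ are simple zeros of $\varphi_p$ by Lemma~\ref{lem:zeros}, I would factor $\varphi_p(z) = (z - t_n)(z - t_{n+1})\,G_n(z)$, where $G_n$ is entire and does not vanish on $[t_n, t_{n+1}]$.

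Computing the logarithmic derivative of $\Psi_n$ and evaluating at $\xi_n$ then yields the critical-point equation
\[
(p-1)\left[\frac{1}{\xi_n - t_n} + \frac{1}{\xi_n - t_{n+1}}\right] + p\,\frac{G_n'(\xi_n)}{G_n(\xi_n)} + \frac{2}{\xi_n} = 0.
\]
Under the hypothesis $\xi_n - t_n \to 0$, the first summand blows up to $+\infty$, while $(p-1)/(\xi_n - t_{n+1})$ is bounded by $(p-1)/\sigma$ in absolute value and $2/\xi_n$ is eventually small. For the equation to hold, $G_n'(\xi_n)/G_n(\xi_n)$ must therefore diverge to $-\infty$. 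Since $G_n$ is continuous and non-vanishing on $[t_n, t_{n+1}]$, this ratio extends continuously across $t_n$ with the explicit value
\[
\frac{G_n'(t_n)}{G_n(t_n)} = \frac{1}{\delta_n} + \frac{\varphi_p''(t_n)}{2\,\varphi_p'(t_n)},
\]
obtained from Taylor expansion of $\varphi_p$ at $t_n$; the first term is at most $1/\sigma$, so the matter reduces to a uniform bound on $\varphi_p''(t_n)/\varphi_p'(t_n)$.

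The hard part will be this uniform bound. Bernstein's inequality gives $|\varphi_p''(t_n)| \leq \pi^2 \|\varphi_p\|_\infty$, but $|\varphi_p'(t_n)|$ itself need not be bounded away from zero; the case $\varphi_2(z) = \sinc(\pi z)$ shows that it can decay like $1/n$. Nevertheless, the \emph{ratio} $\varphi_p''(t_n)/\varphi_p'(t_n)$ should be controllable through the orthogonality relation of Lemma~\ref{lem:orthogonality}~(a): writing it as $\int_{-\infty}^\infty |\varphi_p(x)|^p (x - t_n)^{-1}\,dx = -\|\varphi_p\|_p^p/t_n$ and Taylor-expanding $|\varphi_p|^p$ near $t_n$ should extract a relation between $\varphi_p'(t_n)$, $\varphi_p''(t_n)$, and global $L^p$-quantities, uniformly in $n$. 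As a contingency, one can instead compare $\Psi_n(\xi_n)$ directly to $\Psi_n(\mu_n)$ at the midpoint $\mu_n = (t_n + t_{n+1})/2$: the inequality $\Psi_n(\xi_n) \geq \Psi_n(\mu_n)$ combined with the Taylor bound $|\varphi_p(\xi_n)|^p \lesssim |\varphi_p'(t_n)|^p (\xi_n - t_n)^p$ produces $|\varphi_p(\mu_n)| \lesssim (\xi_n - t_n)^{(p-1)/p}\,\delta_n^{1/p}$, and a quantitative application of Lemma~\ref{lem:local} to $\varphi_p$ near $\mu_n$ should then deliver the desired contradiction.
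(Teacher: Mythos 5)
Your critical-point equation is correct and is equivalent to the paper's relation $\Theta_n'(\xi_n)/\Theta_n(\xi_n) = -(p-1)/\varepsilon_n$ for $\Theta_n:=|x-t_n|^{1-p}\Psi_n$, where $\varepsilon_n = \xi_n - t_n$. But both branches of the plan stall, and the common root is that you never invoke the orthogonality relation of Lemma~\ref{lem:orthogonality}~(b). The equation $\Psi_n'(\xi_n)=0$ is the \emph{definition} of $\xi_n$ as the interior maximum, not a constraint on $\varphi_p$; by itself it is compatible with $\xi_n$ lying anywhere in $(t_n,t_{n+1})$, so no manipulation of this single equation can rule out $\varepsilon_n\to 0$. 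For the main route, the uniform bound on $\varphi_p''(t_n)/\varphi_p'(t_n)$ that you need is not produced by Taylor-expanding Lemma~\ref{lem:orthogonality}~(a): the weight $|\varphi_p|^p$ vanishes to order $p>1$ at $t_n$, so the integrand $|\varphi_p(x)|^p x/(x-t_n)$ is regular there and the local expansion produces no singular term isolating that ratio. For the contingency route, $\Psi_n(\xi_n)\geq\Psi_n(\mu_n)$ is again only the definition of $\xi_n$, and it yields an \emph{upper} bound on $|\varphi_p(\mu_n)|$; Lemma~\ref{lem:local} then propagates this bound to a neighborhood but does not produce a countervailing \emph{lower} bound, and since $|\varphi_p(\mu_n)|\to 0$ anyway (as $\varphi_p\in L^p$), no contradiction follows. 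A useful lower bound on $\Psi_n(\mu_n)/\Psi_n(\xi_n)$ is exactly the content of Lemma~\ref{lem:midpoint}, which is proved afterward and relies on the present lemma, so invoking it here would be circular.

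The missing mechanism is this. One uses the critical-point value $\Theta_n'(\xi_n)/\Theta_n(\xi_n)=-(p-1)/\varepsilon_n$ together with the monotonicity of $\Theta_n'/\Theta_n$ (Lemma~\ref{lem:monotonicity}) to show that $\Psi_n$ decays exponentially on $[t_n+2\varepsilon_n,t_{n+1}]$, whence $\int_{I_n}\Psi_n\leq\frac{2p\varepsilon_n}{p-1}\Psi_n(\xi_n)$. One then combines the same critical-point value with the uniform boundedness of the derivative of $\Theta_n'/\Theta_n$ near $t_n$ (from \eqref{eq:logderiv} together with the separation of Theorem~\ref{thm:zeroset}~(b)) to show that $\Psi_n(x)\geq c_p\Psi_n(\xi_n)$ on $[t_n-3\sigma/4,\,t_n-\sigma/4]$, whence $\int_{\mathbb{R}\setminus I_n}\Psi_n\geq\frac{\sigma}{2}c_p\Psi_n(\xi_n)$. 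The orthogonality relation of Lemma~\ref{lem:orthogonality}~(b) equates these two integrals and thereby forces a uniform lower bound on $\varepsilon_n$. You should restructure the argument around this interplay.
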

\begin{proof}
	We set $\varepsilon_n := \dist\big(\xi_n,\{t_n,t_{n+1}\}\big)$ for a fixed $n\geq1$. We have $t_{n+1}-t_n\geq\sigma$, where $\sigma=\sigma(\mathscr{Z}(\varphi_p))>0$ by Theorem~\ref{thm:zeroset}~(b). No argument is needed when $\varepsilon_n > \sigma/4$, so we assume that $\varepsilon_n\leq \sigma/4$. We will also assume that $\xi_n - t_n = \varepsilon_n$, the converse case being virtually identical. Our starting point is as before Lemma~\ref{lem:orthogonality}~(b). Hence we have 
	\begin{equation}\label{eq:Psixi} 
		\int_{\mathbb{R} \setminus I_n} \Psi_n(x)\,dx = \int_{I_n} \Psi_n(x)\,dx, 
	\end{equation}
	where $I_n = [t_n,t_{n+1}]$, and our goal is to estimate the right-hand side from above and the left-hand side from below.
	
	We first make a preliminary observation. Setting $\Theta_n(x): = |x-t_n|^{1-p} \Psi_n(x)$, we have
	\[\frac{\Psi_n'(x)}{\Psi_n(x)} = \frac{p-1}{x-t_n} + \frac{\Theta_n'(x)}{\Theta_n(x)}.\]
	Since $\Psi_n'(\xi_n)=0$, it is clear that 
	\begin{equation}\label{eq:xin} 
		\frac{\Theta_n'(\xi_n)}{\Theta_n(\xi_n)} = -\frac{p-1}{\varepsilon_n}.
	\end{equation}
	We turn to the upper bound for the right-hand side of \eqref{eq:Psixi}. We split the interval into two parts. On the interval $[t_n,t_n+2\varepsilon_n]$, we use the trivial estimate $\Psi_n(x) \leq \Psi_n(\xi_n)$. On the interval $[t_n+2\varepsilon_n,t_{n+1}]$, we find that 
	\begin{equation}\label{eq:preint1} 
		\frac{\Psi_n'(x)}{\Psi_n(x)} \leq \frac{p-1}{x-t_n} + \frac{\Theta_n'(\xi_n)}{\Theta_n(\xi_n)} = \frac{p-1}{x-t_n} - \frac{p-1}{\varepsilon_n} \leq -\frac{p-1}{2\varepsilon_n}. 
	\end{equation}
	Here the first inequality holds because $x>\xi_n$ and $x\mapsto \Theta_n'(x)/\Theta_n(x)$ is a decreasing function by Lemma~\ref{lem:monotonicity}. Integrating \eqref{eq:preint1}, we obtain the pointwise estimate
	\[\Psi_n(x) \leq \Psi_n(\xi_n)\exp\left({-\frac{p-1}{2\varepsilon_n} (x-\xi_n)}\right).\]
	Combining these bounds for $\Psi_n$, we find that 
	\begin{equation}\label{eq:Psixiupper} 
		\begin{split}
			\int_{I_n} \Psi_n(x)\,dx &\leq \Psi_n(\xi_n) \left(2\varepsilon_n + \int_{t_n+2\varepsilon_n}^{t_{n+1}} \exp\left({-\frac{p-1}{2\varepsilon_n} (x-\xi_n)}\right)dx\right)\\
			&\qquad\qquad\qquad\qquad\qquad\qquad\qquad\qquad\qquad\leq \Psi_n(\xi_n) \frac{2 p \varepsilon_n}{p-1}. 
		\end{split}
	\end{equation}
	To get a lower bound for the right-hand side of \eqref{eq:Psixi}, we restrict to $x$ that lie in the interval $[t_n-3\sigma/4,t_n-\sigma/4]$. Since $\xi_n\leq t_n+\sigma/4$, we will then have
	\[\frac{\Psi_n(x)}{\Psi_n(\xi_n)} = \frac{|x-t_n|^{p-1} \Theta_n(x)}{|\xi_n -t_n|^{p-1}\Theta_n(\xi_n)} \geq \frac{\Theta_n(x)}{\Theta_n(\xi_n)} \geq c_p.\]
	Here the final inequality follows from \eqref{eq:xin} and the fact that the derivative of $\Theta'_n/\Theta_n$ is bounded for $y$ in $[t_n-3\sigma/4,t_n+\varepsilon_n]$ independently of $n$, as follows from \eqref{eq:logderiv}. Consequently, 
	\begin{equation}\label{eq:Psixilower} 
		\int_{\mathbb{R} \setminus I_n} \Psi_n(x)\,dx \geq \int_{t_n-3\sigma/4}^{t_n-\sigma/4} \Psi_n(x)\,dx \geq \frac{\sigma}{2} c_p \Psi_n(\xi_n). 
	\end{equation}
	If $\varepsilon_n$ is sufficiently small we obtain a contradiction from \eqref{eq:Psixi}, \eqref{eq:Psixiupper}, and \eqref{eq:Psixilower}. 
\end{proof}
We now introduce the notations $\mu_n := (t_n+t_{n+1})/2$ and $\tau_n := t_{n+1}-t_n$.
\begin{lemma}\label{lem:midpoint} 
	Fix $1 \leq p < \infty$, and suppose that $n\geq2$ and $\tau_n \geq \max(\tau_{n+1},\tau_{n-1})$. There is a positive constant $C_p$ such that
	\[\Psi_n(\mu_n) \geq \frac{C_p}{\tau_n^2} \Psi_n(\xi_n),\]
	where $\xi_n$ denotes the unique point where $\Psi_n$ attains its maximum in $(t_n,t_{n+1})$. 
\end{lemma}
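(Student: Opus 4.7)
The plan is to combine the log-concavity of $\Psi_n$ on $I_n:=(t_n,t_{n+1})$ with a detailed study of the auxiliary entire function $g(z) := \varphi_p(z)/((z-t_n)(z-t_{n+1}))$, which by Lemma~\ref{lem:zeros} is entire and does not vanish on $[t_n,t_{n+1}]$. First I would rewrite
\[
\Psi_n(x) = x^2 (x-t_n)^{p-1}(t_{n+1}-x)^{p-1} |g(x)|^p \quad \text{on } I_n.
\]
The logarithm of each of the four factors is concave: $2\log x$ obviously, $(p-1)\log(x-t_n)$ and $(p-1)\log(t_{n+1}-x)$ because $p\geq 1$, and $\log|g|$ because
\[
(\log|g|)''(x) = -\sum_{k\neq n,n+1}\frac{1}{(x-t_k)^2} \leq 0.
\]
Hence $\log\Psi_n$ is concave on $I_n$, consistent with Lemma~\ref{lem:monotonicity}.

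Next I would compute the ratio
\[
\frac{\Psi_n(\mu_n)}{\Psi_n(\xi_n)} = \left(\frac{\mu_n}{\xi_n}\right)^{2}\left(\frac{(\tau_n/2)^2}{(\xi_n-t_n)(t_{n+1}-\xi_n)}\right)^{p-1}\left(\frac{|g(\mu_n)|}{|g(\xi_n)|}\right)^{p}.
\]
For $p\geq 1$ the middle factor is at least $1$ by the arithmetic--geometric mean inequality, while the first factor is bounded below by a positive constant: this uses $n\geq 2$ together with the uniform discreteness of the zero set from Theorem~\ref{thm:zeroset}~(b). The heart of the matter is therefore a lower bound on $|g(\mu_n)|/|g(\xi_n)|$.

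Here the hypothesis $\tau_n\geq\max(\tau_{n-1},\tau_{n+1})$ is essential. It ensures that the two zeros of $g$ closest to $I_n$, namely $t_{n-1}$ and $t_{n+2}$, sit on opposite sides of $I_n$ at comparable distances, which combined with Theorem~\ref{thm:zeroset}~(b) are uniformly bounded below. At the midpoint the singular contributions to the logarithmic derivative of $g$ annihilate:
\[
(\log|g|)'(\mu_n) = \frac{\varphi_p'(\mu_n)}{\varphi_p(\mu_n)} - \frac{1}{\mu_n-t_n} - \frac{1}{\mu_n-t_{n+1}} = \frac{\varphi_p'(\mu_n)}{\varphi_p(\mu_n)}.
\]
The remaining sum $\varphi_p'(\mu_n)/\varphi_p(\mu_n) = -2\mu_n\sum_{k\geq 1}(t_k^2-\mu_n^2)^{-1}$ benefits from further symmetric cancellation of paired terms $1/(\mu_n-t_k)+1/(\mu_n+t_k)$ afforded by the evenness of $\varphi_p$ and the balance imposed by the hypothesis. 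Lemma~\ref{lem:xiendpoints} bounds $\min(\xi_n-t_n,t_{n+1}-\xi_n)$ uniformly from below. Integrating this controlled derivative along the segment from $\xi_n$ to $\mu_n$ and invoking the concavity of $\log|g|$ yields the estimate $|g(\mu_n)|/|g(\xi_n)|\geq C_p'\tau_n^{-2/p}$, from which the lemma follows upon combining the three factors.

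The main obstacle I expect is guaranteeing the polynomial (rather than exponential) dependence on $\tau_n$ in this final estimate. A naive application of the mean value theorem would only give $|\log|g(\mu_n)|-\log|g(\xi_n)||\leq C\tau_n$, which is far too crude once $\tau_n$ is allowed to be large. The paired-cancellation argument sketched above, powered by the evenness of $\varphi_p$ and by the hypothesis $\tau_n\geq\max(\tau_{n\pm 1})$, is precisely what converts the exponential estimate into the polynomial one required for the $1/\tau_n^2$ loss claimed in the statement.
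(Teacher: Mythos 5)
Your plan has a genuine gap at exactly the point you flagged as the ``main obstacle,'' and the sketched paired-cancellation argument does not repair it. The issue is quantitative. By concavity of $\log|g|$ on $I_n$, for $\xi_n\leq\mu_n$ the best pointwise estimate available from your approach is
\[
\log|g(\mu_n)| - \log|g(\xi_n)| \;\geq\; (\log|g|)'(\mu_n)\,(\mu_n-\xi_n) \;\geq\; (\log|g|)'(\mu_n)\cdot\tfrac{\tau_n}{2},
\]
so in order to arrive at the claimed $|g(\mu_n)|/|g(\xi_n)|\geq C_p'\tau_n^{-2/p}$ you would need $(\log|g|)'(\mu_n)\geq -\tfrac{4}{p}\tfrac{\log\tau_n}{\tau_n}-O(\tau_n^{-1})$. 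That is an extremely strong decay requirement on the logarithmic derivative at the midpoint. Uniform discreteness of the zero set and the hypothesis $\tau_n\geq\max(\tau_{n\pm1})$ give you that the nearest exterior zeros $t_{n-1},t_{n+2}$ lie at distance between roughly $\tau_n/2$ and $3\tau_n/2$ from $\mu_n$, so $\tfrac{1}{\mu_n-t_{n+2}}+\tfrac{1}{\mu_n-t_{n-1}}$ is only $O(\tau_n^{-1})$ in general, not $O(\log\tau_n/\tau_n)$; the evenness pairing $\tfrac{1}{\mu_n-t_k}+\tfrac{1}{\mu_n+t_k}$ controls the tail of the sum but does not improve the contribution of the nearby zeros. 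Consequently your local argument yields $|g(\mu_n)|/|g(\xi_n)|\geq e^{-c_p\tau_n}$ at best, an exponential loss in $\tau_n$, which is far too weak for the stated $\tau_n^{-2}$ bound. You even diagnose this failure mode in your last paragraph, but the remedy you propose does not actually close it.

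The paper circumvents this by a global input that your proof does not use: the orthogonality relation of Lemma~\ref{lem:orthogonality}~(b), applied to the \emph{adjacent} interval $I_{n+1}=[t_{n+1},t_{n+2}]$. One shows that $\int_{I_{n+1}}\Psi_{n+1}\lesssim\Psi_n(\mu_n)\,\tau_n$ (a pointwise bound, where the potentially dangerous exponential $\exp(p(x-\mu_n)/(t_{n+2}-\mu_n))$ is harmless on $[\mu_n,t_{n+2}]$ precisely because the exponent is normalized by $t_{n+2}-\mu_n$ and stays $\leq p$), and separately that $\int_{\mathbb{R}\setminus I_{n+1}}\Psi_{n+1}\gtrsim\Psi_n(\xi_n)/\tau_n$ (using Lemma~\ref{lem:xiendpoints}, the AM--GM inequality, and Lemma~\ref{lem:local} applied near $\xi_n$). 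Equating the two via orthogonality gives the $\tau_n^{-2}$ loss without ever needing to control the logarithmic derivative of $g$ along a path of length comparable to $\tau_n$. If you want to salvage a purely local argument you would need a much stronger structural statement about the zeros (essentially that they are nearly symmetric about $\mu_n$ to high precision), which is not available; the orthogonality relation is the tool that substitutes for that missing symmetry.
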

\begin{proof}
	We may assume that $t_n < \xi_n \leq \mu_n$, the converse case being completely analogous. We set 
	\begin{equation}\label{eq:Upsilonm} 
		\Theta_n(x) := \Psi_n(x) \frac{|x-t_n|^{1-p} |x-t_{n+1}|^{1-p}}{|x-t_{n+2}|^{p}}. 
	\end{equation}
	By the assumption that $\xi_n \leq \mu_n$, the left-hand side of
	\[\frac{\Psi_n'(x)}{\Psi_n(x)} = \frac{p-1}{x-t_n} + \frac{p-1}{x-t_{n+1}} + \frac{p}{x-t_{n+2}} + \frac{\Theta_n'(x)}{\Theta_n(x)}\]
	is nonpositive for $x=\mu_k$, and we therefore have
	\[ \frac{\Theta_n'(\mu_n)}{\Theta_n(\mu_n)} \leq \frac{p}{t_{n+2}-\mu_n}.\]
	We also have that $\Theta'_n/\Theta_n$ is decreasing on $(t_{n-1},t_{n+3})$ by \eqref{eq:logderiv}. Making use of these observations and integrating, we deduce that 
	\begin{equation}\label{eq:Upsilonmpw} 
		\Theta_n(x) \leq \Theta_n(\mu_n) \,\exp\left({ \frac{p\,(x-\mu_n)}{t_{n+2}-\mu_n}}\right) \leq \Theta_n(\mu_n) e^p 
	\end{equation}
	for $\mu_n \leq x \leq t_{n+2}$. Combining \eqref{eq:Upsilonm} and \eqref{eq:Upsilonmpw}, we find that
	\[\Psi_n(x) \leq e^p \Psi_n(\mu_n) \left(\frac{\tau_n}{2}\right)^{2-2p} \frac{|x-t_n|^{p-1} |x-t_{n+1}|^{p-1} |x-t_{n+2}|^p}{|t_{n+2}-\mu_n|^p}\]
	on the same interval. We apply this estimate to obtain 
	\begin{multline*}
		\Psi_{n+1}(x) = \Psi_n(x) \frac{|x-t_n|}{|x-t_{n+2}|} \\
		\leq e^p \Psi_n(\mu_n) \left(\frac{\tau_n}{2}\right)^{2-2p} \left(\frac{x-t_n}{t_{n+2}-\mu_n}\right)^p |x-t_{n+1}|^{p-1} |x-t_{n+2}|^{p-1}. 
	\end{multline*}
	Using that $\mu_n \leq t_{n+1}$ and that $\tau_{n+1} \leq \tau_n$, we infer that
	\[\left(\frac{x-t_n}{t_{n+2}-\mu_n}\right)^p \leq \left(\frac{2\tau_n}{\tau_{n+1}}\right)^p.\]
	Combining the last two estimates and using again that $\tau_{n+1} \leq \tau_n$, we find that 
	\begin{equation}\label{eq:Ikplus1} 
		\int_{I_{n+1}} \Psi_{n+1}(x)\,dx \leq e^p 2^{3p-2} \B(p,p) \Psi_n(\mu_n) \tau_n. 
	\end{equation}
	By Lemma~\ref{lem:xiendpoints}, there exists an absolute constant $\varepsilon>0$ such that $\xi_n-t_n\geq \varepsilon$ and such that $t_1>\varepsilon$. We may therefore invoke the arithmetic--geometric mean inequality as in the proof of Theorem~\ref{thm:zeroset}~(b) and then Lemma~\ref{lem:local} (with $m=1$) to the function $\psi_j$ at the point $\xi_j$ to get 
	\begin{equation}\label{eq:Ikplus2} 
		\int_{\mathbb{R} \setminus I_{n+1}} \Psi_{n+1}(x)\,dx \geq \int_{\xi_n-\varepsilon/2}^{\xi_n+\varepsilon/2} \Psi_n(x) \frac{|x-t_n|}{|x-t_{n+2}|} \,dx \geq \frac{\Psi_n(\xi_n)}{\tau_n} \frac{\sqrt{3} \varepsilon}{2^{2p+2} \pi}. 
	\end{equation}
	Combining \eqref{eq:Ikplus1} and \eqref{eq:Ikplus2} with Lemma~\ref{lem:orthogonality}~(b), we obtain the asserted result. 
\end{proof}

\begin{proof}[Proof of Theorem~\ref{thm:zeroset}~(c): The case $1 < p < \infty$] 
	We will argue by contradiction. Hence we begin by assuming that there is a subsequence $(t_{n_k})_{k\geq1}$ of $(t_n)_{n\geq1}$ such that $\tau_{n_k} \geq 2$, $\tau_{n_k} \to \infty$ as $k \to \infty$ and $\tau_{n_k} \geq \max(\tau_{n_k-1},\tau_{n_k+1})$ (we can assume this since $\varphi_p$ has exponential type $\pi$). Consider the function
	\[f_k(z) := z^2 \sinc^{4}\left(\frac{\pi}{4}(z-\mu_{n_k})\right).\]
	Since $f_k(0)=0$ and $f_k \Phi_p$ does not change sign on $I_{n_k}$, we infer from Theorem~\ref{thm:prpk} that 
	\begin{equation}\label{eq:uplowme} 
		\int_{I_{n_k}} |f_k(x)| |\varphi_p(x)|^{p-1} \,dx \leq \int_{\mathbb{R}\setminus I_{n_k}} |f_k(x)| |\varphi_p(x)|^{p-1}\,dx. 
	\end{equation}
	Our plan is now to obtain a lower bound for the left-hand side and an upper bound for the right-hand side, which will contradict the assumption that $\tau_{n_k} \to \infty$. 
	
	To get the desired lower bound, we first restrict the integral over $I_{n_k}$ to the interval $[\mu_{n_k}-1/2,\mu_{n_k}+1/2]$, then use a pointwise lower bound for $|f_k|$ and finally combine the arithmetic--geometric mean inequality with Lemma~\ref{lem:local} applied to $\varphi_p$ in the point $\mu_{n_k}$ to obtain
	\[\int_{I_{n_k}} |f_k(x)| |\varphi_p(x)|^{p-1}\,dx \geq A_p \mu_{n_k}^2 |\varphi_p(\mu_{n_k})|^{p-1}.\]
	Recalling that the relationship between $\varphi_p$ and $\Psi_{n_k}$, we reformulate this as 
	\begin{equation}\label{eq:lowyou} 
		\int_{I_{n_k}} |f_k(x)| |\varphi_p(x)|^{p-1}\,dx \geq A_p \Psi_{n_k}^{\frac{p-1}{p}}(\mu_{n_k}) \mu_{n_k}^{\frac{2}{p}} \left(\frac{\tau_{n_k}}{2}\right)^{\frac{2(p-1)}{p}}. 
	\end{equation}
	
	To get an upper bound for the right hand side of \eqref{eq:uplowme}, we first use H\"older's inequality to obtain
	\[\int_{\mathbb{R}\setminus I_{n_k}} |f_k(x)| |\varphi_p(x)|^{p-1}\,dx \leq H_1^{\frac{p-1}{p}} H_2^{\frac{1}{p}},\]
	where 
	\begin{align*}
		H_1 &:= \int_{\mathbb{R}\setminus I_{n_k}}\Psi_{n_k}(x)\,dx, \\
		H_2 &:= \int_{\mathbb{R}\setminus I_{n_k}} x^2 \sinc^{4p}\left(\frac{\pi}{4}(x-\mu_{n_k})\right)|x-t_{n_k}|^{p-1} |x-t_{n_k+1}|^{p-1}\,dx. 
	\end{align*}
	We bound $H_1$ by using Lemma~\ref{lem:orthogonality}~(b) and Lemma~\ref{lem:midpoint} so that we get
	\[H_1 = \int_{I_{n_k}} \Psi_{n_k}(x)\,dx \leq \tau_{n_k} \Psi_{n_k}(\xi_{n_k}) \leq C_p^{-1} \tau_{n_k}^3 \Psi_{n_k}(\mu_{n_k}).\]
	To bound $H_2$, we first note that all factors except $x^2$ are symmetric about $I_{n_k}$. If $x\geq t_{n_{k+1}}$, then
	\[\frac{x^2}{(x-\mu_{n_k})^{4p}} (x-t_{n_k})^{p-1}(x-t_{n_k+1})^{p-1} \leq \frac{(\mu_{n_k}+1)^2}{(x-\mu_{n_k})^{2p}},\]
	by the assumption that $\tau_{n_k}\geq2$. Integrating, we find that $H_2 \leq B_p (\mu_{n_k}+1)^2 \tau_{n_k}^{1-2p}$ and consequently that 
	\begin{equation}\label{eq:upyou} 
		\int_{\mathbb{R}\setminus I_{n_k}} |f_k(x)| |\varphi_p(x)|^{p-1}\,dx \leq D_p \Psi_{n_k}^\frac{p-1}{p}(\mu_{n_k}) (\mu_{n_k}+1)^{\frac{2}{p}} \tau_{n_k}^{1-\frac{2}{p}} 
	\end{equation}
	for some constant $D_p>0$. 
	
	Inserting \eqref{eq:lowyou} and \eqref{eq:upyou} into \eqref{eq:uplowme}, we obtain a bound on $\tau_{n_k}$ that contradicts the assumption that ${\tau_{n_k}}\to \infty$. 
\end{proof}
It would be of interest to establish Theorem~\ref{thm:zeroset}~(b) in the full range $0<p<\infty$. Theorem~\ref{thm:traces}, to be established below, yields a suitable extension of Theorem~\ref{thm:prpk} when $1/2\le p<1$, but the nature of the problem is quite different in this case, since we need to control the size of the reciprocal of the extremal function $\varphi$ rather than $\varphi$ itself. In particular, a major challenge would be to control global variations in the size of $|\varphi|^{-1}$. This interesting problem may require a more refined analysis of the orthogonality relations.

\section{Numerical bounds on the separation of zeros of extremal functions} \label{subsec:zeroset24} 
To place the present section in context, we begin with a brief discussion of how we may optimize our usage of the orthogonality relations of Lemma~\ref{lem:orthogonality}. Let us consider the following model problem. Given a finite interval $I = (t,s)$ with $t>0$, set
\[w_I(x):=\frac{x^2}{|(x-t)(x-s)|}, \]
and let $PW^p(I)$ consist of those functions in $PW^p$ that vanish at $t$ and $s$. Let 
\begin{equation}\label{eq:lpPLS} 
	\lambda_p(I) := \sup_{f\in PW(I)} \left\{\int_I |f(x)|^p \,w_I(x)\, dx \,:\, \int_{-\infty}^{\infty} |f(x)|^p \,w_I(x)\, dx = 1 \right\}. 
\end{equation}
Recall from Lemma~\ref{lem:unique} that in the convex range, the extremal functions are of the form
\[\varphi_p(z) = \prod_{n=1}^\infty \left(1-\frac{z^2}{t_n^2}\right).\]
If we are able to establish that
\[\sup_{\substack{s-t\leq \delta \\
t\geq t_1}} \lambda_p(I)<\frac{1}{2},\]
then it would follow that $t_{n+1}-t_n > \delta$ for every $n\geq1$. This is essentially the problem we wish to solve, up to whatever constraints we may add based on our knowledge of admissible extremal functions. 

Unfortunately, our understanding of the extremal problem \eqref{eq:lpPLS} is rudimentary at best. Our approach is therefore to rely on the precise numerics known for the Pollak--Landau--Slepian problem discussed in Section~\ref{subsec:PS}. This forces us to make the restriction $p>2$, because we use H\"{o}lder's inequality to estimate $L^2$ norms in terms of $L^p$ norms. In addition, we are faced with the problem that our weight $w$ blows up at the endpoints of $I$. We will circumvent that obstacle by doubling the size of the interval and use a different approach close to the endpoints of $I$. In this analysis, we will also invoke the known properties of the extremal function. 

Our ultimate goal is to establish Theorem~\ref{thm:sep}, but we begin with the following weaker result, which holds in more generality.
\begin{theorem}\label{thm:sepall} 
	\mbox{} 
	\begin{enumerate}
		\item[(a)] If $0<p<\infty$, then $t_1 \geq \frac{1}{2}$. 
		\item[(b)] If $2<p<\infty$, then $t_{n+1}-t_n \geq \frac{3}{5}$ for every $n\geq1$. 
	\end{enumerate}
\end{theorem}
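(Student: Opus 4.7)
Part (a) will follow immediately from Lemma~\ref{lem:bernstein}~(a). For any extremal $\varphi$ of \eqref{eq:extremalproblem}, the definition of $\mathscr{C}_p$ gives $\|\varphi\|_p^p = 1/\mathscr{C}_p$, and hence $|\varphi(x)|^p \leq \mathscr{C}_p\|\varphi\|_p^p = 1$ for every real $x$. Combined with $\varphi(0)=1$, this places $\varphi$ in $PW^\infty$ with $\|\varphi\|_\infty = 1 = |\varphi(0)|$, so the origin is a point where $|\varphi|$ attains its supremum. Since $\varphi$ is real by Lemma~\ref{lem:real}, Lemma~\ref{lem:bernstein}~(a) applied with $f=\varphi$ and $\xi=0$ directly yields $t_1 \geq \sqrt{2}/\pi$.

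For part (b), the plan is to argue by contradiction from the orthogonality relation of Lemma~\ref{lem:orthogonality}~(b), assuming that $\tau := t_{n+1}-t_n < 3/5$ for some $n\geq 1$. Writing $\Psi_n(x) = x^2|\varphi(x)|^p/|(x-t_n)(x-t_{n+1})|$ and $I = [t_n,t_{n+1}]$, the relation reads $\int_I\Psi_n\,dx = \int_{\mathbb{R}\setminus I}\Psi_n\,dx$. The natural companion is the auxiliary function $F(x) := x\varphi(x)/((x-t_n)(x-t_{n+1}))$, which is entire of exponential type $\pi$ (since $t_n,t_{n+1}$ are simple zeros of $\varphi$ by Lemma~\ref{lem:zeros}) and belongs to $PW^2$ because $|F(x)| = O(1/|x|)$ at infinity. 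The factorization
\[\Psi_n(x) = |F(x)|^2\, |\varphi(x)|^{p-2}\, |(x-t_n)(x-t_{n+1})|,\]
together with $|\varphi|^{p-2}\leq 1$ (valid for $p>2$ since $\|\varphi\|_\infty = 1$), is what allows us to transfer the problem from an $L^p$ estimate on $\varphi$ to an $L^2$ estimate on $F$, where the Pollak--Landau--Slepian eigenvalues of Section~\ref{subsec:PS} are the natural tool.

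For the upper bound on $\int_I \Psi_n\,dx$, I would use $|(x-t_n)(x-t_{n+1})| \leq \tau^2/4$ and $|\varphi|^{p-2}\leq 1$ on $I$ to get $\Psi_n\leq(\tau^2/4)|F|^2$, and then invoke the PLS bound $\int_I|F|^2\,dx\leq \lambda_0(\pi\tau/2)\|F\|_2^2$, obtaining $\int_I \Psi_n\,dx \leq (\tau^2/4)\lambda_0(\pi\tau/2)\|F\|_2^2$. For the lower bound on $\int_{\mathbb{R}\setminus I}\Psi_n\,dx$, the plan is to double $I$ to $J = [\mu-\tau,\mu+\tau]$ with $\mu = (t_n+t_{n+1})/2$: on $\mathbb{R}\setminus J$ we have $|(x-t_n)(x-t_{n+1})|\geq \tau^2/4$, and the complementary PLS bound $\int_{\mathbb{R}\setminus J}|F|^2\,dx\geq (1-\lambda_0(\pi\tau))\|F\|_2^2$ is immediately available. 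The contribution from the bounded annulus $J\setminus I$, where the weight $1/|(x-t_n)(x-t_{n+1})|$ is still unbounded, is handled separately by the local Bernstein-type lower bounds of Lemma~\ref{lem:bernstein} and Lemma~\ref{lem:local} applied to $\varphi$. To convert these pointwise bounds into an integrated lower bound, one must control the factor $|\varphi|^{p-2}$, which vanishes at every zero of $\varphi$; this is where H\"older's inequality is used on bounded pieces of $\mathbb{R}\setminus J$ to trade the $L^p$ mass of $\varphi$ (equal to $1/\mathscr{C}_p$) against the $L^2$ mass of $F$. This is precisely the step that forces the restriction $p>2$. Feeding the numerical value $\lambda_0(3\pi/5) \approx 0.859$ from Table~\ref{table:lambda0} into the resulting chain of inequalities then contradicts the orthogonality equation whenever $\tau<3/5$.

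The \emph{hard part} is the lower bound on $\int_{\mathbb{R}\setminus I}\Psi_n\,dx$: the weight $|\varphi|^{p-2}$ is globally rough, vanishing on the entire zero set of $\varphi$, so the argument must actively exploit the geometric information on these zeros (uniform discreteness from Theorem~\ref{thm:zeroset}~(b), together with Lemma~\ref{lem:local}) rather than rely on a crude global estimate. Moreover, the numerical margin is tight, which is why both the doubling of the interval and the use of precise PLS values at $c = \pi\tau$ and $c = \pi\tau/2$ are essential in order to beat the threshold corresponding to $\tau=3/5$.
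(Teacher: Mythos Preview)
Your argument for (a) is correct and is exactly what the paper does.

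For (b), you have the right skeleton: the orthogonality relation, the auxiliary function $F(x)=x\varphi(x)/((x-t_n)(x-t_{n+1}))\in PW^2$, the doubling $I\subset J$, and the Pollak--Landau--Slepian bound on $\int_{\mathbb R\setminus J}|F|^2$. But the lower bound for $\int_{\mathbb R\setminus I}\Psi_n$ has a real gap. In your factorization $\Psi_n=|F|^2\,|\varphi|^{p-2}\,|(x-t_n)(x-t_{n+1})|$, the factor $|\varphi|^{p-2}$ vanishes on \emph{all} of $\mathscr Z(\varphi)$, not just on $J$, so knowing $\int_{\mathbb R\setminus J}|F|^2\ge(1-\lambda_0(\pi\tau))\|F\|_2^2$ does not by itself give any lower bound for $\int_{\mathbb R\setminus J}\Psi_n$. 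Your proposed fix (``H\"older on bounded pieces, trading the $L^p$ mass of $\varphi$ against the $L^2$ mass of $F$'') does not produce a quantity that can be compared to your upper bound $(\tau^2/4)\lambda_0(\pi\tau/2)\|F\|_2^2$, and Lemma~\ref{lem:local} controls $\varphi$ only locally near a chosen point, not across all of $\mathbb R\setminus J$.

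The paper's device is to use a \emph{different} factorization, $\Psi_n=|\psi|^p\cdot g$ with $\psi=F$ and $g(x)=|x-t_n|^{p-1}|x-t_{n+1}|^{p-1}/|x|^{p-2}$, and then apply H\"older with exponents $p/2$ and $p/(p-2)$ to obtain
\[
\int_{\mathbb R\setminus J}\Psi_n \;\ge\; \Bigl(\int_{\mathbb R\setminus J}|\psi|^2\Bigr)^{p/2}\Big/\Bigl(\int_{\mathbb R\setminus J}g^{-2/(p-2)}\Bigr)^{(p-2)/2}.
\]
This cleanly separates the PLS-controllable piece $\int|\psi|^2$ from a completely explicit weight integral in which $\varphi$ no longer appears; that is the missing idea. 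The upper bound is then obtained without PLS, simply via $|\psi|\le\|\psi\|_\infty$ on $I$, and both sides are matched through $\|\psi\|_2\ge\|\psi\|_\infty$ (since $\mathscr C_2=1$). Also, the contribution of $J\setminus I$ is not ``handled separately'' here; it is simply discarded by positivity. The annular refinement you describe only enters in the subsequent iteration (Lemma~\ref{lem:iterateme}), not in this theorem.
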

\begin{proof}
	Part (a) follows at once from Lemma~\ref{lem:hormander} with $\xi=0$. For some fixed $n\geq1$, we set
	\[\psi(x) := \frac{x \varphi_p(x)}{(x-t_n)(x-t_{n+1})} \qquad \text{and} \qquad \Psi(x) := |\psi(x)|^p \frac{|x-t_n|^{p-1}|x-t_{n+1}|^{p-1}}{|x|^{p-2}}\]
	and note that $\psi$ is in $PW^2$ since $\varphi_p$ is in $PW^\infty$ and $\varphi_p(t_n)=\varphi_p(t_{n+1})=0$. The orthogonality relation of Lemma~\ref{lem:orthogonality}~(b) takes the form 
	\begin{equation}\label{eq:35ortho} 
		\int_{\mathbb{R}\setminus I} \Psi(x)\,dx = \int_I \Psi(x)\,dx, 
	\end{equation}
	where $I:=[t_n,t_{n+1}]$. We also introduce the notations
	\[\mu := \frac{t_n+t_{n+1}}{2}, \qquad \delta := t_{n+1}-t_n \qquad\text{and}\qquad J := [\mu-\delta,\mu+\delta].\]
	We adhere to the overall strategy outlined above. Specifically, we use H\"older's inequality with the assumption that $p>2$ to conclude that 
	\begin{equation}\label{eq:sepalllower1} 
		\int_{\mathbb{R} \setminus I} \Psi(x)\,dx \geq \int_{\mathbb{R} \setminus J} \Psi(x)\,dx \geq \frac{\left(\displaystyle \int_{\mathbb{R} \setminus J} |\psi(x)|^2\,dx \right)^{\frac{p}{2}}}{\left(\displaystyle \int_{\mathbb{R}\setminus J} \frac{x^2\,dx}{\big(|x-t_n|\,|x-t_{n+1}|\big)^{\frac{2(p-1)}{p-2}}}\right)^{\frac{p-2}{2}}}. 
	\end{equation}
	We have chosen $J$ to be concentric to $I$ with $|J|=2|I|=2\delta$, so since $\psi$ is in $PW^2$ we get with the convention $\lambda_0$ of Section~\ref{subsec:PS} that 
	\begin{equation}\label{eq:PSest} 
		\int_{\mathbb{R} \setminus J} |\psi(x)|^2\,dx \geq \|\psi\|_2^2 \big(1-\lambda_0(\pi \delta)\big) \geq \|\psi\|_\infty^2 \big(1-\lambda_0(\pi \delta)\big), 
	\end{equation}
	where we in the final estimate used that $\mathscr{C}_2=1$. Making use of the symmetry of the domain of integration, we next carry out a substitution to write
	\[\int_{\mathbb{R}\setminus J} \frac{x^2\,dx}{\big(|x-t_n|\,|x-t_{n+1}|\big)^{\frac{2(p-1)}{p-2}}} = \frac{2\mu^2}{\delta^{\frac{3p-2}{p-2}}} \int_1^\infty \left(x^2-\frac{1}{4}\right)^{-\frac{2(p-1)}{p-2}} \left(1+\frac{\delta^2 x^2}{\mu^2}\right)\,dx.\]
	To estimate the integral on the right-hand side, we use that $x^2-1/4 \geq 3/4$ to conclude that 
	\begin{multline*}
		2\int_1^\infty \left(x^2-\frac{1}{4}\right)^{-\frac{2(p-1)}{p-2}} \left(1+\frac{\delta^2 x^2}{\mu^2}\right)\,dx \\
		\leq 2 \left(\frac{4}{3}\right)^{\frac{2}{p-2}} \int_1^\infty \frac{1+\frac{\delta^2 x^2}{\mu^2}}{\left(x^2-\frac{1}{4}\right)^2}\,dx \\
		= \left(\frac{4}{3}\right)^{\frac{2}{p-2}} \left(\frac{16}{3}-4\log{3} + \frac{\delta^2}{\mu^2} \frac{4+3\log{3}}{3}\right). 
	\end{multline*}
	Inserting these estimates into \eqref{eq:sepalllower1}, we find that 
	\begin{equation}\label{eq:sepalllower2} 
		\int_{\mathbb{R} \setminus I} \Psi(x)\,dx \geq \frac{\|\psi\|_\infty^p \,\mu^{2-p}\, \delta^{\frac{3p-2}{2}} \big(1-\lambda_0(\pi \delta)\big)^{\frac{p}{2}}}{\frac{4}{3} \left(\frac{16}{3}-4\log{3} + \frac{\delta^2}{\mu^2} \frac{4+3\log{3}}{3}\right)^{\frac{p-2}{2}}}. 
	\end{equation}
	For the upper bound of the integral over $I$, we estimate 
	\begin{equation}\label{eq:step1upper1} 
		\begin{split}
			\int_I \Psi(x)\,dx &\leq \|\psi\|_\infty^p \int_I (x-t_n)^{p-1} (t_{n+1}-x)^{p-1} x^{2-p}\,dx \\
			&= \|\psi\|_\infty^p \mu^{2-p} \delta^{2p-1} \int_{|x| \leq 1/2} \left(\frac{1}{4}-x^2\right)^{p-1} \left(1-\frac{\delta x}{\mu}\right)^{2-p}\,dx. 
		\end{split}
	\end{equation}
	Since trivially $0<\delta/\mu < 2$, we compute 
	\begin{equation}\label{eq:max} 
		\max_{|x|\leq 1/2} \frac{\frac{1}{4}-x^2}{1-\frac{\delta x}{\mu}} = \frac{1}{\sqrt{4-\frac{\delta^2}{\mu^2}}+2}, 
	\end{equation}
	which when employed in \eqref{eq:step1upper1} implies that 
	\begin{equation}\label{eq:step1upper2} 
		\int_I \Psi(x)\,dx \leq \|\psi\|_\infty^p \,\mu^{2-p}\, \delta^{2p-1} \frac{1}{6} \left(\frac{1}{\sqrt{4-\frac{\delta^2}{\mu^2}}+2}\right)^{p-2}. 
	\end{equation}
	Combining \eqref{eq:sepalllower2} and \eqref{eq:step1upper2} with the orthogonality relation \eqref{eq:35ortho}, then simplifying, we find that 
	\begin{equation}\label{eq:step1postest} 
		1-\lambda_0(\pi \delta) \leq \delta A^{2/p} B^{1-2/p}, 
	\end{equation}
	where $A = 2/9$ and
	\[B = \left(\frac{16}{3}-4\log{3} + \frac{\delta^2}{\mu^2} \frac{4+3\log{3}}{3}\right)\left(\frac{1}{\sqrt{4-\frac{\delta^2}{\mu^2}}+2}\right)^2\]
	It is clear that $B$ is increasing as a function of $\delta/\mu$. We assume that $\delta \leq 3/5$ and aim to obtain a contradiction. We first deduce from part (a) of the current theorem that
	\[\frac{\delta}{\mu} \leq \frac{\delta}{\frac{\sqrt{2}}{\pi}+\frac{\delta}{2}} \leq \frac{\frac{3}{5}}{\frac{\sqrt{2}}{\pi}+\frac{3}{10}}< \frac{4}{5}.\]
	It follows that
	\[B \leq \left(\frac{16}{3}-4\log{3} + \left(\frac{4}{5}\right)^2 \frac{4+3\log{3}}{3}\right)\left(\frac{1}{\sqrt{4-\left(\frac{4}{5}\right)^2}+2}\right)^2 = 0.169841\ldots\]
	Since $B \leq A$ we conclude from \eqref{eq:step1postest} that
	\[1-\lambda_0(\pi \delta) \leq \frac{2}{9} \delta.\]
	Here the left-hand side is decreasing as a function of $\delta$, while the right-hand is increasing as a function of $\delta$. We obtain a contradiction to our assumption that $\delta \leq 3/5$ from the fact that
	\[1-\lambda_0(3\pi/5) \geq 0.14,\]
	which can be extracted from Table~\ref{table:lambda0}. Consequently, $t_{n+1}-t_n \geq 3/5$. 
\end{proof}

It follows from Theorem~\ref{thm:sepall} that the separation constant of $\mathscr{Z}(\varphi_p)$ is at least $3/5$. To improve this, the idea is to argue similarly as in the proof of Lemma~\ref{lem:local} to estimate $\Psi$ on $J\setminus I$, an interval which we in the previous argument simply discarded. In this estimate, we will take into account the information we have already established on $\mathscr{Z}(\varphi_p)$. This allows us to iteratively improve our estimates.

Since $\mathscr{Z}(\psi) = (\mathscr{Z}(\varphi_p) \setminus \{t_n,t_{n+1}\})\cup\{0\}$, the estimate for $t_1$ and the estimate for $t_{n+1}-t_n$ for every $n\geq1$ both have an effect on the separation constant of $\mathscr{Z}(\psi)$. If $t_1 \geq \gamma$ and $t_{n+1}-t_n \geq \delta_0$ for every $n\geq1$, then $\sigma(\mathscr{Z}(\psi)) \geq \min(\gamma,\delta_0)$. Setting 
\begin{equation}\label{eq:Galpha} 
	\mathscr{G}_\alpha(x) = \frac{\cos{\pi \alpha x}}{1-4 (\alpha x)^2}, 
\end{equation}
the most important result of the present section reads as follows.
\begin{lemma}\label{lem:iterateme} 
	Fix $2 < p < \infty$. Suppose that $t_{n+1}-t_n \geq \delta_0$ for every $n\geq1$ and that $t_1 \geq \gamma \geq \delta_0$. If $\delta = t_{n+1}-t_n$ for some $n\geq1$ and if $\delta \leq 3\delta_0/2$, then
	\[1-\lambda_0(\pi \delta) \leq \delta \max(A,B)\]
	where 
	\begin{align*}
		A &:= \frac{4}{3}\int_0^{1/2} \left(\frac{1}{4}-x^2\right) \left(2-\min\left(1,2\left(\frac{3-2x}{1+2x}\right)^{p-1}\left(\frac{\mathscr{G}_\alpha(1-x)}{\mathscr{G}_\alpha(x)}\right)^p\right)\right)dx, \\
		B &:= \left(\frac{16}{3}-4\log{3} + \beta^2 \frac{4+3\log{3}}{3}\right)\bigg(\frac{1}{\sqrt{4-\beta^2}+2}\bigg)^2, 
	\end{align*}
	for $\alpha = \delta/\delta_0$ and $\beta = \delta/(\gamma+\delta/2)$. 
\end{lemma}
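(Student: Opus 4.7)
The plan is to refine the argument for Theorem~\ref{thm:sepall}~(b) by extracting a useful lower bound for the contribution of $\Psi$ on the annulus $J\setminus I$, which was simply discarded in the earlier argument. Starting from Lemma~\ref{lem:orthogonality}~(b) and decomposing $\mathbb{R}\setminus I = (\mathbb{R}\setminus J)\cup(J\setminus I)$, one obtains the key identity $\int_{\mathbb{R}\setminus J}\Psi = \int_I \Psi - \int_{J\setminus I}\Psi$. The argument splits into three steps: (i) lower bound the left-hand side via H\"older and the Landau--Pollak--Slepian eigenvalue, exactly as in Theorem~\ref{thm:sepall}, producing the factor $B$; (ii) upper bound the right-hand side via a pointwise comparison of $\Psi$ on $I$ with $\Psi$ on $J\setminus I$, producing the factor $A$; and (iii) combine the two bounds.

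For step (i), the calculation of Theorem~\ref{thm:sepall} carries over verbatim, except that the crude estimate for $\delta/\mu$ is replaced by $\delta/\mu \leq \beta$, since the hypothesis $t_1\geq\gamma$ combined with $t_n\geq t_1$ gives $\mu\geq t_n+\delta/2\geq\gamma+\delta/2$. For step (ii), change variables $x=\mu+\delta u$ and use symmetry to write $\int_I\Psi-\int_{J\setminus I}\Psi = \delta\int_0^{1/2}S(u)\,du$, where $S(u):=[\Psi(\mu+\delta u)+\Psi(\mu-\delta u)]-[\Psi(\mu+\delta(1-u))+\Psi(\mu-\delta(1-u))]$. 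The ratio $\Psi(\mu+\delta(1-u))/\Psi(\mu+\delta u)$ factors cleanly: the weight $|(x-t_n)(x-t_{n+1})|^{p-1}$ contributes the explicit polynomial part $((3-2u)/(1+2u))^{p-1}$ (a direct computation in $u$-coordinates), while the amplitude part $|\psi(\mu+\delta(1-u))/\psi(\mu+\delta u)|^p$ is bounded below by $(\mathscr{G}_\alpha(1-u)/\mathscr{G}_\alpha(u))^p$. This latter bound is obtained by writing $\psi$ via its canonical product and running a rearrangement argument on $s\mapsto\log|(x_1-s)/(x_0-s)|$ with $x_0=\mu+\delta u$ and $x_1=\mu+\delta(1-u)$: under the separation constraint $\dist(s,\{t_n,t_{n+1}\})\geq\delta_0$ the ratio is minimized by placing the zeros of $\psi$ as close to $\{t_n,t_{n+1}\}$ as possible, and the resulting extremal product---suitably normalized and rescaled to $u$-coordinates---is $\mathscr{G}_\alpha$, the natural Beurling--Selberg--type function of exponential type $\pi\alpha$ vanishing at the points $\pm(2k+1)/(2\alpha)$ for $k\geq 1$. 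Combining this with the trivial $|\psi|\leq\|\psi\|_\infty$ and the factorization trick from \eqref{eq:max}, which reduces $(1/4-u^2)^{p-1}(1-\delta u/\mu)^{2-p}$ to the linear weight $(1/4-u^2)(1-\delta u/\mu)/(\sqrt{4-\beta^2}+2)^{p-2}$, leads to the formula for $A$: the $\min(1,\cdot)$ reflects that $S(u)$ can be discarded whenever the outer contribution already dominates, while the factor $2$ inside the $\min$ arises from applying the amplitude bound separately to the left and right halves of the configuration and symmetrizing.

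Combining the outcomes of steps (i) and (ii) and cancelling the common prefactor $\|\psi\|_\infty^p\mu^{2-p}$ together with the matching powers of $\delta$, the factor $(\sqrt{4-\beta^2}+2)^{p-2}$ from the upper bound cancels against its counterpart in $B$; after taking the $2/p$-th power, the elementary inequality $A^{2/p}B^{1-2/p}\leq\max(A,B)$ yields the stated estimate. The main obstacle is establishing the pointwise lower bound $|\psi(\mu+\delta(1-u))/\psi(\mu+\delta u)|\geq\mathscr{G}_\alpha(1-u)/\mathscr{G}_\alpha(u)$. One must run a rigorous rearrangement/majorization argument showing that shifting zeros of $\psi$ toward $\{t_n,t_{n+1}\}$ only decreases the ratio, verify that the resulting extremal product---built from the factorization $\cos(\pi\alpha z)=\prod_{k=0}^\infty(1-4\alpha^2 z^2/(2k+1)^2)$ after removal of the $\pm 1/(2\alpha)$ zeros---coincides with $\mathscr{G}_\alpha$, and control the remaining far-away zeros of $\psi$ (in particular the one at the origin and its symmetric partners) which lie outside the scope of the local worst-case analysis. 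A secondary technical point is that the origin weight $|x|^{p-2}$ in $\Psi$ is not scale-invariant; for $n=1$ the zero of $\psi$ at the origin is comparatively close to $\mu$, and it is precisely the hypothesis $t_1\geq\gamma$ that enables the bound $\delta/\mu\leq\beta$ to absorb this asymmetry.
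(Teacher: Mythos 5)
Your decomposition $\int_{\mathbb{R}\setminus J}\Psi=\int_I\Psi-\int_{J\setminus I}\Psi$, the lower bound for $\int_{\mathbb{R}\setminus J}\Psi$ recycled from Theorem~\ref{thm:sepall} with $\delta/\mu\leq\beta$, the identification of the polynomial factor $\left(\frac{3-2x}{1+2x}\right)^{p-1}$ coming from $|(x-t_n)(x-t_{n+1})|^{p-1}$, and the concluding use of $A^{2/p}B^{1-2/p}\leq\max(A,B)$ all agree with the paper. The gap is in the pointwise bound on $J\setminus I$. You assert the one-sided inequality
\[
\left|\frac{\psi(\mu+\delta(1-u))}{\psi(\mu+\delta u)}\right|\geq\frac{\mathscr{G}_\alpha(1-u)}{\mathscr{G}_\alpha(u)},
\]
obtained by pushing zeros as close to $\{t_n,t_{n+1}\}$ as the separation allows. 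This is not true. For a one-sided ratio a zero to the right of $J$ shrinks the numerator more than the denominator, while a zero to the left does the opposite; the minimizing configuration is therefore \emph{not} symmetric about $\mu$, and placing every nearby zero on the right drives the one-sided ratio below $\mathscr{G}_\alpha(1-u)/\mathscr{G}_\alpha(u)$ (indeed to $0$ in the limit). The constraints available---separation $\geq\delta_0$ and distance $\geq\delta/2+\delta_0$ from $\mu$---do not force the zeros of $\psi$ to sit symmetrically about $\mu$, so the one-sided lower bound is a false link. What actually makes the lemma work is that the paper never bounds the one-sided ratios separately: it first combines the two halves with the elementary inequality $\min(1,a)+\min(1,b)\geq\min(1,2\sqrt{ab})$, producing the geometric mean
\[
\mathscr{P}(x)=\sqrt{\frac{\Psi(\mu-\delta(1-x))\,\Psi(\mu+\delta(1-x))}{\Psi(\mu-\delta x)\,\Psi(\mu+\delta x)}},
\]
and only then invokes the canonical product. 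The symmetrized combination $\psi(\mu+y)\psi(\mu-y)$ factors as $\psi(\mu)^2\prod_t\bigl(1-y^2/(t-\mu)^2\bigr)$, so each factor of the ratio depends on $|t-\mu|$ alone, is increasing in $|t-\mu|$, and is minimized by the worst-case separation configuration $|t-\mu|=\delta_0(n+\tfrac12)$ for $n\geq1$, which (after the square root) gives exactly $\mathscr{G}_\alpha(1-x)/\mathscr{G}_\alpha(x)$. The factor $2$ inside the $\min$ therefore originates in that elementary inequality, not in bounding each half separately and "symmetrizing" afterwards. To repair your proof you must perform the geometric-mean combination \emph{before} invoking the rearrangement bound; as written the claimed one-sided amplitude inequality is the missing (and false) step.
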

\begin{proof}
	The proof is an elaboration of Theorem~\ref{thm:sepall} and we retain the definitions $\psi$, $\Psi$, $I$, $\mu$ and $J$ from that argument. We also retain the lower bound and deduce from \eqref{eq:sepalllower1}, \eqref{eq:PSest}, \eqref{eq:sepalllower2} and the fact that $\delta/\mu \leq \beta$ that 
	\begin{equation}\label{eq:iteratelower} 
		\int_{\mathbb{R} \setminus J} \Psi(x)\,dx \geq \frac{\|\psi\|_\infty^p \,\mu^{2-p}\, \delta^{\frac{3p-2}{2}} \big(1-\lambda_0(\pi \delta)\big)^{\frac{p}{2}}}{\frac{4}{3} \left(\frac{16}{3}-4\log{3} + \beta^2 \frac{4+3\log{3}}{3}\right)^{\frac{p-2}{2}}}. 
	\end{equation}
	For the upper bound, we begin by rewriting 
	\begin{multline*}
		\int_I \Psi(x)\,dx - \int_{J\setminus I} \Psi(x)\,dx \\
		= \int_{\mu-\delta/2}^{\mu} \big(\Psi(x)-\Psi(2\mu-\delta-x)\big)\,dx + \int_{\mu}^{\mu+\delta/2} \big(\Psi(x)-\Psi(2\mu+\delta-x)\big)\,dx. 
	\end{multline*}
	Next we estimate
	\[\int_{\mu-\delta/2}^\mu \big(\Psi(x)-\Psi(2\mu-\delta-x)\big)\,dx \leq \int_{\mu-\delta/2}^\mu \Psi(x)\left(1-\min\left(1,\frac{\Psi(2\mu-\delta-x)}{\Psi(x)}\right)\right)\,dx\]
	and similarly for the integral from $\mu$ to $\mu+\delta/2$. The purpose of these estimates is that we can now obtain an upper bound by estimating $\Psi$ pointwise from above. If $x$ is in $I$, then 
	\begin{align*}
		\Psi(x) &= |\psi(x)|^p \frac{|x-t_n|^{p-1}|x-t_{n+1}|^{p-1}}{x^{p-2}} \\
		&\leq \|\psi\|_\infty \frac{|x-t_n|^{p-1}|x-t_{n+1}|^{p-1}}{(\mu-|x-\mu|)^{p-2}} =: \|\psi\|_\infty w(x). 
	\end{align*}
	The virtue of this estimate is that $w(x)=w(2\mu-x)$ for $x$ in $I$. Using this estimate and translating the resulting integral, we find that 
	\begin{multline*}
		\int_{\mu-\delta/2}^\mu \big(\Psi(x)-\Psi(2\mu-\delta-x)\big)\,dx \\
		\leq \|\psi\|_\infty \int_0^{\delta/2} w(\mu-x) \left(1-\min\left(1,\frac{\Psi(\mu-\delta+x)}{\Psi(\mu-x)}\right)\right)\,dx. 
	\end{multline*}
	As above, we estimate the the integral from $\mu$ to $\mu+\delta/2$ similarly. Using the symmetry of $w$ and the elementary inequality $\min(1,a)+\min(1,b) \geq \min(1,2\sqrt{ab})$, which is valid for $a,b\geq0$, we find that 
	\begin{multline*}
		\int_I \Psi(x)\,dx - \int_{J\setminus I} \Psi(x)\,dx \\
		\leq \|\psi\|_\infty \int_0^{\delta/2} w(\mu-x) \left(2-\min\left(1,2\sqrt{\frac{\Psi(\mu-\delta+x)\Psi(\mu+\delta-x)}{\Psi(\mu-x)\Psi(\mu+x)}}\right)\right)\,dx. 
	\end{multline*}
	By a substitution, we can rewrite the latter integral (denoted $\mathscr{J}$ in what follows) as
	\[\mathscr{J} = \delta \int_0^{1/2} w(\mu-\delta x) \left(2-\min\left(1,2\sqrt{\frac{\Psi(\mu-\delta(1-x))\Psi(\mu+\delta(1-x))}{\Psi(\mu-\delta x)\Psi(\mu+\delta x)}}\right)\right)\,dx.\]
	Using \eqref{eq:max} as in the proof of Theorem~\ref{thm:sepall}, we then estimate 
	\begin{equation}\label{eq:west} 
		w(\mu-\delta x) \leq \delta^{2p-2} \mu^{2-p} \left(\frac{1}{\sqrt{4-\beta^2}+2}\right)^{p-2} \left(\frac{1}{4}-x^2\right) 
	\end{equation}
	for $0 \leq x \leq 1/2$ and $\delta/\mu \leq \beta$. We now seek to estimate the function
	\[\mathscr{P}(x) = \sqrt{\frac{\Psi(\mu-\delta(1-x))\Psi(\mu+\delta(1-x))}{\Psi(\mu-\delta x)\Psi(\mu+\delta x)}}\]
	from below and set
	\[\frac{\psi(\mu-\delta(1-x))\psi(\mu+\delta(1-x))}{\psi(\mu-\delta x)\psi(\mu+\delta x)} = \prod_{t \in \mathscr{Z}(\psi)} \frac{1-\frac{\delta^2(1-x)^2}{(t-\mu)^2}}{1-\frac{\delta^2x ^2}{(t-\mu)^2}}.\]
	By the assumption that $\delta \leq 3\delta_0/2$ and $0 \leq x \leq 1/2$, we obtain a lower bound for each factor by choosing the minimal value of $|t-\mu|$ for each $t$. We know that $\mathscr{Z}(\psi) = (\mathscr{Z}(\varphi_p) \setminus \{t_n,t_{n+1}\})\cup\{0\}$. The distance from $\mu$ to $\mathscr{Z}(\psi)$ is at least $\delta/2+\delta_0 \geq 3\delta_0/2$ and following this the zeros will be separated by at least $\min(\gamma,\delta_0)=\delta_0$. Taking into account the symmetry around $\mu$, we conclude that
	\[\sqrt{\frac{\psi(\mu-\delta(1-x))\psi(\mu+\delta(1-x))}{\psi(\mu-\delta x)\psi(\mu+\delta x)}} \geq \prod_{n=1}^\infty \frac{1-\frac{\delta^2(1-x)^2}{\delta_0^2(1/2+n)^2}}{1-\frac{\delta^2x ^2}{\delta_0^2(1/2+n)^2}} = \frac{\mathscr{G}_\alpha(1-x)}{\mathscr{G}_\alpha(x)},\]
	where we recall the convention $\alpha = \delta/\delta_0$ and \eqref{eq:Galpha}. It follows that
	\[\mathscr{P}(x) \geq \left(\frac{\mathscr{G}_\alpha(1-x)}{\mathscr{G}_\alpha(x)}\right)^p \left(\frac{3-2x}{1+2x}\right)^{p-1} \left(\frac{\mu^2-\delta^2(1-x)^2}{\mu^2- \delta^2 x^2}\right)^{1-\frac{p}{2}}.\]
	Since $2<p<\infty$ and $0 \leq x \leq 1/2$, the final factor is bounded below by $1$. This is attained as $\mu \to \infty$. Consequently 
	\begin{equation}\label{eq:Pest} 
		\mathscr{P}(x) \geq \left(\frac{3-2x}{1+2x}\right)^{p-1}\left(\frac{\mathscr{G}_\alpha(1-x)}{\mathscr{G}_\alpha(x)}\right)^p. 
	\end{equation}
	Combining \eqref{eq:west} and \eqref{eq:Pest}, we find that 
	\begin{multline*}
		\mathscr{J} \leq \delta^{2p-1} \mu^{2-p} \left(\frac{1}{\sqrt{4-\beta^2}+2}\right)^{p-2} \times \\
		\int_0^{1/2} \left(\frac{1}{4}-x^2\right) \left(2-\min\left(1,2\left(\frac{3-2x}{1+2x}\right)^{p-1}\left(\frac{\mathscr{G}_\alpha(1-x)}{\mathscr{G}_\alpha(x)}\right)^p\right)\right)\,dx. 
	\end{multline*}
	Recalling the upper bound
	\[\int_I \Psi(x)\,dx - \int_{J\setminus I} \Psi(x)\,dx \leq \|\psi\|_\infty \mathscr{J}\]
	and the lower bound \eqref{eq:iteratelower}, we obtain the stated result after simplifying the resulting inequality (in view of Lemma~\ref{lem:orthogonality}~(b)), then finally using that $p>2$. 
\end{proof}

The following elementary result shows that it is sufficient to obtain a contradiction for one $p_1 > 2$ and one $\delta_1 > \delta_0$ from Lemma~\ref{lem:iterateme} to obtain the same conclusion for the intervals $2 \leq p \leq p_1$ and $\delta_0 \leq \delta \leq \delta_1$.
\begin{lemma}\label{lem:Galpha} 
	Fix $0 \leq x \leq 1/2$. The function
	\[(\alpha,p) \quad \mapsto \quad \left(\frac{3-2x}{1+2x}\right)^{p-1}\left(\frac{\mathscr{G}_\alpha(1-x)}{\mathscr{G}_\alpha(x)}\right)^p\]
	is decreasing as function of both $0 < p < \infty$ and $1 \leq \alpha \leq 3/2$. 
\end{lemma}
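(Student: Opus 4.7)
The plan is to take logarithmic derivatives and reduce both monotonicity assertions to a single inequality on the ratio $B(\alpha,x) := \mathscr{G}_\alpha(1-x)/\mathscr{G}_\alpha(x)$. Writing $A(x) := (3-2x)/(1+2x)$ and $G(p,\alpha,x) := A^{p-1} B^p$, we have $\partial_p \log G = \log(AB)$ and $\partial_\alpha \log G = p\, \partial_\alpha \log B$ (since $A$ is independent of $\alpha$ and $p$). Hence it suffices to establish (i) $B$ is decreasing in $\alpha \in [1,3/2]$, and (ii) $AB \leq 1$ on that range.

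First I would exploit the Weierstrass product for cosine,
\[
\cos(\pi \alpha y) = (1 - 4\alpha^2 y^2)\prod_{n=1}^\infty\left(1 - \frac{4\alpha^2 y^2}{(2n+1)^2}\right),
\]
in which the $n=0$ factor cancels the denominator of $\mathscr{G}_\alpha(y)$, yielding the convergent, positive representation
\[
\mathscr{G}_\alpha(y) = \prod_{n=1}^\infty\left(1 - \frac{4\alpha^2 y^2}{(2n+1)^2}\right),
\]
valid for $0 \leq 2\alpha y < 3$ (which holds throughout our parameter range). Taking logarithms and differentiating in $\alpha$, assertion (i) becomes the term-by-term inequality
\[
\frac{(1-x)^2}{(2n+1)^2 - 4\alpha^2(1-x)^2} \;\geq\; \frac{x^2}{(2n+1)^2 - 4\alpha^2 x^2} \qquad (n \geq 1),
\]
which upon clearing (positive) denominators collapses to $(1-x)^2 \geq x^2$, i.e.\ $x \leq 1/2$. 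This settles (i) and, in particular, monotonicity of $G$ in $\alpha$.

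For (ii), I would evaluate $B$ at the endpoint $\alpha=1$ directly: the identities $\cos(\pi(1-x)) = -\cos(\pi x)$ and $1-4(1-x)^2 = -(1-2x)(3-2x)$ yield
\[
B(1,x) \;=\; \frac{-(1-2x)(1+2x)}{-(1-2x)(3-2x)} \;=\; \frac{1+2x}{3-2x} \;=\; \frac{1}{A(x)}.
\]
Combined with the monotonicity of $B$ in $\alpha$ established in the previous paragraph, this gives $B(\alpha,x) \leq B(1,x) = 1/A(x)$ for $\alpha \in [1,3/2]$, hence $AB \leq 1$, which via $\partial_p \log G = \log(AB) \leq 0$ yields monotonicity of $G$ in $p$.

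There is no serious obstacle; the only care is required at the corner $(\alpha,x) = (3/2, 0)$, where the $n=1$ factor of $\mathscr{G}_{3/2}(1)$ vanishes so that $B = 0$ and the claim is trivial, while the analysis above applies unchanged on the interior and extends by continuity.
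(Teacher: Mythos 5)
Your proof is correct, and its overall architecture matches the paper's: reduce to the two claims that the ratio $B(\alpha,x)=\mathscr{G}_\alpha(1-x)/\mathscr{G}_\alpha(x)$ decreases in $\alpha$ and that $A(x)\,B(1,x)=1$, then conclude. The second ingredient is identical in both arguments; the paper calls it an ``amusing observation,'' and you obtain it by the same factorizations $1-4x^2=(1-2x)(1+2x)$ and $1-4(1-x)^2=-(1-2x)(3-2x)$. Where you genuinely diverge is the first ingredient. The paper works with $f:=\mathscr{G}_1$ directly and establishes $\alpha$-monotonicity by a three-step derivative estimate that relies on $f>0$, $f'<0$, and $f''\leq 0$ on $[0,3/2]$ --- properties that are merely asserted, not verified (and $f''\leq0$ in particular is not obvious by inspection). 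You instead pass to the Weierstrass product $\mathscr{G}_\alpha(y)=\prod_{n\geq1}\bigl(1-4\alpha^2y^2/(2n+1)^2\bigr)$, which the paper itself invokes in the proof of Lemma~\ref{lem:iterateme} but not here, and the $\alpha$-derivative of $\log B$ then collapses term-by-term to $(1-x)^2\geq x^2$. This is cleaner: it is self-contained, needs no concavity claim, and makes transparent exactly why the hypothesis $x\leq 1/2$ is what drives the result. Your remark about the degenerate corner $(\alpha,x)=(3/2,0)$ is correct and a point the paper glosses over; one could also note that the paper's claim ``$f'(y)<0$'' fails at $y=0$ (where $f'(0)=0$), a minor slip your product argument sidesteps since the single factor $y^2$ in each numerator handles the endpoint automatically.
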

\begin{proof}
	We first fix $0<p<\infty$ consider the expression as a function of $\alpha$. The function $f(y)=\mathscr{G}_1(y)= \frac{\cos{\pi y}}{1-4y^2}$ satisfies $f(y)\geq0$, $f'(y)<0$ and $f''(y)\leq0$ for $0 \leq y \leq 3/2$. Then, 
	\begin{multline*}
		\frac{d}{d\alpha} \frac{f(\alpha(1-x))}{f(\alpha x)} = \frac{1-x}{f(\alpha x)} f'(\alpha(1-x)) - \frac{x f(\alpha(1-x))}{f^2(\alpha x)} f'(\alpha x) \\
		\leq \frac{x}{f(\alpha x)}\left(f'(\alpha (1-x))-f'(\alpha x)\right) \leq 0, 
	\end{multline*}
	where we first used that $f'(y)<0$ and that $1-x \geq x$ in combination with that $f(\alpha (1-x)) \leq f(\alpha x)$ and that $-f'(y)>0$, then finally that $f''(y)\leq0$.
	
	To establish that the expression is decreasing as a function of $0<p<\infty$ if $1 \leq \alpha \leq 3/2$ and $0 \leq x \leq 1/2$ fixed, it is sufficient to establish that 
	\begin{equation}\label{eq:lol} 
		\frac{3-2x}{1+2x}\frac{\mathscr{G}_1(1-x)}{\mathscr{G}_1(x)} \leq 1 
	\end{equation}
	by the previous claim. This follows at once from the amusing observation \eqref{eq:lol} is an equality for all $0 \leq x \leq 1$. 
\end{proof}

In order to iterate our way from $t_{n+1}-t_n \geq 3/5$ from Theorem~\ref{thm:sepall}~(b) to $t_{n+1}-t_n \geq 2/3$ in Theorem~\ref{thm:sep}~(b) we require a better estimate for $t_1$ than the one established in Theorem~\ref{thm:sepall}~(a).

\begin{proof}[Proof of Theorem~\ref{thm:sep}~(a)] 
	Fix $2<p\leq 4$. Let $t_1$ be the smallest positive zero of $\varphi_p$ and set
	\[\psi(x) := \frac{\varphi_p(x)}{x-t_1} \qquad \text{and} \qquad \Psi(x) := |\psi(x)|^p |x-t_1|^{p-1} |x|.\]
	Note that $\psi$ is in $PW^2$ since $\varphi_p$ is in $PW^\infty$ and $\varphi_p(t_1)=0$. The orthogonality relation of Lemma~\ref{lem:orthogonality}~(a) is 
	\begin{equation}\label{eq:ortho1} 
		\int_{\mathbb{R} \setminus I} \Psi(x)\,dx = \int_I \Psi(x)\,dx, 
	\end{equation}
	where $I := [0,t_1]$. Setting $J := [-t_1/2,3t_1/2]$, we rewrite \eqref{eq:ortho1} as 
	\begin{equation}\label{eq:ortho2} 
		\int_{\mathbb{R} \setminus J} \Psi(x)\,dx = \int_I \Psi(x)\,dx - \int_{J \setminus I} \Psi(x)\,dx. 
	\end{equation}
	Using H\"older's inequality as in the proof of Theorem~\ref{thm:sepall}~(b), we find that 
	\begin{equation}\label{eq:t1lower1} 
		\int_{\mathbb{R} \setminus J} \Psi(x)\,dx \geq \frac{\|\psi\|_\infty^p \big(1-\lambda_0(\pi t_1)\big)^{\frac{p}{2}}}{\left(\displaystyle \int_{\mathbb{R}\setminus J} |x|^{-\frac{2}{p-2}} |x-t_1|^{-\frac{2(p-1)}{p-2}}\,dx\right)^{\frac{p-2}{2}}}. 
	\end{equation}
	Moreover, 
	\begin{multline*}
		\int_{\mathbb{R}\setminus J} |x|^{-\frac{2}{p-2}} |x-t_1|^{-\frac{2(p-1)}{p-2}}\,dx \\
		\leq \sup_{x \in \mathbb{R} \setminus J} \left(|x|^{-1} |x-t_1|^{-1}\right)^{\frac{4-p}{p-2}} \int_{\mathbb{R}\setminus J} |x|^{-1} |x-t_1|^{-3}\,dx \\
		= t_1^{-\frac{p+2}{p-2}} \left(\frac{4}{3}\right)^{\frac{4-p}{p-2}} \left(2\log{3}-\frac{8}{9}\right), 
	\end{multline*}
	since $2 <p \leq 4$. Inserting this estimate into \eqref{eq:t1lower1}, we find that 
	\begin{equation}\label{eq:t1lower2} 
		\int_{\mathbb{R} \setminus J} \Psi(x)\,dx \geq \frac{\|\psi\|_\infty^p \,t_1^{\frac{p}{2}+1} \,\big(1-\lambda_0(\pi t_1)\big)^{\frac{p}{2}}}{\big(\frac{4}{3}\big)^{\frac{4-p}{2}}\big(2\log{3}-\frac{8}{9}\big)^{\frac{p-2}{2}}}. 
	\end{equation}
	For the upper bound of the right-hand side of \eqref{eq:ortho2}, we discard the contribution of the interval $(t_1,3t_1/2]$ to get 
	\begin{equation}\label{eq:t1upper1} 
		\int_I \Psi(x)\,dx - \int_{J \setminus I} \Psi(x)\,dx \leq \int_0^{t_1/2} \big(\Psi(x)-\Psi(-x)\big)\,dx + \int_{t_1/2}^{t_1} \Psi(x)\,dx. 
	\end{equation}
	For the second integral in \eqref{eq:t1upper1}, we estimate
	\[\int_{t_1/2}^{t_1} \Psi(x)\,dx \leq \|\psi\|_\infty^p \int_{t_1/2}^{t_1} (t_1-x)^{p-1}x\,dx = \|\psi\|_\infty^p t_1^{p+1} \int_{1/2}^1 (1-x)^{p-1}x\,dx.\]
	For the first integral in \eqref{eq:t1upper1}, we use the fact that $\varphi_p$ is even (from Lemma~\ref{lem:unique}) to estimate 
	\begin{align*}
		\int_0^{t_1/2} \big(\Psi(x)-\Psi(-x)\big)\,dx &= \int_0^{t_1/2} |\psi(x)|^p (t_1-x)^{p-1} \frac{2x^2}{t_1+x}\,dx \\
		&\leq \|\psi\|_\infty^p t_1^{p+1} \int_0^{1/2} (1-x)^{p-1} \frac{2x^2}{1+x}\,dx. 
	\end{align*}
	Inserting these two estimates into \eqref{eq:t1upper1}, we find that 
	\begin{equation}\label{eq:t1upper2} 
		\int_I \Psi(x)\,dx - \int_{J \setminus I} \Psi(x)\,dx \leq \|\psi\|_\infty^p t_1^{p+1} \int_0^1 (1-x)^{p-1} w(x)\,dx, 
	\end{equation}
	where $w(x) := \frac{2x^2}{1+x}$ for $0 \leq x \leq 1/2$ and $w(x):=x$ for $1/2<x\leq1$. By H\"older's inequality and the assumption that $2 < p \leq 4$, we deduce that 
	\begin{multline*}
		\int_0^1 (1-x)^{p-1} w(x)\,dx \leq \left(\int_0^1 (1-x) w(x)\,dx\right)^{\frac{4-p}{2}} \left(\int_0^1 (1-x)^3 w(x)\,dx \right)^{\frac{p-2}{2}} \\
		= \left(4 \log{\left(\frac{3}{2}\right)}-\frac{3}{2}\right)^{\frac{4-p}{2}} \left(16\log{\left(\frac{3}{2}\right)}-\frac{6203}{960}\right)^{\frac{p-2}{2}}. 
	\end{multline*}
	Inserting this estimate in \eqref{eq:t1upper2}, then combining what we get with the modified orthogonality relation \eqref{eq:ortho2} and the lower bound \eqref{eq:t1lower2} we simplify to find that 
	\begin{equation}\label{eq:t1postest} 
		1-\lambda_0(\pi t_1) \leq t_1 A^{\frac{4-p}{p}} B^{\frac{p-2}{p}}, 
	\end{equation}
	where
	\[A = \frac{16}{3} \log{\left(\frac{3}{2}\right)}-2 \qquad \text{and} \qquad B = \left(2\log{3}-\frac{8}{9}\right)\left(16\log{\left(\frac{3}{2}\right)}-\frac{6203}{960}\right).\]
	Since $A < \sqrt{B}$, we find that $A^{\frac{4-p}{p}} B^{\frac{p-2}{p}} \leq \sqrt{B}$. Inserting this estimate into \eqref{eq:t1postest}, we conclude that 
	\begin{equation}\label{eq:t1postAB} 
		1-\lambda_0(\pi t_1) \leq t_1 \sqrt{B}. 
	\end{equation}
	The left-hand side of \eqref{eq:t1postAB} is a decreasing function of $t_1$, while the right-hand side is an increasing function of $t_1$. Consequently, if \eqref{eq:t1postAB} fails for some $t_1 = s$, then we must have $t_1>s$. Setting $s = 2/\pi$, we obtain the desired contradiction since $2 \sqrt{B}/\pi \leq 0.118$ and 
	\begin{equation}\label{eq:PS2pi} 
		1-\lambda_0(2) \geq 0.119, 
	\end{equation}
	where the latter was extracted from Table~\ref{table:lambda0}. 
\end{proof}

It seems difficult to do much better than $t_1 \geq 2/\pi$ uniformly in the range $2 \leq p \leq 4$. We expect the first positive zero of $\varphi_4$ to be close to the first positive zero of the function $f_4$ defined in \eqref{eq:fp}, which is $t_1 = 0.76547\ldots$.

\begin{proof}[Proof of Theorem~\ref{thm:sep}~(b)] 
	We will use Lemma~\ref{lem:iterateme} twice. In view of Theorem~\ref{thm:sep}~(a) we may take $\gamma = 2/\pi$ and by Theorem~\ref{thm:sepall}~(b) we can set $\delta_0 = 3/5$. By Lemma~\ref{lem:Galpha}, we see that $A$ is increasing as a function of both $p$ and $\delta$. We set $\delta=2/\pi$ and $p=4$ to get a value for $A$ which works for all $3/5 \leq \delta \leq 2/\pi$ and $2<p \leq 4$. Computing numerically with the \texttt{integrate} package from SciPy we find that
	\[A = 0.1440\ldots \qquad \text{and} \qquad B = 0.1337\ldots\]
	We obtain a contradiction to the inequality $1-\lambda_0(\pi \delta) \leq \delta A$ for $\delta = 2/\pi$ by \eqref{eq:PS2pi}. It follows that $t_{n+1}-t_n \geq 2/\pi$ for every $n\geq1$ and for all $2 \leq p \leq 4$. 
	
	Repeating the same analysis with $\gamma=\delta_0 = 2/\pi$, $p=4$ and $\delta=2/3$ we find that
	\[A = 0.1387\ldots \qquad \text{and} \qquad B = 0.1388\ldots\]
	which is a contradiction to the inequality $1-\lambda(\pi \delta) \leq \delta B$ for $\delta = 2/3$ by
	\[1-\lambda_0(2\pi/3)>0.103,\]
	which can be extracted from Table~\ref{table:lambda0}. This means that $t_{n+1}-t_n \geq 2/3$ for every $n\geq1$ and for all $2 \leq p \leq 4$ as desired. 
\end{proof}

\section{An upper bound for \texorpdfstring{$\mathscr{C}_p$}{Cp} when \texorpdfstring{$2 < p < 4$}{2<p<4}} \label{sec:2p4} 
A corollary to Theorem~\ref{thm:intrep} will serve as our starting point for the proof of Theorem~\ref{thm:korevaar}. To state it, we fix $1 \leq p < \infty$ and recall from Lemma~\ref{lem:unique} the unique\footnote{The fact that the extremal function is unique (and consequently even) is technically only required in the proof of Theorem~\ref{thm:sep}, but it simplifies the exposition of the present section greatly.} solution of \eqref{eq:extremalproblem} is of the form
\[\varphi_p(z) = \prod_{n=1}^\infty \left(1-\frac{z^2}{t_n^2}\right),\]
where $(t_n)_{n\geq1}$ is a strictly increasing sequence of positive numbers. Recall also our convention that $t_0=0$. Consider the function $K \colon (0,\infty) \to \mathbb{R}$ defined by 
\begin{equation}\label{eq:K} 
	K(x) := \sum_{n=0}^\infty \chi_{(t_n,t_{n+1})}(x)\, \frac{\sin{\frac{p}{2}\pi(x-n)}}{\pi x} 
\end{equation}
and set $K_+(x) := \max(K(x),0)$. 
\begin{corollary}\label{cor:kplus} 
	Fix $1 \leq p < \infty$ and let $K_+$ be as above. Then
	\[\mathscr{C}_p \leq 2 \int_0^\infty K_+^2(x)\,dx.\]
\end{corollary}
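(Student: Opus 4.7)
The plan is to apply the representation formula of Theorem~\ref{thm:intrep} to the extremal function $\varphi_p$ with the exponent $q = p/2$, and then to combine this with the Cauchy--Schwarz inequality in the same spirit as in the proof of Theorem~\ref{thm:PW4}. The reason for the choice $q=p/2$ is precisely that after an application of Cauchy--Schwarz, the square of $|\varphi_p|^{p/2}$ will reconstitute $|\varphi_p|^p$ and thereby give access to $\|\varphi_p\|_p^p = 1/\mathscr{C}_p$.

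The first step is to use the uniqueness result of Lemma~\ref{lem:unique} together with Lemma~\ref{lem:real} to conclude that $\varphi_p$ is real and even, so that the zeros appear as $t_{-n} = -t_n$ for $n\ge 1$. Invoking Theorem~\ref{thm:intrep} with $f=\varphi_p$ and $q = p/2$ yields
\[
1 = |\varphi_p(0)|^{p/2} = \sum_{n=0}^\infty \int_{-t_{n+1}}^{-t_n} |\varphi_p(x)|^{p/2} \,\frac{\sin\tfrac{p}{2}\pi(x+n)}{\pi x}\,dx + \sum_{n=0}^\infty \int_{t_n}^{t_{n+1}} |\varphi_p(x)|^{p/2} \,\frac{\sin\tfrac{p}{2}\pi(x-n)}{\pi x}\,dx.
\]
In the first sum I would perform the substitution $x \mapsto -x$. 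Using the evenness of $|\varphi_p|^{p/2}$, together with the fact that both $x \mapsto \sin\tfrac{p}{2}\pi x$ and $x\mapsto 1/x$ are odd (so their product is even, and the sign of $n$ flips back when the sine is rewritten via $\sin\tfrac{p}{2}\pi(-x+n)=-\sin\tfrac{p}{2}\pi(x-n)$), the two sums coincide. Recognising the resulting integrand as $2 |\varphi_p(x)|^{p/2} K(x)$ with $K$ defined by~\eqref{eq:K}, I obtain the clean identity
\[
1 = 2 \int_0^\infty |\varphi_p(x)|^{p/2} K(x)\,dx.
\]

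Next I would bound $K$ pointwise by $K_+$, which is legitimate since $|\varphi_p|^{p/2}\ge 0$, to obtain
\[
1 \le 2 \int_0^\infty |\varphi_p(x)|^{p/2} K_+(x)\,dx,
\]
and then apply the Cauchy--Schwarz inequality to the right-hand side, giving
\[
1 \le 4 \left(\int_0^\infty |\varphi_p(x)|^{p}\,dx\right)\left(\int_0^\infty K_+^2(x)\,dx\right) = 2 \|\varphi_p\|_p^p \int_0^\infty K_+^2(x)\,dx,
\]
where the final equality uses once more that $\varphi_p$ is even. Since $\varphi_p$ is extremal for \eqref{eq:extremalproblem} with $\varphi_p(0)=1$, we have $\|\varphi_p\|_p^p = 1/\mathscr{C}_p$, and the stated bound $\mathscr{C}_p \le 2\int_0^\infty K_+^2(x)\,dx$ follows at once.

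The only step requiring any care is the symmetrisation that collapses the two sums in Theorem~\ref{thm:intrep} into a single integral on $(0,\infty)$; the chain of sign flips (the Jacobian of $x\mapsto -x$, the odd function $1/x$, and the parity of the sine after shifting by $n$) has to be tracked carefully to confirm that the combined integrand is $2|\varphi_p|^{p/2} K$ rather than, say, a quantity in which the contributions cancel. Once this symmetry check is in hand, everything else is a straightforward application of Cauchy--Schwarz together with the defining extremal property of $\varphi_p$.
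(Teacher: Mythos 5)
Your proof is correct and takes essentially the same route as the paper: apply Theorem~\ref{thm:intrep} to $\varphi_p$ with $q=p/2$, use the evenness from Lemma~\ref{lem:unique} to collapse the two sums into $1 = 2\int_0^\infty |\varphi_p|^{p/2} K\,dx$, then bound $K$ by $K_+$ and apply Cauchy--Schwarz. The only cosmetic difference is the order of operations (you bound $K$ by $K_+$ before squaring whereas the paper squares first, both valid since the relevant quantities are nonnegative), and you spell out the symmetrization bookkeeping more explicitly than the paper does.
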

\begin{proof}
	We apply Theorem~\ref{thm:intrep} with $f=\varphi_p$ and $q=p/2$, then invoke Lemma~\ref{lem:unique} to rewrite the formula as
	\[1=2\int_0^{\infty} |\varphi_p(x)|^{p/2} K(x) \,dx.\]
	Squaring both sides, then using the definition of $K_+$ and the Cauchy--Schwarz inequality we find that
	\[1\leq 4 \left(\int_0^{\infty} |\varphi_p(x)|^{p/2} K_+(x) \,dx\right)^2 \leq 2 \|\varphi_p\|_p^p \int_{0}^\infty K_+^2(x)\, dx. \qedhere\]
\end{proof}

Before proceeding, let us take a look at the situation for the two endpoint cases $p=2$ and $p=4$. 
\begin{example}\label{ex:2} 
	If $p=2$, then the expression for $K$ in \eqref{eq:K} simplifies to
	\[K(x) = \sum_{n=0}^\infty \chi_{(t_n,t_{n+1})}(x)\, (-1)^n \sinc{\pi x}.\]
	It is clear that the choice $t_n=n$ for $n\geq1$ maximizes the upper bound of Corollary~\ref{cor:kplus}, since in this case $K(x)=K_+(x)=|\sinc{\pi x}|$. This gives yet another proof that $\mathscr{C}_2=1$ and that the unique solution of \eqref{eq:extremalproblem} is $\varphi_2(x)=\sinc{\pi x}$. 
\end{example}
\begin{example} 
	If $p=4$, then the expression for $K$ in \eqref{eq:K} simplifies to
	\[K(x) = 2\sum_{n=0}^\infty \chi_{(t_n,t_{n+1})}(x)\, \sinc{2 \pi x}.\]
	Here the choice of $t_n$ for $n\geq1$ is irrelevant. The upper bound of Corollary~\ref{cor:kplus} recovers the result from Theorem~\ref{thm:PW4}. 
\end{example}

The reader may at this point notice that if we had known that 
\begin{equation}\label{eq:2p4} 
	n-2/p \leq t_n \leq n, 
\end{equation}
then the proof in the case $2<p<\infty$ would have been equally trivial as in these two examples. It seems likely that \eqref{eq:2p4} holds for $2<p<4$, but, unfortunately, we only have at our disposal the much weaker assertions of Theorem~\ref{thm:sep}. Our plan is now to deduce from that theorem that the following result applies. 
\begin{lemma}\label{lem:collection} 
	Fix $1 \leq p < \infty$ and let $K_+$ be as above. Suppose that there is a sequence $\mathscr{I} = (I_k)_{k\geq0}$ of bounded measurable subsets of $(0,\infty)$ that enjoy the following properties: 
	\begin{itemize}
		\item[(a)] $K_+(x)=0$ on $(0,\infty)\setminus \bigcup_{k\geq0} I_k$. 
		\item[(b)] Set $\xi_k = \inf(I_k)$. Then $\xi_0=0$, $\xi_1\geq1$ and $\xi_{k+1} \geq \xi_k + 2/p$ for all $k\geq1$. 
		\item[(c)] For every $k\geq0$,
		\[\int_{I_k} K_+^2(x)\,dx \leq \int_{\xi_k}^{\xi_k+2/p} \frac{\sin^2{\frac{p}{2}\pi(x-\xi_k)}}{\pi^2 x^2}\,dx.\]
	\end{itemize}
	Then
	\[2 \int_0^\infty K_+^2(x)\,dx \leq \frac{p}{2}\left(1- 2(p-2)\int_1^\infty (\sinc{\pi x})^2 \, \frac{4x+p-2}{(2x+p-2)^2}\,dx\right).\]
\end{lemma}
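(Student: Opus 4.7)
The plan is to combine (a) and (c) to obtain the bound
\[
2\int_0^\infty K_+^2(x)\,dx \;\leq\; 2\sum_{k\geq 0} J(\xi_k), \qquad J(\xi) := \int_{\xi}^{\xi+2/p} \frac{\sin^2\!\bigl(\tfrac{p\pi}{2}(x-\xi)\bigr)}{\pi^2 x^2}\,dx,
\]
and then to evaluate the right-hand side under the extremal choice of the $\xi_k$ allowed by (b). After a substitution $y = x-\xi$, the integrand defining $J(\xi)$ is pointwise decreasing in $\xi\geq 0$, so the sum is maximized by taking each $\xi_k$ as small as possible, namely $\xi_0=0$ and $\xi_k = 1 + 2(k-1)/p$ for $k\geq 1$. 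It then suffices to verify that twice this maximal sum equals the stated right-hand side.

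The further substitution $u = py/2$ turns the $k=0$ term into $\frac{p}{2\pi^2}\int_0^1 (\sin^2\pi u)/u^2\,du$, while for $k\geq 1$ one obtains
\[
J(\xi_k) = \frac{2p}{\pi^2}\int_0^1 \frac{\sin^2 \pi u}{(2u+p+2(k-1))^2}\,du.
\]
The key maneuver is to shift $v = u + k - 1$ and exploit the integer-period identity $\sin^2\pi v = \sin^2\pi u$ to telescope the disjoint unit intervals $[k-1,k]$ into $[0,\infty)$; one further translation $x = v+1$ then yields
\[
\sum_{k=1}^\infty J(\xi_k) = \frac{2p}{\pi^2}\int_1^\infty \frac{\sin^2 \pi x}{(2x+p-2)^2}\,dx.
\]

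Combining the $k=0$ term with this sum and using the classical identity $\int_0^\infty (\sin^2\pi x)/x^2\,dx = \pi^2/2$ to trade $\int_0^1$ for $\pi^2/2 - \int_1^\infty$, I would arrive at
\[
2\sum_{k\geq 0} J(\xi_k) = \frac{p}{2} - \frac{p}{\pi^2}\int_1^\infty \left(\frac{1}{x^2}-\frac{4}{(2x+p-2)^2}\right)\sin^2 \pi x\,dx.
\]
The final ingredient is the partial-fraction identity
\[
\frac{1}{x^2}-\frac{4}{(2x+p-2)^2} = \frac{(p-2)(4x+p-2)}{x^2(2x+p-2)^2},
\]
which is a direct computation once denominators are cleared; inserting it and recognizing $\sin^2\pi x/(\pi^2 x^2) = (\sinc \pi x)^2$ delivers the asserted bound. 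I do not foresee a substantive obstacle: the whole argument reduces to a careful telescoping together with the hidden algebraic identity above, and the monotonicity check that selects the extremal configuration $\xi_k = 1 + 2(k-1)/p$ is immediate.
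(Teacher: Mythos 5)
Your proposal is correct and follows essentially the same route as the paper's proof: the bound $2\int_0^\infty K_+^2 \le 2\sum_k J(\xi_k)$ via (a), (c), then the monotonicity of $1/x^2$ together with (b) and the $\pi$-periodicity of $\sin^2$ to reduce to the extremal configuration $\xi_k = 1+2(k-1)/p$ for $k\geq 1$, after which one compares against $\int_0^{2/p}\sin^2(\tfrac{p}{2}\pi x)/(\pi^2 x^2)\,dx + \int_1^\infty\sin^2(\tfrac{p}{2}\pi(x-1))/(\pi^2 x^2)\,dx$. The only difference is that the paper stops at that intermediate expression and merely asserts its equality with the stated bound, whereas you carry out the closing algebra explicitly (the rescaling $u = py/2$, the identity $\int_0^\infty \sin^2(\pi x)/x^2\,dx = \pi^2/2$, and the partial-fraction identity $(2x+p-2)^2 - 4x^2 = (p-2)(4x+p-2)$), which is a welcome clarification rather than a change of method.
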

\begin{proof}
	Using (a) and (c), then combining (b) with the fact that the function $x \mapsto 1/x$ is decreasing on $(0,\infty)$ and periodicity, we find that 
	\begin{align*}
		2 \int_0^\infty K_+^2(x)\,dx &\leq 2\sum_{k=0}^\infty \int_{\xi_k}^{\xi_k+2/p} \frac{\sin^2{\frac{p}{2}\pi(x-\xi_k)}}{\pi^2 x^2}\,dx \\
		&\leq 2\left(\int_0^{2/p} \frac{\sin^2{\frac{p}{2}\pi x}}{\pi^2 x^2}\,dx + \int_1^\infty \frac{\sin^2{\frac{p}{2}\pi (x-1)}}{\pi^2 x^2}\,dx\right). 
	\end{align*}
	The final expression is equal to the one given in the statement. 
\end{proof}

By Corollary~\ref{cor:kplus} and Lemma~\ref{lem:collection}, we will have a proof of Theorem~\ref{thm:korevaar} if we can produce the sequence of sets described in the latter result. We begin by establishing some terminology. 
\begin{definition}
	For $n\geq0$, we refer to the components of
	\[\left\{x\in\mathbb{R}\,:\,\sin{\frac{p}{2}\pi(x-n)} > 0\right\}\]
	as the intervals at \emph{level} $n$. An interval at level $n$ contained in $(t_n,t_{n+1})$ is called a \emph{stationary interval}. For $n\geq 1$, we assign a \emph{sign} $(\delta,\varepsilon)$ to $t_n$ as follows. We set $\delta$ equal to $+$ if $t_n$ is in the interior of an interval at level $n-1$ and equal to $-$ otherwise. Similarly, $\varepsilon$ is set equal to $+$ if $t_n$ is in the interior of an interval at level $n$ and equal to $-$ otherwise. When the sign of $t_n$ is $(+,\pm)$, we will refer to the interval at level $n-1$ containing $t_n$ as a \emph{departure interval}. When the sign of $t_n$ is $(\pm,+)$, we will refer to the interval at level $n$ containing $t_n$ as an \emph{arrival interval}. 
\end{definition}

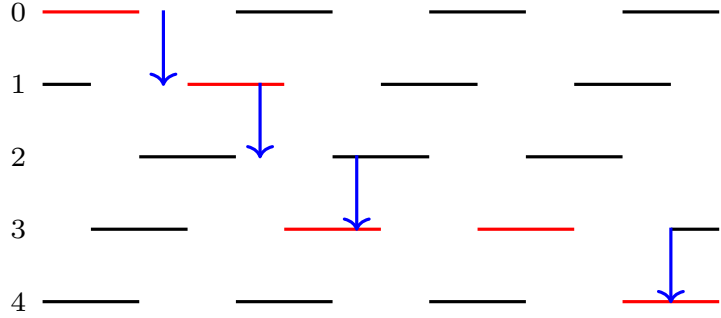
\begin{figure}
	\centering
	\begin{tikzpicture}[scale=1.5]
		\begin{axis}[
			axis equal image,
			axis lines = none,
			xmin = -2, 
			xmax = 30,
			ymin = -14, 
			ymax = 1]
			
			\node at (axis cs: -1,0) {$\scriptstyle 0$};
			\node at (axis cs: -1,-3) {$\scriptstyle 1$};
			\node at (axis cs: -1,-6) {$\scriptstyle 2$};
			\node at (axis cs: -1,-9) {$\scriptstyle 3$};
			\node at (axis cs: -1,-12) {$\scriptstyle 4$};
			
			\addplot[thick,solid,color=red] coordinates {(0,0) (4,0)};
			\addplot[thick,solid] coordinates {(8,0) (12,0)};
			\addplot[thick,solid] coordinates {(16,0) (20,0)};
			\addplot[thick,solid] coordinates {(24,0) (28,0)};
			
			\addplot[thick,solid] coordinates {(0,-3) (2,-3)};
			\addplot[thick,solid,color=red] coordinates {(6,-3) (10,-3)};
			\addplot[thick,solid] coordinates {(14,-3) (18,-3)};
			\addplot[thick,solid] coordinates {(22,-3) (26,-3)};
			
			\addplot[thick,solid] coordinates {(4,-6) (8,-6)};
			\addplot[thick,solid] coordinates {(12,-6) (16,-6)};
			\addplot[thick,solid] coordinates {(20,-6) (24,-6)};
			
			\addplot[thick,solid] coordinates {(2,-9) (6,-9)};
			\addplot[thick,solid,color=red] coordinates {(10,-9) (14,-9)};
			\addplot[thick,solid,color=red] coordinates {(18,-9) (22,-9)};
			\addplot[thick,solid] coordinates {(26,-9) (28,-9)};
			
			\addplot[thick,solid] coordinates {(0,-12) (4,-12)};
			\addplot[thick,solid] coordinates {(8,-12) (12,-12)};
			\addplot[thick,solid] coordinates {(16,-12) (20,-12)};
			\addplot[thick,solid,color=red] coordinates {(24,-12) (28,-12)};
			
			\addplot[thick,solid,color=blue,->] coordinates {(5,0+0.065) (5,-3-0.065)};
			\addplot[thick,solid,color=blue,->] coordinates {(9,-3+0.065) (9,-6-0.065)};
			\addplot[thick,solid,color=blue,->] coordinates {(13,-6+0.065) (13,-9-0.065)};
			\addplot[thick,solid,color=blue,->] coordinates {(26,-9+0.065) (26,-12-0.065)};
		\end{axis}
	\end{tikzpicture}
	\caption{Some intervals at levels $n=0,1,2,3,4$ for $p=3$. Here $t_1 = \frac{5}{6}$ has sign ${\color{blue}(-,-)}$, $t_2 = \frac{3}{2}$ has sign ${\color{blue}(+,-)}$, $t_3 = \frac{13}{6}$ has sign ${\color{blue}(+,+)}$ and $t_4=\frac{13}{3}$ has sign ${\color{blue}(-,+)}$. From left to right, the {\color{red}highlighted} intervals are stationary, departure interval for $t_2$, arrival interval for $t_3$, stationary, and arrival interval for $t_4$.}
	\label{fig:p3}
\end{figure}

We begin by pointing out that the interval at level $n$ are of the form $I=(\xi,\xi+2/p)$, where $\xi = n + k 4/p$ for $k$ in $\mathbb{Z}$. We also stress that a stationary interval can neither be arrival nor departure. See Figure~\ref{fig:p3} for an illustration of the definition. 

The idea is that the support of $K_+$ is the union of the intervals at level $n$ intersected with the interval $(t_n,t_{n+1})$. We traverse $(0,\infty)$ and jump from level $n$ to $n+1$ whenever we encounter $t_{n+1}$. As we go along, we will use an algorithm based on the signs of $t_n$ to construct the sequence $\mathscr{I}$. The following result is trivial, but we state it separately for ease of reference.
 
\begin{lemma}\label{lem:interval} 
	If $2 < p \leq 4$, then an interval at level $n$ intersects at most one interval at any other level. If $I=(\xi,\xi+2/p)$ is an interval at level $n$, then the interval at level $n+1$ intersecting $I$ is $(\xi+1-4/p,\xi+1-2/p)$. 
\end{lemma}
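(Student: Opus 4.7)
The plan is a direct computation from the definition. Solving $\sin(\tfrac{p}{2}\pi(x-n)) > 0$ shows that the intervals at level $n$ are precisely $(n + 4k/p,\, n + 4k/p + 2/p)$ for $k\in\mathbb{Z}$. The crucial observation is that, for $2 < p \leq 4$, both these intervals and the gaps between consecutive ones at a single level have length exactly $2/p$.

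For the first assertion, I observe that an open interval of length $2/p$ cannot meet two distinct open intervals at some level $m$: such a configuration forces the interval to straddle an entire gap (also of length $2/p$) from inside one component to inside the next, which requires length strictly greater than $2/p$, a contradiction.

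For the second assertion, write $I = (\xi,\,\xi+2/p)$ with $\xi = n + 4k_0/p$, and determine which integer $k_1$ makes the level-$(n+1)$ interval $J_{k_1} = (n+1+4k_1/p,\, n+1+4k_1/p+2/p)$ meet $I$. Nonempty intersection reduces to a pair of linear inequalities in $k_0 - k_1$, and checking the three candidates $k_1 \in \{k_0,\, k_0-1,\, k_0-2\}$ shows that for $2 < p \leq 4$ only $k_1 = k_0 - 1$ survives: the choices $k_1 = k_0$ and $k_1 = k_0 - 2$ would require $p < 2$ and $p > 6$, respectively. Substituting $k_1 = k_0 - 1$ yields the stated formula $J = (\xi + 1 - 4/p,\, \xi + 1 - 2/p)$.

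There is no genuine obstacle here; the hypothesis $2 < p \leq 4$ is precisely the range in which equality of interval and gap lengths rules out multiple intersections while still guaranteeing that exactly one shift between consecutive levels produces a nonempty intersection.
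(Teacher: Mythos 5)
Your proof is correct and complete. The paper does not supply a proof of this lemma at all — it explicitly calls it ``trivial'' and states it only for ease of reference — so there is no argument to compare against; your direct computation (identifying the level-$n$ intervals as $(n+4k/p,\,n+4k/p+2/p)$, noting that both the intervals and the gaps have length $2/p$, and then solving the two linear inequalities $p-2<4(k_0-k_1)<p+2$ to isolate $k_1=k_0-1$) is exactly the routine verification the authors had in mind when they elected not to write one out.
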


The key point of Lemma~\ref{lem:interval} is that $\xi+1-4/p \leq \xi$ since $2 < p \leq 4$. The effect of this is that arrival intervals are easier for us to deal with and that the most problematic case is $t_n$ of sign $(+,-)$. We need different arguments for the two cases $2 < p < 3$ and $3 \leq p \leq 4$. The easier latter case will be handled first. We invite the reader to consult Figure~\ref{fig:p3} during the proof. 

\begin{proof}[Proof of Theorem~\ref{thm:korevaar}: The case $3 \leq p \leq 4$] 
	The separation
	\[\sigma := \inf_{n\geq1} (t_{n+1}-t_n) \geq \frac{2}{3}\]
	established in Theorem~\ref{thm:sep}~(b) and the assumption that $p\geq3$ implies that no interval can be both an arrival interval and a departure interval. Let $\mathscr{I} = (I_k)_{k\geq0}$ be the sequence of intervals constructed iteratively for $n=0,1,2,\ldots$ as follows: 
	\begin{itemize}
		\item Include all stationary intervals at level $n$, ordered left to right. 
		\item If $t_{n+1}$ has sign $(+,-)$, include its departure interval in $\mathscr{I}$. 
		\item If $t_{n+1}$ has sign $(\pm,+)$, include its arrival interval (at level $n+1$) in $\mathscr{I}$. 
		\item If $t_{n+1}$ has sign $(-,-)$, do nothing. 
	\end{itemize}
	If $\xi_k = \inf(I_k)$, then Lemma~\ref{lem:interval} shows that $\mathscr{I}$ is ordered such that $\xi_k < \xi_{k+1}$. We need to check that $\mathscr{I}$ satisfies the three conditions (a)--(c) of Lemma~\ref{lem:collection}, which will establish the stated estimate by Corollary~\ref{cor:kplus}. It follows at once from Lemma~\ref{lem:interval} that (a) holds, since we always choose arrival intervals whenever possible. The only case of (c) which requires an argument is $(+,+)$, where we appeal to Lemma~\ref{lem:interval} again and use the fact $x \mapsto 1/x$ is decreasing on $(0,\infty)$ and periodicity. For (b), there are three cases to consider. 
	\begin{enumerate}
		\item[(i)] If $I_k$ and $I_{k+1}$ are on the same level, then $\xi_{k+1} = \xi_k + 4/p$. 
		\item[(ii)] If $I_k$ is at level $n-1$ and $I_{k+1}$ is at level $n$, then $\xi_{k+1} = \xi_k + 1$. We cannot have $I_{k+1} = (\xi_k+1-4/p,\xi_k+1-2/p)$ (see Lemma~\ref{lem:interval}), as this would contradict the fact that $\xi_k < \xi_{k+1}$. 
		\item[(iii)] If $I_k$ is at level $n-1$ and $I_{k+1}$ is at level $n-1+s$ for some $s\geq2$, then $s=2$ and $\xi_{k+1}=\xi_k+2-4/p$. The key point is that $t_n$ has sign $(\pm,-)$ and $t_{n+1}$ has sign $(+,+)$ (the latter due to the assumption that $I_{k+1}$ is not at level $n$ and that $\sigma \geq 2/3 \geq 2/p$ by Theorem~\ref{thm:sep}). Consequently, $s=2$ and $\xi_k+1 < t_{n+1} < \xi_k+1+2/p$. By Lemma~\ref{lem:interval}, the only interval at level $n+1$ which overlaps with this interval must be $I_{k+1}$. 
	\end{enumerate}
	Using the assumption that $p\geq 3$, we deduce from (i)--(iii) the general estimate $\xi_{k+1} \geq \xi_k+2/p$. If $t_1 \geq 2/p$, then $I_0 = (0,2/p)$ will be included in $\mathscr{I}$ as a stationary interval. Should $t_1<2/p$, we know by Theorem~\ref{thm:sep}~(a) that $t_1 \geq 2/\pi>1/2 \geq 1-2/p$ which means that the sign of $t_1$ is $(+,-)$ by Lemma~\ref{lem:interval} and so $I_0=(0,2/p)$ is included as a departure interval. Consequently, $\xi_0=0$. 
	
	For $\xi_1$, there are two cases to consider. If $t_2 \geq 2-2/p$, then we can exclude the case (iii) as the possible arrival interval at level $2$ is $I_1 = (2-4/p,2-2/p)$. Hence we have $\xi_1 \geq 1$. This means that (b) holds and we are done.
	
	In the case that $t_2 < 2-2/p$, we make the following adjustment to the algorithm. Since $t_2 > 2/\pi+2/3 \geq 5/4>2-4/p$ by Theorem~\ref{thm:sep}, it follows that the sign of $t_2$ is $(+,+)$. According to the algorithm, we should select the arrival interval $I_1=(2-4/p,2-2/p)$. We will instead select the departure interval $I_1 = (1,1+2/p)$ which has $\xi_1=1$ as desired. To justify this choice, we need to manually verify that condition (c) in Lemma~\ref{lem:collection} holds. This condition simplifies to
	\[\int_{t_2}^{2-2/p} \frac{\sin^2{\frac{p}{2}\pi (x-2)}}{\pi^2 x^2}\,dx \leq \int_{t_2}^{1+2/p} \frac{\sin^2{\frac{p}{2}\pi (x-1)}}{\pi^2 x^2}\,dx.\]
	We first note that $2-2/p \leq 1+2/p$ for $3 \leq p \leq 4$. Since $t_2 \geq 5/4$, it is therefore sufficient to establish that the pointwise estimate
	\[\sin^2{\frac{p}{2}\pi (x-2)} \leq \sin^2{\frac{p}{2}\pi (x-1)}\]
	holds for $5/4 \leq x \leq 2-2/p$. This pointwise estimate follows from the fact that both $\sin$-functions are positive on the interval and that the midpoint of $(1,2-2/p)$ is $3/2-1/p$ which is at most $5/4$. As we have now chosen the interval $I_1 = (1,1+2/p)$ instead of the interval $(2-4/p,2-2/p)$, we need to make sure that the next interval is not $(4-8/p,4-6/p)$ so that the requirement $\xi_2 \geq \xi_1+2/p$ still holds. The only way this interval could be included is as the arrival interval of $t_4$. However, by appealing to the separation, we find that
	\[t_4 \geq t_2 + 2 \sigma \geq \frac{5}{4} + \frac{4}{3} = \frac{31}{12}>\frac{5}{2} \geq 4-\frac{6}{p}\]
	for $3 \leq p \leq 4$, which shows that this cannot occur. 
\end{proof}

In the proof of Theorem~\ref{thm:korevaar} for $3 \leq p \leq 4$, the sets in $\mathscr{I}$ were all intervals of length $2/p$. In the proof of the case $2<p<3$, we will at times need to choose $I = (\xi,b)\cup (a,\eta)$ where $\xi$ is the left endpoint of some interval at level $n$ and $\eta$ is the right endpoint of some interval either at level $n$ or at level $n+2$. When checking condition (c) in Lemma~\ref{lem:collection}, the following trivial result will be helpful. 
\begin{lemma}\label{lem:doubleint} 
	If $0 \leq \xi < b < a < \eta$ satisfy $\eta \geq \xi+2/p$ and $\eta-a+b-\xi\leq 2/p$, then
	\[\int_\xi^b \frac{\sin^2{\frac{p}{2}\pi (x-\xi)}}{\pi^2 x^2}\,dx + \int_a^\eta \frac{\sin^2{\frac{p}{2}\pi (\eta-x)}}{\pi^2 x^2}\,dx \leq \int_\xi^{\xi+2/p} \frac{\sin^2{\frac{p}{2}\pi (x-\xi)}}{\pi^2 x^2}\,dx. \]
\end{lemma}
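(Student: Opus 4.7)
The plan is elementary: I would split the right-hand side into two pieces, one of which exactly matches the first integral on the left, while the other dominates the second integral on the left. Since $b-\xi \leq (b-\xi)+(\eta-a) \leq 2/p$, the right-hand side decomposes as
\[\int_\xi^{\xi+2/p} \frac{\sin^2{\frac{p}{2}\pi (x-\xi)}}{\pi^2 x^2}\,dx = \int_\xi^b \frac{\sin^2{\frac{p}{2}\pi (x-\xi)}}{\pi^2 x^2}\,dx + \int_b^{\xi+2/p} \frac{\sin^2{\frac{p}{2}\pi (x-\xi)}}{\pi^2 x^2}\,dx,\]
and the first term exactly cancels with the first integral on the left. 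The problem therefore reduces to showing that the tail integral over $[b,\xi+2/p]$ dominates $\int_a^\eta \frac{\sin^2{\frac{p}{2}\pi(\eta-x)}}{\pi^2 x^2}\,dx$.

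To compare these two tail integrals, I would perform the symmetric substitutions $u=\eta-x$ in the integral over $[a,\eta]$ and $v=\xi+2/p-x$ in the integral over $[b,\xi+2/p]$, so that both new variables run from $0$ upwards. Using the identity
\[\sin\bigl(\tfrac{p}{2}\pi(2/p-v)\bigr)=\sin\bigl(\pi-\tfrac{p}{2}\pi v\bigr)=\sin\bigl(\tfrac{p}{2}\pi v\bigr),\]
both integrands acquire the common numerator $\sin^2(\frac{p}{2}\pi u)$, and the required inequality reduces to
\[\int_0^{\eta-a} \frac{\sin^2{\frac{p}{2}\pi u}}{\pi^2 (\eta-u)^2}\,du \leq \int_0^{\xi+2/p-b} \frac{\sin^2{\frac{p}{2}\pi v}}{\pi^2 (\xi+2/p-v)^2}\,dv.\]

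At this point the two hypotheses feed in directly, one into each factor of the comparison. The inequality $\eta\geq \xi+2/p$ gives $\eta-u\geq \xi+2/p-u>0$ on $[0,\eta-a]$ (positivity holds because $\eta-a\leq\xi+2/p-b<\xi+2/p$), hence the pointwise bound $(\eta-u)^{-2}\leq(\xi+2/p-u)^{-2}$. The inequality $\eta-a+b-\xi\leq 2/p$ rearranges to $\eta-a\leq\xi+2/p-b$, so the interval of integration on the left is contained in the interval on the right; since the integrand is nonnegative, one may enlarge the domain and conclude. There is no real obstacle: each hypothesis is used exactly once, for precisely the comparison it was designed to produce (one of denominators, one of interval lengths), and the only small point of care is the trigonometric identity arising from the substitution in the target interval.
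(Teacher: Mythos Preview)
Your proof is correct and follows essentially the same approach as the paper: the paper's proof is the one-liner ``This follows at once from the fact that $x \mapsto 1/x$ is decreasing on $(0,\infty)$,'' and your argument is precisely the detailed unpacking of that sentence---matching the $\sin^2$ factors via the substitution and the identity $\sin(\pi-\theta)=\sin\theta$, then using $\eta\ge\xi+2/p$ to compare denominators and $\eta-a\le\xi+2/p-b$ to compare interval lengths.
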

\begin{proof}
	This follows at once from the fact that $x \mapsto 1/x$ is decreasing on $(0,\infty)$. 
\end{proof}

\begin{figure}
	\centering
	\begin{tikzpicture}[scale=1.5]
		\begin{axis}[
			axis equal image,
			axis lines = none,
			xmin = -2, 
			xmax = 30,
			ymin = -14, 
			ymax = 1]
			
			\node at (axis cs: -1,0) {$\scriptstyle 0$};
			\node at (axis cs: -1,-3) {$\scriptstyle 1$};
			\node at (axis cs: -1,-6) {$\scriptstyle 2$};
			\node at (axis cs: -1,-9) {$\scriptstyle 3$};
			\node at (axis cs: -1,-12) {$\scriptstyle 4$};
			
			\addplot[thick,solid,color=red] coordinates {(0,0) (6,0)};
			\addplot[thick,solid] coordinates {(12,0) (18,0)};
			\addplot[thick,solid] coordinates {(24,0) (30,0)};
						
			\addplot[thick,solid] coordinates {(0,-3) (2,-3)};
			\addplot[thick,solid,color=red] coordinates {(8,-3) (14,-3)};
			\addplot[thick,solid] coordinates {(20,-3) (26,-3)};
			
			\addplot[thick,solid] coordinates {(4,-6) (10,-6)};
			\addplot[thick,solid,color=red] coordinates {(16,-6) (22,-6)};
			\addplot[thick,solid] coordinates {(28,-6) (30,-6)};
			
			\addplot[thick,solid] coordinates {(0,-9) (6,-9)};
			\addplot[thick,solid] coordinates {(12,-9) (18,-9)};
			\addplot[thick,solid,color=red] coordinates {(24,-9) (30,-9)};

			\addplot[thick,solid] coordinates {(0,-12) (2,-12)};
			\addplot[thick,solid] coordinates {(8,-12) (14,-12)};
			\addplot[thick,solid] coordinates {(20,-12) (25,-12)};
			\addplot[thick,solid] coordinates {(25,-12) (26,-12)};
			
			\addplot[thick,solid,color=blue,->] coordinates {(13,0+0.065) (13,-3-0.065)};
			\addplot[thick,solid,color=blue,->] coordinates {(18.4,-3+0.065) (18.4,-6-0.065)};
			\addplot[thick,solid,color=blue,->] coordinates {(24.3,-6+0.065) (24.3,-9-0.065)};
			\addplot[thick,solid,color=blue,->] coordinates {(29.6,-9+0.065) (29.6,-12-0.065)};
		\end{axis}
	\end{tikzpicture}
	\caption{Some intervals at levels $n=0,1,2,3,4$ for $p=\frac{8}{3}$. Here $t_1 = \frac{13}{8}$ has sign ${\color{blue}(+,+)}$, $t_2 = \frac{23}{10}$ has sign ${\color{blue}(-,+)}$, $t_3 = \frac{73}{24}$ has sign ${\color{blue}(-,+)}$ and $t_4=\frac{89}{24}$ has sign ${\color{blue}(+,-)}$. From left to right, we have applied {\color{red} Rule 1}, {\color{red}Rule 4.3}, {\color{red}Rule 2.2}, {\color{red}Rule 2.2}, and {Rule 3.1}.}
	\label{fig:p83a}
\end{figure}
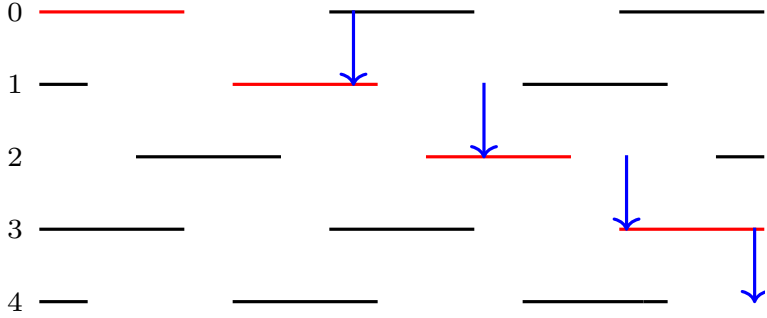
 
\begin{proof}[Proof of Theorem~\ref{thm:korevaar}: The case $2<p<3$] 
	In contrast to the previous case, it is now possible that an interval $I = (\xi,\xi+2/p)$ is both the arrival interval for $t_n$ and the departure interval for $t_{n+1}$. By Theorem~\ref{thm:sep}~(b), this means that
	\[\xi<t_n < t_n + 2/3 \leq t_{n+1} < \xi+2/p\]
	and hence $t_n$ has sign $(-,+)$ and $t_{n+1}$ has sign $(+,-)$ in view of Lemma~\ref{lem:interval}. We will choose the sequence $\mathscr{I}$ iteratively for $n=0,1,2,\ldots$ according to the following four rules. See Figure~\ref{fig:p83a}, Figure~\ref{fig:p83b} and Figure~\ref{fig:p83c} for illustrations of the rules. 
	\subsubsection*{Rule 1} All stationary intervals at level $n$ will be included in $\mathscr{I}$. 
	\subsubsection*{Rule 2} The sign of $t_{n+1}$ is $(-,+)$ and the arrival interval is $(\xi,\xi+2/p)$. 
	\begin{enumerate}
		\item[2.1:] If $I_{k-1} = (\xi+4/p-2,t_n)\cup(t_{n+1},\xi+2/p)$ is already included in $\mathscr{I}$ by either \emph{Rule 3.3} or \emph{Rule 4.1} below, then we do nothing and proceed. 
		\item[2.2:] If such a set is not included in $\mathscr{I}$, then we include the arrival interval $I_k = (\xi,\xi+2/p)$ in $\mathscr{I}$. 
	\end{enumerate}
	\subsubsection*{Rule 3} The sign of $t_{n+1}$ is $(+,-)$ and the departure interval is $(\xi,\xi+2/p)$. 
	\begin{enumerate}
		\item[3.1:] If $I_{k-1}=(\xi,\xi+2/p)$ is already included in $\mathscr{I}$ by \emph{Rule 2.2}, then we do nothing and proceed. 
		\item[3.2:] If $I_{k-1}=(\xi,\xi+2/p)$ is not included in $\mathscr{I}$ and if $t_{n+2} \geq \xi+2-2/p$, then we include the departure interval $I_k = (\xi,\xi+2/p)$ in $\mathscr{I}$. 
		\item[3.3:] If $I_{k-1}=(\xi,\xi+2/p)$ not included in $\mathscr{I}$ and if $t_{n+2}<\xi+2-2/p$, then we include $I_k = (\xi,t_{n+1})\cup(t_{n+2},\xi+2-2/p)$ in $\mathscr{I}$. 
	\end{enumerate}
	\subsubsection*{Rule 4} The sign of $t_{n+1}$ is $(+,+)$ and the arrival interval is $(\xi,\xi+2/p)$. By Lemma~\ref{lem:interval}, the departure interval is $(\xi+4/p-1,\xi+6/p-1)$. 
	\begin{enumerate}
		\item[4.1:] If $I_{k-1} = (\xi+4/p-2,t_n) \cup (t_{n+1},\xi+2/p)$ is already included in $\mathscr{I}$ either by \emph{Rule 3.3} or by the present rule and if $t_{n+2}<\xi+2/p+1$, then we include the set $I_k = (\xi+4/p-1,t_{n+1})\cup(t_{n+2},\xi+2/p+1)$ in $\mathscr{I}$. 
		\item[4.2:] If $I_{k-1} = (\xi+4/p-2,t_n) \cup (t_{n+1},\xi+2/p)$ is already included in $\mathscr{I}$ either by \emph{Rule 3.3} or by \emph{Rule 4.1} and if $t_{n+2} \geq \xi+2/p+1$, then we include\footnote{The second interval is only included to preserve the overall dichotomy that either $I = (\xi,\xi+2/p)$ or $I = (\xi,b)\cup(a,\eta)$. It could be dropped without affecting conditions (a)--(c) of Lemma~\ref{lem:collection}.} the set $I_k=(\xi+4/p-1,t_{n+1})\cup(\xi+2/p,\xi+6/p-1)$ in $\mathscr{I}$. 
		\item[4.3:] If $I_{k-1} = (\xi+4/p-2,t_n) \cup (t_{n+1},\xi+2/p)$ is not included in $\mathscr{I}$, then we include the arrival interval $I_k = (\xi,\xi+2/p)$ in $\mathscr{I}$. 
	\end{enumerate}
	
	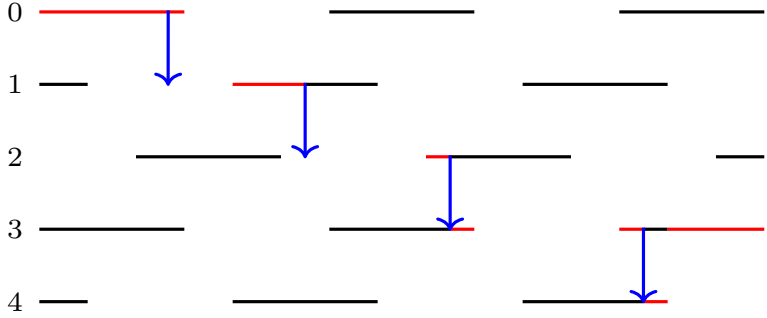
\begin{figure}
		\centering
		\begin{tikzpicture}[scale=1.5]
			\begin{axis}[
				axis equal image,
				axis lines = none,
				xmin = -2, 
				xmax = 30,
				ymin = -14, 
				ymax = 1]
	
				\node at (axis cs: -1,0) {$\scriptstyle 0$};
				\node at (axis cs: -1,-3) {$\scriptstyle 1$};
				\node at (axis cs: -1,-6) {$\scriptstyle 2$};
				\node at (axis cs: -1,-9) {$\scriptstyle 3$};
				\node at (axis cs: -1,-12) {$\scriptstyle 4$};
	
				\addplot[thick,solid,color=red] coordinates {(0,0) (6,0)};
				\addplot[thick,solid] coordinates {(12,0) (18,0)};
				\addplot[thick,solid] coordinates {(24,0) (30,0)};
				
				\addplot[thick,solid] coordinates {(0,-3) (2,-3)};
				\addplot[thick,solid,color=red] coordinates {(8,-3) (11,-3)};
				\addplot[thick,solid] coordinates {(11,-3) (14,-3)};
				\addplot[thick,solid] coordinates {(20,-3) (26,-3)};
	
				\addplot[thick,solid] coordinates {(4,-6) (10,-6)};
				\addplot[thick,solid,color=red] coordinates {(16,-6) (17,-6)};
				\addplot[thick,solid] coordinates {(17,-6) (22,-6)};
				\addplot[thick,solid] coordinates {(28,-6) (30,-6)};
	
				\addplot[thick,solid] coordinates {(0,-9) (6,-9)};
				\addplot[thick,solid] coordinates {(12,-9) (17,-9)};
				\addplot[thick,solid,color=red] coordinates {(17,-9) (18,-9)};
				\addplot[thick,solid,color=red] coordinates {(24,-9) (25,-9)};
				\addplot[thick,solid] coordinates {(25,-9) (26,-9)};
				\addplot[thick,solid,color=red] coordinates {(26,-9) (30,-9)};

				\addplot[thick,solid] coordinates {(0,-12) (2,-12)};
				\addplot[thick,solid] coordinates {(8,-12) (14,-12)};
				\addplot[thick,solid] coordinates {(20,-12) (25,-12)};
				\addplot[thick,solid,color=red] coordinates {(25,-12) (26,-12)};

				\addplot[thick,solid,color=blue,->] coordinates {(5.333,0+0.065) (5.33,-3-0.065)};
				\addplot[thick,solid,color=blue,->] coordinates {(11,-3+0.065) (11,-6-0.065)};
				\addplot[thick,solid,color=blue,->] coordinates {(17,-6+0.065) (17,-9-0.065)};
				\addplot[thick,solid,color=blue,->] coordinates {(25,-9+0.065) (25,-12-0.065)};
			\end{axis}
		\end{tikzpicture}
		\caption{Some intervals at levels $n=0,1,2,3,4$ for $p=\frac{8}{3}$. Here $t_1 = \frac{2}{3}$ has sign ${\color{blue}(+,-)}$, $t_2 = \frac{11}{8}$ has sign ${\color{blue}(+,-)}$, $t_3 = \frac{17}{8}$ has sign ${\color{blue}(+,+)}$, $t_4=\frac{25}{8}$ has sign ${\color{blue}(+,+)}$, and $t_5 \geq \frac{39}{8}$. From left to right, we have applied {\color{red}Rule 3.2}, {\color{red}Rule 3.3}, {\color{red}Rule 4.1} and {\color{red}Rule 4.2}.}
		\label{fig:p83b}
	\end{figure}
	
	\subsubsection*{Rule 5} If the sign of $t_{n+1}$ is $(-,-)$, then do we nothing and proceed. 
		
	\medskip \noindent We now need to check that conditions (a)--(c) of Lemma~\ref{lem:collection} are satisfied. It is clear that (a) holds. It is also not difficult to check that (c) holds. For \emph{Rule 2.1} and \emph{Rule 3.1} there is nothing to do. That (c) holds for \emph{Rule 1} and \emph{Rule 3.2} is trivial. For \emph{Rule 2.2} and \emph{Rule 4.3} we need to use that $x \mapsto 1/x$ is decreasing on $(0,\infty)$ and Lemma~\ref{lem:interval} as in the proof of the case $3 \leq p \leq 4$. For \emph{Rule 3.3}, \emph{Rule 4.1} and \emph{Rule 4.2} we use the separation $\sigma \geq 2/3$ and the assumption that $2<p<3$ to show that the conditions of Lemma~\ref{lem:doubleint} are satisfied. For (b), there are three cases to consider. Note first that $\xi_k = \inf(I_k)$ will always correspond to the left endpoint of an interval at some level. We will therefore think of $\xi_k$ as being on this level. 
	\begin{enumerate}
		\item[(i)] If $\xi_k$ and $\xi_{k+1}$ are on the same level, then $\xi_{k+1} = \xi_k + 4/p$. 
		\item[(ii)] If $\xi_k$ is at level $n-1$ and $\xi_{k+1}$ is at level $n$, then $\xi_{k+1} = \xi_k + 1$. This is clear since the rules avoid the interval $(\xi_k+1-4/p,\xi_k+1-2/p)$. 
		\item[(iii)] If $\xi_k$ is at level $n-1$ and $\xi_{k+1}$ at level $n-1+s$ for some $s \geq2$, then either $s=2$ and $\xi_k = \xi_k+2$ or $s=3$ and $\xi_k = \xi_k+3-4/p$. The key point is that the interval $(\xi_k+1,\xi_k+1+2/p)$ at level $n$ has been avoided. This is only possible by an application of \emph{Rule 3.3} or \emph{Rule 4.1} with $t_{n+1}<\xi_k+1$, such that the next rule applied is \emph{Rule 2.1}. We may reach level $n+2$ through either \emph{Rule 2.2} or \emph{Rule 4.3} applied to $t_{n+2}$. We refer to Figure~\ref{fig:p83c} for an illustration of the latter possibility. 
	\end{enumerate}
	In summary, we conclude that $\xi_{k+1} \geq \xi_k + 1$ for all $k\geq0$, which imply the weaker claim that $\xi_1 \geq1$ and $\xi_{k+1} \geq \xi_k + 2/p$ for $k\geq1$. It only remains to see that $\xi_0=0$. If $t_1 \geq 2/p$ this is evident, since the interval $I_0 = (0,2/p)$ will be stationary and included in $\mathscr{I}$. If $2/\pi \leq t_1 < 2/p$, then the sign of $t_1$ is $(+,-)$ by Lemma~\ref{lem:interval} and the fact that $2/\pi > 1/3 \geq 1-2/p$ for $2 \leq p \leq 3$. This means that $\xi_0=0$ either by \emph{Rule 3.2} or by \emph{Rule 3.3}, depending on $t_2$. 
\end{proof}

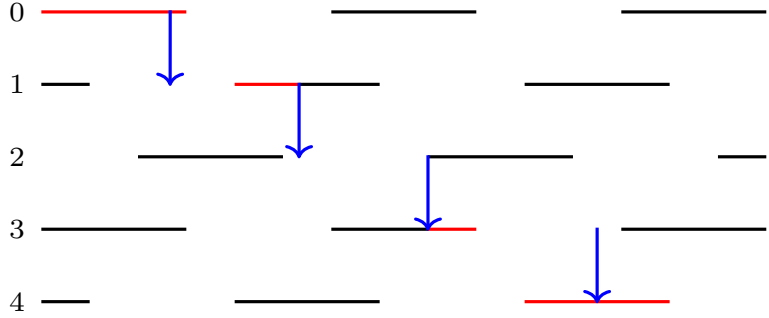
\begin{figure}
	\centering
	\begin{tikzpicture}[scale=1.5]
		\begin{axis}[
			axis equal image,
			axis lines = none,
			xmin = -2, 
			xmax = 30,
			ymin = -14, 
			ymax = 1]
	
			\node at (axis cs: -1,0) {$\scriptstyle 0$};
			\node at (axis cs: -1,-3) {$\scriptstyle 1$};
			\node at (axis cs: -1,-6) {$\scriptstyle 2$};
			\node at (axis cs: -1,-9) {$\scriptstyle 3$};
			\node at (axis cs: -1,-12) {$\scriptstyle 4$};
	
			\addplot[thick,solid,color=red] coordinates {(0,0) (6,0)};
			\addplot[thick,solid] coordinates {(12,0) (18,0)};
			\addplot[thick,solid] coordinates {(24,0) (30,0)};
		
			\addplot[thick,solid] coordinates {(0,-3) (2,-3)};
			\addplot[thick,solid,color=red] coordinates {(8,-3) (10.666,-3)};
			\addplot[thick,solid] coordinates {(10.666,-3) (14,-3)};
			\addplot[thick,solid] coordinates {(20,-3) (26,-3)};
	
			\addplot[thick,solid] coordinates {(4,-6) (10,-6)};
			\addplot[thick,solid] coordinates {(16,-6) (22,-6)};
			\addplot[thick,solid] coordinates {(28,-6) (30,-6)};
	
			\addplot[thick,solid] coordinates {(0,-9) (6,-9)};
			\addplot[thick,solid] coordinates {(12,-9) (16,-9)};
			\addplot[thick,solid,color=red] coordinates {(16,-9) (18,-9)};
			\addplot[thick,solid] coordinates {(24,-9) (30,-9)};			
	
			\addplot[thick,solid] coordinates {(0,-12) (2,-12)};
			\addplot[thick,solid] coordinates {(8,-12) (14,-12)};
			\addplot[thick,solid,color=red] coordinates {(20,-12) (26,-12)};

			\addplot[thick,solid,color=blue,->] coordinates {(5.333,0+0.065) (5.33,-3-0.065)};
			\addplot[thick,solid,color=blue,->] coordinates {(10.666,-3+0.065) (10.666,-6-0.065)};
			\addplot[thick,solid,color=blue,->] coordinates {(16,-6+0.065) (16,-9-0.065)};
			\addplot[thick,solid,color=blue,->] coordinates {(23,-9+0.065) (23,-12-0.065)};
		\end{axis}
	\end{tikzpicture}
	\caption{Some intervals at levels $n=0,1,2,3,4$ for $p=\frac{8}{3}$. Here $t_1 = \frac{2}{3}$ has sign ${\color{blue}(+,-)}$, $t_2 = 
\frac{4}{3}$ has sign ${\color{blue}(+,-)}$, $t_3 = 2$ has sign ${\color{blue}(-,+)}$, and $t_4=\frac{23}{8}$ has sign ${\color{blue}(-,+)}$. From left to right, we have applied {\color{red}Rule 3.2}, {\color{red}Rule 3.3}, {Rule 2.1}, and {\color{red}Rule 4.3}.}
	\label{fig:p83c}
\end{figure}

\section{Asymptotics for \texorpdfstring{$\mathscr{C}_p$}{Cp}} \label{sec:asymp} 
We have now come to the problems of estimating $\mathscr{C}_p$ as $p\to0^+$ and as $p\to \infty$. The main goals of the present section are to prove Theorem~\ref{thm:Cp0} and to prove Theorem~\ref{thm:Cpinfty}. We begin with some preliminary estimates on a family of entire functions of exponential type $\pi$.

\subsection{A family of entire functions} \label{subsec:gamma} For $\alpha>\frac{1}{2}$, consider the function
\[g_\alpha(z) := (2\alpha-1) \, 2^{2\alpha-2} \B(\alpha,\alpha) \int_{-\pi}^\pi \left(\cos{\frac{\xi}{2}}\right)^{2\alpha-2} e^{i z \xi}\,\frac{d\xi}{2\pi}.\]
The normalizing factor is chosen so that $g_\alpha(0)=1$. Note also that $g_1(z) = \sinc{\pi z}$. By a calculation attributed to Ramanujan (see \cite[Section~7.6]{Titchmarsh86}), we deduce that 
\begin{equation}\label{eq:galpha} 
	g_\alpha(z) = \frac{\Gamma^2(\alpha)}{\Gamma(\alpha-z)\Gamma(\alpha+z)} = \prod_{n=1}^\infty \left(1-\frac{z^2}{(n+\alpha-1)^2}\right). 
\end{equation}
The formula \eqref{eq:galpha} defines an entire function of exponential type $\pi$ for all $\alpha>0$. In particular, we see that $g_{1/2}(z) = \cos{\pi z}$.

Note the similarity of $\widehat{g}_\alpha$ and $\widehat{f}_p$ from \eqref{eq:fp}. From this one would expect that choosing $\alpha = \frac{1}{2}+\frac{1}{p}$ could yield a reasonable lower bound for $\mathscr{C}_p$. We have numerically verified that in the range $1 \leq p \leq 4$ the lower bounds obtained by $f_p$ are better. However, the virtue of $g_\alpha$ is that we know exactly the location of its zeros. This makes it possible (see e.g.~\cite{Gorbachev05}) to compute
\[\mathscr{C}_1 \geq \left( \pi \int_0^1 \sinc{\pi x}\,dx\right)^{-1} = 0.5399751567\ldots\]
by testing \eqref{eq:pointeval} with the function $g_{3/2}(z) = \frac{\cos{\pi z}}{1-4z^2}$. This is only slightly worse than the numerical lower bound obtained by H\"{o}rmander and Bernhardsson in \eqref{eq:hb}.

We choose to work with $g_\alpha$ for precisely the same reason when obtaining lower asymptotic estimates for $\mathscr{C}_p$ as $p\to \infty$ and $p \to 0^+$ by letting $\alpha$ go to respectively $\frac{1}{2}$ and $ \infty$. We will rely on the following estimates.
\begin{lemma}\label{lem:galphaest} 
	\mbox{} 
	\begin{enumerate}
		\item[(a)] If $0 \leq x \leq \alpha$, then
		\[g_\alpha(x) \leq \left(1-\frac{x}{\alpha}\right)^{-(\alpha-1/2-x)} \left(1+\frac{x}{\alpha}\right)^{-(\alpha-1/2+x)}.\]
		\item[(b)] If $1/2 \leq \alpha \leq x$, then
		\[|g_\alpha(x)| \leq \alpha^{2\alpha-1} \frac{(1-\alpha+x)^{1/2-\alpha+x}}{(\alpha+x)^{\alpha-1/2+x}} |\sin{\pi(\alpha-x)}|.\]
		\item[(c)] If $\alpha \geq 1/2$ and $0 \leq x \leq \alpha$, then
		\[|g_\alpha(x)| \leq \cos\left(\frac{\pi x}{2\alpha}\right).\]
	\end{enumerate}
\end{lemma}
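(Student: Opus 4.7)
The plan is to derive all three bounds from the Gamma representation \eqref{eq:galpha}. Parts (a) and (b) are Stirling-type estimates that I would prove through Binet's integral formulas for $\log\Gamma$, while (c) is a direct termwise comparison of Hadamard products.

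For part (a), since $0\leq x<\alpha$ all Gamma arguments are positive, and I would apply Binet's second formula
\[\log\Gamma(s) = \bigl(s-\tfrac12\bigr)\log s - s + \tfrac12\log(2\pi) + J(s), \qquad J(s) := 2\int_0^\infty \frac{\arctan(t/s)}{e^{2\pi t}-1}\,dt,\]
to $\log g_\alpha(x) = 2\log\Gamma(\alpha) - \log\Gamma(\alpha-x) - \log\Gamma(\alpha+x)$. The linear and $\log(2\pi)$ contributions cancel, leaving the announced polynomial part (which one checks equals the logarithm of the right-hand side of (a) after factoring $\alpha^{2\alpha-1}$ out) plus the remainder $R := 2J(\alpha) - J(\alpha-x) - J(\alpha+x)$. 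Since $s\mapsto \arctan(t/s)$ is convex on $(0,\infty)$ for each $t>0$ (its second derivative in $s$ equals $2ts/(s^2+t^2)^2>0$) and $\alpha$ is the midpoint of $\{\alpha-x,\alpha+x\}$, the integrand of $R$ is pointwise nonpositive, so $R\leq 0$. Exponentiation yields (a), with the boundary case $x=\alpha$ handled by continuity.

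For part (b), since $x\geq\alpha\geq 1/2$ implies $1-\alpha+x\geq 1$, the reflection formula $\Gamma(\alpha-x)\Gamma(1-\alpha+x)=\pi/\sin\pi(\alpha-x)$ gives
\[|g_\alpha(x)| = \frac{\Gamma^2(\alpha)\,\Gamma(1-\alpha+x)}{\pi\,\Gamma(\alpha+x)}\,|\sin\pi(\alpha-x)|.\]
Feeding Binet's formula into this and performing the same polynomial cancellations, the asserted inequality reduces to
$2J(\alpha) + J(1-\alpha+x) - J(\alpha+x) \leq 2J(1/2) = 1-\log 2$,
where the numerical identity is a direct consequence of $\Gamma(1/2)=\sqrt{\pi}$. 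To verify the remaining inequality I would switch to Binet's first formula $J(s) = \int_0^\infty P(t)e^{-st}\,dt/t$ with the nonnegative kernel $P(t) := \tfrac12 - \tfrac1t + \tfrac{1}{e^t-1}$, which reduces matters to the pointwise claim
\[2e^{-\alpha t} + e^{-(1-\alpha+x)t} - e^{-(\alpha+x)t} - 2e^{-t/2} \leq 0 \qquad (t>0).\]
Setting $\beta := \alpha-\tfrac12\geq 0$ and using the factorizations $1-e^{-2\beta t} = (1-e^{-\beta t})(1+e^{-\beta t})$ and $2(e^{-t/2}-e^{-\alpha t}) = 2e^{-t/2}(1-e^{-\beta t})$, the common factor $1-e^{-\beta t}$ pulls out, collapsing the estimate to $e^{-(1-\alpha+x)t} + e^{-(1/2+x)t} \leq 2e^{-t/2}$, which is immediate because both exponents are at least $1/2$ (the first uses $x\geq \alpha-\tfrac12$).

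Part (c) needs no new machinery: the Hadamard products $g_\alpha(x) = \prod_{n\geq 0}\bigl(1-x^2/(\alpha+n)^2\bigr)$ and $\cos(\pi x/(2\alpha)) = \prod_{n\geq 0}\bigl(1-x^2/((2n+1)^2\alpha^2)\bigr)$ may be compared termwise, since the elementary inequality $(2n+1)\alpha\geq \alpha + n$ (valid for $\alpha\geq 1/2$ and $n\geq 0$) forces $1-x^2/(\alpha+n)^2 \leq 1-x^2/((2n+1)^2\alpha^2)$ on $[0,\alpha]$, where all factors are nonnegative. The main obstacle of the whole lemma lies in part (b): the difference $2J(\alpha) - 2J(1/2)$ is nonpositive (since $J$ is decreasing) whereas $J(1-\alpha+x) - J(\alpha+x)$ is nonnegative (since $1-\alpha+x\leq \alpha+x$), so a crude monotonicity or convexity argument on $J$ cannot suffice; the explicit factorization $1-e^{-2\beta t} = (1-e^{-\beta t})(1+e^{-\beta t})$ is what produces the exact cancellation between the competing contributions.
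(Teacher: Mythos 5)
Your proofs of parts (a) and (c) follow the same route as the paper: the convexity of $s\mapsto\arctan(t/s)$ applied to Binet's second formula for (a), and the termwise comparison of the Hadamard products of $g_\alpha$ and $\cos(\pi x/(2\alpha))$ via $(2n-1)\alpha\geq n+\alpha-1$ for (c). These are correct and nothing needs flagging.

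Part (b) is where you diverge from the paper, and your route is a genuine improvement in transparency. The paper sticks with the arctan (second Binet) representation, forms the same combination
\[
F(\alpha,x) = 2\arctan\tfrac{t}{\alpha}+\arctan\tfrac{t}{1-\alpha+x}-\arctan\tfrac{t}{\alpha+x},
\]
and proves the target bound $F(\alpha,x)\leq F(1/2,1/2)=2\arctan(2t)$ by two separate monotonicity steps: first that $x\mapsto F(\alpha,x)$ is decreasing for $\alpha\geq 1/2$, then that $\alpha\mapsto F(\alpha,\alpha)$ is decreasing. Each step requires a nontrivial derivative computation that the paper does not display, and it is not immediately obvious how the competing signs (one term increasing in $x$, the other decreasing; same for $\alpha$) resolve. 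Your approach replaces both derivative checks with a single algebraic identity: after passing to Binet's first (Laplace-kernel) formula, the pointwise integrand becomes
\[
\bigl(1-e^{-\beta t}\bigr)\Bigl[e^{-(1-\alpha+x)t}+e^{-(1/2+x)t}-2e^{-t/2}\Bigr],\qquad \beta=\alpha-\tfrac12,
\]
which is manifestly $\leq 0$ because the bracket is, both exponents being $\geq 1/2$ under the hypothesis $x\geq\alpha\geq 1/2$, and the prefactor $1-e^{-\beta t}\geq 0$. You also correctly identify the normalizing constant via $2J(1/2)=1-\log 2$, which is the same numerical evaluation the paper performs as $4\int_0^\infty\arctan(2t)\,(e^{2\pi t}-1)^{-1}\,dt=1-\log 2$. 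The small price is that you need to know the kernel $P(t)=\tfrac12-\tfrac1t+(e^t-1)^{-1}$ is nonnegative (a standard fact, easily checked from the power series), whereas the paper needs only the sign of the weight $1/(e^{2\pi t}-1)$; but in exchange your factorization makes explicit the exact cancellation that the paper's double-monotonicity argument leaves implicit, and it exposes sharpness at $\alpha=1/2$ (where $\beta=0$ kills the prefactor) cleanly.
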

\begin{proof}
	The well known formula 
	\begin{equation}\label{eq:loggamma} 
		\log{\Gamma(z)} = \left(z-\frac{1}{2}\right) \log{z} - z + \frac{1}{2}\log{2\pi } + 2\int_0^{\infty} \frac{\arctan \frac{t}{z}}{e^{2\pi t}-1} \,dt 
	\end{equation}
	is valid for $\mre{z}>0$. We begin with (a). If $0 < x \leq \alpha$ and $t \geq0$, then it follows from convexity that
	\[\arctan{\frac{t}{\alpha-x}} -2\arctan{\frac{t}{\alpha}} + \arctan{\frac{t}{\alpha+x}} \geq 0.\]
	We therefore obtain from three applications of \eqref{eq:loggamma} that
	\[\log{g_\alpha(x)} \leq 2\left(\alpha-\frac{1}{2}\right)\log{\alpha}- \left(\alpha-x-\frac{1}{2}\right)\log(\alpha-x) - \left(\alpha+x-\frac{1}{2}\right)\log(\alpha+x),\]
	which simplifies to the stated result (a). For the proof of (b), we first use the reflection formula for the gamma function to obtain 
	\begin{equation}\label{eq:galphareflect} 
		g_\alpha(x) = \frac{\Gamma^2(\alpha)}{\pi} \frac{\Gamma(1-\alpha+x)}{\Gamma(\alpha+x)} \sin{\pi(\alpha-x)}. 
	\end{equation}
	Fix $t>0$ and consider
	\[F(\alpha,x) := 2 \arctan{\frac{t}{\alpha}}+\arctan{\frac{t}{1-\alpha+x}}-\arctan{\frac{t}{\alpha+ x}}.\]
	We verify by differentiation that if $\alpha \geq 1/2$, then $x \mapsto F(\alpha,x)$ is decreasing. Another differentiation reveals that $\alpha \mapsto F(\alpha,\alpha)$ is also decreasing. It follows that
	\[2 \arctan{\frac{t}{\alpha}}+\arctan{\frac{t}{1-\alpha+x}}-\arctan{\frac{t}{\alpha+ x}} \leq 2 \arctan{2t}\]
	for all $x \geq \alpha \geq 1/2$ and $t\geq 0$. From this estimate and three applications of \eqref{eq:loggamma}, we find that 
	\begin{multline*}
		\log{\frac{\Gamma^2(\alpha) \Gamma(1-\alpha+x)}{\Gamma(\alpha+x)}} \leq 2\left(\alpha-\frac{1}{2}\right)\log{\alpha}+\left(\frac{1}{2}-\alpha+x\right)\log(1-\alpha+x) \\
		- (\alpha-1/2+x)\log(\alpha+x) + \log(2\pi) -1 + 4 \int_0^\infty \frac{\arctan{2t}}{e^{2\pi t}-1}\,dt. 
	\end{multline*}
	The integral evaluates to $(1-\log{2})/4$. Inserting the resulting estimate into \eqref{eq:galphareflect}, we obtain (b). The proof of (c) is the easiest one. If $\alpha \geq 1/2$ and $n\geq1$, then
	\[(n-1+\alpha) \leq \left(n-\frac{1}{2}\right) 2\alpha.\]
	The stated estimate follows after inserting this estimate into \eqref{eq:galpha} for $0 \leq x \leq \alpha$. 
\end{proof}

\subsection{Asymptotics as \texorpdfstring{$p\to\infty$}{p to infty}}  It is not difficult to see that an upper bound of the form $\mathscr{C}_p \leq C p$ is not asymptotically sharp as $p\to\infty$. Suppose that $f$ in $PW^p$ is real, even, and satisfies $f(0)=\|f\|_\infty$. Using Lemma~\ref{lem:hormander}, we find that
\begin{equation} \label{eq:fcos}
	\int_{-\infty}^\infty |f(x)|^p \,dx \geq \int_{-\frac{1}{2}}^{\frac{1}{2}} \|f\|_\infty^p \left(\cos{\pi x}\right)^p \,dx.
\end{equation}
Combining this with the well known asymptotic expansion
\begin{equation} \label{eq:betacos}
	\int_{-\frac{1}{2}}^{\frac{1}{2}} (\cos{\pi x})^p \,dx = \frac{\B((p+1)/2,1/2)}{\pi} = \sqrt{\frac{2}{\pi p}} + O\left(\frac{1}{p^{\frac{3}{2}}}\right)
\end{equation}
as $p \to \infty$, we conclude from \eqref{eq:extremalproblem} and Lemma~\ref{lem:unique} that
\[\mathscr{C}_p \leq \sqrt{\frac{\pi p}{2}} + O\left(\frac{1}{\sqrt{p}}\right)\]
as $p\to \infty$. In Theorem~\ref{thm:Cpinfty} we will sharpen this upper bound and provide a matching lower bound. In addition to \eqref{eq:fcos} and \eqref{eq:betaasymp}, we require the Riesz interpolation formula (see e.g.~\cite[Section~11.3]{Boas54}), which states that if $f$ is in $PW^\infty$, then 
\begin{equation}\label{eq:RIF} 
	f'(x_0)=\frac{4}{\pi} \sum_{n=-\infty}^{\infty}(-1)^n \frac{f(x_0+n+1/2)}{(2n+1)^2}. 
\end{equation}
We will also use Bernstein's inequality (see e.g.~\cite[Lecture~28]{Levin96}), which states that
\begin{equation}\label{eq:bernstein} 
	\|f'\|_\infty \leq \pi \|f\|_\infty 
\end{equation}
for every $f$ in $PW^\infty$. The inequality is attained if and only if $f(z) = a \cos{\pi z} + b \sin{\pi z}$.

\begin{proof}[Proof of Theorem~\ref{thm:Cpinfty}: Upper bound] 
	Fix $1<p<\infty$. In view of Lemma~\ref{lem:unique}, we may assume without loss of generality that $f$ is real, even, and satisfies $\|f\|_\infty=f(0)=1$. Since $f$ is a nontrivial function in $PW^p$ for $p<\infty$, we know that \eqref{eq:bernstein} is not attained. We may consequently assume that $\|f''\|_{\infty} =\pi^2-\delta \pi$ for some $0<\delta<\pi$. There are now two cases to consider.
	
	The first case is that $\delta>c \frac{\log{p}}{p}$ for some positive constant $c$ to be chosen later. Using Taylor's theorem at the origin, we obtain the estimate
	\[|f(x)| \geq 1-\frac{\|f''\|_\infty}{2} x^2 = 1-\frac{\pi^2-\delta \pi}{2} x^2.\]
	Restricting the domain of integration to the interval $|x| \leq \frac{\sqrt{2}}{\pi} \left(1-\frac{\delta}{\pi}\right)^{-\frac{1}{2}}$ and using this estimate, we find that 
	\begin{equation}\label{eq:substitution} 
		\|f\|_p^p = \int_{-\infty}^\infty |f(x)|^p \,dx \geq \frac{\sqrt{2}}{\pi} \left(1-\frac{\delta}{\pi}\right)^{-\frac{1}{2}} \int_{-1}^1 \left(1-x^2\right)^p\,dx. 
	\end{equation}
	Combining the expansion
	\begin{equation}\label{eq:betaasymp} 
		\int_{-1}^1 \left(1-x^2\right)^p \,dx = \B(p+1,1/2) = \sqrt{\frac{\pi}{p}} + O\left(\frac{1}{p^{\frac{3}{2}}}\right) 
	\end{equation}
	with the estimate 
	\[\left(1-\frac{\delta}{\pi}\right)^{-\frac{1}{2}} \geq 1 + \frac{\delta}{2\pi}\]
	which holds for $0<\delta<\pi$, we deduce from \eqref{eq:substitution} that
	\begin{equation}\label{eq:ppest} 
		\|f\|_p^p \geq \sqrt{\frac{2}{\pi p}} + \sqrt{\frac{2}{\pi}} \frac{c}{2\pi} \frac{\log{p}}{p^{\frac{3}{2}}} + O\left(\frac{1}{p^{\frac{3}{2}}}\right). 
	\end{equation}
	as $p\to\infty$, since $\delta > c \frac{\log{p}}{p}$.
	
	The second case is that $\delta \leq c \frac{\log{p}}{p}$. Using \eqref{eq:fcos} and \eqref{eq:betacos} as above, we find that
	\begin{equation}\label{eq:substitution2} 
		\int_{-\frac{1}{2}}^{\frac{1}{2}} |f(x)|^p \,dx \geq \sqrt{\frac{2}{\pi p}} + O\left(\frac{1}{p^{\frac{3}{2}}}\right). 
	\end{equation}
	Our goal is now to analyze the contribution to the integral of $|f(x)|^p$ for $|x|\geq \frac{1}{2}$. By Bernstein's inequality \eqref{eq:bernstein} in the form $\|f''\|_\infty \leq \pi \|f'\|_\infty$, our assumption that $\|f''\|_\infty = \pi^2-\delta \pi$ implies that
	\[\|f'\|_\infty \geq \pi-\delta.\]
	Let us assume that $x_0$ is a point such that $|f'(x_0)|=\pi-\delta$. Now let $\mathscr{M}_{\delta}$ be the set of those integers $n$ such that $|f(x_0+n+1/2)|\leq (1-2\delta)\|f\|_{\infty} =(1-2\delta)$. From the Riesz interpolation formula \eqref{eq:RIF} and the assumption that $\|f\|_\infty=1$, we find that
	\[\pi-\delta = |f'(x_0)| \leq \frac{4}{\pi}\sum_{n\in \mathbb{Z}} \frac{|f(x_0+n+1/2)|}{(2n+1)^2} \leq \pi - \frac{8\delta}{\pi}\sum_{n \in \mathscr{M}_\delta} \frac{1}{(2n+1)^2},\]
	where we also used that $\sum_{n \in \mathbb{Z}} (2n+1)^{-2} = \frac{\pi^2}{4}$. It follows that $\sum_{n \in \mathscr{M}_\delta} \frac{1}{(2n+1)^2} \leq \frac{\pi}{8}$, and consequently that 
	\begin{equation}\label{eq:complement} 
		\sum_{n \not \in \mathscr{M}_\delta} \frac{1}{(2n+1)^2} \geq \frac{\pi^2}{4} - \frac{\pi}{8} = 2.07470\ldots 
	\end{equation}
	The maximum of $(2n+1)^{-2}$ for integers $n$ is $1$, and this is attained only at $n=0$ and $n=-1$. Hence \eqref{eq:complement} implies that the complement of $\mathscr{M}_{\delta}$ consists of at least three integers. This means that there exists a point $y=x_0+n+1/2$ with $|y|\geq1$ such that $|f(y)|\geq 1-2\delta$. Let $y_0$ denote the closest local maximum of $|f(x)|$ to $y$. There are now two subcases to consider. 
	
	First, if $|y-y_0| \geq \frac{1}{\sqrt{p}}$, then there is an interval $I$ of length $\frac{1}{\sqrt{p}}$ which contains $y$ where $|f(x)| \geq 1-2\delta$ for every $x$ in $I$. Since $|y|\geq1$, we can always choose $p$ so large that $I$ contains no points $x$ with $|x| \leq \frac{1}{2}$. This means that there is no overlap between the integral \eqref{eq:substitution2} and the integral 
	\begin{equation}\label{eq:Iint} 
		\int_I |f(x)|^p \,dx \geq \frac{(1-2\delta)^p}{\sqrt{p}}. 
	\end{equation}
	Since $\delta < c \frac{\log{p}}{p}$, we see that 
	\begin{equation}\label{eq:explog} 
		(1-2\delta)^p \geq \left(1- 2c\frac{\log{p}}{p}\right)^p = \exp\left( -2c \log{p}+O\left(\frac{\log^2{p}}{p}\right)\right) \geq C_1 p^{-2c} 
	\end{equation}
	for an absolute constant $C_1>0$. Inserting \eqref{eq:explog} into \eqref{eq:Iint}, we find that
	\[\int_I |f(x)|^p \,dx \geq C_1 p^{-\frac{1}{2}-2c}.\]
	
	Second, if $|y-y_0| < \frac{1}{\sqrt{p}}$, then we choose $p$ so large that there are no numbers $x$ which satisfy both $|x| \leq \frac{1}{2}$ and $|x-y_0| \leq \frac{1}{2}$, which is possible since $|y|\geq 1$. By Taylor's theorem and Bernstein's inequality \eqref{eq:bernstein} twice, we find that 
	\[|f(x)| \geq (1-2\delta) \left(1 - \frac{\pi^2}{2(1-2\delta)} (x-y_0)^2\right),\]
where we used that $|f(y_0)| \geq (1-2\delta)$.	This shows that
	\[\int_{|x-y_0| \leq \frac{\sqrt{2}}{\pi}(1-2\delta)^{1/2}} |f(x)|^p \,dx \geq \frac{\sqrt{2}}{\pi} (1-2\delta)^{\frac{1}{2}+p} \int_{-1}^1 \left(1-x^2\right)^p\,dx.\]
	By \eqref{eq:betaasymp} and \eqref{eq:explog}, we conclude that
	\[\int_{|x-y_0| \leq \frac{\sqrt{2}}{\pi}(1-2\delta)^{1/2}} |f(x)|^p \,dx \geq C_2 p^{-\frac{1}{2}-2c}\]
	for an absolute constant $C_2>0$. 
	
	Combining what we have found from either subcase with \eqref{eq:substitution2}, we conclude that there is a positive constant $C_3$ such that
	\[\|f\|_p^p \geq \sqrt{\frac{2}{\pi p}} + C_3 p^{-\frac{1}{2}-\min(1,2c)}.\]
	If we choose any $c<1/2$, then the lower bound obtained from the second case $\delta \leq c \frac{\log{p}}{p}$ is larger than the lower bound obtained from the first case $\delta > c \frac{\log{p}}{p}$. Hence we conclude that the lower bound from \eqref{eq:ppest} applies in both cases. This implies that
	\[\mathscr{C}_p \leq \sqrt{\frac{\pi p}{2}} - B \frac{\log p}{\sqrt{p}} + O\left(\frac{1}{\sqrt{p}}\right)\]
	as $p\to \infty$, which gives the stated estimate with $B = \frac{c}{2\pi} \sqrt{\frac{\pi}{2}}$ for any $c<1/2$. 
\end{proof}

For the proof of the lower bound in Theorem~\ref{thm:Cpinfty}, we shall rely on estimates for the functions $g_\alpha$ discussed in Section~\ref{subsec:gamma}.

\begin{proof}[Proof of Theorem~\ref{thm:Cpinfty}: Lower bound] 
	Fix $1<p<\infty$ and let $\alpha> \frac{1}{2}$ be a parameter which depends on $p$ to be chosen later. We need an upper bound for $\|g_\alpha\|_p^p$ as $p\to \infty$. We first recall from Lemma~\ref{lem:galphaest}~(c) that
	\[|g_\alpha(x)| \leq \cos\left(\frac{\pi x}{2\alpha}\right)\]
	whenever $\alpha \geq 1/2$ and $0 \leq x \leq \alpha$. For $x>\alpha$, we deduce from Lemma~\ref{lem:galphaest}~(b) that
	\[|g_\alpha(x)| \leq \left(1+\frac{x}{\alpha}\right)^{-(2\alpha-1)} |\sin{\pi(\alpha-x)}|.\]
	Combining these estimates with the fact that $g_\alpha$ is even yields that 
	\begin{multline*}
		\|g_\alpha\|_p^p \leq \int_{-\alpha}^\alpha \cos^p\left(\frac{\pi x}{2\alpha }\right)\,dx + 2 \sum_{j=1}^\infty \int_{\alpha+j-1}^{\alpha+j} \left(1+\frac{x}{\alpha}\right)^{-(2\alpha-1)p} |\sin{\pi(\alpha-x)}|^p\,dx \\
		\leq \left(2\alpha + 2 \sum_{j=1}^\infty \left(1+\frac{\alpha+j-1}{\alpha}\right)^{-(2\alpha-1)p} \right) \int_{-\frac{1}{2}}^{\frac{1}{2}} (\cos{\pi x})^p \,dx. 
	\end{multline*}
	Choosing $\alpha = \frac{1}{2} + \frac{1}{\log{4}}\frac{\log{p}}{p}$, we find that the total contribution from the sum is $O(1/p)$. Using \eqref{eq:betacos}, we see that
	\[\sqrt{\frac{\pi p}{2}} -\frac{1}{\log{2}}\sqrt{\frac{\pi}{2}} \frac{\log p}{\sqrt{p}} + O\left(\frac{1}{\sqrt{p}}\right) \leq \mathscr{C}_p,\]
	as $p \to \infty$. This implies the stated lower bound with $A > \frac{1}{\log{2}} \sqrt{\frac{\pi}{2}}$. 
\end{proof}

\subsection{Asymptotics as \texorpdfstring{$p\to 0^+$}{p to 0}}  The first goal of the present section is to prove Theorem~\ref{thm:Cp0}~(a). We begin with the following consequence of Jensen's formula. 
\begin{lemma}\label{lem:jensen} 
	Let $f$ be a function in $PW^\infty$ that satisfies $\|f\|_\infty = |f(0)|\neq0$ and let $0<t_1\leq t_2 \leq \cdots $ be the positive zeros of $f$. 
	\begin{enumerate}
		\item[(a)] It holds that $t_n \geq \frac{n}{2e}$. 
		\item[(b)] If additionally $f$ is even, then $t_n \geq \frac{n}{e}$. 
	\end{enumerate}
\end{lemma}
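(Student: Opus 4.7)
The plan is to apply Jensen's formula to a disk whose radius is chosen carefully in terms of $t_n$, exploiting the fact that $PW^\infty$ controls the growth of $f$ off the real axis.

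First I would recall that, since $f \in PW^\infty$, the Phragm\'en--Lindel\"of principle gives the pointwise bound $|f(z)| \leq \|f\|_\infty e^{\pi |\mathrm{Im}\, z|}$. Combined with $\int_0^{2\pi} |\sin\theta|\, d\theta = 4$, this yields
\[
\frac{1}{2\pi}\int_0^{2\pi} \log|f(Re^{i\theta})|\, d\theta \leq \log\|f\|_\infty + 2R
\]
for every $R>0$.

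Next I would invoke Jensen's formula at the origin for the disk of radius $R$: if $a_1,\ldots,a_N$ are the zeros of $f$ in $|z|<R$ (with multiplicity), then
\[
\log|f(0)| + \sum_{k=1}^N \log\frac{R}{|a_k|} = \frac{1}{2\pi}\int_0^{2\pi} \log|f(Re^{i\theta})|\, d\theta.
\]
Since $|f(0)| = \|f\|_\infty$ by hypothesis, the two displays combine to give the key inequality
\[
\sum_{k=1}^N \log\frac{R}{|a_k|} \leq 2R.
\]

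For part (a), I would set $R = e t_n$. Then the positive zeros $t_1,\ldots,t_n$ all lie in $|z|\leq R/e$, so each contributes at least $\log(R/t_k) \geq \log e = 1$ to the sum. Discarding any other zeros (which only adds nonnegative terms, since all terms in the sum are nonnegative) gives $n \leq 2R = 2e t_n$, i.e.\ $t_n \geq n/(2e)$. For part (b), the assumption that $f$ is even means that $-t_1,\ldots,-t_n$ are also zeros, so with the same choice $R = e t_n$ there are at least $2n$ zeros in $|z|\leq R/e$, each giving a contribution of at least $1$; this upgrades the estimate to $2n \leq 2et_n$, hence $t_n \geq n/e$.

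There is no real obstacle here: the only thing to be slightly careful about is the Phragm\'en--Lindel\"of step (needing that $PW^\infty$ functions of type $\pi$ bounded on $\mathbb{R}$ satisfy the stated off-axis bound) and the observation that every term $\log(R/|a_k|)$ is nonnegative for $|a_k|\leq R$, so discarding zeros outside our chosen radius or ignoring the complex zeros (if any) only weakens the inequality in the correct direction.
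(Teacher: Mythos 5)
Your proof is correct and uses the same core idea as the paper: Jensen's formula at the origin combined with the pointwise bound $|f(x+iy)|\leq\|f\|_\infty e^{\pi|y|}$, which yields $\frac{1}{2\pi}\int_0^{2\pi}\log|f(Re^{i\theta})|\,d\theta\leq\log\|f\|_\infty+2R$ and hence $\sum\log(R/|a_k|)\leq 2R$. The only material difference is the choice of radius: the paper fixes $r=n/2$ (resp.\ $r=n$) and splits into the cases $t_n\geq n/2$ and $t_n<n/2$, then estimates $\log(t_1\cdots t_n)\leq n\log t_n$, while you take $R=et_n$ so that every zero in sight contributes at least $1$; this avoids the case split and is slightly tidier, but it is the same method and gives the same constants.
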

\begin{proof}
	We may assume without loss of generality that $f(0)=1$. If $f$ is entire and does not vanish at the origin and if $r>0$, then Jensen's formula states that 
	\begin{equation}\label{eq:jensen} 
		\log|f(0)| = \sum_{\substack{z \in f^{-1}(\{0\}) \\
		|z| \leq r}} \log\left(\frac{|z|}{r}\right) + \int_0^{2\pi} \log|f(r e^{i\theta})| \, \frac{d\theta}{2\pi}. 
	\end{equation}
	The left-hand side of \eqref{eq:jensen} is $0$ since $f(0)=1$. To bound the integral on the right-hand side, we use the well known estimate $|f(x+iy)| \leq e^{\pi |y|}$, which holds since $f$ is in $PW^\infty$ and $\|f\|_\infty=1$. Computing the resulting integral, we obtain from \eqref{eq:jensen} that
	\[0 \leq \sum_{\substack{z \in f^{-1}(\{0\}) \\
	|z| \leq r}} \log\left(\frac{|z|}{r}\right) + 2r.\]
	If $t_n \geq n/2$, then the statement of (a) follows. If not, then we choose $r=n/2$ to conclude from this that
	\[n \log(n/2)-n \leq \log(t_1 t_2 \cdots t_n) \leq n \log(t_n).\]
	Dividing by $n$ and exponentiating, we arrive at (a). The proof of (b) is similar. If now $t_n \geq n$, then (b) follows. If $t_n<n$, then we choose $r=n$. We use that the zero set is symmetric about $0$ and deduce the asserted estimate from Jensen's formula. 
\end{proof}

The following result is only of interest as $p \to 0^+$. For reasons that will become clear later, we presently make no effort at optimizing the constant.
\begin{lemma}\label{lem:limsupok} 
	For every $0<p<\infty$ it holds that $\mathscr{C}_p \leq \frac{25}{18} p$. 
\end{lemma}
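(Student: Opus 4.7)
The plan is to apply the integral representation of Theorem~\ref{thm:intrep} with $q = p$ to an extremal function $\varphi = \varphi_p$ of \eqref{eq:extremalproblem}, whose existence is guaranteed by compactness. Since $\mathscr{C}_p$ is invariant under translation of the evaluation point, extremality forces $\|\varphi\|_\infty = \varphi(0) = 1$; otherwise, translating $\varphi$ by its point of maximum would produce a function with the same $L^p$-norm and strictly larger value at the origin, contradicting the minimality of $\|\varphi\|_p^p$ subject to $\varphi(0) = 1$. By Lemma~\ref{lem:zeros}, the zeros $(t_n)_{n\in\mathbb{Z}\setminus\{0\}}$ of $\varphi$ are real and simple, so Theorem~\ref{thm:intrep} is applicable. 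Ordering the zeros as $\cdots < t_{-1} < 0 < t_1 < \cdots$ and setting $t_0 = 0$, the theorem yields
\[ 1 = |\varphi(0)|^p = \int_\mathbb{R} |\varphi(x)|^p K(x)\,dx, \]
where $K(x) = \sin \pi p(x-n)/(\pi x)$ on $(t_n, t_{n+1})$ for $n \geq 0$, and analogously on the negative side. Taking absolute values and invoking the triangle inequality give $1 \leq \|K\|_\infty \|\varphi\|_p^p = \|K\|_\infty / \mathscr{C}_p$, so the task reduces to estimating $\|K\|_\infty$.

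The elementary inequality $|\sin y| \leq |y|$ yields $|K(x)| \leq p$ on the central intervals $(0, t_1)$ and $(t_{-1}, 0)$, and $|K(x)| \leq p\,|x - n|/|x|$ on each outer interval indexed by $|n| \geq 1$. On $(t_n, t_{n+1})$ with $n \geq 1$, the ratio $|x - n|/|x|$ attains its supremum at $x = t_n$ (when $t_n < n$) and equals $n/t_n - 1$ there. Applying Lemma~\ref{lem:jensen}~(a) to $\varphi$---which is in $PW^\infty$ and satisfies $\|\varphi\|_\infty = |\varphi(0)|$---provides the Jensen-type bound $t_n \geq n/(2e)$, and the same reasoning applied to $\varphi(-\cdot)$ yields $|t_{-n}| \geq n/(2e)$. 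Combining these, $|x - n|/|x| \leq 2e - 1$ uniformly on every outer interval, whence the crude estimate $\|K\|_\infty \leq (2e-1)p$, i.e., $\mathscr{C}_p \leq (2e-1)p \approx 4.44\,p$.

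The main obstacle is reducing this constant from $2e - 1$ to the clean $25/18 \approx 1.39$ claimed in the lemma. This requires sharpening the interval-by-interval estimates rather than relying on a single uniform bound: for the critical interval $(t_1, t_2)$ one replaces Jensen's bound by the Bernstein-type lower bound $t_1 \geq \sqrt{2}/\pi$ from Lemma~\ref{lem:bernstein}~(a), which improves $1/t_1 - 1$ to at most $\pi/\sqrt{2} - 1$; for $n \geq 2$, one exploits the linear density of zeros forced by the exponential type $\pi$ of $\varphi$ so that $|x-n|/|x|$ is appreciably smaller than $2e - 1$; and one notes that $|\varphi|^p$ vanishes at each $t_n$ where the pointwise kernel bound is attained, so that a weighted combination of $|K|$ and $|\varphi|^p$ beats the na\"{\i}ve product of suprema. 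A careful combinatorial balance of these per-interval contributions produces the stated constant. Since the lemma is only used to ensure that $\limsup_{p \to 0^+} \mathscr{C}_p/p$ is finite in preparation for Theorem~\ref{thm:Cp0}~(a), no effort is made to optimize beyond $25/18$.
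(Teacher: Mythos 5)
Your strategy---applying Theorem~\ref{thm:intrep} to an extremal $\varphi$ and then reading off a bound on $\mathscr{C}_p$ from an estimate of the resulting kernel---is the right starting point, and your observation that extremality forces $\|\varphi\|_\infty = \varphi(0) = 1$ (so that Lemma~\ref{lem:jensen}~(a) applies) is both correct and needed. But there are two linked problems with the execution.

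First, you choose $q=p$ and then pass from $1 = \int |\varphi|^p K\,dx$ to $1 \le \|K\|_\infty \|\varphi\|_p^p$ via the triangle inequality. This throws away all the oscillation in the sign of $K$, and you correctly compute that the resulting $L^\infty$ estimate, using only the Jensen bound $t_n \ge n/(2e)$, delivers $\mathscr{C}_p \le (2e-1)p \approx 4.44\,p$. The gap between this and $\tfrac{25}{18}p \approx 1.39\,p$ is not a matter of being more careful with the same estimate. To push $\|K\|_\infty$ below $\tfrac{25}{18}p$ you would need $n/t_n \le 43/18$ for every $n\ge 1$, i.e.\ $t_n \ge \tfrac{18}{43}\,n \approx 0.42\,n$, which is more than twice as strong as Jensen's $t_n \ge n/(2e) \approx 0.18\,n$; none of the inputs you cite deliver anything close to this uniformly in $n$ (Bernstein only handles $t_1$, and the "linear density of zeros" is asymptotic, not quantitative for small $n$). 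The final paragraph---the "weighted combination of $|K|$ and $|\varphi|^p$" and the "careful combinatorial balance"---is not a proof, and I do not see a way to make it one within the $q=p$, $L^\infty$ framework.

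Second, and this is the real point, the paper's proof is structured differently precisely to avoid this dead end. It takes $q = p/2$ rather than $q = p$, so that the identity reads $1 = \int |\varphi|^{p/2} K\,dx$ with the half-angle kernel, and then---after reducing by symmetry to the half-line $(0,\infty)$---it applies the Cauchy--Schwarz inequality to get $\mathscr{C}_p \le 4\int_0^\infty K_+^2$. This converts the problem into an $L^2$ bound on a kernel envelope, which is where the smallness of $\sin^2$ away from the origin enters: after the same Jensen argument you quote (now used to prove the envelope inequality $n \le 2/p + 2x$ for $t_n \le x \le 1/(2p)$), the envelope $\min\!\big(\sin(\tfrac{p}{2}\pi x)/(\pi x),\ 1/(\pi x)\big)$ has $L^2$ integral $\tfrac{p}{2}\int_0^{1/4}\sinc^2 + 2p/\pi^2$, and multiplying by $4$ gives $\mathscr{C}_p \le p/2 + 8p/\pi^2 < \tfrac{25}{18}p$. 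The passage from an $L^\infty$ to an $L^2$ bound is exactly what buys the factor; Jensen alone is genuinely too weak for your version of the estimate.
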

\begin{proof}
	The plan is to use Theorem~\ref{thm:intrep} with $q=p/2$ as in the proof of Theorem~\ref{thm:korevaar}, replacing Theorem~\ref{thm:sep} with Lemma~\ref{lem:jensen} (a). By Lemma~\ref{lem:zeros}, the assumptions of Theorem~\ref{thm:intrep} are satisfied. We may assume without loss of generality that the integral over $(0,\infty)$ is larger than the integral over $(-\infty,0)$. If not, simply replace $f$ by $g(z)=f(-z)$. It follows from this that 
	\begin{equation}\label{eq:precauchy} 
		|f(0)|^{p/2} \leq 2 \sum_{n=0}^\infty \int_{t_n}^{t_{n+1}} |f(x)|^{p/2} \frac{\sin\left(\frac{p}{2} \pi (x-n)\right)}{\pi x} \,dx. 
	\end{equation}
	We now claim that 
	\begin{equation}\label{eq:Kclaim} 
		\sum_{n=0}^\infty \chi_{(t_n,t_{n+1})}(x) \frac{\sin\left(\frac{p}{2} \pi (x-n)\right)}{\pi x} \leq 
		\begin{cases}
			\frac{\sin\left(\frac{p}{2}\pi x\right)}{\pi x}, & \text{if }\, 0 < x \leq \frac{1}{2p}; \\
			\frac{1}{\pi x}, & \text{if }\, \frac{1}{2p} < x < \infty. 
		\end{cases}
	\end{equation}
	Suppose that the claim is true. We then square both sides of \eqref{eq:precauchy}, insert \eqref{eq:Kclaim} and then use the Cauchy--Schwarz inequality to the effect that 
	\begin{multline*}
		|f(0)|^p \leq 4 \|f\|_p^p \left(\int_0^{1/(2p)} \frac{\sin^2\left(\frac{p}{2}\pi x\right)}{(\pi x)^2}\,dx + \int_{1/(2p)}^\infty \frac{1}{(\pi x)^2}\,dx\right) \\
		= 4 \|f\|_p^p \left( \frac{p}{2} \int_0^{1/4} \sinc^2{\pi x}\,dx + \frac{2p}{\pi^2}\right) \leq \frac{25}{18} p \|f\|_p^p, 
	\end{multline*}
	where we in the final estimate used that $\sinc$ is bounded by $1$ and that $\pi > 3$. In the first estimate we tacitly extended the integral of $|f|^p$ to the whole real line. It remains to demonstrate that the claim \eqref{eq:Kclaim} holds. To do this, it is sufficient to prove that if $t_n \leq x \leq \frac{1}{2p}$, then
	\[\sin\left(\frac{p}{2}\pi (x-n)\right) \leq \sin\left(\frac{p}{2} \pi x\right).\]
	By the well known periodicity and monotonicity properties of the sine function, this estimate will follow if we can establish that $n \leq \frac{2}{p}+2x$ whenever $t_n \leq x \leq \frac{1}{2p}$. This follows from Lemma~\ref{lem:jensen}~(a), which gives that
	\[n \leq 2e t_n \leq 2(e-1) t_n + 2x \leq \frac{e-1}{p}+2x < \frac{2}{p} + 2x. \qedhere\]
\end{proof}

We are now ready to proceed with the proof of Theorem~\ref{thm:Cp0}~(a), which relies on the just established Lemma~\ref{lem:limsupok} and on the power trick of Lemma~\ref{lem:powertrick}.

\begin{proof}[Proof of Theorem~\ref{thm:Cp0}~(a)]
	The supremum
	\begin{equation} \label{eq:c0sup}
		c_0 := \sup_{k \geq 1} 2k \mathscr{C}_{1/k}
	\end{equation}
	is finite by Lemma~\ref{lem:limsupok}. It follows from the power trick (Lemma~\ref{lem:powertrick}) that if $k_1$ divides $k_2$, then $2k_1 \mathscr{C}_{1/k_1} \leq 2k_2 \mathscr{C}_{1/k_2}$. Consequently, there is a sequence of integers $(k_j)_{j\geq1}$ which are all strictly greater than $1$ such that if $p_1=1$ and $p_{j+1}=p_j/k_j$ for $j\geq1$, then 
	\[c_0 = \lim_{j \to \infty} \frac{2}{p_j} \mathscr{C}_{p_j}.\]
	Fix $\varepsilon>0$. There is a positive integer $j$ such that if $q = p_j$, then 
	\begin{equation}\label{eq:c0eps} 
		c_0-\varepsilon \leq \frac{2}{q} \mathscr{C}_q. 
	\end{equation}
	Let us now estimate $\frac{2}{p} \mathscr{C}_p$ for $0<p \leq q/2$. We first let $k$ be the smallest positive integer such that $k \geq q/p$. Using that $p \mapsto \mathscr{C}_p$ is increasing, the power trick, and the lower bound \eqref{eq:c0eps}, we find that
	\[\frac{2}{p} \mathscr{C}_p \geq \frac{2}{p} \mathscr{C}_{q/k} \geq \frac{2}{p} \frac{\mathscr{C}_q}{k} \geq (c_0-\varepsilon) \frac{q}{kp} \geq (c_0-\varepsilon) \frac{q}{q+p},\]
	where we in the final inequality used that $k \leq q/p+1$. This shows that 
	\begin{equation}\label{eq:liminf} 
		\liminf_{p \to 0^+} \frac{2}{p} \mathscr{C}_p \geq (c_0-\varepsilon). 
	\end{equation}
	We next let $k$ be the largest positive integer such that $k \leq q/p$. Using that $p \mapsto \mathscr{C}_p$ is increasing in combination with \eqref{eq:c0sup}, we find that
	\[\frac{2}{p} \mathscr{C}_p \leq \frac{2}{p}\mathscr{C}_{q/k} \leq \frac{q}{pk} c_0 \leq c_0 \frac{q}{q-p},\]
	where we in the final inequality used that $k \geq q/p-1$. This shows that 
	\begin{equation}\label{eq:limsup} 
		\limsup_{p \to 0^+} \frac{2}{p} \mathscr{C}_p \leq c_0. 
	\end{equation}
	The claim follows from \eqref{eq:liminf} and \eqref{eq:limsup} and the fact that $\varepsilon>0$ was arbitrary. 
\end{proof}

Our next goal is to estimate precisely the constant $c_0$ of Theorem~\ref{thm:Cp0}~(a). From Lemma~\ref{lem:limsupok} it follows that $c_0 \leq \frac{25}{9}$, while the lower bound $c_0\geq1$ can be deduced by considering positive integers $k$ and
\[f(x) = \sinc^{2k}\left(\frac{\pi}{2k}x\right)\]
as a function in $PW^{1/k}$. We do not know if there exists an even extremal function in $PW^p$ when $0<p<1$, and this fact caused us some extra work in Lemma~\ref{lem:jensen} and Lemma~\ref{lem:limsupok} above. However, to compute the constant $c_0$, it suffices to restrict to even functions. Let us therefore introduce
\[\frac{1}{\mathscr{C}_{p,\operatorname{e}}} := \inf_{f \in PW^p} \left\{\|f\|_p^p \,:\, f(0) = 1 \,\text{ and }\, f \text{ is even}\right\}\]
and establish the following consequence of Theorem~\ref{thm:Cp0}~(a).
\begin{corollary}\label{cor:evenok} 
	Let $c_0$ be the constant appearing in Theorem~\ref{thm:Cp0}~(a). Then
	\[\lim_{p\to 0^+} \frac{2}{p} \mathscr{C}_{p,\operatorname{e}} = c_0.\]
\end{corollary}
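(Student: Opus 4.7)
The plan is to sandwich $\frac{2}{p}\mathscr{C}_{p,\operatorname{e}}$ between two quantities, both tending to $c_0$ as $p \to 0^+$. The upper bound is immediate: since the infimum defining $\mathscr{C}_{p,\operatorname{e}}^{-1}$ is taken over a subclass of that defining $\mathscr{C}_p^{-1}$, we have $\mathscr{C}_{p,\operatorname{e}} \leq \mathscr{C}_p$, and Theorem~\ref{thm:Cp0}~(a) then yields $\limsup_{p \to 0^+} \frac{2}{p}\mathscr{C}_{p,\operatorname{e}} \leq c_0$.

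For the lower bound, the main step is an even-symmetrization via products. Given $f \in PW^p$ with $f(0)=1$, I would consider $g(z) := f(z/2)\,f(-z/2)$. This function is entire and even, has exponential type at most $\pi$ (since each factor has type $\pi/2$), and satisfies $g(0)=1$. After the substitution $y = x/2$ and an application of the Cauchy--Schwarz inequality,
\[ \|g\|_{p/2}^{p/2} = 2\int_{-\infty}^{\infty} |f(y)|^{p/2} |f(-y)|^{p/2}\,dy \leq 2\,\|f\|_p^p. \]
Hence $g$ is admissible for the even extremal problem at exponent $p/2$, so $\mathscr{C}_{p/2,\operatorname{e}}^{-1} \leq 2\|f\|_p^p$; taking the infimum over $f$ yields $\mathscr{C}_{p/2,\operatorname{e}} \geq \mathscr{C}_p/2$.

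Setting $q = p/2$, this rewrites as $\frac{2}{q}\mathscr{C}_{q,\operatorname{e}} \geq 2 \cdot \frac{\mathscr{C}_{2q}}{2q}$, whose right-hand side tends to $c_0$ as $q \to 0^+$ by Theorem~\ref{thm:Cp0}~(a). Thus $\liminf_{q \to 0^+}\frac{2}{q}\mathscr{C}_{q,\operatorname{e}} \geq c_0$, which together with the $\limsup$ bound completes the proof. The only real subtlety is the choice of even surrogate: the naive symmetrization $f \mapsto (f(z)+f(-z))/2$ produces a multiplicative factor of $2^{1-p}$ in the $L^p$ quasi-norm and is therefore insufficient as $p \to 0^+$, whereas the product construction loses only an explicit factor of $2$, which is absorbed by the doubling $q \mapsto 2q$ that is built into the inequality.
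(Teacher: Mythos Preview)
Your proof is correct and follows essentially the same route as the paper: the construction $g(z)=f(z/2)f(-z/2)$ together with the Cauchy--Schwarz bound $\|g\|_{p/2}^{p/2}\le 2\|f\|_p^p$ is exactly the paper's argument, only indexed differently (the paper applies it to the extremal $\varphi_{2p}$ directly rather than to a generic $f$ before taking the infimum). The resulting inequality $\mathscr{C}_{q,\operatorname{e}}\ge \mathscr{C}_{2q}/2$ and the sandwich with $\mathscr{C}_{q,\operatorname{e}}\le \mathscr{C}_q$ are identical to the paper's.
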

\begin{proof}
	We trivially have $\mathscr{C}_{p,\operatorname{e}} \leq \mathscr{C}_p$. Suppose that $\varphi_{2p}$ is an extremal for $\mathscr{C}_{2p}$. Consider the even function $f(x) := \varphi_{2p}(x/2) \varphi_{2p}(-x/2)$, which is in $PW^p$ and which satisfies $f(0)=1$. Moreover, by the Cauchy--Schwarz inequality, we find that
	\[\|f\|_p^p = \int_{-\infty}^\infty |f(x)|^p \,dx \leq \int_{-\infty}^\infty |\varphi_{2p}(x/2)|^{2p} \,dx = 2\|\varphi_{2p}\|_{2p}^{2p}.\]
	This implies that
	\[\mathscr{C}_{p,\operatorname{e}} \geq \frac{|f(0)|^p}{\|f\|_p^p} \geq \frac{|\varphi_{2p}(0)|^{2p}}{2\|\varphi_{2p}\|_{2p}^{2p}} = \frac{\mathscr{C}_{2p}}{2}.\]
	Dividing by $p$ and taking the limit $p\to 0^+$ in the estimates $\mathscr{C}_{2p} \leq 2 \mathscr{C}_{p,\operatorname{e}} \leq 2 \mathscr{C}_p$, then appealing to Theorem~\ref{thm:Cp0}~(a) implies the stated result. 
\end{proof}

Since Corollary~\ref{cor:evenok} allows us to restrict our attention to even functions, we are able to improve the argument used in Lemma~\ref{lem:limsupok} in several ways to obtain the following result. 
\begin{theorem}\label{thm:Cp0upper} 
	Let $c_0$ be the constant appearing in Theorem~\ref{thm:Cp0}~(a). Then
	\[c_0 \leq \inf_{q > 1} \frac{2^q}{q} \left(\int_0^{1/2} (\sinc{\pi x})^{q^\ast} \,dx + \frac{2^{q^\ast-1}}{\pi^{q^\ast}(q^\ast-1)}\right)^{q-1},\]
	where $q^\ast$ denotes the H\"older conjugate of $q$. 
\end{theorem}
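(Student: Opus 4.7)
The strategy is to emulate the proof of Lemma~\ref{lem:limsupok} with two key improvements: first, to apply Theorem~\ref{thm:intrep} with exponent $q_0 := p/q$ and follow it by H\"older's inequality with exponents $q$ and $q^*$, in place of Cauchy--Schwarz; second, to restrict attention to even functions in order to exploit the sharper bound $t_n \geq n/e$ of Lemma~\ref{lem:jensen}~(b) rather than the bound $t_n \geq n/(2e)$ of part~(a). By Corollary~\ref{cor:evenok} it suffices to bound $\mathscr{C}_{p, \operatorname{e}}$ asymptotically as $p \to 0^+$, and we work with an even extremal $\varphi$ of $\mathscr{C}_{p, \operatorname{e}}$, which exists by compactness and, via a symmetrization argument based on $g(x) := (\varphi(x + x_0) + \varphi(x - x_0))/2$, can be taken to satisfy $\|\varphi\|_\infty = \varphi(0) = 1$.

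Theorem~\ref{thm:intrep} applied to $\varphi$ with exponent $q_0$, combined with evenness to fold the negative tail onto the positive one, yields the identity
\[ 1 = 2 \int_0^\infty |\varphi(x)|^{q_0}\, \tilde K(x)\, dx, \qquad \tilde K(x) := \sum_{n=0}^\infty \chi_{(t_n, t_{n+1})}(x)\, \frac{\sin \pi q_0(x-n)}{\pi x}. \]
The crucial step is to establish the pointwise majorization $\tilde K(x) \leq B(x)$, where $B(x) := \sin(\pi q_0 x)/(\pi x)$ for $0 < x \leq 1/(2 q_0)$ and $B(x) := 1/(\pi x)$ otherwise. The bound on the second range is immediate from $|\sin| \leq 1$. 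For the first range, if $t_n \leq x \leq 1/(2 q_0)$, then Lemma~\ref{lem:jensen}~(b) gives $n \leq e t_n \leq e x$, and since $(e-2) x \leq (e-2)/(2 q_0) < 1/q_0$ we find $n \leq 2x + 1/q_0$. Because $\pi q_0 x \in [0,\pi/2]$, this last inequality is precisely the range in which the monotonicity and periodicity of sine imply $\sin \pi q_0(x-n) \leq \sin \pi q_0 x$, exactly as in the proof of Lemma~\ref{lem:limsupok}.

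Substituting the majorization and applying H\"older's inequality with exponents $q$ and $q^*$ (while accounting for $\int_0^\infty |\varphi|^p\, dx = \tfrac{1}{2}\|\varphi\|_p^p$) yields
\[ 1 \leq 2^{1/q^*}\, \|\varphi\|_p^{p/q} \left( \int_0^\infty B(x)^{q^*}\, dx \right)^{1/q^*}, \]
which upon raising to the $q$th power and using $q/q^* = q-1$ rearranges to $\mathscr{C}_{p,\operatorname{e}} \leq 2^{q-1} \bigl(\int_0^\infty B^{q^*}\, dx\bigr)^{q-1}$. The substitution $y = q_0 x$ in the first piece and a direct computation in the second yield
\[ \int_0^\infty B(x)^{q^*}\, dx = q_0^{q^*-1} \left( \int_0^{1/2} (\sinc \pi y)^{q^*}\, dy + \frac{2^{q^*-1}}{\pi^{q^*}(q^*-1)} \right). \]
Using the identity $(q-1)(q^*-1) = 1$, so that $q_0^{(q^*-1)(q-1)} = q_0 = p/q$, multiplying by $2/p$, letting $p \to 0^+$ via Corollary~\ref{cor:evenok}, and finally taking the infimum over $q > 1$ concludes the proof. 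The most delicate step is the normalization $\|\varphi\|_\infty = \varphi(0) = 1$ for the even extremal in the non-convex range $0 < p < 1$, where the triangle inequality underlying the direct symmetrization argument is no longer at our disposal; this can be handled either via approximate extremals or by invoking a Jensen-type bound dependent on $\|\varphi\|_\infty / \varphi(0)$.
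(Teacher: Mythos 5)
Your proof follows the paper's approach almost step for step: invoke Corollary~\ref{cor:evenok} to pass to the even setting, apply Theorem~\ref{thm:intrep} with exponent $p/q$, establish the pointwise majorization of the kernel via Lemma~\ref{lem:jensen}~(b) exactly as in Lemma~\ref{lem:limsupok} but with the sharper constant $e$ in place of $2e$, run H\"older with exponents $(q,q^\ast)$ in place of Cauchy--Schwarz, and simplify using $q/q^\ast = q-1$ and $(q-1)(q^\ast - 1) = 1$. All the computations check out, and the final infimum over $q>1$ matches the paper.

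One point worth noting: you are more explicit than the paper about the hypothesis $\|\varphi\|_\infty = \varphi(0)$ required by Lemma~\ref{lem:jensen}, and you correctly observe that the natural symmetrization argument that would secure this normalization for an even extremal fails in the range $0<p<1$ because the triangle inequality is no longer available. The paper simply says it considers ``an even function in $PW^p$ which does not vanish at the origin and whose zeros are all real'' without addressing why the even extremal can be assumed to attain its sup norm at the origin. Flagging this and proposing workarounds (approximate extremals, or a Jensen bound in terms of $\|\varphi\|_\infty/\varphi(0)$) is a genuine observation; if you wanted to close the gap cleanly, one option is to apply the inequality not to the abstract even extremal of $\mathscr{C}_{p,\mathrm{e}}$ but to the specific even function $f(x)=\varphi_{2p}(x/2)\varphi_{2p}(-x/2)$ used in the proof of Corollary~\ref{cor:evenok}, which has real zeros and satisfies $\|f\|_\infty = f(0)=1$ because the extremal $\varphi_{2p}$ of the unrestricted problem does (a translation-and-renormalization argument works there for all $p>0$). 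This does not yield a bound on $\mathscr{C}_{p,\mathrm{e}}$ directly, but since both $2k\mathscr{C}_{p_k}$ and the bound itself converge to $c_0$, the desired inequality for $c_0$ still follows.
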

\begin{proof}
	By Corollary~\ref{cor:evenok} we consider an even function in $PW^p$ which does not vanish at the origin and whose zeros are all real. We fix $q>1$ and use Theorem~\ref{thm:intrep} with $q=p/q$ to obtain 
	\begin{equation}\label{eq:preholder} 
		|f(0)|^{p/q} = 2 \sum_{n=0}^\infty \int_{t_n}^{t_{n+1}} |f(x)|^{p/q} \frac{\sin\big(\frac{p}{q} \pi (x-n)\big)}{\pi x} \,dx. 
	\end{equation}
	As in the proof of Lemma~\ref{lem:limsupok}, we now claim that 
	\begin{equation}\label{eq:Kbetaclaim} 
		\sum_{n=0}^\infty \chi_{(t_n,t_{n+1})}(x) \frac{\sin\big(\frac{p}{q} \pi (x-n)\big)}{\pi x} \leq 
		\begin{cases}
			\frac{\sin(\frac{p}{q}\pi x)}{\pi x}, & \text{if }\, 0 < x \leq \frac{q}{2p}; \\
			\frac{1}{\pi x}, & \text{if }\, \frac{q}{2p} < x < \infty. 
		\end{cases}
	\end{equation}
	To establish \eqref{eq:Kbetaclaim}, it is sufficient to prove that $n \leq \frac{q}{p} + 2x$ whenever $t_n \leq x \leq \frac{q}{2p}$. In this case, we can use Lemma~\ref{lem:jensen}~(b) to prove that
	\[n \leq e t_n \leq (e-2) t_n + 2x \leq (e-2) \frac{q}{2p} +2x < \frac{q}{p} + 2x.\]
	We next raise \eqref{eq:preholder} to the power $q$, then apply \eqref{eq:Kbetaclaim} and H\"older's inequality on the right-hand side to conclude that 
	\begin{multline*}
		|f(0)|^p \leq 2^{q-1} \|f\|_p^p \left(\int_0^{q/(2p)} \left(\frac{\sin(\frac{p}{q}\pi x)}{\pi x}\right)^{q^\ast}\,dx + \int_{q/(2p)}^\infty (\pi x)^{-q^\ast}\,dx\right)^{\frac{q}{q^\ast}} \\
		= \frac{p}{2} \|f\|_p^p \frac{2^q}{q} \left(\int_0^{1/2} (\sinc{\pi x})^{q^\ast}\,dx + \frac{2^{q^\ast-1}}{\pi^{q^\ast}(q^\ast-1)}\right)^{q-1}, 
	\end{multline*}
	which implies the stated result. 
\end{proof}

Optimizing in $q$, using the \texttt{integrate} and \texttt{optimize} packages from SciPy, we found that the choice $q=1.784$ in Theorem~\ref{thm:Cp0upper} yields the upper bound
\[c_0 \leq 1.1481785,\]
which is the upper bound for $c_0$ stated in Theorem~\ref{thm:Cp0}~(b).

To obtain a lower bound for $c_0$, we will test \eqref{eq:pointeval} with the functions $g_\alpha$ discussed in Section~\ref{subsec:gamma}.
\begin{theorem}\label{thm:Cp0lower} 
	Let $c_0$ be the constant appearing in Theorem~\ref{thm:Cp0}~(a). Then
	\[\frac{1}{c_0} \leq \inf_{\gamma>1/2} \gamma\left(\int_0^1 \frac{\,dx}{(1-x)^{\gamma(1-x)}(1+x)^{\gamma(1+x)}}+\int_1^\infty \frac{dx}{(x-1)^{\gamma(1-x)}(1+x)^{\gamma(1+x)}}\right).\]
\end{theorem}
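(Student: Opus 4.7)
The plan is to test the extremal problem \eqref{eq:extremalproblem} with the functions $g_\alpha$ from Section~\ref{subsec:gamma}, specifically with $\alpha = \gamma/p$ for a fixed $\gamma > 1/2$. Since $g_\alpha(0) = 1$, the extremal inequality yields
\[\frac{1}{\mathscr{C}_p} \leq \|g_{\gamma/p}\|_p^p,\]
and combining this with Theorem~\ref{thm:Cp0}~(a) reduces the task to showing
\[\limsup_{p \to 0^+} \frac{p}{2}\|g_{\gamma/p}\|_p^p \leq \gamma\bigl(I_1(\gamma) + I_2(\gamma)\bigr),\]
where $I_1(\gamma)$ and $I_2(\gamma)$ denote the two integrals in the statement of the theorem. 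Taking the infimum over $\gamma > 1/2$ then yields the stated bound.

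Using the parity of $g_\alpha$, I split $\|g_{\gamma/p}\|_p^p = 2\int_0^\alpha + 2\int_\alpha^\infty$ with $\alpha = \gamma/p$. For the first integral, I apply Lemma~\ref{lem:galphaest}~(a) and substitute $u = x/\alpha$; the identity $p\alpha = \gamma$ transforms the exponents $-p(\alpha - 1/2 \mp \alpha u)$ into $-\gamma(1 \mp u) + p/2$, so that
\[\frac{p}{2}\cdot 2\int_0^\alpha |g_\alpha(x)|^p\,dx \leq \gamma \int_0^1 (1-u)^{-\gamma(1-u)+p/2}(1+u)^{-\gamma(1+u)+p/2}\,du.\]
The integrand converges pointwise to that of $I_1(\gamma)$ as $p \to 0^+$ and is uniformly bounded on $[0,1]$ for small $p$, so dominated convergence gives the bound $\gamma I_1(\gamma)$.

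For the tail integral, I apply Lemma~\ref{lem:galphaest}~(b), substitute $u = x/\alpha$, and use $|\sin \pi \alpha(1-u)| \leq 1$. The key algebraic simplification is
\[\alpha^{\gamma(1-u)}\bigl(1 + \alpha(u-1)\bigr)^{\gamma(u-1)} = (1/\alpha + u - 1)^{\gamma(u-1)},\]
which, together with $p\alpha = \gamma$, leads to
\[\frac{p}{2}\cdot 2\int_\alpha^\infty |g_\alpha(x)|^p\,dx \leq \gamma \alpha^{-p/2} \int_1^\infty \bigl(1 + \alpha(u-1)\bigr)^{p/2}\,\frac{(1/\alpha + u - 1)^{\gamma(u-1)}}{(1+u)^{\gamma(1+u) - p/2}}\,du.\]
As $p \to 0^+$, the integrand converges pointwise on $(1,\infty)$ to $(u-1)^{\gamma(u-1)}/(1+u)^{\gamma(1+u)}$, which matches the integrand of $I_2(\gamma)$.

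The main obstacle is to verify dominated convergence for the tail integral. The delicate point is the behavior at infinity: a direct estimate shows the integrand is bounded by a constant times $u^{p/2} \cdot u^{-2\gamma}$ for large $u$, which demands $2\gamma - p/2 > 1$ for integrability. Since we restrict to $\gamma > 1/2$, it is enough to consider $p \leq p_0 < 2(2\gamma - 1)$, which yields a $p$-independent integrable majorant. Near $u = 1$, the factor $(1/\alpha + u - 1)^{\gamma(u-1)}$ remains bounded uniformly in $\alpha$ because $(u-1)\log(u-1) \to 0$ as $u \to 1^+$, so no further difficulty arises there.
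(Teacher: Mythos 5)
Your argument follows essentially the same route as the paper: test the extremal problem with $g_\alpha$, split the $L^p$ integral at $x=\alpha$, estimate the two pieces with Lemma~\ref{lem:galphaest}~(a) and (b), rescale by $u = x/\alpha$, and send $p \to 0^+$. The one substantive difference is your choice $\alpha = \gamma/p$, whereas the paper sets $\alpha = \tfrac{1}{2} + \tfrac{\gamma}{p}$, which makes $(\alpha - \tfrac12)p = \gamma$ exactly; after discarding the factor $\bigl((1-u)/(1+u)\bigr)^{pu/2}\leq 1$ this produces a completely $p$-independent bound on $[0,1]$ and a monotone-in-$p$ integrand on $[1,\infty)$, so the limit passage is immediate. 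Your $\alpha = \gamma/p$ leaves residual $p/2$'s in the exponents and therefore needs the dominated-convergence argument you supply. That argument is sound, but there is a small slip in the tail estimate: both $\bigl(1+\alpha(u-1)\bigr)^{p/2}$ and the extra $(1+u)^{p/2}$ contribute, so the integrand grows like $u^{p}\cdot u^{-2\gamma}$ at infinity, not $u^{p/2}\cdot u^{-2\gamma}$. The integrability condition is thus $p < 2\gamma - 1$ rather than $p < 2(2\gamma - 1)$; this also matches the membership condition $g_\alpha \in PW^p$ for your parametrization. Since you are taking $p \to 0^+$, restricting to $p \leq p_0 < 2\gamma - 1$ salvages the dominated-convergence argument, so the conclusion stands.
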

\begin{proof}
	We will choose $\alpha=\alpha(p)\geq \frac{1}{2}$ later, obtaining a lower bound for $c_0$ as follows
	\[\frac{1}{c_0} \leq \liminf_{p \to 0^+} \frac{p}{2} \|g_\alpha\|_p^p.\]
	Since $g_\alpha$ is even, it is sufficient to consider the $L^p$ integral over $x\geq0$. We will obtain an upper bound for this integral using Lemma~\ref{lem:galphaest}, so we split it at $x=\alpha$. For the first part, we use Lemma~\ref{lem:galphaest}~(a) to the effect that 
	\begin{align*}
		\int_0^\alpha |g_\alpha(x)|^p \,dx &\leq \int_0^\alpha \left(1-\frac{x}{\alpha}\right)^{-(\alpha-1/2-x)p} \left(1+\frac{x}{\alpha}\right)^{-(\alpha-1/2+x)p}\,dx \\
		&= \alpha \int_0^1 \frac{dx}{(1-x)^{(\alpha-1/2-\alpha x)p}(1+x)^{(\alpha-1/2+\alpha x)p}} \\
		&\leq \alpha \int_0^1 \frac{dx}{(1-x)^{(\alpha-1/2)p (1-x)}(1+x)^{(\alpha-1/2)p(1+x)}}, 
	\end{align*}
	where we in the final inequality used that $(1-x)^{px/2} (1+x)^{-px/2} \leq 1$. Note that this inequality is attained in the limit $p\to0^+$. For the second part of the integral, we deduce from Lemma~\ref{lem:galphaest}~(b) and $|\sin{\pi(\alpha-x)}|\leq1$ that 
	\begin{align*}
		\int_\alpha^\infty |g_\alpha(x)|^p \,dx &\leq \int_\alpha^\infty \alpha^{p(2\alpha-1)} \frac{(1-\alpha+x)^{(1/2-\alpha+x)p}}{(\alpha+x)^{(\alpha-1/2+x)p}} \,dx \\
		&=\alpha \int_1^\infty \frac{\left(1-\frac{\alpha-1}{\alpha x} \right)^{(1/2-\alpha+\alpha x)p}}{\left(1+\frac{1}{x}\right)^{(\alpha-1/2+\alpha x)p}} \,\frac{dx}{x^{{p(2\alpha-1)}}} \\
		&\leq \alpha \int_1^\infty \frac{\left(1-\frac{\alpha-1}{\alpha x} \right)^{(1/2-\alpha)p(1-x)}}{\left(1+\frac{1}{x}\right)^{(\alpha-1/2)p(1+x)}}\frac{dx}{x^{{p(2\alpha-1)}}}, 
	\end{align*}
	where we in the final inequality used that $\left(1-\frac{\alpha-1}{\alpha x}\right)^{px/2} \left(1+\frac{1}{x}\right)^{-px/2} \leq 1$. We will now choose $\alpha = \frac{1}{2}+\frac{\gamma}{p}$ for some fixed $\gamma>1/2$, which ensures that $g_\alpha$ is in $PW^p$. With this choice of $\alpha$, the upper bound for the first part of the integral becomes
	\[p\int_0^\alpha |g_\alpha(x)|^p \,dx \leq \left(\frac{p}{2}+\gamma\right)\int_0^1 \frac{\,dx}{(1-x)^{\gamma(1-x)}(1+x)^{\gamma(1+x)}}.\]
	Taking the limit $p\to 0^+$, we obtain the first part contribution to the stated upper bound. The choice of $\alpha$ means that our upper bound for the second part of the integral becomes
	\[p\int_\alpha^\infty |g_\alpha(x)|^p \,dx \leq \left(\frac{p}{2}+\gamma\right) \int_1^\infty \frac{dx}{\left(x-\frac{2\gamma-p}{2\gamma+p}\right)^{\gamma(1-x)}\left(1+x\right)^{\gamma(1+x)}}.\]
	Taking the limit $p\to0^+$, we obtain the second part of the contribution to the stated upper bound. 
\end{proof}

Optimizing the parameter $\gamma$ of Theorem~\ref{thm:Cp0lower} numerically using the \texttt{integrate} and \texttt{optimize} packages from SciPy, we find that $\gamma=0.935$ gives the lower bound
\[c_0 \geq 1.1393830,\]
which provides the lower bound for $c_0$ stated in Theorem~\ref{thm:Cp0}~(b).

\pagebreak 

\section{Epilogue: Duality and orthogonality revisited} \label{sec:epilogue}

This section presents some afterthoughts on the notions of duality and orthogonality, as discussed and employed in the preceding treatise. The first question to be addressed, is whether Theorem~\ref{thm:prpk}, which seems to be tied to the usual duality between $L^p$ and $L^{p/(1-p)}$, may nevertheless carry over to the range $0<p<1$. The second question is whether there is a natural Hilbert space induced by the orthogonality relations of Lemma~\ref{lem:orthogonality} (b). An interesting point is that both questions are intimately related to the same question: What can be said about the decay of the extremal functions $|\varphi(x)|$ when $x\to \infty$ and $x$ is bounded away from the zeros of $\varphi$? 

\subsection{Traces of duality} \label{subsec:traces} We begin with the problem of extending the reproducing formula of Theorem~\ref{thm:prpk} to the range $0<p<1$. Since H\"{o}lder's inequality is no longer available, we may not expect the formula to hold for all functions $f$ in $PW^p$. Instead, we will aim for a formula that applies to functions in $PW^2$ that belong to the Schwartz space $\mathscr{S}$ on $\mathbb{R}$.

We will rely on Theorem~\ref{thm:zeroset} (b), and we will therefore only succeed in making the desired extension in the range $1/2\leq p<1$. Retaining the notation $\Phi = |\varphi|^{p-2} \varphi/\| \varphi \|_p^p$ from \eqref{eq:holderfriend} in Section~\ref{subsec:convex}, we may state our next result as follows. 
\begin{theorem}\label{thm:traces} 
	Suppose that $1/2\le p<1$ and let $\varphi$ be a solution of the extremal problem \eqref{eq:extremalproblem}. Then $\Phi$ is a tempered distribution and 
	\begin{equation}\label{eq:dualp} 
		f(0)=\int_{-\infty}^{\infty} f(x) \, \Phi(x) dx 
	\end{equation}
	for every $f$ in $\mathscr{S} \cap PW^2$. 
\end{theorem}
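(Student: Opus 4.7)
The plan is to prove the result in two parts: first establish that $\Phi$ defines a tempered distribution, and then derive the reproducing identity \eqref{eq:dualp} via a variational argument in the spirit of Theorem~\ref{thm:prpk}, suitably adapted to the non-convex range.

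For the tempered distribution claim, I would first observe that since the zeros of $\varphi$ are real and simple by Lemma~\ref{lem:zeros}, near any zero $t$ we have $|\Phi(x)| \asymp |x-t|^{p-1}$, which is locally integrable for every $p > 0$. To promote this to tempered growth, one combines: (i) the Plancherel--P\'olya theorem, which gives $PW^p \subset L^\infty(\mathbb{R})$ and hence $|\varphi|$ bounded; and (ii) Theorem~\ref{thm:zeroset}(b), which provides uniform discreteness of $\mathscr{Z}(\varphi)$ (this is the only place the hypothesis $p \geq 1/2$ enters). A careful estimate combining the local singularity of $|\varphi|^{p-1}$ with a polynomial lower bound on $|\varphi'(t_n)|$---extractable from the canonical product of $\varphi$ together with the uniform separation---shows that $\int_{-R}^R |\Phi|\,dx$ grows only polynomially in $R$, so that $\Phi$ pairs continuously with every Schwartz test function.

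For the reproducing formula, fix $f \in \mathscr{S}\cap PW^2$ and set $g := f - f(0)\varphi$. Since Schwartz functions lie in $PW^q$ for every $q > 0$, we have $g \in PW^p$, and clearly $g(0) = 0$. For each small $\varepsilon \in \mathbb{R}$ the function $\psi_\varepsilon := \varphi + \varepsilon g$ still lies in $PW^p$ with $\psi_\varepsilon(0) = 1$, and extremality of $\varphi$ forces $F(\varepsilon) := \|\psi_\varepsilon\|_p^p$ to be minimized at $\varepsilon = 0$. If the differentiation can be justified to yield
\[
F'(0) = p\int_{-\infty}^{\infty}|\varphi|^{p-2}\varphi\, g\, dx,
\]
then $F'(0) = 0$, combined with $\int|\varphi|^{p-2}\varphi\cdot\varphi\,dx = \|\varphi\|_p^p$ and the reality of $\varphi$ (Lemma~\ref{lem:real}), gives \eqref{eq:dualp} at once after dividing by $\|\varphi\|_p^p$.

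The main obstacle will be justifying this differentiation, since the naive mean-value dominating function $p|g||\varphi + \theta\varepsilon g|^{p-1}$ blows up along the curve of zeros of $\varphi + \varepsilon g$ that drifts off $\mathscr{Z}(\varphi)$ linearly with $\varepsilon$. The remedy is to split $\mathbb{R}$ into $\Omega_c := \{|\varphi| < c\}$ and its complement. On $\Omega_c^c$, $|\varphi|$ is bounded below, so standard dominated convergence applies after first truncating to $|x| \leq R$ and then using $|\varphi + \varepsilon g|^p \leq |\varphi|^p + \varepsilon^p|g|^p$ (the subadditivity of $t \mapsto t^p$ for $0 < p < 1$) to control the tail as $R \to \infty$. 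Inside $\Omega_c$, which is a disjoint union of small arcs of width $\asymp c/|\varphi'(t_n)|$ around each $t_n$, I would replace $\varphi$ and $g$ by their linearizations and reduce the local contribution to a one-dimensional integral whose leading-order-in-$\varepsilon$ term vanishes by the symmetry of $u \mapsto |u|^p$ about $0$, leaving an $O(\varepsilon^2)$ remainder. Summing the remainders over $n$ requires both a polynomial lower bound on $|\varphi'(t_n)|$ and the Schwartz decay of $f$ (which controls $g(t_n) = f(t_n)$) to secure absolute convergence. Sending $\varepsilon \to 0$ first and $c \to 0$ afterwards then identifies $F'(0)$ with the desired integral and completes the proof.
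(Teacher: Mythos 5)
Your overall variational scheme mirrors the paper's: both proofs define $F(\varepsilon) = \|\varphi + \varepsilon(f - f(0)\varphi)\|_p^p$, invoke extremality to conclude $F'(0) = 0$, and then must justify differentiation under the integral despite the singularity of $|u|^p$ at $u = 0$. The structure of the tempered-distribution part is also the same. But two steps in your justification are under-supported, and one of them is the crux of the theorem.

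The central gap is your claim that a polynomial lower bound on $|\varphi'(t_n)|$ is ``extractable from the canonical product of $\varphi$ together with the uniform separation.'' It is not. Uniform discreteness and density $1$ of $\mathscr{Z}(\varphi)$, together with $\varphi\in PW^p$, do not by themselves preclude $|\varphi'(t_n)|$ from decaying superpolynomially. What the paper actually proves (Lemma~\ref{lem:upper}) is the pointwise estimate $|\varphi(x)|\gtrsim \dist(x,\mathscr{Z}(\varphi))/(1+|x|)^{\gamma}$, and the proof of that lemma rests in an essential way on the orthogonality relation of Lemma~\ref{lem:orthogonality}~(b), that is, on the \emph{extremality} of $\varphi$---not merely on geometric properties of its zero set. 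It is this lower bound (used via the integrals $\int_{y}^{y+1}|\varphi|^{p-1}\,dx = O(|y|^{(1-p)\gamma})$) that makes $\Phi$ tempered, and it is the same bound that controls the summation over zeros in the second half of the argument. As written, your proof silently assumes the hardest lemma in the section. You should either cite Lemma~\ref{lem:upper} explicitly or reprove it, but in the latter case you must bring the orthogonality relations into play.

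A secondary issue is the handling of the tail on $\Omega_c^c$. You propose to control $\int_{|x|>R}$ via the subadditivity $|\varphi+\varepsilon g|^p\le |\varphi|^p + \varepsilon^p|g|^p$, but this only bounds the difference quotient $(F(\varepsilon)-F(0))/\varepsilon$ by $\varepsilon^{p-1}\int_{|x|>R}|g|^p$, which blows up as $\varepsilon\to 0^+$ because $p<1$. Subadditivity is the wrong tool here: on $\Omega_c^c$ the integrand is bounded away from the singularity, so one should Taylor-expand $|u|^p$ directly (using $|\varphi|\ge c$ and the Schwartz decay of $g$ to keep $|\varepsilon g|$ small compared with $|\varphi|$) to obtain the linear term plus an $O(\varepsilon^2)$ remainder whose integral converges because $\Phi$ is tempered. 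The paper's treatment avoids your split entirely: it removes an $\varepsilon$-neighbourhood of $\mathscr{Z}(\varphi)$, bounds the excised contribution by $O(\varepsilon^{p+1})$ using the Plancherel--P\'olya estimate of Lemma~\ref{lem:PP} applied to $\varphi'$, and Taylor-expands on the remainder using Lemma~\ref{lem:upper}. Both decompositions can be made to work, but yours, as stated, needs the fixes above.
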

We will use the following two lemmas, the first of which is standard. Its proof relies on a classical argument of Plancherel and P\'{o}lya \cite{PP37} and follows from \cite[Theorem~6.7.15]{Boas54}, either by using that the differentiation operator acts boundedly on $PW^p$ or by employing a Cauchy estimate in the proof of \cite[Theorem~6.7.15]{Boas54}.
\begin{lemma}\label{lem:PP} 
	Let $\Lambda$ be a uniformly discrete set of real numbers and assume that $0<p<\infty$. Then there exists a constant $C$ depending only on the separation constant of $\Lambda$ and on $p$ such that
	\[ \sum_{\lambda\in \Lambda} |f'(\lambda)|^p \leq C \| f \|_p^p \]
	holds for every $f$ in $PW^p$. 
\end{lemma}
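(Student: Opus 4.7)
The plan is to reduce the statement to the known Plancherel--P\'olya sampling inequality (the substance of \cite[Theorem~6.7.15]{Boas54}), which asserts that for any uniformly discrete $\Lambda \subset \mathbb{R}$ with separation $\sigma > 0$ and any $g \in PW^p$,
\[
\sum_{\lambda \in \Lambda} |g(\lambda)|^p \leq C(\sigma,p)\, \|g\|_p^p.
\]
Applying this estimate to $g = f'$ would immediately yield the desired bound, provided we know that differentiation maps $PW^p$ into itself boundedly.

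The cleanest route is to invoke the $L^p$-version of Bernstein's inequality for entire functions of exponential type at most $\pi$, namely $\|f'\|_p \leq \pi \|f\|_p$, which is valid for all $0 < p \leq \infty$. Since $f'$ is automatically entire of exponential type at most $\pi$, this places $f'$ in $PW^p$ with norm controlled by $\pi \|f\|_p$, and composition with the standard Plancherel--P\'olya inequality finishes the proof.

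If one prefers to bypass the $L^p$ Bernstein inequality (whose proof for $0 < p < 1$ is nontrivial), a direct Cauchy-estimate variant of the argument works. Setting $r = \sigma(\Lambda)/3$, the open disks $D(\lambda,r)$ with $\lambda \in \Lambda$ are pairwise disjoint, and Cauchy's formula gives $|f'(\lambda)| \leq r^{-1} \sup_{|z-\lambda|=r} |f(z)|$. Since $|f|^p$ is subharmonic on $\mathbb{C}$ for every $p>0$ (as $f$ is entire), the sub-mean-value inequality applied on a disk of radius $r/2$ around the maximizer bounds $\sup_{|z-\lambda|=r} |f(z)|^p$ by a constant multiple of $r^{-2} \int_{D(\lambda,2r)} |f(w)|^p \, dA(w)$. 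Summing over the disjoint disks and then integrating in $y$ via the classical Plancherel--P\'olya slice estimate $\int_\mathbb{R} |f(x+iy)|^p \, dx \leq e^{p\pi|y|} \|f\|_p^p$ produces the claimed inequality.

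Neither approach presents a genuine obstacle; the only point requiring care is to verify that the resulting constant depends solely on $\sigma(\Lambda)$ and $p$, which is transparent in both routes since the disk radius $r$ is a function of $\sigma$ alone and the Bernstein constant is universal.
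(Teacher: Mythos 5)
Your proposal is correct and matches the two routes the paper sketches in the sentence preceding the lemma: compose the $L^p$ Bernstein inequality with the Plancherel--P\'olya sampling theorem, or rerun the Cauchy-estimate argument from the proof of that theorem directly. (One small slip in the second route: with $r=\sigma/3$ the dilated disks $D(\lambda,2r)$ are not pairwise disjoint, so either take $r<\sigma/4$ or appeal to bounded overlap when summing.)
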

\begin{lemma}\label{lem:upper} 
	Fix $1/2\leq p<\infty$ and suppose that $\varphi$ is a solution of the extremal problem \eqref{eq:extremalproblem}. Then there exist positive constants $C$ and $\gamma$ such that
	\[ |\varphi(x)| \geq C \frac{\dist (x,\mathscr{Z}(\varphi))}{(1+|x|)^{\gamma}}\]
	for all $x$ in $\mathbb{R}$. 
\end{lemma}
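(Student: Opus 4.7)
The plan is to reduce the estimate to a polynomial lower bound for the ``deflated'' function $\varphi(z)/(z-t_x)$, where $t_x$ is a nearest zero, and then to extract that lower bound from the canonical Hadamard product representation of $\varphi$ combined with the uniform separation of its zeros.

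\textbf{Setup.} First I would observe that $\mathscr{C}_p<\infty$ implies $\varphi \in PW^\infty$ with $M := \|\varphi\|_\infty < \infty$, so by the Phragm\'en--Lindel\"of principle $|\varphi(z)| \leq M e^{\pi |\Im z|}$ for all $z\in\mathbb{C}$. Lemma~\ref{lem:zeros} gives that all zeros of $\varphi$ are real and simple, and Theorem~\ref{thm:zeroset}(b), which applies since $p\geq 1/2$, yields that $\mathscr{Z}(\varphi)$ is uniformly discrete with some separation $\sigma := \sigma(\mathscr{Z}(\varphi)) > 0$.

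\textbf{Reduction.} Fix $x\in\mathbb{R}$ and let $t_x$ be a zero of $\varphi$ with $|x-t_x|=\dist(x,\mathscr{Z}(\varphi))$. Set
\[\psi_x(z) := \frac{\varphi(z)}{z-t_x},\]
an entire function of exponential type at most $\pi$. On the circle $|z-t_x|=\sigma/2$ we have $|\psi_x(z)|\leq (2/\sigma) M e^{\pi\sigma/2}$, so by the maximum modulus principle $\|\psi_x\|_{L^\infty(\mathbb{R})}$ is bounded by a constant $C_1=C_1(M,\sigma)$ independent of $x$. Since $|\varphi(x)|=|x-t_x|\cdot|\psi_x(x)|$, the claim reduces to establishing the uniform lower bound $|\psi_x(x)|\geq C(1+|x|)^{-\gamma}$.

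\textbf{The polynomial lower bound.} For this step I would invoke the canonical product representation \eqref{eq:lucrep}, together with the fact (standard Cartwright-class theory for entire functions of exponential type $\pi$ bounded on $\mathbb{R}$) that the zero counting function satisfies $\#\{t_n:|t_n|\leq R\}\leq R+O(1)$. Writing
\[\log|\psi_x(x)| \;=\; \sum_{t_n\neq t_x} \log\left|1-\tfrac{x}{t_n}\right|\;+\;\text{const},\]
I would split the sum into the ``near'' zeros $|t_n|\leq 2|x|$ and the ``far'' zeros $|t_n|>2|x|$. The near sum is a finite product controllable via the uniform separation $\sigma$ and the counting estimate, yielding a contribution bounded below by $-\gamma'\log(1+|x|)$ for some $\gamma'$. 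The far sum is handled by pairing (using the canonical product's symmetric partial summation, noting that $\varphi$ is even when $p\geq 1$ and more generally combining conjugate-symmetric pairs via the real-valuedness given by Lemma~\ref{lem:real}), which bounds its contribution uniformly from below.

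\textbf{Main obstacle.} The delicate point is controlling the tail $\prod_{|t_n|>2|x|}(1-x/t_n)$, whose factors have oscillating signs and whose naive absolute-value bound diverges. The plan is to exploit the symmetry of the zero set (pairing each $t_n$ with a ``nearby'' $t_{n'}$ of opposite sign using the density bound) so that the telescoping pairs yield $|(1-x/t_n)(1-x/t_{n'})|=1+O(x^2/(t_n t_{n'}))$, which when summed over $|t_n|>2|x|$ gives a uniformly bounded contribution. Combining the near-zero polynomial bound with the bounded tail gives the desired estimate $|\psi_x(x)|\geq C(1+|x|)^{-\gamma}$, completing the proof.
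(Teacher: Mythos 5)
Your plan never uses the variational structure of $\varphi$. Beyond membership in $PW^\infty$, reality and simplicity of the zeros, and uniform discreteness (which you correctly extract from Lemma~\ref{lem:zeros} and Theorem~\ref{thm:zeroset}~(b)), you rely only on the canonical product together with a counting bound, so your argument would produce constants $C,\gamma$ depending only on $\|\varphi\|_\infty$, the separation $\sigma$, and the counting data. That is impossible: the functions $g_\alpha$ of Section~\ref{subsec:gamma} all satisfy $\|g_\alpha\|_\infty=g_\alpha(0)=1$, have real simple zeros $\pm(\alpha+n-1)$ with separation $\geq 1$ and counting function $n(R)\leq 2R$, yet by Lemma~\ref{lem:galphaest}~(b) one has $|g_\alpha(\alpha+\tfrac12)|\lesssim 4^{-\alpha}/\alpha$ while $\dist(\alpha+\tfrac12,\mathscr{Z}(g_\alpha))=\tfrac12$ and $1+|\alpha+\tfrac12|\asymp\alpha$, so no choice of $C,\gamma$ depending only on those structural data can work. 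The concrete error is in the far-zero step: with zero density $\sim 1$ one has $\sum_{|t_n|>2|x|}x^2/t_n^2\asymp|x|$, so the paired tail $\prod_{|t_n|>2|x|}(1-x^2/t_n^2)$ is of size $e^{-c|x|}$, not $O(1)$. Even for $\sinc\pi x$ the tail at half-integers is exponentially small, and the observed polynomial behavior $|\sinc\pi x|\asymp 1/|x|$ comes from an exponentially large near product that compensates precisely; controlling that compensation requires knowing the zeros sit near the integers, not merely separation and density (this is indeed among the open problems in Section~\ref{sec:conj}). A further gap: for $1/2\leq p<1$ the extremal is not known to be even, so the symmetric pairing you invoke is not available in that range.

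The paper's proof hinges instead on the orthogonality relation of Lemma~\ref{lem:orthogonality}~(b) applied to consecutive zeros $t=t_n$, $s=t_{n+1}$. Setting $\psi_n(x)=\varphi(x)/((x-t_n)(x-t_{n+1}))$ and $\Psi_n(x)=|\psi_n(x)|^p\,x^2/|(x-t_n)(x-t_{n+1})|^{1-p}$, the identity $\int_{\mathbb{R}\setminus I_n}\Psi_n=\int_{I_n}\Psi_n$ is bounded above crudely on $I_n$ (using $\delta_n\leq t_n$ for $n$ large) and below by restricting the left side to a fixed neighborhood of the origin where $|\varphi|\geq\tfrac12$ via Lemma~\ref{lem:bernstein}~(b), giving $\max_{I_n}|\psi_n|\gtrsim t_{n+1}^{-(2+3/p)}$; the monotonicity statement of Lemma~\ref{lem:monotonicity} then upgrades the maximal estimate to a pointwise estimate for the further deflated function $\widetilde\psi_n=\varphi/((\cdot-t_n)(\cdot-t_{n+1})(\cdot-t_{n+2}))$ by comparison with a neighboring interval. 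The variational identity is the essential ingredient your plan lacks, and I do not see how to repair the plan without introducing it.
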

\begin{proof}
	By symmetry, it is sufficient to establish the estimate for $x\geq0$. Let $\mathscr{Z}(\varphi)=(t_n)_{n \in \mathbb{Z}\setminus\{0\}}$ be the zero set of $\varphi$. For $n\geq1$, we set $\delta_n := t_{n+1}-t_n$. Since $\varphi$ has exponential type $\pi$, we know that $t_n/n \to 1$ as $n \to \infty$. It follows that there is some $N$ such that $\delta_n \leq t_n$ and $t_{n+2} \leq 2 t_n$ whenever $n \geq N$. We restrict our attention to these $n$ in what follows. Setting
	\[\psi_n(x) := \frac{\varphi(x)}{(x-t_n)(x-{t_{n+1}})} \quad \text{and} \quad \Psi_n(x) := |\psi_n(x)|^p \frac{x^2}{|(x-t_n)(x-t_{n+1})|^{1-p}},\]
	we appeal to Lemma~\ref{lem:orthogonality}~(b) as before to conclude that
	\[\int_{\mathbb{R}\setminus I_n} \Psi_n(x)\,dx = \int_{I_n} \Psi_n(x)\,dx,\]
	where $I_n := [t_n,t_{n+1}]$. We estimate the right-hand in the now familiar fashion,
	\[\int_{I_n} \Psi_n(x)\,dx \leq \max_{x \in I_n} |\psi_n(x)|^p t_{n+1}^2 |I_n|^{2p-1} \B(p,p) \leq \max_{x \in I_n} |\psi_n(x)|^p t_{n+1}^{2p+1} \B(p,p),\]
	where we used that $\delta_n \leq t_n \leq t_{n+1}$ for $n\geq N$ in the final estimate. To bound the left-hand side from below, we use Lemma~\ref{lem:hormander} along with the fact that $\|\varphi\|_\infty = \varphi(0) = 1$ to conclude that
	\[\int_{\mathbb{R}\setminus I_n} \Psi_n(x)\,dx \geq \int_0^{\frac{1}{3}} \Psi_n(x)\,dx = \int_0^{\frac{1}{3}} \frac{|\varphi(x)|^p x^2}{(t_n-x)(t_{n+1}-x)}\,dx \geq \frac{1}{2^p} \frac{t_{n+1}^{-2}}{3^4}.\]
	Combining what we have done, we find that 
	\begin{equation}\label{eq:maxest} 
		\max_{x \in I_n} |\psi_n(x)| \geq \frac{c}{t_{n+1}^{2+3/p}} \qquad \text{for} \qquad c = \frac{1}{2(3^4 \B(p,p))^{1/p}}, 
	\end{equation}
	whenever $n \geq N$. To parlay this maximal estimate into a pointwise estimate, we will combine it with that of a neighboring interval. The function $\psi_n$ has a unique critical point on the interval $(t_{n-1},t_{n+2})$ by Lemma~\ref{lem:monotonicity}, where we use the convention $t_0=0$ as usual. Since $I_n$ is a subinterval of that interval, is follows from this that the maximum of $|\psi_n|$ on $I_n$ will be attained at a unique point which we denote by $\xi_n$. We now set
	\[\widetilde{\psi}_n(x) := \frac{\varphi(x)}{(x-t_n)(x-t_{n+1})(x-t_{n+2})}.\]
	A similar argument shows that the minimum of $|\widetilde{\psi}_n|$ on the interval $[\xi_n,\xi_{n+1}]$, which is a subinterval of $(t_{n-1},t_{n+3})$, must be attained at one of the endpoints. Combining this with \eqref{eq:maxest}, we find that 
	\begin{equation}\label{eq:tildepsij} 
		|\widetilde{\psi}_n(x)| \geq \min\left(\frac{|\psi_n(\xi_n)|}{t_{n+2}-\xi_n},\frac{|\psi_{n+1}(\xi_{n+1})|}{\xi_{n+1}-t_n}\right) \geq \frac{c}{t_{n+2}^{3+3/p}} \geq \frac{c 2^{-3-3/p}}{x^{3+3/p}} 
	\end{equation}
	for $\xi_n \leq x \leq \xi_{n+1}$, where we in the final estimate used that $t_{n+2} \leq 2 t_n$ for $n \geq N$. We next estimate
	\[\frac{|x-t_n|\,|x-t_{n+1}|\,|x-t_{n+2}|}{\dist(x,\mathscr{Z}(\varphi))} \geq \frac{1}{2}\inf_{n\geq1} (t_{n+1}-t_n)^2 = \frac{\sigma^2}{2},\]
	and recall from Theorem~\ref{thm:zeroset}~(b) that $\sigma>0$. Combined with \eqref{eq:tildepsij}, this yields the stated estimate with
	\[C = \frac{2^{-5-3/p} \sigma^2}{(3^4 \B(p,p))^{1/p}} \qquad \text{and} \qquad \gamma = 3+\frac{3}{p}\]
	for $x\geq \xi_n$ and $n \geq N$. We adjust $C$ if necessary to take into account $0 \leq x \leq \xi_N$. 
\end{proof}

The assumption that $p\geq 1/2$ is only required in the argument above to conclude that $\mathscr{Z}(\varphi)$ is uniformly discrete by Theorem~\ref{thm:zeroset}~(b). If $p < 1/2$ and if $\mathscr{Z}(\varphi)$ is uniformly discrete, then $|I_n|^{2p-1}$ is bounded above by a constant, so in fact we get a better exponent $\gamma$.

Using Theorem~\ref{thm:zeroset}~(c) twice in the argument above, we obtain the sharper bound 
\begin{equation}\label{eq:presharper} 
	|\varphi_p(x)| \geq C \frac{\dist(x,\mathscr{Z}(\varphi_p))}{(1+|x|)^{4/p}} 
\end{equation}
for $1 \leq p < \infty$. Taking also into account Corollary~\ref{cor:x2infty}, we find that 
\begin{equation}\label{eq:sharper} 
	|\varphi_p(x) | = \frac{\dist (x,\mathscr{Z}(\varphi_p))}{o\big((1+|x|)^{4/p}\big)} 
\end{equation}
as $x \to \infty$. In the next subsection, we will improve this restriction on the decay of $|\varphi_p|$ when $p>1$.

\begin{proof}[Proof of Theorem~\ref{thm:traces}] 
	We begin by noting that $\Phi$ is a tempered distribution because
	\[\int_{y}^{y+1} |\varphi(x)|^{p-1}\, dx = O\big(|y|^{(1-p)\gamma}\big)\]
	by Theorem~\ref{thm:zeroset}~(b) and Lemma~\ref{lem:upper}.
	
	To establish \eqref{eq:dualp}, we begin by picking an arbitrary function $f$ in $\mathscr{S} \cap PW^2$. We may assume that $f$ is real entire since $PW^p$ is closed under complex conjugation. We define
	\[ F(\varepsilon):= \int_{-\infty}^{\infty} \left|\varphi(x)+\varepsilon \big(f(x)-f(0)\varphi(x)\big) \right|^p dx. \]
	Since $F(\varepsilon)\geq F(0)$ for every $\varepsilon$ by the extremality of $\varphi$, it suffices to show that $F$ is differentiable at $0$ with 
	\begin{equation}\label{eq:Fder} 
		F'(0)=p \int_{-\infty}^{\infty} f(x) |\varphi(x)|^{p-2}\varphi(x) \,dx-p f(0) \| \varphi\|_p^p . 
	\end{equation}
	To this end, we note initially that 
	\begin{multline*}
		F(\varepsilon) = (1-\varepsilon f(0))^p \int_{-\infty}^{\infty} \left|\varphi(x)+\frac{\varepsilon f(x)}{1-\varepsilon f(0)}\right|^p \,dx \\
		= \int_{-\infty}^{\infty} \left|\varphi(x)+\frac{\varepsilon f(x)}{1-\varepsilon f(0)}\right|^p dx - p \varepsilon f(0) \|\varphi\|_p^p + O(\varepsilon^{p+1}), 
	\end{multline*}
	Here we used that
	\[ |a|^p-|b|^p \leq |a+b|^p\leq |a|^p+|b|^p \]
	holds for arbitrary complex numbers $a$ and $b$ when $0<p\leq 1$ to get the term $O(\varepsilon^{p+1})$. We now use Lemma~\ref{lem:PP} to obtain
	\[F(\varepsilon)=\int_{\dist(x,\mathscr{Z}(\varphi))\geq \varepsilon} \left|\varphi(x)+\frac{\varepsilon f(x)}{1-\varepsilon f(0)} \right|^p \,dx - p \varepsilon f(0) \|\varphi\|_p^p+O(\varepsilon^{p+1}).\]
	This is justified because if $|x-t_n|\leq \varepsilon$, then $|\varphi(x)|\leq \varepsilon |\varphi'(\lambda)|$ for some $\lambda$ between $t_n$ and $x$. Using Lemma~\ref{lem:upper} and our assumption that $f$ is in $\mathscr{S}$, we infer from this that 
	\begin{align*}
		F(\varepsilon) & =\int_{\dist(x,\mathscr{Z}(\varphi))\geq \varepsilon} |\varphi(x)|^{p} \left|1+\varepsilon \frac{f(x)}{(1-\varepsilon f(0))\varphi(x)} \right|^p \,dx \\
		& \qquad\qquad\qquad\qquad\qquad\qquad\qquad\qquad\qquad - p \varepsilon f(0) \|\varphi\|_p^p + O(\varepsilon^{p+1}) \\
		& = \int_{\dist(x,\mathscr{Z}(\varphi))\geq \varepsilon} \left( |\varphi(x)|^{p} +p \varepsilon f(x) |\varphi(x)|^{p-2} \varphi(x) \right)\,dx \\
		& \qquad\qquad\qquad\qquad\qquad\qquad\qquad\qquad\qquad -p \varepsilon f(0) \|\varphi\|_p^p+O(\varepsilon^{p+1}). 
	\end{align*}
	Now using Lemma~\ref{lem:PP} and Lemma~\ref{lem:upper} a second time, we find that
	\[ F(\varepsilon)=\int_{-\infty}^{\infty} \left(|\varphi(x)|^{p}+p \varepsilon f(x) |\varphi(x)|^{p-2} \varphi(x) \right)\,dx -p \varepsilon f(0) \|\varphi\|_p^p+O(\varepsilon^{p+1}), \]
	which yields \eqref{eq:Fder}. 
\end{proof}

\subsection{A family of de Branges spaces} \label{subsec:deBranges} We turn to our investigation of the Hilbert space structure induced by the orthogonality relation of Lemma~\ref{lem:orthogonality} (b). This will lead us to a de Branges space associated with the extremal functions $\varphi_p$ in the strictly convex range. An immediate application of this study will be an improvement of \eqref{eq:sharper} for $p>1$. 

We begin by recalling (from e.g.~\cite[Problem 50]{deBranges68}) that a Hilbert space $H$ of entire functions is a de Branges space if the following conditions are met: 
\begin{enumerate}
	\item[(H1)] Whenever $f$ is in $H$ and has a nonreal zero $w$, the function $f(z)(z-\overline{w})/(z-w)$ is in $H$ and has the same norm as $f$. 
	\item[(H2)] For every nonreal $w$, the linear functional defined on $H$ by $f \mapsto f(w)$ is continuous. 
	\item[(H3)] The function $f^\ast(z) := \overline{f (\overline{z})}$ belongs to $H$ whenever $f$ is in $H$, and $f^\ast$ has the same norm as $f$. 
\end{enumerate}
For $1 \leq p < \infty$, let $m_p$ denote the measure defined on $\mathbb{R}$ by
\[ dm_p(x) = |\varphi_p(x)|^{p-2} \,dx\]
and let $L^2(m_p)$ denote the corresponding $L^2$ space of measurable functions on $\mathbb{R}$. We furthermore declare $B_p$ to be the closure of $PW^p \cap L^2(m_p)$ in $L^2(m_p)$ and endow $B_p$ with the norm and inner product of $L^2(m_p)$. It is clear that $\varphi_p$ is in $B_p$, since 
\begin{equation}\label{eq:varphiinBp} 
	\|\varphi_p\|_{B_p}^2 = \int_{-\infty}^\infty |\varphi_p(x)|^2 \,dm_p(x) = \int_{-\infty}^\infty |\varphi_p(x)|^p \,dx = \|\varphi_p\|_p^p. 
\end{equation}
We begin with the following generalization of Corollary~\ref{cor:x2infty} which will be important in our investigations of $B_p$.
\begin{lemma}\label{lem:zerotype} 
	If $1 \leq p < \infty$ and if $\omega$ is a nonconstant entire function of $0$ exponential type, then 
	\begin{equation}\label{eq:unbounded} 
		\int_{-\infty}^{\infty} |\omega(x)|^2 |\varphi_p(x)|^p dx = \infty. 
	\end{equation}
\end{lemma}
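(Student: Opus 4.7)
The plan is to argue by contradiction: assuming the integral is finite for some nonconstant entire function $\omega$ of exponential type $0$, I would show that $\omega$ must be a polynomial of degree at most $1$ and then reach a contradiction via Corollary~\ref{cor:x2infty}.

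The first step would be to convert the hypothesis into an integrability condition for $\omega$ alone by exploiting a quantitative lower bound on $|\varphi_p|$. Combining the refined estimate $|\varphi_p(x)|^p \geq C\,\dist(x,\mathscr{Z}(\varphi_p))^p/(1+|x|)^{4}$ (which follows from Theorem~\ref{thm:zeroset}(c) applied twice, as noted after Lemma~\ref{lem:upper}) with the uniform discreteness and density of $\mathscr{Z}(\varphi_p)$ from Theorem~\ref{thm:zeroset}, I would deduce that the set $A := \{x\in\mathbb{R} : \dist(x,\mathscr{Z}(\varphi_p)) \geq \sigma/4\}$, where $\sigma$ is the separation constant, is relatively dense in the Logvinenko--Sereda sense and satisfies $|\varphi_p(x)|^p \gtrsim (1+|x|)^{-4}$ on $A$. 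The hypothesis would thus become $\int_A |\omega(x)|^2/(1+x^2)^2\, dx < \infty$.

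Second, I would pass from $\omega$ to an associated entire function. The meromorphic function $F(z) := \omega(z)/(z^2+1)$ has only simple poles at $\pm i$, and subtracting its Laurent principal parts produces the entire function
\[ G(z) := F(z) - \frac{\omega(i)}{2i(z-i)} + \frac{\omega(-i)}{2i(z+i)}, \]
which inherits exponential type $0$ from $\omega$. Since the subtracted rational correction is $O(1/|x|)$ on $\mathbb{R}$ and hence in $L^2(\mathbb{R})$, one gets $\int_A |G|^2\,dx<\infty$. The chief obstacle in the proof is then to promote this to $\int_{\mathbb{R}} |G|^2\, dx < \infty$ via a Logvinenko--Sereda-type sampling inequality for entire functions of exponential type $0$ on the thick set $A$; since the standard result is formulated for positive type $\tau$, one may obtain the type-$0$ case by applying it to $G(z)e^{i\varepsilon z}$ and letting $\varepsilon\to 0^+$, with attention to the dependence of the constants. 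Once $G \in L^2(\mathbb{R})$, the Paley--Wiener theorem forces $G \equiv 0$, since its Fourier transform as a tempered distribution would have to be supported in $\{0\}$ while lying in $L^2$. Therefore
\[ \omega(z) \;=\; (z^2+1)F(z) \;=\; \frac{\omega(i)(z+i) - \omega(-i)(z-i)}{2i}, \]
a polynomial of degree at most $1$.

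Finally, since $\omega$ is nonconstant, we may write $\omega(z) = \alpha z + \beta$ with $\alpha \neq 0$. Then $|\omega(x)|^2 \geq (|\alpha|^2/2)\, x^2$ for $|x|$ sufficiently large, so
\[ \int_{-\infty}^{\infty} |\omega(x)|^2 |\varphi_p(x)|^p\, dx \;\geq\; \frac{|\alpha|^2}{2} \int_{|x|>R} x^2 |\varphi_p(x)|^p\, dx \;=\; \infty \]
by Corollary~\ref{cor:x2infty}, which is the desired contradiction.
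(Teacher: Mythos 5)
The overall strategy you propose differs genuinely from the paper's, and the first half of your argument (the lower bound $|\varphi_p(x)|^p \gtrsim \dist^p(x,\mathscr{Z}(\varphi_p))/(1+|x|)^4$, the construction of the auxiliary function $G$, and the observation that $\int_A |G|^2\,dx<\infty$) is correct. There is, however, a real gap at the step you yourself flag as ``the chief obstacle.'' The Logvinenko--Sereda theorem is a \emph{sampling inequality} for functions already known to lie in the Paley--Wiener space $PW^2_\tau$; it asserts $\|f\|_{L^2(\mathbb{R})}\le C\|f\|_{L^2(A)}$ for $f\in PW^2_\tau$ and $A$ thick, but it does not, as stated, upgrade ``$f$ is $L^2$ on a thick set'' to ``$f$ is $L^2$ on $\mathbb{R}$'' when $f$ is merely an entire function of the right type. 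Your $G$ is only known to satisfy $\int_A |G|^2\,dx<\infty$; it is not a priori in $L^2(\mathbb{R})$, and a function of exponential type $0$ can be very large off $A$. Moreover, multiplying by $e^{i\varepsilon z}$ and letting $\varepsilon\to 0^+$ does nothing here: on the real axis $|G(x)e^{i\varepsilon x}|=|G(x)|$, so the modified function has exactly the same unknown integrability, and the limiting argument never gets off the ground. One could try to repair this with a \emph{local} Remez/Nazarov-type inequality on each interval of a partition of $\mathbb{R}$ and then sum; that local statement does not require global $L^2$ membership, but it is a different (and not universally ``standard'') tool, and the dependence of the constant on the type needs to be checked. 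As written, the step is not justified.

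The paper circumvents this cleanly. Since a nonconstant entire function of exponential type $0$ that is not a polynomial has infinitely many zeros, one may divide out two zeros of $\omega$ to absorb the $(1+|x|)^{-4}$ weight, reducing to $\int_{\mathbb{R}}|\omega(x)|^2\,\dist^p(x,\mathscr{Z}(\varphi_p))\,dx<\infty$. From this one extracts a separated, relatively dense sequence $(x_n)$ on which $|\omega|$ stays bounded, and the Duffin--Schaeffer theorem (\cite[Corollary~10.5.4]{Boas54}) — a sup-norm sampling result that applies to \emph{any} entire function of exponential type, with no a priori $L^p$ hypothesis — forces $\omega$ to be bounded on $\mathbb{R}$. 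A type-$0$ function bounded on $\mathbb{R}$ is constant, contradicting the assumption that $\omega$ is not a polynomial. This route avoids both the auxiliary function $G$ and the $L^2$-thickness machinery, and it uses Corollary~\ref{cor:x2infty} only once, at the outset, to dispatch polynomials. Your final step (reducing the linear case to Corollary~\ref{cor:x2infty}) is fine, but the detour through $G$ and Logvinenko--Sereda is both more elaborate and, at present, incomplete.
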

\begin{proof}
	If $\omega$ is a polynomial, the statement follows at once from Corollary~\ref{cor:x2infty}. We will therefore assume that $\omega$ is an entire function of $0$ exponential type which is not a polynomial and that the integral in \eqref{eq:unbounded} is finite. Our goal is to show that this premise leads to a contradiction. 
	
	By \eqref{eq:presharper}, our assumption implies that
	\[\int_{-\infty}^{\infty} |\omega(x)|^2 \frac{\dist^p(x,\mathscr{Z}(\varphi_p))}{(1+|x|)^4}\, dx < \infty. \]
	Since $\omega$ is not a polynomial, it has an infinite number of zeros. Dividing out two zeros of $\omega$, we may instead assume that 
	\begin{equation}\label{eq:intcond} 
		\int_{-\infty}^{\infty} |\omega(x)|^2 \dist^p(x,\mathscr{Z}(\varphi_p))\, dx < \infty. 
	\end{equation}
	By Theorem~\ref{thm:zeroset}~(b), we know that $\mathscr{Z}(\varphi_p)$ has a positive separation constant $\sigma$. Consider the sequence of intervals $I_n:=[\sigma n, \sigma (n+1)]$ for $n\geq1$, and let $x_n$ be a point in $I_{2n}$ such that
	\[|\omega(x_n)|=\max\big\{|\omega(x)|\,:\, x\in I_{2n} \,\text{ and }\, \dist(x,\mathscr{Z}(\varphi_p))\geq \sigma/2\big\}.\]
	The condition \eqref{eq:intcond} implies that $\sup_{n\geq1} |\omega(x_n)|<\infty$. Now a classical theorem of Duffin and Schaeffer (see \cite{DS45} or \cite[Corollary~10.5.4]{Boas54}) implies that $\omega$ is bounded on $\mathbb{R}$. Since $\omega$ is assumed to be of $0$ exponential type, this implies that $\omega$ is a constant function. This contradicts our assumption that $\omega$ is not a polynomial. 
\end{proof}

By the singularities of the weight $|\varphi_1|^{-1}$, every function in $B_1$ must be of the form $\omega\varphi_1$ with $\omega$ of $0$ exponential type, and so $B_1$ consists only of scalar multiples of $\varphi_1$. We therefore restrict our attention to the nontrivial case $1<p<\infty$ in what follows. To state our next theorem, we set $e_0(z) := \varphi_p(z)$ and
\[e_n(z) = \frac{z}{z - t_n} \varphi_p(z)\]
for $n$ in $\mathbb{Z} \setminus\{0\}$ with the convention that $t_n = -t_{-n}$ in view of Lemma~\ref{lem:unique}. For $p>1$, it is easy to check that $e_n$ is in $B_p$ for every $n$ as in \eqref{eq:varphiinBp}.
\begin{theorem}\label{thm:deBranges} 
	If $1<p<\infty$, then $B_p$ is a de Branges space with orthogonal basis $\mathscr{B} = (e_n)_{n\in\mathbb{Z}}$. 
\end{theorem}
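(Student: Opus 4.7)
The proof has three stages: verifying orthogonality, establishing reproducing formulas that identify $\langle f,e_n\rangle_{B_p}$ with values of $f$, and then deducing completeness together with the de Branges axioms.

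First, I would check that each $e_n$ lies in $PW^p \cap L^2(m_p)$. Since $\varphi_p(t_n)=0$, the apparent pole of $e_n(z)=z\varphi_p(z)/(z-t_n)$ is removable and $e_n$ is entire of exponential type $\pi$; membership in $L^p$ follows from the factorization $e_n(z)=\varphi_p(z)+t_n\varphi_p(z)/(z-t_n)$ together with $\varphi_p(t_n)=0$, and
\[
\|e_n\|_{B_p}^2 = \int_{-\infty}^\infty \frac{x^2|\varphi_p(x)|^p}{(x-t_n)^2}\,dx
\]
is finite because $|\varphi_p(x)|^p$ vanishes to order $p>1$ at $t_n$ (making the integrand behave like $|x-t_n|^{p-2}$ locally), while the factor $x^2/(x-t_n)^2$ is bounded at infinity and $|\varphi_p|^p \in L^1$. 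The orthogonality $\langle e_n,e_m\rangle_{B_p}=0$ for $n\ne m$ then follows directly from Lemma~\ref{lem:orthogonality}, via the identity $|\varphi_p|^p=\varphi_p^2|\varphi_p|^{p-2}$.

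Second, for $f\in PW^p\cap L^2(m_p)$ I would derive the reproducing formulas
\[
\langle f,e_0\rangle_{B_p} = \|\varphi_p\|_p^p\,f(0), \qquad \langle f,e_n\rangle_{B_p} = c_n f(t_n) \quad (n\ne 0),
\]
where $c_n$ is a nonzero constant depending only on $\varphi_p$. The first is an immediate application of Theorem~\ref{thm:prpk}. For the second, the partial fraction decomposition
\[
\frac{xf(x)}{x-t_n} = \frac{xf(x)-t_nf(t_n)}{x-t_n} + \frac{t_nf(t_n)}{x-t_n}
\]
isolates a first summand $G$ which is entire and lies in $PW^p$; Theorem~\ref{thm:prpk} applies and gives $\int G\,\Phi\,dx = G(0) = f(t_n)$, with $\Phi=|\varphi_p|^{p-2}\varphi_p/\|\varphi_p\|_p^p$, while the second summand combines with $\Phi$ to contribute $t_nf(t_n)\alpha_n$ for a convergent scalar $\alpha_n$ depending only on $t_n$ and $\varphi_p$. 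Testing on $f=e_n$, whose value at $t_n$ is the nonzero number $t_n\varphi_p'(t_n)$ (Lemma~\ref{lem:zeros}), forces $c_n=\|e_n\|_{B_p}^2/(t_n\varphi_p'(t_n))\ne 0$.

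Third, I would verify the de Branges axioms. Axiom (H3) is routine, since $m_p$ has real density and $PW^p\cap L^2(m_p)$ is stable under $f\mapsto f^*$ with preserved norm. Axiom (H2) rests on a pointwise bound $|f(w)|\leq C(w)\|f\|_{B_p}$ for $f\in PW^p\cap L^2(m_p)$ and nonreal $w$; at the origin, Cauchy--Schwarz applied to Theorem~\ref{thm:prpk} yields $|f(0)|\leq\|f\|_{B_p}\|\varphi_p\|_p^{-p/2}$, and this is upgraded to compact subsets of $\mathbb{C}\setminus\mathbb{R}$ by combining Plancherel--P\'olya estimates with the lower bound on $|\varphi_p|$ from Lemma~\ref{lem:upper} and the separation of zeros from Theorem~\ref{thm:zeroset}~(b). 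This identifies $B_p$ with a Hilbert space of entire functions of exponential type at most $\pi$, and axiom (H1) then follows because $(z-\overline{w})/(z-w)$ has unimodular restriction to $\mathbb{R}$. Completeness of $\mathscr{B}$ is finally proved by contradiction: any $f\in B_p$ orthogonal to every $e_n$ must, by the second stage and (H2), vanish at $\{0\}\cup\mathscr{Z}(\varphi_p)$, hence factor as $f(z)=z\varphi_p(z)\omega(z)$ with $\omega$ entire of exponential type $0$, and the finiteness of $\int x^2|\omega(x)|^2|\varphi_p(x)|^p\,dx$ then forces $\omega\equiv 0$ by Lemma~\ref{lem:zerotype}.

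The principal obstacle is the pointwise bound used to establish (H2): the weight $|\varphi_p|^{p-2}$ degenerates at the zeros of $\varphi_p$ when $p>2$ and is singular there when $1<p<2$, so passing from $L^2(m_p)$ convergence to local uniform convergence of the entire extensions is the delicate ingredient that ties together Lemma~\ref{lem:upper} and Theorem~\ref{thm:zeroset}~(b).
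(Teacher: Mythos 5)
Your high-level architecture and several individual steps match the paper closely: the orthogonality of $\mathscr{B}$ comes straight from Lemma~\ref{lem:orthogonality}, the reproducing identities $\langle f,e_n\rangle_{B_p}=\frac{f(t_n)}{e_n(t_n)}\|e_n\|_{B_p}^2$ are the backbone of the completeness argument, and the appeal to Lemma~\ref{lem:zerotype} to kill the zero-type factor $\omega$ is exactly the paper's final blow. Your partial-fraction route to the reproducing formula is a little more circuitous than the paper's (which instead applies Theorem~\ref{thm:prpk} to $g(z)=\big(f(z)-\tfrac{f(t_n)}{e_n(t_n)}e_n(z)\big)\tfrac{z}{z-t_n}$ directly), but the auxiliary scalar $\alpha_n$ converges by H\"older and the computation is sound.

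The serious gap is in your treatment of axiom~(H2), and it propagates into the completeness step. You assert that the pointwise bound $|f(w)|\leq C(w)\|f\|_{B_p}$ follows by combining Plancherel--P\'olya with the lower bound of Lemma~\ref{lem:upper} and the separation from Theorem~\ref{thm:zeroset}~(b). For $1<p\leq 2$ this is fine, since $\|\varphi_p\|_\infty=\varphi_p(0)=1$ implies $|\varphi_p|^{p-2}\geq 1$ pointwise, so $\|f\|_2\leq\|f\|_{B_p}$ and Plancherel--P\'olya finishes. But for $p>2$ the weight $|\varphi_p|^{p-2}$ \emph{vanishes} at the zeros of $\varphi_p$ and decays at infinity, so $\|f\|_{B_p}$ does not control $\|f\|_2$; and the lower bound from Lemma~\ref{lem:upper}, being only polynomial in $|x|$ with an unfavorable exponent $\gamma(p-2)$, is far too weak to recover (H2) this way. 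What the paper does instead is prove the quantitative inequality
\[
\sum_{n=1}^{\infty}\frac{1}{\|e_n\|_{B_p}^2\,t_n^2}\;\leq\;\left(\int_0^{t_1}x^2|\varphi_p(x)|^p\,dx\right)^{-1},
\]
by evaluating $\|f_N\|_{B_p}^2$ for the partial sum $f_N=\sum_{n=1}^N \frac{1}{\|e_n\|_{B_p}t_n}\,\frac{e_n}{\|e_n\|_{B_p}}$ in two different ways. This single estimate is the engine of the whole proof: it shows that $\sum a_n e_n/\|e_n\|_{B_p}$ converges locally uniformly to an entire function of type $\pi$ for every $\ell^2$ sequence $(a_n)$, so the closed span of $\mathscr{B}$ (and hence $B_p$, after completeness) consists of entire functions; and it identifies the norm of point evaluation at $w\notin\mathbb{R}$ as $\big(\sum_n |e_n(w)|^2/\|e_n\|_{B_p}^2\big)^{1/2}<\infty$, which is (H2). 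Without this estimate or a substitute, your completeness argument is also in trouble, because treating a generic $f\in B_p$ (a priori only an $L^2(m_p)$ limit) as an entire function with a Hadamard factorization presupposes exactly the conclusion of (H2). The paper sidesteps this by running the completeness argument on $f\in PW^p\cap L^2(m_p)$ directly, which is dense in $B_p$, rather than on arbitrary elements of $B_p$.
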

\begin{proof}
	We have already seen that $\mathscr{B}$ is a subset of $B_p$. It follows at once from Lemma~\ref{lem:orthogonality}~(a) that if $n\neq0$, then $\langle e_n, e_0 \rangle_{B_p}=0$. When $m \neq n$ and neither $m$ nor $n$ is zero, we get that $\langle e_m, e_n \rangle_{B_p} = 0$ directly from Lemma~\ref{lem:orthogonality} (b). It follows that $\mathscr{B}$ is an orthogonal basis for a subspace of $B_p$. 
	
	To show that this subspace consists of entire functions of exponential type $\pi$, it suffices to show that 
	\begin{equation}\label{eq:sumbasis} 
		\sum_{n=1}^{\infty} \frac{1}{\| e_n\|_{B_p}^2 t_n^2}<\infty. 
	\end{equation}
	Indeed, writing
	\[ S_N(x):=\sum_{n=-N}^N a_n \frac{e_n(x)}{\| e_n \|_{B_p}} = x \varphi_p (x) \sum_{n=-N}^N \frac{a_n}{(x-t_n)\|e_n\|_{B_p}}\]
	for an arbitrary $\ell^2$ sequence $(a_n)_{n \in \mathbb{Z}}$, we see that \eqref{eq:sumbasis} ensures that the sequence $S_N$ converges uniformly on compact subsets of $\mathbb{C}$ and that the limit function will be of exponential type $\pi$. (Here we tacitly used the fact that $t_{-n}=-t_n$.) 
	
	We may in fact give an explicit bound for the sum of the series in \eqref{eq:sumbasis}. To this end, setting
	\[f_N(x):=\sum_{n=1}^N \frac{1}{\| e_n\|_{B_p} t_n}\, \frac{e_n(x)}{\| e_n \|_{B_p}}, \]
	we have 
	\begin{equation}\label{eq:Bpnorm} 
		\|f_N\|_{B_p}^2=\sum_{n=1}^N \frac{1}{\|e_n\|_{B_p}^2 t_n^2}. 
	\end{equation}
	On the other hand, when $0\leq x \leq t_1$, we also have
	\[|f_N(x)| =x \varphi_p(x) \sum_{n=1}^N \frac{1}{\| e_n \|_{B_p}^2t_n(t_n-x)} \geq x \varphi_p(x) \sum_{n=1}^N \frac{1}{\| e_n \|_{B_p}^2 t_n^2},\]
	which implies that
	\[\|f_N\|_{B_p}^2\geq \left(\sum_{n=1}^N \frac{1}{\| e_n \|_{B_p}^2 t_n^2}\right)^2 \int_0^{t_1} x^2 |\varphi_p(x)|^p \,dx.\]
	Combining this inequality with \eqref{eq:Bpnorm}, we conclude that
	\[ \sum_{n=1}^{\infty} \frac{1}{\|e_n\|_{B_p}^2 t_n^2} \leq \left(\int_0^{t_1} x^2 |\varphi_p(x)|^p\, dx \right)^{-1}. \]
	
	We prove next that the sequence $\mathscr{B}$ is complete in $B_p$. We begin by recalling that the zeros of $\varphi_p$ are simple, which means that $e_n(t_n)\neq 0$. Let $f$ be any function in $PW^p\cap L^2(m_p)$ and set
	\[g(z) := \left(f(z)-\frac{f(t_n)}{e_n(t_n)}e_n(z)\right) \frac{z}{z-t_n}.\]
	Since $g(0)=0$, Theorem~\ref{thm:prpk} yields
	\[\langle f, e_n \rangle_{B_p} = \langle g, e_{0} \rangle_{B_p} + \frac{f(t_n)}{e_n(t_n)} \|e_n\|_{B_p}^2 = \frac{f(t_n)}{e_n(t_n)} \|e_n\|_{B_p}^2.\]
	It follows that
	\[h:=f-\sum_{n=-\infty}^{\infty} \frac{\langle f, e_n \rangle_{B_p}}{\| e_n\|_{B_p}^2} e_n \]
	is an entire function of exponential type $\pi$ that vanishes at the zeros of $z\varphi_p(z)$, whence $h(z)=z \varphi_p(z) \omega(z)$ for an entire function $\omega$ of $0$ exponential type. By Lemma~\ref{lem:zerotype}, we must have $\omega\equiv 0$, and so $f$ must lie in the closure of the span of $\mathscr{B}$. This means that $B_p$ is a Hilbert space of entire functions.
	
	To finish the proof, we note that the axioms (H1) and (H3) are trivially satisfied, while (H2) holds because the functional of point evaluation at the point $w$ in $\mathbb{C}$ is
	\[ \left(\sum_{n=-\infty}^{\infty} \frac{|e_n(w)|^2}{\| e_n\|_{B_p}^2} \right)^{1/2},\]
	which is finite by \eqref{eq:sumbasis}. 
\end{proof}

The bound \eqref{eq:sumbasis} from the proof of Theorem~\ref{thm:deBranges} yields an interesting improvement of \eqref{eq:sharper}. Indeed, using Lemma~\ref{lem:orthogonality}~(b), Lemma~\ref{lem:midpoint} and Theorem~\ref{thm:zeroset}~(c), we find that $\|e_n\|_{B_p}^2$ is bounded above and below by an absolute constant times $|\varphi_p(\mu_n)|^p$. Hence \eqref{eq:sumbasis} yields the following result. 
\begin{theorem}\label{thm:midpointest} 
	Fix $1 < p< \infty$ and let $\varphi_p$ be the solution of \eqref{eq:extremalproblem}. Then 
	\begin{equation}\label{eq:strong} 
		\sum_{n=1}^\infty \frac{1}{|\varphi_p(\mu_n)|^p n^2} < \infty,
	\end{equation}
	where $\mu_n = (t_n+t_{n+1})/2$. 
\end{theorem}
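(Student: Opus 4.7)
The theorem follows from the bound \eqref{eq:sumbasis} established during the proof of Theorem~\ref{thm:deBranges}, namely $\sum_{n=1}^\infty \frac{1}{\|e_n\|_{B_p}^2\, t_n^2}<\infty$, via two asymptotic comparisons: $t_n\asymp n$ and $\|e_n\|_{B_p}^2\asymp|\varphi_p(\mu_n)|^p$, both with constants independent of $n$. Granted these, the summands in \eqref{eq:sumbasis} are comparable to $1/(|\varphi_p(\mu_n)|^p n^2)$, and the claim follows at once.

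The first comparison is immediate from combining Theorem~\ref{thm:zeroset}(b) (uniform discreteness, $\tau_n:=t_{n+1}-t_n\ge\sigma>0$) and Theorem~\ref{thm:zeroset}(c) (uniform density, $\tau_n\le L<\infty$), since then $\sigma n\le t_n\le Ln$ for every $n\ge 1$.

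The second comparison is where the real work lies. Set $\Psi_n(x):=\frac{x^2|\varphi_p(x)|^p}{|x-t_n|\,|x-t_{n+1}|}$ as in Section~\ref{sec:zeroset}. Lemma~\ref{lem:orthogonality}(b), applied with $t=t_n$ and $s=t_{n+1}$, splits after separating the integrand by its sign into $\int_{-\infty}^{\infty}\Psi_n(x)\,dx=2\int_{I_n}\Psi_n(x)\,dx$, and on $I_n$ the function $\Psi_n$ attains its maximum at a unique interior point $\xi_n$ by Lemma~\ref{lem:monotonicity}. Lemma~\ref{lem:midpoint} then yields $\Psi_n(\xi_n)\le C_p^{-1}\tau_n^2\Psi_n(\mu_n)$, so $\int_{I_n}\Psi_n\lesssim\tau_n^3\Psi_n(\mu_n)$. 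Since $\tau_n$ is uniformly bounded by Theorem~\ref{thm:zeroset}(c), this gives an upper bound of the desired form in terms of $\Psi_n(\mu_n)$ and hence of $|\varphi_p(\mu_n)|^p$ (as $\mu_n$ is bounded away from $t_n$ and $t_{n+1}$). The transfer from $\Psi_n$-integrals to the integral defining $\|e_n\|_{B_p}^2=\int \frac{x^2|\varphi_p(x)|^p}{(x-t_n)^2}\,dx$ amounts to handling the singularity near $t_n$ via the factorization $\varphi_p(x)=(x-t_n)h_n(x)$ (with $h_n$ entire) and the integrability of $|x-t_n|^{p-2}$ for $p>1$, while away from $t_n$ the two integrands are comparable since $|x-t_n|\asymp|x-t_{n+1}|$. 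The matching lower bound comes from restricting $\|e_n\|_{B_p}^2$ to a subinterval of $I_n$ of fixed length centered at $\mu_n$, on which $x^2/(x-t_n)^2$ is pinched between positive constants and a local arithmetic–geometric mean argument in the spirit of Lemma~\ref{lem:local} gives $|\varphi_p(x)|^p\asymp|\varphi_p(\mu_n)|^p$.

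The principal obstacle is that Lemma~\ref{lem:midpoint} is stated only for those $n$ with $\tau_n\ge\max(\tau_{n-1},\tau_{n+1})$, and its conclusion must be extended to all $n$. I would do this by first proving that $\log|\varphi_p|$ is uniformly Lipschitz on sets of the form $\{x:\dist(x,\mathscr{Z}(\varphi_p))\ge\varepsilon\}$, a consequence of the canonical product representation together with the uniform density and discreteness of $\mathscr{Z}(\varphi_p)$, and then using this bound to transfer comparisons between $\mu_n$ and $\mu_{n\pm 1}$ through those indices where the hypothesis of Lemma~\ref{lem:midpoint} does hold.
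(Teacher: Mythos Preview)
Your strategy mirrors the paper's sketch exactly: start from \eqref{eq:sumbasis}, use $t_n\asymp n$, and establish $\|e_n\|_{B_p}^2\asymp|\varphi_p(\mu_n)|^p$. The first two steps are fine, and your remark that Lemma~\ref{lem:midpoint} carries a hypothesis the paper glosses over is well taken. But the third comparison is false, and this gap is shared with the paper's own one-line argument.

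The culprit is the numerator $x^2$ in $\Psi_n$. You pass from $\int_{I_n}\Psi_n\lesssim\Psi_n(\mu_n)$ to a bound ``in terms of $|\varphi_p(\mu_n)|^p$'', but
\[
\Psi_n(\mu_n)=\frac{\mu_n^2\,|\varphi_p(\mu_n)|^p}{(\tau_n/2)^2}\asymp n^2\,|\varphi_p(\mu_n)|^p,
\]
not $|\varphi_p(\mu_n)|^p$; your parenthetical ``$\mu_n$ is bounded away from $t_n$ and $t_{n+1}$'' controls the denominator but says nothing about the factor $\mu_n^2$. Carrying the correct power through your chain gives $\|e_n\|_{B_p}^2\asymp n^2|\varphi_p(\mu_n)|^p$, and together with \eqref{eq:sumbasis} this yields only $\sum_n n^{-4}|\varphi_p(\mu_n)|^{-p}<\infty$. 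The case $p=2$ is decisive: there $e_n(x)=(-1)^n\sinc\pi(x-n)$, so $\|e_n\|_{B_2}^2=1$ for every $n$, whereas $|\varphi_2(\mu_n)|^2=\pi^{-2}(n+\tfrac12)^{-2}$; these are not comparable, and indeed
\[
\sum_{n\ge 1}\frac{1}{|\varphi_2(\mu_n)|^2\,n^2}=\sum_{n\ge 1}\frac{\pi^2(n+\tfrac12)^2}{n^2}
\]
diverges. So the inequality \eqref{eq:strong} cannot hold as stated at $p=2$, and no argument along these lines will produce it.
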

This result is significantly stronger than \eqref{eq:sharper} which merely says that the terms in the series in \eqref{eq:strong} tend to $0$. We may view this enhancement as resulting from our use of orthogonality on a global scale, in contrast to our former local study of pairs of zeros. It would be interesting to see if the spaces $B_p$ could be used to extract more nontrivial information about the extremal functions.

\section{Conjectures and further open problems} \label{sec:conj}

This section summarizes a number of conjectures and open problems suggested by our work. We split our discussion into four subsections. 

\subsection{Problems about \texorpdfstring{$\mathscr{C}_p$}{Cp}} We begin by restating the monotonicity conjecture, which is the main challenge pertaining to $\mathscr{C}_p$. 
\begin{conjecture}
	The function $p \mapsto \frac{\mathscr{C}_p}{p}$ is strictly decreasing on $(0,\infty)$. 
\end{conjecture}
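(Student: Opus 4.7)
The plan is to prove strict monotonicity of $J(p) := p/\mathscr{C}_p = p\|\varphi_p\|_p^p$ on $(0,\infty)$, which is equivalent to the monotonicity conjecture. I would first establish continuity of $p \mapsto \mathscr{C}_p$ on $(0,\infty)$: in the convex range $p \geq 1$, uniqueness of the extremal (Lemma~\ref{lem:unique}) together with a standard compactness argument for \eqref{eq:extremalproblem} gives continuity of the family $\varphi_p$, hence of $\mathscr{C}_p$. Continuity combined with Lemma~\ref{lem:powertrick} applied to a suitable integer multiple $kp \geq 1$ would then transfer the monotonicity statement from $p \geq 1$ to the full half-line, so the focus can be put on $p \geq 1$.

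In the convex range, the envelope theorem applied to \eqref{eq:extremalproblem} eliminates the contribution from the variation of $\varphi_p$ with $p$, leaving
\[
J'(p) = \int_{-\infty}^\infty |\varphi_p(x)|^p \bigl(1+p\log|\varphi_p(x)|\bigr)\, dx.
\]
Writing $d\nu_p := \mathscr{C}_p |\varphi_p|^p\, dx$ for the associated probability measure and
\[
H(\nu_p) := -\int_{-\infty}^\infty \log\bigl(\mathscr{C}_p |\varphi_p(x)|^p\bigr)\, d\nu_p(x)
\]
for its differential entropy, the strict positivity $J'(p)>0$ becomes equivalent to the uncertainty-type bound
\[
H(\nu_p) < 1 - \log \mathscr{C}_p.
\]

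The main obstacle is proving this entropy inequality uniformly in $p$. As $p\to\infty$, Bernstein's inequality \eqref{eq:bernstein2} forces $|\varphi_p(x)|^p \approx \exp(-p\pi^2 x^2/2)$ in a neighborhood of the origin of width $1/\sqrt{p}$, so that $\nu_p$ is asymptotically Gaussian with variance $1/(p\pi^2)$; combined with Theorem~\ref{thm:Cpinfty}, this yields $J'(p) \sim \frac{1}{2}\sqrt{2/(\pi p)}$, and the entropy inequality holds with a uniform margin of $1/2$. For finite $p$ one would need quantitative decay of $|\varphi_p|$ away from its zeros to control the tail contribution to $H(\nu_p)$; the orthogonality of Lemma~\ref{lem:orthogonality}, the de Branges space structure of Theorem~\ref{thm:deBranges}, and the midpoint estimate of Theorem~\ref{thm:midpointest} all suggest such bounds but do not yet appear strong enough in the form needed here. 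An alternative route would sidestep the entropy formulation altogether and attempt to extend the non-integer power trick underlying Theorem~\ref{thm:korevaar}: apply Theorem~\ref{thm:intrep} with fractional exponent $p/q$ to $\varphi_q$ for arbitrary rational $q > p$ (not just $q = 2p$), combined with a H\"older-type splitting keyed to zero-separation estimates generalizing Theorem~\ref{thm:sep} to the full range $p > 2$. Below the convex regime, the distributional reproducing formula of Theorem~\ref{thm:traces} would replace Theorem~\ref{thm:prpk} in setting up the variational derivative for $1/2 \leq p < 1$, while $0 < p < 1/2$ would be covered by the continuity-and-power-trick transfer step described above.
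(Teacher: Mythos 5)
The statement you are trying to prove is one of the paper's open conjectures — it is listed as the first conjecture in Section~\ref{sec:conj} — and the paper does not prove it. The authors establish partial results in its direction (the power trick yields $\mathscr{C}_{kp}/(kp) \leq \mathscr{C}_p/p$ for integers $k$, Theorem~\ref{thm:korevaar} gives $\mathscr{C}_p < p/2$ for $2 \leq p \leq 4$, and the asymptotics of Theorems~\ref{thm:Cpinfty} and~\ref{thm:Cp0} are consistent with monotonicity), but no proof of strict monotonicity across arbitrary pairs $p<q$ exists in the paper. Your proposal correctly treats this as an unsolved problem and sketches approaches rather than claiming a proof, which is the right posture here.

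That said, two concrete issues in the sketch deserve flagging. First, the entropy reformulation $J'(p) = \mathscr{C}_p^{-1}\bigl(1 - \log\mathscr{C}_p - H(\nu_p)\bigr)$ presupposes that $p \mapsto \varphi_p$ is differentiable and that the envelope theorem applies despite the ambient space $PW^p$ itself changing with $p$; neither is established in the paper, and the second point is genuinely delicate because one must verify that $\varphi_p$ lies in $PW^q$ for $q$ in a two-sided neighborhood of $p$, which is linked to the (open) decay properties of the extremals. Second, and more seriously, the claimed transfer of monotonicity from $[1,\infty)$ down to $(0,1)$ via Lemma~\ref{lem:powertrick} does not work: that lemma provides $\mathscr{C}_{kp}/(kp) \leq \mathscr{C}_p/p$, so given $p_1<p_2$ in $(0,1)$ and $k$ large, you obtain $\mathscr{C}_{kp_1}/(kp_1) \leq \mathscr{C}_{p_1}/p_1$ and $\mathscr{C}_{kp_2}/(kp_2) \leq \mathscr{C}_{p_2}/p_2$ together with $\mathscr{C}_{kp_1}/(kp_1) > \mathscr{C}_{kp_2}/(kp_2)$, but these three inequalities chain in the wrong direction and do not imply $\mathscr{C}_{p_1}/p_1 > \mathscr{C}_{p_2}/p_2$. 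The power trick moves information upward in $p$, not downward, so the region $p<1$ cannot be handled by continuity plus the power trick alone. Your alternative route — applying Theorem~\ref{thm:intrep} with $f=\varphi_q$ and exponent $p/q$ — is indeed the paper's guiding idea (Section~\ref{sec:prologue}), but it hinges on quantitative zero-separation bounds for the extremals that are only known for $2\leq p\leq 4$ (Theorem~\ref{thm:sep}); extending them to all $p$ is itself part of the open problem.
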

If we could verify this hypothesis, it seems likely that we should be able get a simpler and cleaner proof of Theorem~\ref{thm:korevaar}. We should also be able to prove the following. 
\begin{conjecture}\label{conj:lowerbound} 
	There is a positive constant $A$ such that
	\[\mathscr{C}_p \geq \frac{p}{2}\big(1-A(p-2)\big)\]
	for $1 \leq p \leq 2$. 
\end{conjecture}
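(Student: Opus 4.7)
The plan is to use the trial function $\sinc(\pi z)$ near $p = 2$ and to extend via a compactness argument on the rest of $[1,2]$. For any $p>1$, since $\sinc(\pi \cdot) \in PW^p$ with value $1$ at the origin, the extremal problem \eqref{eq:extremalproblem} gives $\mathscr{C}_p \geq 1/I(p)$, where $I(p) := \int_{-\infty}^\infty |\sinc(\pi x)|^p\, dx$. The function $I$ is real-analytic on $(1,\infty)$ and satisfies $I(2)=1$.

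The key quantitative step is the computation
\[ I'(2) = \int_{-\infty}^\infty \sinc^2(\pi x) \log|\sinc(\pi x)|\,dx = \gamma-1, \]
where $\gamma$ denotes the Euler--Mascheroni constant. To derive this, I would write $\log|\sinc(\pi x)| = \log|\sin(\pi x)| - \log(\pi|x|)$, substitute $t = \pi x$, and handle each piece separately. For the $\log|\sin t|$ part, the classical Fourier expansion $\log|\sin t| = -\log 2 - \sum_{k\geq 1} \cos(2kt)/k$ combined with the fact that the Fourier transform of $\sin^2 t/t^2$ is the tent function supported on $[-2,2]$ kills all $k\geq 1$ terms and yields the value $-(\pi/2)\log 2$. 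For the $\log|t|$ part, a Mellin transform computation---differentiating $\int_0^\infty (\sin^2 t/t^2)\,t^s\,dt = -2^{-s}\Gamma(s-1)\sin(\pi s/2)$ at $s=0$---gives $(\pi/2)(1-\gamma-\log 2)$. Assembling the two pieces yields $I'(2) = \gamma-1 \approx -0.423$. Since $\tfrac12 + I'(2) = \gamma - \tfrac12 > 0$, a Taylor expansion about $p=2$ gives
\[ \frac{1}{I(p)} - \frac{p}{2}\bigl(1-A_0(p-2)\bigr) = \bigl(A_0 - \tfrac12 - I'(2)\bigr)(p-2) + O\bigl((p-2)^2\bigr), \]
which is nonnegative on some left-neighborhood $[2-\delta, 2]$ whenever $A_0 \in (0, \gamma-\tfrac12)$.

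To extend the bound to $[1, 2-\delta]$, a compactness argument is the natural route. The continuity of $p\mapsto \mathscr{C}_p$ on this interval (which I would justify via the uniqueness and continuous dependence of the extremal function from Lemma~\ref{lem:unique}) implies that $(\mathscr{C}_p - p/2)/(2-p)$ is continuous on $[1, 2-\delta]$. Provided one has the strict inequality $\mathscr{C}_p > p/2$ throughout this interval, the ratio attains a positive infimum $A_1$, and the additive bound $\mathscr{C}_p \geq p/2 + A_1(2-p)$ translates (via $p\leq 2$) into the multiplicative form $\mathscr{C}_p \geq \tfrac{p}{2}(1 - A_1(p-2))$. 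The conjecture then follows with $A := \min(A_0, A_1)$.

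The main obstacle is precisely the rigorous verification of $\mathscr{C}_p > p/2$ on all of $[1, 2-\delta]$. At individual values of $p$ this is verifiable by direct trial-function testing (for instance $g_{3/2}(z) = \cos(\pi z)/(1-4z^2)$ from Section~\ref{subsec:gamma} gives $\mathscr{C}_1 \geq (\pi\int_0^1 \sinc(\pi x)\,dx)^{-1}>1/2$), but a uniform argument across the full interval appears to require either a detailed analytic study of the trial-function family $f_p$ from \eqref{eq:fp}, for which closed-form norms are unavailable, or a genuinely new idea bypassing trial functions altogether.
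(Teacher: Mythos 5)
The statement you address is stated in the paper only as a \emph{conjecture}; the paper offers no proof, remarking merely that a ``preliminary analysis of $g_{1/2+1/p}$'' confirms the bound ``for $p$ very close to $2$.'' Your proposal recovers this local result by a clean and self-contained route, and the key computation is correct: $I'(2)=\int \sinc^2(\pi x)\log|\sinc(\pi x)|\,dx = \gamma-1$ (one can verify this exactly as you sketch, via the Fourier series of $\log|\sin t|$ together with the tent-shaped Fourier transform of $\sin^2 t/t^2$, and the Mellin derivative for the $\log|t|$ piece). Consequently $\tfrac12+I'(2)=\gamma-\tfrac12>0$, and the Taylor expansion gives the conjectured bound on a left-neighborhood $[2-\delta,2]$ for any $A_0\in(0,\gamma-\tfrac12)$. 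Notably, freezing the trial function at $\sinc=g_1$ rather than letting $\alpha=\tfrac12+\tfrac1p$ vary costs nothing to first order: since $\sinc$ is the true extremal at $p=2$, the $\alpha$-derivative of $\|g_\alpha\|_2^2$ vanishes at $\alpha=1$, so your approach and the paper's suggested family agree to first order in $p-2$.

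The gap you name is the genuine one and is exactly what keeps the statement a conjecture. The strict inequality $\mathscr{C}_p>p/2$ on $[1,2-\delta]$ is, according to the paper, only verified numerically; no rigorous proof is known, so the compactness step cannot be executed. Two ancillary points are worth flagging: first, $\sinc\in PW^p$ only for $p>1$ and $\|\sinc\|_p^p\to\infty$ as $p\to1^+$, so this single trial function gives nothing near $p=1$, while the adapted families $f_p$ and $g_{1/2+1/p}$ lack usable closed-form $L^p$ norms; second, the continuity of $p\mapsto\mathscr{C}_p$, which you invoke for the compactness step, is plausible but not an immediate corollary of Lemma~\ref{lem:unique} and would need its own short argument. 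In summary, your local analysis is correct and on par with the paper's own unpublished preliminary work, but the argument stops precisely where the open problem begins.
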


The preceding problem may conceivably be solved by a more refined analysis of the example functions $g_{1/2+1/p}$ (see Section~\ref{subsec:gamma}). Similar numerical computations as done with Bessel functions (see Figure~\ref{fig:plot}) suggest that this would be a reasonable approach. We have performed a preliminary analysis of $g_{1/2+1/p}$ showing that the conjectured bound holds for $p$ very close to $2$. 

One could also ask for an upper bound for $\mathscr{C}_p$ in the range $1 \leq p \leq 2$ similar to that obtained for $2 \leq p \leq 4$ in Theorem~\ref{thm:korevaar} or for a lower bound for $\mathscr{C}_p$ in the range $2 \leq p \leq 4$ similar to the one conjectured in Conjecture~\ref{conj:lowerbound}. Related to these questions is the challenge of extending the numerical work of H\"{o}rmander and Bernhardsson \cite{HB93} to the range $1\leq p \leq 4$; see the discussion at the end of Section~\ref{subsec:convex}.

It remains an interesting problem to determine the constant $c_0$ of Theorem~\ref{thm:Cp0}. We have no evidence suggesting what one should expect that constant to be, beyond the fact that it is contained in the interval $[1.1393830,1.1481785]$.

\subsection{Point evaluation on the imaginary axis} Fix $0<p<\infty$ and $y>0$. A natural extension of our problem is that of finding the smallest constant $C$ such that the inequality $|f(iy)|^p \leq C \|f\|_p^p$ holds for every $f$ in $PW^p$. A variant of this problem was studied by Korevaar~\cite{Korevaar49} who was interested in finding the best constant $K$, say $\mathscr{K}_p$, such that the inequality 
\begin{equation}\label{eq:pointkor} 
	|f(iy)|^p \leq K \sinc (i \pi p y) \|f\|_p^p 
\end{equation}
holds for all $y\geq0$ and all $f$ in $PW^p$. Note that plainly $\mathscr{C}_p \leq \mathscr{K}_p$. Korevaar proved that if $1 \leq p \leq 2$, then $\frac{1}{2} \leq \mathscr{K}_p \leq 1$ and if $2 \leq p < \infty$, then $1 \leq \mathscr{K}_p \leq p$. He conjectured, presumably based on the power trick, that $\mathscr{K}_p=p/2$ for all $1 \leq p < \infty$. However, this conjecture is refuted by \eqref{eq:hb} and the bound $\mathscr{C}_1\leq \mathscr{K}_1$. We believe that the following adjustment should hold. 
\begin{conjecture}\label{cor:newkor} 
	We have
	\[\mathscr{K}_p = 
	\begin{cases}
		\mathscr{C}_p, & \text{if } 0 < p < 2; \\
		p/2, & \text{if } 2 \leq p < \infty. 
	\end{cases}
	\]
\end{conjecture}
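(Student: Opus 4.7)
The plan is to treat the two branches of the conjecture separately, while first isolating the calibration point $p=2$. The trivial inequality $\mathscr{K}_p \geq \mathscr{C}_p$ comes from setting $y=0$ in \eqref{eq:pointkor}. For $p=2$, which belongs to both branches, the Cauchy--Schwarz inequality applied to the reproducing kernel formula \eqref{eq:2rpk} continued analytically to $z=iy$ gives
\[ |f(iy)|^2 \leq \|f\|_2^2 \int_{-\infty}^\infty \bigl|\sinc(\pi(x + iy))\bigr|^2\,dx = \sinc(2\pi iy)\,\|f\|_2^2,\]
yielding $\mathscr{K}_2 \leq 1$; combined with $\mathscr{K}_2 \geq \mathscr{C}_2 = 1$ from Corollary~\ref{cor:C2}, we obtain $\mathscr{K}_2 = 1 = 2/2$. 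The extremum is attained at $y=0$ by $\sinc(\pi\cdot)$.

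For the upper bound $\mathscr{K}_p \leq p/2$ when $p \geq 2$, the plan is to establish the monotonicity $\mathscr{K}_q/q \leq \mathscr{K}_p/p$ for all $0 < p < q$. The power trick in the spirit of Lemma~\ref{lem:powertrick} already yields this when $q/p$ is a positive integer: applying Korevaar's inequality at the point $iky$ to $g(z):=f(z/k)^k \in PW^p$ returns \eqref{eq:pointkor} for $f$ at $iy$ with constant $K = k\,\mathscr{K}_p$, giving $\mathscr{K}_{kp}/(kp) \leq \mathscr{K}_p/p$. The extra freedom of the parameter $y$ should allow one to upgrade this to arbitrary real ratios by adapting the contour argument of Theorem~\ref{thm:intrep}: I would rerun that argument with the factor $1/z$ replaced by $1/(z - iy)$, producing a representation of $|f(iy)|^q$ as an integral of $|f(x)|^q$ against a $y$-dependent kernel. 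A Cauchy--Schwarz estimate in the style of the proof of Theorem~\ref{thm:PW4} would then yield the monotonicity and hence $\mathscr{K}_p \leq (p/2)\,\mathscr{K}_2 = p/2$ for all $p\geq 2$.

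The matching lower bound $\mathscr{K}_p \geq p/2$ for $p>2$ must come from large $y$, since Theorem~\ref{thm:korevaar} gives $\mathscr{C}_p < p/2$ strictly. The plan is to test \eqref{eq:pointkor} against a family of functions whose Fourier transforms concentrate near the boundary $\xi = -\pi$ of the admissible band. A concrete initial candidate is
\[ \widehat{f_\varepsilon}(\xi) = \tfrac{2\pi}{\varepsilon}\,\chi_{[-\pi,\,-\pi+\varepsilon]}(\xi),\]
for which a direct computation in the joint limit $\varepsilon \to 0^+$ and $y\to\infty$ with $c := \varepsilon y$ held fixed shows that the ratio $|f_\varepsilon(iy)|^p / (\sinc(i\pi py)\,\|f_\varepsilon\|_p^p)$ tends to $\pi p\,(1-e^{-c})^p/(c^{p-1}\int_{-\infty}^\infty|\sinc u|^p\, du)$. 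Optimizing over $c$ gives a nontrivial lower bound which is however strictly less than the conjectured $p/2$, so the family must be refined -- for example by replacing $\chi_{[-\pi,-\pi+\varepsilon]}$ with a power-weighted bump $(\xi+\pi)^s$ and jointly optimizing over $(c,s)$, or by taking a superposition of such bumps inspired by the functions $g_\alpha$ from Section~\ref{subsec:gamma} as $\alpha \to 1/2$. Verifying that a refined family saturates $p/2$ is a concrete but delicate variational calculation.

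The case $0 < p < 2$ is, in my estimation, the main obstacle to a complete proof. The conjecture $\mathscr{K}_p = \mathscr{C}_p$ asserts that for \emph{every} $f \in PW^p$, the supremum of $|f(iy)|^p/\sinc(i\pi py)$ over $y\geq 0$ is bounded by $\mathscr{C}_p\,\|f\|_p^p$. The naive approach of bounding $\|f(\,\cdot +iy)\|_p^p \leq \sinc(i\pi py)\,\|f\|_p^p$ (from which $\mathscr{K}_p \leq \mathscr{C}_p$ would follow by applying the definition of $\mathscr{C}_p$ to the translate $f(\,\cdot + iy) \in PW^p$) fails already at $p=2$: the family $\widehat{f_\varepsilon}$ above violates this bound by an arbitrarily large factor as $y \to \infty$. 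Thus no translation-invariant inequality can suffice, and one must exploit pointwise information about $f$. The most promising route I can see is a subharmonicity argument for
\[ z\;\longmapsto\; \frac{|f(z)|^p}{\sinc(i\pi p\,|\mim z|)},\]
combined with an interpolation between $p=2$ (handled directly above) and $p\to 0^+$ (where Theorem~\ref{thm:Cp0} governs the scaling). The hard part is that the normalizing denominator and the subharmonic function $|f|^p$ interact cleanly only at $p=2$, and the correct $p$-dependent weight that makes the quotient obey a maximum principle is unclear; identifying such a weight, possibly through the de Branges space structure of Section~\ref{subsec:deBranges}, is the crux.
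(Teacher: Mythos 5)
This statement is a \emph{conjecture} in the paper, not a theorem, so what you have written is rightly a research program rather than a proof; the appropriate comparison is therefore between your supporting evidence and the paper's. Your strategy differs in organization and in one substantive way from what the paper does. The paper's central piece of evidence is a proved identity, namely
\[
\lim_{y\to \infty}\ \sup_{f\in PW^p}\ \frac{|f(iy)|^p\, y\, e^{-p\pi y}}{\| f \|_p^p}=\frac{1}{4 \pi},
\]
which is equivalent to $\lim_{y\to\infty}\mathscr{K}_p(y)=p/2$ in your notation. The upper bound there comes cleanly from the inclusion $e^{i\pi z}f(z)\in H^p$ of the upper half-plane; the lower bound comes from the $H^p$ reproducing kernel $k_y(z)=(y-iz)^{-2/p}$, whose Fourier transform is $\propto\xi^{2/p-1}e^{-y\xi}$ on $(0,\infty)$, mollified to lie in $PW^p$. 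Your test family $\widehat{f_\varepsilon}=(2\pi/\varepsilon)\chi_{[-\pi,-\pi+\varepsilon]}$ and your calculation that it falls short of $p/2$ are both correct, and your proposed refinement by a power-weighted bump $(\xi+\pi)^s$ is precisely the right idea: taking $s=2/p-1$ together with the exponential damping $e^{-y(\xi+\pi)}$ recovers the $H^p$ kernel and does saturate the limit. In contrast, the quantity you leave as ``a concrete but delicate variational calculation'' is a proved theorem in the paper, so you should import it rather than re-derive it.

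The other substantive difference is the organizing monotonicity. You propose proving $q\mapsto\mathscr{K}_q/q$ is decreasing (a $p$-monotonicity, mirroring the paper's main conjecture for $\mathscr{C}_p$), establishing it for integer ratios via the power trick and hoping to interpolate via a contour representation with $1/(z-iy)$ in place of $1/z$. The paper instead proposes a $y$-monotonicity: that the best constant $\mathscr{K}_p(y)$ for fixed $y$ is nondecreasing in $y$ when $p>2$ and nonincreasing when $p<2$. Since $\mathscr{K}_p(0)=\mathscr{C}_p$ and $\mathscr{K}_p(y)\to p/2$, this single hypothesis yields \emph{both} branches of the conjecture at once: $\sup_y\mathscr{K}_p(y)=p/2$ when $p>2$ and $\sup_y\mathscr{K}_p(y)=\mathscr{C}_p$ when $p<2$. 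Your $p$-monotonicity only addresses the $p\geq 2$ branch, and you correctly flag the case $0<p<2$ as the main obstacle — but note that the paper's $y$-monotonicity handles it uniformly, whereas your subharmonicity proposal is a separate and genuinely new idea not pursued in the paper. Your calibration $\mathscr{K}_2=1$ via Cauchy--Schwarz on the reproducing formula is correct (and in fact gives $\mathscr{K}_2(y)=1$ for every $y$, not just $y=0$, since equality in Cauchy--Schwarz is attained by $\sinc(\pi(\cdot+iy))$ for each $y$); the paper does not state this explicitly, though it is implicit. Finally, be aware that your contour-representation step for non-integer ratios runs into the same difficulty the paper faces for $\mathscr{C}_p$: making that representation quantitative requires controlling the zeros of the ($y$-dependent) extremal functions, which is the hard open problem occupying most of Sections 4--6 even in the $y=0$ case.
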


To justify this conjecture, we will now prove that 
\begin{equation}\label{eq:korevaarnew} 
	\lim_{y\to \infty} \sup_{f\in PW^p} \frac{|f(iy)|^p y e^{-p\pi y}}{\| f \|_p^p}=\frac{1}{4 \pi }. 
\end{equation}
To this end, we note that if $f$ is in $PW^p$, then $e^{i \pi z} f(z)$ is in the $H^p$ space of the upper half-plane (see \cite[Chapter~2]{Garnett07}), which implies that
\[e^{-p \pi y} |f(iy)|^p \leq \frac{\| f \|_p^p}{4\pi y}\]
holds for all $f$ in $PW^p$. This establishes the upper bound in \eqref{eq:korevaarnew}. On the other hand, the function $k_y(z):=(y-iz)^{-2/p}$ satisfies 
\begin{equation}\label{eq:ky} 
	|k_y(iy)|^p=\frac{\|k_y\|_p^p}{4\pi y}. 
\end{equation}
The function $k_y$ can be represented using the Fourier transform as
\[k_y(z) = \frac{2\pi}{\Gamma(2/p)}\int_0^\infty \xi^{2/p-1} e^{-y\xi} e^{i z\xi}\, \frac{d\xi}{2\pi}.\]
The function $z \mapsto e^{-i \pi z} k_y(z)$ is not entire. Let therefore $b$ be a smooth bump function such that $b(\xi)=1$ for $0 \leq \xi \leq \pi$ and $b(\xi)=0$ for $2\pi \leq \xi < \infty$. Then
\[f_y(z) := e^{-i \pi z} \int_0^\infty \widehat{k}_y(\xi)\, b(\xi) \,e^{i z\xi}\,\frac{d\xi}{2\pi}\]
is in $PW^p$. We next estimate
\[\left|e^{\pi y} k_y(iy)-f_y(z)\right| \leq e^{\pi y} \frac{2\pi}{\Gamma(2/p)} \int_0^\infty \xi^{2/p-1} e^{-2 y\xi} \big(1-b(\xi)\big)\,\frac{d\xi}{2\pi} \leq C_p e^{-\pi y},\]
using that $1-b(\xi)=0$ for $0 \leq \xi \leq \pi$. A similar estimate shows that
\[\int_{-\infty}^\infty |e^{i \pi x} k_y(ix)-f_y(ix)|^p \,dx \leq D_p e^{-\pi y} \|k_y\|_p^p.\]
We see from \eqref{eq:ky} that $\|k_y\|_p^p = \pi/y$. Combining these estimates, we then find that
\[ \frac{|f_y(iy)|^pe^{-p\pi y}y}{\| f_y \|_p^p}=\frac{1+o(1)}{4\pi} \]
when $y\to \infty$ which implies the lower bound in \eqref{eq:korevaarnew}.

We interpret \eqref{eq:korevaarnew} as saying that Korevaar's original conjecture is asymptotically true when $y\to \infty$. Curiously, it seems that the best constant in \eqref{eq:pointkor} for a fixed $y$, say $\mathscr{K}_p(y)$, may increase with $y$ when $p>2$ and decrease with $y$ when $0<p<2$. If we could establish such monotonicity properties, then Conjecture~\ref{cor:newkor} would be verified. 

The asymptotic behavior of $\mathscr{K}_p(y)$ when $y\to \infty$ reflects that point evaluation in $PW^p$ at $iy$ looks increasingly as point evaluation in $H^p$, up to an exponential factor. Since zeros of functions in $H^p$ can be divided out by Blaschke products, it is immediate that the power trick for $H^p$ extends to all positive powers. This means that we know the norm of point evaluation for all $H^p$ spaces once we know it for a single $H^p$ space. 

\subsection{Problems about the extremal functions and their zero sets} We begin with the most basic problem that remains to be resolved. 
\begin{conjecture}
	The extremal problem \eqref{eq:extremalproblem} has a unique solution for $0 < p < 1$. 
\end{conjecture}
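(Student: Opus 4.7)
Plan: The fundamental obstacle is that convexity of $L^p$, which is the standard tool for the analogous result in the range $1\leq p<\infty$, fails for $0<p<1$. Any proposed argument must therefore exploit structural properties specific to Paley--Wiener spaces. I would decompose the problem into two sub-claims: (i) an extremal $\varphi$ is uniquely determined by its zero set together with the normalization $\varphi(0)=1$, and (ii) any two extremals must share a common zero set.

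The first sub-claim should be comparatively tractable. By Lemma~\ref{lem:real} every extremal is real-valued on $\mathbb{R}$, and by Lemma~\ref{lem:zeros} all its zeros are real and simple. Two functions in $PW^p$ with the same zero set and the same value at $0$ must differ by a factor $e^{cz}$; reality on $\mathbb{R}$ forces $c\in\mathbb{R}$, and the fact that the type of any extremal is exactly $\pi$ (by the rescaling argument mentioned after \eqref{eq:extremalproblem}) forces $c=0$. The harder problem is thus to verify that the zero set of an extremal is rigidly determined.

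To attack the second sub-claim, I would try to extend the duality result of Theorem~\ref{thm:traces} below the present threshold $p\geq 1/2$. The proof of Theorem~\ref{thm:traces} relies essentially on the uniform discreteness of $\mathscr{Z}(\varphi)$ supplied by Theorem~\ref{thm:zeroset}~(b), which in turn is valid only for $p\geq 1/2$. For $0<p<1/2$ the weight $|\varphi|^{p-2}$ exhibits stronger singularities at the zeros of $\varphi$, and one can only hope to upgrade Lemma~\ref{lem:upper} by leveraging the weaker information that $\mathscr{Z}(\varphi)$ is a finite union of uniformly discrete sets (Theorem~\ref{thm:zeroset}~(a)), together with a more refined analysis of the size of $|\varphi|$ between clusters of its zeros. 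Once a reproducing formula of the form \eqref{eq:dualp} is available for $\mathscr{S}\cap PW^2$ test functions, one can attempt to recover the zero set of $\varphi$ from the action of the distribution $\Phi$ on a sufficiently rich family of test functions, thereby forcing the zero sets of any two extremals to coincide.

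As a complementary route, given two putative extremals $\varphi_1,\varphi_2$, one may apply Theorem~\ref{thm:intrep} with $q=p$ to each, obtaining two integral identities for $|\varphi_i(0)|^p=1$ in terms of the respective zero sequences, and combine these with the orthogonality relations of Lemma~\ref{lem:orthogonality}. This produces an over-determined nonlinear system of constraints on each zero sequence; the goal is to prove that the system admits at most one solution compatible with the constraints of Theorem~\ref{thm:zeroset}. The main obstacle throughout is the same: in the non-convex range, we have no monotone quantity analogous to $\|(\varphi_1+\varphi_2)/2\|_p^p$ to compare against $\mathscr{C}_p^{-1}$, since the elementary estimate $\|(\varphi_1+\varphi_2)/2\|_p^p\leq 2^{1-p}\mathscr{C}_p^{-1}$ goes in the wrong direction. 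A genuinely new idea, perhaps inspired by the inner--outer factorization techniques used to establish uniqueness for point-evaluation extremals in the Hardy spaces $H^p$ with $p<1$, may well be required.
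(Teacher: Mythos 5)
This statement is one of the paper's open conjectures (listed in Section~\ref{sec:conj}), and the paper offers no proof of it; so there is no argument of the authors' against which to check your proposal. What you have written is an honest research plan rather than a proof, and you acknowledge this yourself at the end. The decomposition into the two sub-claims (rigidity of zero set determines the extremal; uniqueness of the zero set) is sensible, and sub-claim (i) is sound: two extremals with identical zero sets and the normalization $\varphi(0)=1$ must differ by a factor $e^{bz}$ with $b$ real, and since a nontrivial $PW^p$ function of type $\pi$ cannot decay exponentially on the real line (Cartwright), no such $b\neq 0$ is admissible. But sub-claim (ii) is exactly where the genuine difficulty lies and where the argument stops: extending Theorem~\ref{thm:traces} below $p\geq 1/2$ presupposes a resolution of Conjecture~\ref{conj:discrete} (uniform discreteness for all $p>0$), which is itself open; and even granted a distributional reproducing formula \eqref{eq:dualp}, it is not clear how to extract from $\Phi$ an identification of the zero set that would force any two extremals to agree. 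The appeal to Theorem~\ref{thm:intrep} together with Lemma~\ref{lem:orthogonality} likewise yields constraints but no argument that the constraint system has a unique admissible solution. Your closing remark is the correct diagnosis: in the range $0<p<1$ one has no convexity and no monotone quantity with which to compare two candidate extremals, and a new structural idea seems to be required. As it stands, the proposal identifies the right obstacles but does not overcome them, which is entirely consistent with the paper's decision to list this as an open problem.
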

Clearly, if we were able to establish that there is a unique solution $\varphi_p$ to \eqref{eq:extremalproblem}, then $\varphi_p$ would have to be an even function, since also $\varphi_p(-x)$ would solve \eqref{eq:extremalproblem}. This observation leads to a presumably easier problem. 
\begin{conjecture}
	Any solution of \eqref{eq:extremalproblem} for $0<p<1$ is even. 
\end{conjecture}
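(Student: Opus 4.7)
The natural first step is to observe that, by the invariance of the problem under $z \mapsto -z$, the function $\tilde{\varphi}(z) := \varphi(-z)$ is also an extremal whenever $\varphi$ is. If uniqueness of extremals were available (the preceding conjecture), evenness would follow immediately. The point of a direct attack on the present conjecture must therefore be to show that asymmetric extremals cannot exist \emph{without} first settling uniqueness.

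The obvious strategy---averaging $\varphi$ with $\tilde\varphi$ to form the even function $\varphi_{\mathrm{e}} := (\varphi+\tilde\varphi)/2$ and applying a Minkowski-type inequality---is precisely what works in the convex range $p \ge 1$, but it fails for $p<1$. Indeed, subadditivity of $t\mapsto |t|^p$ only yields $\|\varphi_{\mathrm{e}}\|_p^p \leq 2^{1-p}\|\varphi\|_p^p$, which is \emph{worse} than $\|\varphi\|_p^p$ and hence useless. A second natural candidate is the multiplicative average $\Psi(z) := \varphi(z/2)\varphi(-z/2)$, which is even, satisfies $\Psi(0)=1$, and has exponential type $\pi$; but $\Psi$ lies in $PW^{p/2}$ rather than $PW^p$, so this construction relates the extremal problems at levels $p/2$ and $p$ (essentially the power trick) and does not comment directly on the extremal set at level $p$.

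The approach I would actually pursue is an intrinsic one via the integral representation of Theorem~\ref{thm:intrep}. Applied with $f = \varphi$ and $q = p$, this expresses $1 = |\varphi(0)|^p$ as a sum of integrals of $|\varphi|^p$ against an explicit kernel determined by the gaps between consecutive zeros; the same formula applied to $\tilde\varphi$ produces the reflected sum. The orthogonality relations of Lemma~\ref{lem:orthogonality} impose further rigid constraints on the zero set, and Theorem~\ref{thm:zeroset}~(a) together with Lemma~\ref{lem:upper} control the decay of $|\varphi|$ away from its zeros. The plan is to combine these ingredients into a rigidity statement: if $\varphi$ were not even, the two versions of Theorem~\ref{thm:intrep} together with extremality of both $\varphi$ and $\tilde\varphi$ should be incompatible with the orthogonality constraints, forcing the zero set to satisfy $\mathscr{Z}(\varphi) = -\mathscr{Z}(\varphi)$ and hence $\varphi = \tilde\varphi$.

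The principal obstacle is the complete absence of any convexity structure, which is exactly what drives every known evenness and uniqueness argument for $p\ge1$. In lieu of convexity one must convert the extremal problem into a question about the zero set alone, and then verify by rigidity that no stationary configuration of zeros can be asymmetric. Given how indirect our access to these zeros already is---mediated entirely through orthogonality relations whose extraction of quantitative information required the delicate work of Sections~\ref{sec:zeroset} and \ref{subsec:zeroset24}---I expect the implementation of this rigidity step to be the crux, and it may well require techniques beyond those developed in the present paper.
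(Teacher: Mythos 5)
This statement is presented in the paper as an open conjecture (it appears in Section~\ref{sec:conj} with no proof), so there is no paper argument against which to compare your proposal. What you have written is, correctly, not a proof but a diagnosis of why a proof is out of reach, and your diagnosis is sound and accords with the paper's implicit stance.

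Your main points check out against the text. That $\varphi(-z)$ is also extremal, so evenness would follow from uniqueness, is exactly the observation the paper makes in the convex range (cf.\ the discussion preceding Lemma~\ref{lem:unique}); you correctly note that the convexity averaging step collapses for $p<1$ because one only has $|a+b|^p \le |a|^p + |b|^p$ in that range, which points the wrong way. Your observation that the multiplicative symmetrization $\Psi(z)=\varphi(z/2)\varphi(-z/2)$ lands in $PW^{p/2}$ rather than $PW^p$ is also accurate, and is in fact the same device the paper itself uses in Corollary~\ref{cor:evenok} to relate $\mathscr{C}_{2p}$ to the even-restricted constant $\mathscr{C}_{p,\mathrm{e}}$; as you say, it reshuffles the exponent rather than settling evenness at a fixed $p$. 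Your proposed route through the integral representation of Theorem~\ref{thm:intrep} combined with the orthogonality relations of Lemma~\ref{lem:orthogonality} is a reasonable speculation, but it is not carried out, and you are right to flag that the rigidity step it would require goes beyond what Sections~\ref{sec:zeroset} and~\ref{subsec:zeroset24} supply. One small caveat: you invoke Lemma~\ref{lem:upper}, which in the paper is only established for $p \geq 1/2$ (it relies on Theorem~\ref{thm:zeroset}~(b)), so even the decay control you want to use is only partially available in the range $0<p<1$. In short, you have not proved the conjecture, nor does the paper, and your account of the obstructions is accurate.
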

We turn to the zero sets of solutions of \eqref{eq:extremalproblem}. The following conjecture has only been established for $p\geq 1/2$ in Theorem~\ref{thm:zeroset}~(b). 
\begin{conjecture}\label{conj:discrete} 
	The zero set of any solution of \eqref{eq:extremalproblem} is uniformly discrete for all $0<p<\infty$. 
\end{conjecture}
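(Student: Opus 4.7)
The plan is to revisit the strategy used in the proof of Theorem~\ref{thm:zeroset}~(b) and identify precisely where the condition $p \ge 1/2$ is used. In that argument, one assumes by contradiction the existence of a sequence of pairs of consecutive zeros $(t_{n_j}, t_{n_j+1})$ with $\delta_j := t_{n_j+1} - t_{n_j} \to 0$, applies the orthogonality relation of Lemma~\ref{lem:orthogonality}~(b) to this pair, and compares the resulting integrals over $I_j = [t_{n_j}, t_{n_j+1}]$ and its complement. The upper bound on the integral over $I_j$ takes the form $C\, t_{n_j+1}^2\, \delta_j^{2p-1}\, \|\psi_j\|_\infty^p$, which vanishes (or is logarithmic) precisely when $p \ge 1/2$. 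For $0 < p < 1/2$ the factor $\delta_j^{2p-1}$ diverges, and the direct contradiction disappears.

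My plan is to replace the rough bound $\|\psi_j\|_\infty^p$ by a local quantity and to match it with a lower estimate so that the divergent factor $\delta_j^{2p-1}$ cancels on both sides. The tool for this cancellation is the arithmetic--geometric comparison of Lemma~\ref{lem:local}, applied symmetrically: on the right-hand side of the orthogonality, I would bound $\Psi_j(x)$ pointwise by $\Psi_j(\xi_j)$ times a geometric factor depending on $\dist(x, \mathscr{Z}(\psi_j))$, where $\xi_j$ is a point where $|\psi_j|$ attains its maximum on $I_j$; on the left-hand side, the same arithmetic--geometric estimate would give a lower bound for $\Psi_j$ in terms of $\Psi_j(\xi_j)$ on an interval at definite distance from $I_j$. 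After dividing by $\Psi_j(\xi_j)$, what remains is a purely geometric inequality involving the configuration of $\mathscr{Z}(\varphi) \setminus \{t_{n_j}, t_{n_j+1}\}$, which by Theorem~\ref{thm:zeroset}~(a) is a finite union of uniformly discrete sets. The goal is then to show that this geometric inequality is incompatible with the presence of nearby zeros coming from the other components of the union, so that the pair $\{t_{n_j}, t_{n_j+1}\}$ must in fact be isolated; iterating on the remaining components should then recover full uniform discreteness.

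The main obstacle I anticipate is to establish a uniform lower bound on the geometric factor that appears in the far-field integral, independently of $j$. By Theorem~\ref{thm:zeroset}~(a) the cluster may lie adjacent to further almost-clustered pairs, and one needs a lower bound for $|\psi_j|$ on long intervals away from $\mathscr{Z}(\varphi)$. This is precisely the difficulty flagged at the end of Section~\ref{sec:zeroset}: namely, the control of $|\varphi|^{-1}$ on a global scale. For $p \ge 1/2$ this control is furnished by Lemma~\ref{lem:upper}, whose proof rests on the orthogonality relations coupled with a Plancherel--P\'olya type bound; for $p < 1/2$, an independent input is needed, because no reproducing-kernel formula is presently available even in the weak distributional sense of Theorem~\ref{thm:traces}. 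The cleanest route I foresee is a bootstrap: use the Duffin--Schaeffer argument from the proof of Lemma~\ref{lem:zerotype} to extract an initial polynomial lower bound on $|\varphi|$ away from its zero set directly from Theorem~\ref{thm:zeroset}~(a), insert this into the geometric comparison above to upgrade the finite-union decomposition to genuine uniform discreteness, and iterate if necessary. Making this bootstrap quantitative and self-consistent is the crux of the problem and the principal difficulty.
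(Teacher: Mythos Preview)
This statement is a \emph{conjecture} in the paper, listed in Section~\ref{sec:conj} among the open problems; the paper offers no proof. What the paper does prove is the case $p\ge 1/2$ (Theorem~\ref{thm:zeroset}~(b)), and it explicitly flags the extension to $0<p<1/2$ as open, remarking at the end of Section~\ref{sec:zeroset} that ``a major challenge would be to control global variations in the size of $|\varphi|^{-1}$'' and that ``this interesting problem may require a more refined analysis of the orthogonality relations.''

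Your proposal is not a proof but a research outline, and you are candid about that. Your diagnosis of where the existing argument breaks is correct: the upper bound $\delta_j^{2p-1}$ in \eqref{eq:sepup1} diverges for $p<1/2$, and the obstacle you isolate---a uniform lower bound on $|\psi_j|$ (equivalently $|\varphi|$) away from the zero set, independent of $j$---is precisely the difficulty the paper names. Your suggested bootstrap via a Duffin--Schaeffer argument is plausible in spirit but not yet a mechanism: the Duffin--Schaeffer theorem as used in Lemma~\ref{lem:zerotype} delivers an \emph{upper} bound on a function from values on a separated sequence, not a lower bound on $|\varphi|$; and Lemma~\ref{lem:upper} itself already consumes uniform discreteness as an input, so it cannot be invoked in a bootstrap without circularity. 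The idea of cancelling $\delta_j^{2p-1}$ by replacing $\|\psi_j\|_\infty$ with a local value $|\psi_j(\xi_j)|$ and matching it via Lemma~\ref{lem:local} is reasonable, but Lemma~\ref{lem:local} only controls the product $|\psi_j(\xi_j-y)\psi_j(\xi_j+y)|$ on a scale set by $\dist(\xi_j,\mathscr{Z}(\psi_j))$, and when nearby zeros from other components of the finite-union decomposition crowd in, that scale may itself shrink with $j$. In short: you have located the genuine obstruction and proposed the right \emph{kind} of attack, but the crucial quantitative input---a lower bound on $|\varphi|$ away from $\mathscr{Z}(\varphi)$ that does not already presuppose uniform discreteness---remains to be found, and the paper does not supply it either.
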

One could also conjecture that the zero set of any solution of \eqref{eq:extremalproblem} is uniformly dense, but we will offer a more precise conjecture (see Conjecture~\ref{conj:precise}) which also implies Conjecture~\ref{conj:discrete}. The reason for singling out Conjecture~\ref{conj:discrete} is that it implies the following. 
\begin{conjecture}
	Theorem~\ref{thm:traces} extends to the full range $0<p<\infty$. 
\end{conjecture}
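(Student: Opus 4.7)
The plan is to reduce the statement to the range $0<p<1/2$, since for $p\geq 1$ the claim is already contained in Theorem~\ref{thm:prpk} (applied to the unique extremal, noting that every $f \in \mathscr{S} \cap PW^2$ trivially lies in $PW^p$ for all $p>0$), and for $1/2\leq p<1$ it is Theorem~\ref{thm:traces} itself. In the outstanding range, the proof of Theorem~\ref{thm:traces} uses the hypothesis $p\geq 1/2$ in exactly two spots: the invocation of Theorem~\ref{thm:zeroset}~(b) (through Lemma~\ref{lem:upper}) and the Plancherel--P\'{o}lya type bound of Lemma~\ref{lem:PP}. The strategy is therefore to show that both of these ingredients persist once one grants Conjecture~\ref{conj:discrete}, after which the variational calculation that yielded \eqref{eq:Fder} proceeds unchanged.

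First I would adapt Lemma~\ref{lem:upper} to the range $0<p<1/2$. Retaining the notation $\psi_n$, $\Psi_n$, $I_n$ from that proof and writing $\sigma:=\sigma(\mathscr{Z}(\varphi))>0$ for the separation constant furnished by Conjecture~\ref{conj:discrete}, the only estimate in need of attention is
\[
\int_{I_n}\Psi_n(x)\,dx \leq \max_{x\in I_n}|\psi_n(x)|^p \, t_{n+1}^{2}\, |I_n|^{2p-1}\,\B(p,p),
\]
where for $2p-1<0$ we now bound $|I_n|^{2p-1}\leq \sigma^{2p-1}$ rather than by a power of $t_n$. The lower bound for the integral over $\mathbb{R}\setminus I_n$ obtained from Lemma~\ref{lem:bernstein}~(b) near the origin remains valid, and the subsequent propagation from the maximal estimate \eqref{eq:maxest} to a pointwise estimate through the auxiliary function $\widetilde{\psi}_n$ is purely geometric and is unaffected by the value of $p$. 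The outcome is again an inequality of the form $|\varphi(x)|\geq C\, \dist(x,\mathscr{Z}(\varphi))\,(1+|x|)^{-\gamma}$ for some (larger) exponent $\gamma=\gamma(p)$. As for Lemma~\ref{lem:PP}, its proof via \cite[Theorem~6.7.15]{Boas54} is valid on any uniformly discrete set and in the full range $0<p<\infty$, so Conjecture~\ref{conj:discrete} automatically delivers it for $\mathscr{Z}(\varphi)$.

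With these two tools in hand, the proof of Theorem~\ref{thm:traces} transposes verbatim. The temperedness of $\Phi$ follows from $\int_y^{y+1}|\varphi(x)|^{p-1}\,dx = O(|y|^{(1-p)\gamma})$, a consequence of the extended Lemma~\ref{lem:upper} and uniform discreteness. For the variational identity, one defines $F(\varepsilon)$ exactly as before, uses the elementary inequality $|a|^p-|b|^p \leq |a+b|^p \leq |a|^p+|b|^p$ (valid throughout $0<p\leq 1$) to extract the $O(\varepsilon^2)$ remainder, restricts the integral to $\{\dist(x,\mathscr{Z}(\varphi))\geq \varepsilon\}$ at a cost of $O(\varepsilon^{p+1})$ by Lemma~\ref{lem:PP}, expands $|1+\varepsilon f/((1-\varepsilon f(0))\varphi)|^p$ pointwise on the complement set using the extended lower bound on $|\varphi|$ and the rapid decay of $f\in\mathscr S$, and finally restores the full real line by a second application of Lemma~\ref{lem:PP} and Lemma~\ref{lem:upper}. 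This yields $F'(0)=0$, which is \eqref{eq:dualp}.

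The decisive obstacle is, evidently, Conjecture~\ref{conj:discrete} itself: everything above is conditional on uniform discreteness of $\mathscr{Z}(\varphi)$ in the sub-critical range $0<p<1/2$. In the proof of Theorem~\ref{thm:zeroset}~(b), the exponent $2p-1$ appearing in \eqref{eq:sepup1} becomes negative below $p=1/2$, so the right-hand side blows up rather than vanishes as $\delta_j\to 0$, and the contradiction mechanism collapses. A genuine proof of the extension therefore requires a new idea for bounding the right-hand side of \eqref{eq:eqpsik} from above when a pair of zeros is allowed to coalesce; a natural avenue is to exploit orthogonality relations involving more than two zeros simultaneously (as in the proof of Theorem~\ref{thm:zeroset}~(a)) together with a finer analysis of the local behavior of $\varphi$, but at present we see no complete argument.
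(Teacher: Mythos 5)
The statement is a \emph{conjecture} and the paper offers no proof of it; it merely asserts, immediately before the statement, that Conjecture~\ref{conj:discrete} implies it. Your submission is not a proof but a careful verification of exactly that implication, and it matches the paper's (unwritten) reasoning essentially point for point. You correctly locate the two places where $p\geq 1/2$ enters the proof of Theorem~\ref{thm:traces}---through the uniform discreteness furnished by Theorem~\ref{thm:zeroset}~(b) in Lemma~\ref{lem:upper}, and again through Lemma~\ref{lem:PP}---and you correctly observe that both are restored if one grants uniform discreteness of the zero set. Your fix in the extended Lemma~\ref{lem:upper}, replacing the bound $|I_n|^{2p-1}\leq t_{n+1}^{2p-1}$ by $|I_n|^{2p-1}\leq\sigma^{2p-1}$ when $2p-1<0$, is exactly the remark the paper makes after the proof of that lemma. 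You are also right that the variational calculation and the temperedness argument then transfer unchanged.

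The one thing to flag is that you have not proved the conjecture, and you say so explicitly. Since the paper also treats this as open, that is the honest and correct conclusion. If anything, your closing diagnosis---that the contradiction mechanism in Theorem~\ref{thm:zeroset}~(b) fails for $p<1/2$ because the exponent $2p-1$ in \eqref{eq:sepup1} turns negative, and that any progress would need to exploit orthogonality relations involving more than two zeros at once---is a small but genuine sharpening of what the paper says and would be worth recording.
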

We now take the uniqueness of the extremal function for granted. We retain the notation $\varphi_p$ for the unique solution of \eqref{eq:extremalproblem}, and we let $t_n=t_n(p)$, $n\geq 1$, be the positive zeros of $\varphi_p$. We have the following monotonicity conjecture for the zeros. 
\begin{conjecture}
	The function $p\mapsto t_n$ is strictly decreasing for all $n\geq 1$. 
\end{conjecture}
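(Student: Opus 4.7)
The plan is to establish differentiability of $p \mapsto \varphi_p$ in the strictly convex regime $1 < p < \infty$, derive a formula for $t_n'(p)$ by implicit differentiation, and then analyze its sign. The case $p = 1$, and the extension to $0 < p < 1$ once uniqueness is available, would have to be treated separately as limiting cases.

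For regularity, continuity of $p \mapsto \varphi_p$ in the topology of uniform convergence on compact subsets of $\mathbb{C}$ follows from uniqueness (Lemma~\ref{lem:unique}), a compactness argument on $PW^p$-balls, and the joint continuity of $F(p,f) := \|f\|_p^p$. Continuity of each $t_n(p)$ is then a consequence of Hurwitz's theorem, using the simplicity of the zeros (Lemma~\ref{lem:zeros}). To upgrade to $C^1$ regularity, one would apply the implicit function theorem to the Euler--Lagrange condition $\widehat{\Phi_p}\chi_{(-\pi,\pi)} = \chi_{(-\pi,\pi)}$ of Corollary~\ref{cor:otherside}, the required non-degeneracy condition being furnished by strict convexity of the $L^p$ norm.

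Setting $\dot{\varphi}_p := \partial_p \varphi_p$, implicit differentiation of $\varphi_p(t_n(p)) = 0$ gives $t_n'(p) = -\dot{\varphi}_p(t_n(p))/\varphi_p'(t_n(p))$. From the product representation $\varphi_p = \prod_{m\geq 1}(1-z^2/t_m(p)^2)$ one computes $\operatorname{sgn}(\varphi_p'(t_n)) = (-1)^n$, so the conjecture reduces to showing $\operatorname{sgn}(\dot{\varphi}_p(t_n(p))) = (-1)^n$. Differentiating in $p$ the extremal condition $\int f\,|\varphi_p|^{p-2}\varphi_p\,dx = 0$ from Theorem~\ref{thm:prpk} (valid for $f \in PW^p$ with $f(0) = 0$), and justifying differentiation under the integral via Lemma~\ref{lem:upper}, one obtains
\[ (p-1)\int f\,|\varphi_p|^{p-2}\dot{\varphi}_p\,dx = -\int f\,|\varphi_p|^{p-2}\varphi_p\log|\varphi_p|\,dx. \]
Choosing $f$ that localizes near $t_n$ while killing the contributions of the other zeros via Lemma~\ref{lem:orthogonality}---for instance, $f(z) = z(z-t_n)^{-1}\psi(z)\varphi_p(z)$ with $\psi$ a suitably band-limited approximate delta at $t_n$---would let one isolate $\dot{\varphi}_p(t_n(p))$ as an integral of $\varphi_p\log|\varphi_p|$ against an explicit kernel.

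The main obstacle is the sign of the resulting integral. The factor $\varphi_p(x)\log|\varphi_p(x)|$ has a delicate sign pattern: $\log|\varphi_p|$ is positive on the bounded set $\{|\varphi_p|>1\}$ (cf.~Lemma~\ref{lem:bernstein}) and negative in the tails (cf.~Lemma~\ref{lem:upper}), while $\varphi_p$ alternates sign across its zeros. A purely local analysis seems unlikely to succeed. A more conceptual route would exploit the de Branges space $B_p$ of Section~\ref{subsec:deBranges}, in which each $t_n(p)$ admits a spectral characterization, and adapt the Hellmann--Feynman and Markov-type monotonicity theorems for zeros of orthogonal polynomials under varying weights. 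Since the measure $|\varphi_p|^{p-2}\,dx$ puts progressively more mass on the bounded region $\{|\varphi_p|>1\}$ as $p$ increases, this spectral heuristic is consistent with the claimed monotonicity, though a rigorous implementation appears to require finer control of $\varphi_p$ than is currently available.
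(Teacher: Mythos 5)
This statement appears in the paper as a conjecture (Section~\ref{sec:conj}); the paper offers no proof, and it is an open problem. Your outline is a plausible line of attack, and you concede at the end that it does not constitute a proof. The essential gap is the sign analysis: after reducing, correctly, to showing that $\dot{\varphi}_p(t_n)$ and $\varphi_p'(t_n)$ share the sign $(-1)^n$, and obtaining the identity you display, nothing in the proposal controls the sign of the resulting integral of $f\,|\varphi_p|^{p-2}\varphi_p\log|\varphi_p|$. As you observe, $\log|\varphi_p|$ changes sign at $|\varphi_p|=1$, $\varphi_p$ alternates across its zeros, and near each zero $\log|\varphi_p|\to-\infty$; the estimates available in the paper do not control the balance of these contributions (Lemma~\ref{lem:upper} is a one-sided lower bound on $|\varphi_p|$ away from the zeros and says nothing about the set where $|\varphi_p|>1$). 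The closing appeal to a ``spectral heuristic'' via the de Branges space $B_p$ is suggestive but not substantiated.

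Two subsidiary steps are also glossed over. First, the asserted $C^1$ regularity of $p\mapsto\varphi_p$ via the implicit function theorem applied to Corollary~\ref{cor:otherside} needs a specific Banach-space framework, and strict convexity of $L^p$ does not by itself furnish nondegeneracy of the linearized operator in this infinite-dimensional setting with the affine constraint $\varphi_p(0)=1$. Second, the test functions $f=z(z-t_n)^{-1}\psi(z)\varphi_p(z)$ themselves depend on $p$ through $\varphi_p$, so differentiating the extremal identity at a $p$-dependent $f$ produces an extra term involving $\partial_p f$ that your displayed identity omits; with a $p$-independent $f$ the localization at $t_n$ you want is harder to arrange. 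Neither issue is fatal in principle, but both would need to be resolved, and the sign problem remains genuinely open---consistent with the statement's status in the paper as a conjecture rather than a theorem.
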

As suggested in the beginning of Section~\ref{sec:2p4}, we also believe that 
\begin{equation}\label{eq:pbigger2} 
	n-1/2+1/p \leq t_n \leq n 
\end{equation}
when $2\leq p<\infty$. It also seems reasonable to expect that we have
\[ n \leq t_n \leq n-1/2+1/p \]
when $0<p \leq 2$.

The next conjecture is based on the surmise that the zeros of any extremal function are regularly distributed. 
\begin{conjecture}\label{conj:precise} 
	We have $\displaystyle \lim_{n\to \infty} (t_{n+1}-t_n)=1$ for all $p>0$. 
\end{conjecture}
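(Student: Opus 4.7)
The plan is to proceed by contradiction via a renormalization argument. Suppose there is a subsequence $n_k\to\infty$ with $\delta_{n_k}:=t_{n_k+1}-t_{n_k}\to L\neq 1$. By Theorem~\ref{thm:zeroset}~(b) and (c), $L\in[\sigma,M]$ for positive constants $\sigma,M$; and since $t_n/n\to 1$ (because $\varphi_p$ has exponential type exactly $\pi$), the deviations of $\delta_n$ from $1$ must average out, so the case $L>1$ forces compensating gaps $\delta_{m_k}<1$ along another subsequence, and conversely for $L<1$.

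Center on $m_k:=(t_{n_k}+t_{n_k+1})/2$ and set $\psi_k(z):=\varphi_p(z+m_k)$. Since $\|\varphi_p\|_\infty=1$, the family $(\psi_k)$ is normal on $\mathbb{C}$ by Bernstein's inequality; after a diagonal extraction we may assume $\psi_k\to\psi$ locally uniformly, where $\psi$ is a real entire function of exponential type at most $\pi$. By Hurwitz's theorem the zero set $\Lambda$ of $\psi$ consists of limit points of translated zeros of $\varphi_p$; by Theorem~\ref{thm:zeroset} it is uniformly discrete and uniformly dense, with $\pm L/2\in\Lambda$ and $(-L/2,L/2)\cap\Lambda=\emptyset$.

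The heart of the argument is then to pass the orthogonality relation of Lemma~\ref{lem:orthogonality}~(b) to the limit. For distinct $t,s\in\Lambda$ approximated by zeros $t^{(k)},s^{(k)}$ of $\varphi_p$, the substitution $x=y+m_k$ and division by $m_k^2$ convert Lemma~\ref{lem:orthogonality}~(b) into
\[ \int_{-\infty}^\infty \frac{|\psi_k(y)|^p}{(y-a^{(k)})(y-b^{(k)})}\left(\frac{y^2}{m_k^2}+\frac{2y}{m_k}+1\right)dy=0, \]
where $a^{(k)}:=t^{(k)}-m_k\to t$ and $b^{(k)}:=s^{(k)}-m_k\to s$. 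Using the decay estimates from Lemma~\ref{lem:upper}, Theorem~\ref{thm:midpointest}, and uniform $PW^p$ bounds on the translates, one would show the contributions from the $y^2/m_k^2$ and $2y/m_k$ terms vanish as $k\to\infty$, leaving the translation-invariant identity
\[ \int_{-\infty}^\infty\frac{|\psi(y)|^p}{(y-t)(y-s)}\,dy=0 \qquad\text{for all distinct }t,s\in\Lambda. \]
This is a strictly stronger orthogonality condition on $\psi$ than that satisfied by $\varphi_p$, since the central weight $x^2$ has been cancelled in the limit.

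The main obstacle is the rigidity step: one must deduce from this limit condition that $\Lambda$ is a translate of $\mathbb{Z}$, hence $L=1$. The natural interpretation is that $\psi$ solves the Euler--Lagrange equation of a translation-invariant limit of \eqref{eq:extremalproblem}, and one hopes a uniqueness theorem forces $\psi$ to be a shifted $\sinc$. A cleaner route may go through the de Branges space $B_p$ of Theorem~\ref{thm:deBranges}: the limit $\psi$ ought to generate a shift-invariant analogue of $B_p$, and the structure theory of such spaces should force the zero set to be an arithmetic progression. Both routes, however, rely on a uniqueness or classification theorem for limit extremals that appears essentially equivalent to the conjecture itself, so genuinely new input will likely be required to close the argument.
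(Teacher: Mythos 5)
This is a stated \emph{conjecture} in the paper (Section~\ref{sec:conj}); the authors offer no proof, only the remark that the closest result they have is $\limsup_{n\to\infty}(t_{n+1}-t_n)\le 1.0805$ for $p=1$. You correctly recognize that the argument is incomplete, and your honest assessment that ``the rigidity step appears essentially equivalent to the conjecture itself'' is the right diagnosis. However, there are also concrete technical problems in the sketch as written that are worth flagging.

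First, several of the ingredients you invoke are not available across the whole stated range $p>0$: Theorem~\ref{thm:zeroset}~(b) requires $p\ge 1/2$, Theorem~\ref{thm:zeroset}~(c) requires $p\ge 1$, Lemma~\ref{lem:upper} requires $p\ge 1/2$, and Theorem~\ref{thm:midpointest} requires $1<p<\infty$. So even as a sketch, the argument could at best address $p\ge 1$.

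Second, and more seriously, the compactness step as written produces the zero function. Since $\varphi_p\in PW^p$, one has $\varphi_p(x)\to 0$ as $|x|\to\infty$ (indeed \eqref{eq:presharper} and Corollary~\ref{cor:x2infty} quantify this decay). Therefore the translates $\psi_k(z)=\varphi_p(z+m_k)$ satisfy $\psi_k(y)\to 0$ pointwise for each fixed $y$, and the locally uniform limit $\psi$ is identically zero. Hurwitz's theorem gives no information about a zero set, and the limiting orthogonality identity degenerates to $0=0$. To obtain a nontrivial limit one must renormalize, e.g.\ $\tilde\psi_k(z):=\varphi_p(z+m_k)/\sup_{|y|\le 1}|\varphi_p(y+m_k)|$; but then the orthogonality relation of Lemma~\ref{lem:orthogonality}~(b), after substitution, inherits a diverging factor (the normalizing constant raised to the $p$-th power times a growth factor from $x^2$), and it is no longer clear that the error terms vanish or that the putative limit relation $\int |\psi|^p/((y-t)(y-s))\,dy=0$ even makes sense, since $\psi$ need not be in $L^p$. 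Repairing this would require uniform two-sided estimates on $|\varphi_p|$ near midpoints that are stronger than what Theorem~\ref{thm:midpointest} (a summability statement) provides.

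Finally, even granting the limiting identity, the rigidity step you identify as missing really is the crux: one would need a classification theorem for real entire functions of exponential type $\pi$ whose zero set is uniformly discrete and uniformly dense and satisfies the translation-invariant orthogonality relations. No such theorem is established in the paper, and as you note, proving it appears to be of comparable difficulty to the conjecture. So the proposal is a plausible framework rather than a proof, and your own closing caveat is well placed.
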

The result of this paper closest to giving evidence for the preceding conjecture is the bound $\limsup_{n\to \infty} (t_{n+1}-t_n)\leq 1.0805$ when $p=1$ (see Section~\ref{sec:uppersep}).

Our analysis also suggests the following asymptotic behavior of $t_1$.
\begin{conjecture}\label{con:pasymp} 
	We have $\displaystyle \lim_{p\to \infty} t_1=1/2$ and $\displaystyle \lim_{p\to 0^+} p t_1 >0$. 
\end{conjecture}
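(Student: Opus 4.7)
The two limits call for quite different arguments, which I treat separately.

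For $\lim_{p\to \infty} t_1 = 1/2$, the plan is to show that $\varphi_p \to \cos(\pi z)$ locally uniformly as $p \to \infty$ and then invoke Hurwitz's theorem. Since $\varphi_p$ is even with $\|\varphi_p\|_\infty = \varphi_p(0) = 1$ (by Lemma~\ref{lem:unique} and the observation in Section~\ref{subsec:convex}) and $|\varphi_p(z)| \leq e^{\pi|\mathrm{Im}\,z|}$, the family $\{\varphi_p\}$ is normal; any subsequential limit $g$ is an even real entire function of exponential type $\pi$ with $g(0) = \|g\|_\infty = 1$. The crucial step is to upgrade the Bernstein bound $|\varphi_p''(0)| \leq \pi^2$ to the asymptotic equality $A_p := -\varphi_p''(0) \to \pi^2$ using Theorem~\ref{thm:Cpinfty}. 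A Taylor expansion whose cubic remainder is controlled by $\|\varphi_p^{(3)}\|_\infty \leq \pi^3$ (iterated Bernstein) gives
\[
\varphi_p(x) \geq (1 - A_p x^2/2)\bigl(1 - O(p^{-3/2})\bigr) \qquad \text{for } |x| \leq p^{-1/2},
\]
and integrating yields $\|\varphi_p\|_p^p \geq \sqrt{2\pi/(A_p p)}(1+o(1))$; comparison with $\|\varphi_p\|_p^p = 1/\mathscr{C}_p \sim \sqrt{2/(\pi p)}$ forces $A_p \to \pi^2$. Hence the limit $g$ satisfies $|g''(0)| = \pi^2\|g\|_\infty$, so iterated Bernstein gives $\|g'\|_\infty = \pi$, and the Bernstein equality case together with evenness and $g(0) = 1$ yields $g(z) = \cos(\pi z)$. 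Hurwitz's theorem, applied to this locally uniform limit with the simple zero of $\cos(\pi z)$ at $1/2$, delivers $t_1(\varphi_p) \to 1/2$.

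For $\lim_{p\to 0^+} pt_1 > 0$, the starting point is that at $p = 2/k$ for positive integers $k$, iterating the case $p = 2$ via Lemma~\ref{lem:powertrick} yields $\mathscr{C}_{2/k} = 1/k$, attained by $\sinc^k(\pi z/k) \in PW^{2/k}$ whose first positive zero is $t_1 = k = 2/p$; hence $pt_1 = 2$ at this discrete sequence of values. The plan for arbitrary $p$ is to suppose by contradiction that $p_n \to 0^+$ with $p_n t_1(p_n) \to 0$. By Corollary~\ref{cor:evenok} each $\varphi_{p_n}$ may be taken even; combined with $\|\varphi_{p_n}\|_\infty = 1$ and the asymptotic $\|\varphi_{p_n}\|_{p_n}^{p_n} = 1/\mathscr{C}_{p_n} \sim 2/(c_0 p_n)$ from Theorem~\ref{thm:Cp0}, this forces $|\varphi_{p_n}|$ to exceed $2^{-1/p_n}$ on a set whose measure is at least of order $1/p_n$. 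The quantitative bridge to the desired contradiction would go via compactness for $\{\varphi_{p_n}\}$, producing a subsequential limit $g \in PW^\infty$ with $g(0) = \|g\|_\infty = 1$ and a real zero at $\lim t_1(p_n)$ (when this limit is finite), combined with the orthogonality relation of Lemma~\ref{lem:orthogonality}~(a) at this zero passed to the limit.

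The first limit should proceed without serious difficulty beyond bookkeeping. The hard part by far is the second: uniqueness of $\varphi_p$ is not established for $0 < p < 1$, so stability arguments around the explicit extremals at $p = 2/k$ are delicate, and the quantitative control of zeros via Lemma~\ref{lem:orthogonality} weakens as $p \to 0$ (paralleling the weakening of Theorem~\ref{thm:zeroset}~(a)). A careful analysis of the extremal profile on the scale $1/p$, guided by the candidate functions $g_\alpha$ with $\alpha \sim 1/p$ used in Theorem~\ref{thm:Cp0lower}, will most likely be required to close this argument.
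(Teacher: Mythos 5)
This statement is labeled as a \emph{conjecture} in the paper (Conjecture~\ref{con:pasymp}), and the paper contains no proof of either limit; your proposal must therefore stand on its own rather than be compared to an existing argument.

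For the first limit, $\lim_{p\to\infty} t_1 = 1/2$, your outline looks essentially sound and would, if written out carefully, actually promote this part of the conjecture to a theorem. Normality of $\{\varphi_p\}$, identification of $A_p := -\varphi_p''(0)$ via the Taylor expansion and the asymptotic $\|\varphi_p\|_p^p = 1/\mathscr{C}_p = \sqrt{2/(\pi p)}(1+O(\log p/p))$ from Theorem~\ref{thm:Cpinfty}, the squeeze $A_p \to \pi^2$ against the Bernstein bound $A_p \leq \pi^2$, the equality case of Bernstein's inequality, and then Hurwitz -- each step is plausible. One technical point: integrating over $|x| \leq p^{-1/2}$ as written does not recover the full constant $\sqrt{2\pi/(A_p p)}$ (you only capture roughly $99.8\%$ of the Gaussian mass); the window must widen, say to $|x| \leq p^{-1/2}\log p$, to make the $(1+o(1))$ factor honest, but this is a routine fix since the cubic Taylor error remains negligible on that range.

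For the second limit there is a genuine error in your starting point. You claim that ``iterating the case $p=2$ via Lemma~\ref{lem:powertrick} yields $\mathscr{C}_{2/k} = 1/k$, attained by $\sinc^k(\pi z/k)$.'' Lemma~\ref{lem:powertrick} only gives the one-sided inequality $\mathscr{C}_{2/k} \geq 1/k$, and in fact this inequality is \emph{strict} for large $k$: Theorem~\ref{thm:Cp0}~(a)--(b) establishes $\lim_{p\to 0^+} 2\mathscr{C}_p/p = c_0 \in [1.139, 1.148]$, so $\mathscr{C}_{2/k} \sim c_0/k > 1/k$. Moreover $\sinc^k(\pi z/k)$ has zeros of multiplicity $k\geq 2$ and so cannot be extremal by Lemma~\ref{lem:zeros}, which asserts that every solution of \eqref{eq:extremalproblem} has only simple zeros. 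So the explicit ``anchor points'' $pt_1 = 2$ at $p = 2/k$ on which you build your contradiction argument do not exist. Beyond this, as you acknowledge, the compactness-plus-orthogonality strategy is not quantified (the natural limit lives on the scale $x \sim 1/p$, so a normal-families argument on fixed compacta degenerates), and uniqueness and evenness of the extremal are not available below $p=1$. This half remains genuinely open, consistent with the paper's decision to state it as a conjecture.
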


Intimately related to the distribution of the zeros $\mathscr{Z}(\varphi_p)$ is the decay of $\varphi_p$. When discussing this relation beyond what was found in \eqref{eq:sharper} and Theorem~\ref{thm:midpointest}, it is quite natural to think of $x\varphi_p(x)$ as the generating function of the set $ \mathscr{Z}(\varphi_p)\cup \{0\}$. (Note that $x \varphi_2(x)= \frac{1}{\pi} \sin{\pi x}$ and $\mathscr{Z}(\varphi_2)\cup \{0\}=\mathbb{Z}$.) In view of Lemma~\ref{lem:zerotype}, the following assertion has been verified for $p\geq 2$. 
\begin{conjecture}
	$\mathscr{Z}(\varphi_p)\cup\{0\}$ is a uniqueness set for $PW^p$ if and only if $p\geq 1$. 
\end{conjecture}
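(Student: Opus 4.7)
The plan is to establish the two directions separately: the ``if'' direction ($p \geq 1$) by strengthening Lemma~\ref{lem:zerotype} from an $L^2$-weighted statement to an $L^p$-weighted one, and the ``only if'' direction ($0 < p < 1$) by constructing an explicit nonzero element of $PW^p$ vanishing on $\mathscr{Z}(\varphi_p) \cup \{0\}$.

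For the ``if'' direction, I would prove the following target lemma: \emph{for $1 \leq p < \infty$ and any nonconstant entire function $\omega$ of exponential type zero, $\int_{-\infty}^{\infty} |\omega(x)|^p |\varphi_p(x)|^p\,dx = \infty$.} Once this is in place, the uniqueness argument becomes routine: if $f \in PW^p$ vanishes on $\mathscr{Z}(\varphi_p) \cup \{0\}$, then by the simplicity of the zeros of $\varphi_p$ (Lemma~\ref{lem:zeros}) and the sine-type behavior afforded by Theorem~\ref{thm:zeroset} (b)--(c), the quotient $\omega := f/\varphi_p$ is entire of exponential type zero and vanishes at $0$, forcing $\|f\|_p^p = \int |\omega|^p |\varphi_p|^p\,dx = \infty$ unless $\omega \equiv 0$. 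To prove the target lemma I would follow the Duffin--Schaeffer strategy of Lemma~\ref{lem:zerotype}: assume the integral is finite and invoke \eqref{eq:presharper} to deduce $\int |\omega|^p \dist^p(x,\mathscr{Z}(\varphi_p))(1+|x|)^{-4}\,dx < \infty$; then divide $\omega$ by $\lceil 4/p\rceil$ of its zeros (possible unless $\omega$ is a polynomial) to absorb the factor $(1+|x|)^{-4}$, and conclude via \cite[Corollary~10.5.4]{Boas54} combined with the separation of $\mathscr{Z}(\varphi_p)$ from Theorem~\ref{thm:zeroset}~(b). The polynomial case of degree $m$ reduces to Corollary~\ref{cor:x2infty} whenever $mp \geq 2$; the residual window ($m = 1$ and $1 \leq p < 2$, which corresponds to $f = cz\varphi_p$) is the main technical hurdle of the ``if'' direction and would likely require combining the de Branges decay bound \eqref{eq:sumbasis}, Lemma~\ref{lem:midpoint}, and an interpolation-style estimate to propagate the conclusion of Corollary~\ref{cor:x2infty} to the weaker weight $|x|^p |\varphi_p|^p$.

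For the ``only if'' direction, the natural candidate is $f(z) := z\varphi_p(z)$, which is nonzero, of exponential type $\pi$, and has zero set exactly $\mathscr{Z}(\varphi_p) \cup \{0\}$; its membership in $PW^p$ is equivalent to $\int |x|^p |\varphi_p(x)|^p\,dx < \infty$. This finiteness is the principal obstacle of the entire conjecture. For $0 < p < 1$ the convex-analytic tools of Sections~\ref{subsec:convex} and~\ref{subsec:deBranges} are unavailable, the extremal function $\varphi_p$ is not even known to be unique or even, and Theorem~\ref{thm:zeroset}~(b) does not cover $p < 1/2$ (see Conjecture~\ref{conj:discrete}). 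Establishing the required decay of $|\varphi_p|$ away from its zeros---precisely the open problem flagged at the end of Section~\ref{sec:zeroset}---would likely require a distributional variant of Theorem~\ref{thm:traces} adapted to test against polynomial multipliers of $\varphi_p$, or, should the direct candidate $z\varphi_p$ fail to be integrable for some $p$, the construction of a fallback counterexample of the form $f(z) = P(z)\varphi_p(z)$ with $P$ vanishing at $0$ and tuned so that the extra polynomial factor compensates for the size of $|\varphi_p|$ at infinity.
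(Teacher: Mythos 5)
This statement is a conjecture in the paper, not a theorem: the paper offers no proof, only the observation that the ``if'' direction has been verified for $p\geq 2$ by means of Lemma~\ref{lem:zerotype}, together with the heuristic that $|x\varphi_p(x)|/\dist(x,\mathscr{Z}(\varphi_p)\cup\{0\})$ should decay like $(1+|x|)^{1-2/p}$, which explains why $z\varphi_p(z)$ is expected to lie in $PW^p$ exactly when $p<1$. Your proposal is therefore a roadmap rather than a proof, and it is an honest and essentially correct one; you have accurately located the open gaps.

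A few remarks on how your route compares with the paper's. For $p\geq 2$, the paper deduces uniqueness not by upgrading Lemma~\ref{lem:zerotype} to an $L^p$-weighted statement but by noting that $PW^p\hookrightarrow L^2(m_p)$ via H\"older (since $p-2\geq 0$, one has $\int |f|^2|\varphi_p|^{p-2}\,dx\leq \|f\|_p^2\|\varphi_p\|_p^{p-2}$); a nonzero $f$ vanishing on $\mathscr{Z}(\varphi_p)\cup\{0\}$ factors as $f=\omega\varphi_p$ with $\omega$ nonconstant of exponential type zero, so $\|f\|_{L^2(m_p)}^2=\int|\omega|^2|\varphi_p|^p\,dx=\infty$, contradicting the embedding. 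Your target lemma (with $|\omega|^p$ in place of $|\omega|^2$) is in fact equivalent to Lemma~\ref{lem:zerotype} for $p\geq 2$ by another application of H\"older, so the two approaches coincide there. For $1\leq p<2$ your target lemma is genuinely stronger than Lemma~\ref{lem:zerotype}, and your diagnosis of where it fails is exactly right: the polynomial case $m=1$ reduces to the finiteness question for $\int |x|^p|\varphi_p(x)|^p\,dx$, which Corollary~\ref{cor:x2infty} does not control when $p<2$, and the transcendental case inherits the same obstruction. Your observation that $f=z\varphi_p$ is the natural candidate for the ``only if'' direction agrees precisely with the paper's stated rationale, and your assessment that establishing the needed decay estimate for $|\varphi_p|$ away from its zeros when $0<p<1$ is the central open difficulty (uniqueness, evenness, and uniform discreteness of $\mathscr{Z}(\varphi_p)$ all being unresolved there) matches the paper's own caveats at the end of Section~\ref{sec:zeroset}. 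In short: no genuine error, and no missing idea that the paper itself supplies -- the gaps you flag are the gaps that make this a conjecture.
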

Although it remains to verify the above for $p<2$ and in particular in the most accessible range $1<p<2$, we believe much more to be true. The rationale for the above conjecture is that we suspect that $x\varphi_p(x)/\dist(x,\mathscr{Z}(\varphi_p)\cup \{0\})$ in absolute value to behave roughly as the function $(1+|x|)^{1-2/p}$, which is integrable when $p<1$. Although this precise relation may be false, it seems likely that the following may hold. 
\begin{conjecture}
	Suppose that $1<p<\infty$. The function
	\[\left(\frac{|x\varphi_p(x)|}{\dist(x,\mathscr{Z}(\varphi_p)\cup \{0\})}\right)^p\]
	is a Muckenhoupt $(A_p)$ weight. 
\end{conjecture}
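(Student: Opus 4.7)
The plan is to reduce the conjecture to two ingredients. The first is a sharp two-sided pointwise estimate
\[ c_1 (1+|x|)^{1-2/p} \le W(x) \le c_2 (1+|x|)^{1-2/p}, \qquad W(x) := \frac{|x\varphi_p(x)|}{\dist(x,\mathscr{Z}(\varphi_p)\cup\{0\})}, \]
valid uniformly in $x \in \mathbb{R}$. The second is a local Harnack-type bound $\max_{|y-x|\le 1} W(y) \le C \min_{|y-x|\le 1} W(y)$ away from the zero set, together with the observation that near a zero $t_n$ of $\varphi_p$ the factor $(x-t_n)^{-1}$ inside $1/\dist(x,\mathscr{Z}(\varphi_p)\cup\{0\})$ cancels against the corresponding linear factor of $\varphi_p$, so that $W$ remains bounded and bounded below. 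Granted these, the averages appearing in the Muckenhoupt quotient for $W^p$ on any interval $I$ become comparable to the corresponding averages of $(1+|x|)^{p-2}$, and the conjecture reduces to the classical fact that $(1+|x|)^\alpha \in A_p(\mathbb{R})$ precisely when $-1 < \alpha < p-1$, which for $\alpha = p-2$ holds exactly in the range $p > 1$.

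For the upper bound $W(x) \le c_2 (1+|x|)^{1-2/p}$, I would refine the argument of Lemma~\ref{lem:upper} by applying the orthogonality relation of Lemma~\ref{lem:orthogonality}(b) simultaneously to many pairs $(t_n, t_{n+k})$ and extracting a sharper polynomial exponent. A parallel input comes from the de Branges space structure of Theorem~\ref{thm:deBranges}: the summability estimate of Theorem~\ref{thm:midpointest}, combined with the uniform separation and density of $\mathscr{Z}(\varphi_p)$ from Theorem~\ref{thm:zeroset}(b)--(c), should force $|\varphi_p(\mu_n)|$ to decay at the rate $(1+|\mu_n|)^{-2/p}$ up to a logarithmic loss. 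The local Harnack inequality would then follow from the strict monotonicity of $(\log|\varphi_p|)'$ on intervals between consecutive zeros (Lemma~\ref{lem:monotonicity}), which converts a pointwise estimate at the midpoints $\mu_n$ into a uniform estimate on the intermediate intervals, using also the Plancherel--P\'olya bound of Lemma~\ref{lem:PP} to control derivatives.

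The main obstacle is the matching \emph{lower} bound $W(x) \ge c_1 (1+|x|)^{1-2/p}$. The natural strategy is by contradiction: if $W$ were substantially smaller on some interval $J$ of length $\ge 1$ centered at a point $x$ with $|x|$ large, then the reproducing formula of Theorem~\ref{thm:prpk} tested against a function concentrated on $J$ (for instance a translated and rescaled Landau--Pollak--Slepian extremal of Section~\ref{subsec:PS}) would be incompatible with the normalization $\varphi_p(0) = 1$ together with the upper estimate already established. However, executing this cleanly requires uniform lower control on $|\varphi_p(\mu_n)|$, for which we at present only have the weak $\ell^2$-type summability of Theorem~\ref{thm:midpointest}. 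A genuine pointwise lower bound on $|\varphi_p(\mu_n)|$ seems to require either the resolution of Conjecture~\ref{conj:precise} on the asymptotic spacing $t_{n+1}-t_n \to 1$, or an explicit identification of the de Branges space $B_p$ with a classical model for which the reproducing kernel on the real axis admits closed-form bounds; either route appears to lie well beyond the techniques developed in the present paper.
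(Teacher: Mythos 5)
This statement is labelled a conjecture in the paper; the authors offer no proof, so there is nothing to compare against on their side. What you have written is a conditional reduction, and you are candid in the last paragraph that it does not close.

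The concrete gap is not merely that the lower bound is hard: your whole plan rests on the two-sided pointwise estimate $W(x)\asymp(1+|x|)^{1-2/p}$, and neither half of it is available from the paper's machinery. The best established lower bound is \eqref{eq:presharper}, which after multiplying by $|x|$ only gives $W(x)\gtrsim(1+|x|)^{1-4/p}$ (a strictly worse exponent for $p<4$), and no upper bound on $W$ is proved at all; Theorem~\ref{thm:midpointest} is an $\ell^1$-type summability statement over the midpoints $\mu_n$, not a uniform decay rate, and extracting a pointwise rate from it (even up to logarithms, as you suggest) is not justified. More to the point, the paragraph immediately preceding the conjecture warns that ``this precise relation may be false.'' The authors state the $A_p$ claim exactly \emph{because} the Muckenhoupt condition is an averaged, and strictly weaker, requirement than pointwise comparability to a power weight: it tolerates controlled oscillation of $W$ around the conjectured rate that a two-sided pointwise bound would rule out. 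Your reduction therefore replaces the target by a hypothesis that is both unproven and possibly false, which is the wrong direction to go.

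The local ingredients you invoke are sound: the zeros of $\varphi_p$ are simple (Lemma~\ref{lem:zeros}), so $W$ extends continuously and positively across each $t_n$; Lemma~\ref{lem:monotonicity} applied to $|\varphi_p|$ gives a unique critical point between consecutive zeros; and Theorem~\ref{thm:zeroset}(b)--(c) supplies uniform separation and density. This does yield the Harnack-type local comparability you want. But the global step is where the argument halts, and you diagnose this correctly; Lemma~\ref{lem:orthogonality}(b), Lemma~\ref{lem:upper}, and the de Branges structure of Theorem~\ref{thm:deBranges} all produce one-sided or averaged information only. A route that might fare better than the one you outline would be to attack the $A_p$ integral inequality directly on each interval $I$, using the orthogonality relations to compare $\int_I W^p$ and $\int_I W^{-p/(p-1)}$ without ever asserting a pointwise power law; this is a genuinely different strategy, and even it would likely need something like Conjecture~\ref{conj:precise} as input.
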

Here we recall that a positive function $w$ is said to be a Muckenhoupt $(A_p)$ weight (see \cite[pp. 246--247]{Garnett07}) if
\[\sup_{I} \frac{1}{|I|} \int_I w(x) \,dx \left( \frac{1}{|I|}\int_I \left(\frac{1}{w(x)}\right)^{\frac{1}{p-1}} \,dx \right)^{p-1} < \infty, \]
where the supremum is taken over all finite intervals $I$. It is easy to verify that $(1+|x|)^{p-2}$ is an $(A_p)$ weight. In view of \cite{LS97}, this conjecture would imply the stronger property of $\mathscr{Z}_0(\varphi_p)\cup\{0\}$ that it be a complete interpolating sequence for $PW^p$. This would give the following stronger link between $PW^p$ and the de Branges space $B_p$ of the preceding section: The expansion
\[f(x)=\sum_{n=-\infty}^{\infty} f(t_n) \frac{e_n(x)}{e_n(t_n)} \]
would hold also for $f$ in $PW^p$, and $\big(\sum_{n\in\mathbb{Z}} |f(t_n)|^p\big)^{1/p}$ would define another norm on $PW^p$ equivalent to the $L^p$ norm.

There are also basic questions about the Fourier transform of the extremal functions that we are unable to answer. For example, if $0 < p \leq 1$, then every function in $PW^p$ is integrable and has a continuous Fourier transform. Since these Fourier transforms are compactly supported, they are also integrable. We believe our extremal functions in the strictly convex regime should enjoy the same properties.
\begin{conjecture}\label{conj:cont} 
	If $1<p<\infty$, then the Fourier transform $\widehat{\varphi}_p$ is a continuous integrable function on $(-\pi,\pi)$. 
\end{conjecture}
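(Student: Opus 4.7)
The plan is to split the proof into the cases $1<p\le 2$ and $p>2$, which call for different tools. For $1<p\le 2$, integrability on the compact interval $(-\pi,\pi)$ follows immediately from the Hausdorff--Young inequality: since $\varphi_p\in L^p(\mathbb{R})$, we have $\widehat{\varphi}_p\in L^{p'}(\mathbb{R})$ with $p'=p/(p-1)\in[2,\infty)$, hence $\widehat{\varphi}_p\in L^{p'}(-\pi,\pi)\subset L^1(-\pi,\pi)$. For $p>2$, however, even the assertion that $\widehat{\varphi}_p$ is a locally integrable function on $(-\pi,\pi)$, rather than merely a tempered distribution, is nontrivial, and a separate argument will be needed.

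For continuity on $(-\pi,\pi)$, which is a local statement, my strategy is to localize in the Fourier variable. Fix $\xi_0\in(-\pi,\pi)$ and pick $\eta\in C_c^\infty((-\pi,\pi))$ with $\eta\equiv 1$ in a neighborhood of $\xi_0$ and $\mathrm{supp}(\eta)\subset[-\pi+\delta,\pi-\delta]$ for some $\delta>0$. The distribution $\eta\widehat{\varphi}_p$ is then the Fourier transform of $\check{\eta}*\varphi_p$. By Young's inequality combined with $\|\varphi_p\|_\infty=1$, this convolution lies in $L^q(\mathbb{R})$ for every $q\ge p$, and its Fourier spectrum is compactly contained in $(-\pi,\pi)$. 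If one shows that $\check{\eta}*\varphi_p\in L^1(\mathbb{R})$, then $\eta\widehat{\varphi}_p$ is a continuous function on $\mathbb{R}$; since $\eta\equiv 1$ near $\xi_0$, this yields continuity of $\widehat{\varphi}_p$ at $\xi_0$. Running this argument with a family of cutoffs $\eta_\varepsilon$ whose supports exhaust $(-\pi,\pi)$ would also show that $\widehat{\varphi}_p$ is a function (rather than a distribution) on $(-\pi,\pi)$ in the case $p>2$, and local integrability would then follow from continuity. Global integrability on $(-\pi,\pi)$ would reduce to a quantitative control of $\|\check\eta_\varepsilon*\varphi_p\|_1$ as $\varepsilon\to 0^+$.

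The key step, and the principal obstacle, is showing that $\check\eta*\varphi_p\in L^1(\mathbb{R})$. The gain here has to come from the Schwartz decay of $\check\eta$: we have $|\check\eta(x)|\le C_N(1+|x|)^{-N}$ for every $N$, so
\[
|(\check\eta*\varphi_p)(x)|\le\int_{-\infty}^\infty \frac{C_N\,|\varphi_p(y)|}{(1+|x-y|)^N}\,dy.
\]
The naive bound using $|\varphi_p|\le 1$ gives only a bounded output, so the zero structure of $\varphi_p$ must be exploited. I would partition $\mathbb{R}$ into the intervals between consecutive zeros of $\varphi_p$, use Theorem~\ref{thm:zeroset}(b),(c) to control how many such intervals meet $|y|\le R$, and combine Lemma~\ref{lem:midpoint} with Theorem~\ref{thm:midpointest} to bound each local $L^1$ mass of $|\varphi_p|$ in terms of the midpoint value $|\varphi_p(\mu_n)|$.

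The hard part will be that even the localized convolution may fail to be in $L^1$ without a sharper pointwise upper bound on $|\varphi_p|$ than what Lemma~\ref{lem:upper} provides. Since $\varphi_2=\sinc\pi x$ is not in $L^1(\mathbb{R})$, the argument must genuinely exploit the strict bandwidth reduction introduced by $\eta$, and I expect the contributions near the endpoints $\xi=\pm\pi$ to be the most delicate point when attempting the limit $\varepsilon\to 0^+$. Ultimately I suspect that a complete proof will require first establishing the type of asymptotic description of $\varphi_p$ predicted by Conjecture~\ref{conj:precise} on the spacing of zeros, from which the desired decay of $\check\eta*\varphi_p$ ought to follow by a summation argument.
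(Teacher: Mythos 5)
This statement is Conjecture~\ref{conj:cont}, which appears in Section~\ref{sec:conj} among the paper's open problems; the authors state explicitly that they are ``unable to answer'' the questions surrounding $\widehat{\varphi}_p$, so there is no proof in the paper to compare against. What can be done is to evaluate your sketch on its own terms.

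The Hausdorff--Young observation for $1<p\le 2$ is correct and matches the remark the paper makes in Section~\ref{subsec:convex}: $\widehat{\varphi}_p\in L^{p/(p-1)}(-\pi,\pi)\subset L^1(-\pi,\pi)$. But note that this settles only integrability; it gives no continuity, so even in the ``easy'' range $1<p\le 2$ your strategy still has to carry the localization argument to completion. The localization scheme itself is natural and the reduction to showing $\check\eta*\varphi_p\in L^1(\mathbb{R})$ is the right pivot.

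The genuine gap is in the displayed estimate
\[
|(\check\eta*\varphi_p)(x)|\le\int_{-\infty}^\infty \frac{C_N\,|\varphi_p(y)|}{(1+|x-y|)^N}\,dy,
\]
which replaces both $\check\eta$ and $\varphi_p$ by their absolute values. This step destroys exactly the cancellation that makes the conclusion plausible, and it cannot be repaired by sharper control of $|\varphi_p|$ alone. The model case $p=2$ shows why: $|\varphi_2(x)|=|\sinc\pi x|$ decays only like $|x|^{-1}$ and is \emph{not} integrable, so the right-hand side of your bound is $\Theta(1)$ uniformly in $x$ (for $N\ge 2$, say), whereas the left-hand side $\check\eta*\varphi_2=\check\eta$ is Schwartz and clearly in $L^1$. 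Any correct argument must therefore keep the sign of $\check\eta$ (or equivalently the oscillation of $\varphi_p$) and produce decay in $x$ from that oscillation, not from pointwise smallness of $|\varphi_p|$. Moreover, Theorem~\ref{thm:midpointest} pushes in the \emph{opposite} direction from what your plan requires: it gives a lower, not upper, bound on the midpoint values $|\varphi_p(\mu_n)|$, and \eqref{eq:sharper} rules out fast pointwise decay of $|\varphi_p|$ off its zeros. So the summation-over-intervals idea you float at the end must be set up as an oscillatory (alternating) sum, with the contribution of each interval between zeros controlled up to cancellation with its neighbours; this is essentially what you acknowledge would require asymptotics on the spacing $t_{n+1}-t_n$ of the strength of Conjecture~\ref{conj:precise}, which the paper does not establish. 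In short, the proposal correctly identifies where the difficulty lies, but the single quantitative step it commits to is doomed for structural reasons, and the statement remains open.
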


Based on our belief that the extremal functions are close to the functions $f_p$ defined in \eqref{eq:fp}, we are also led to the following. 
\begin{conjecture} 
	Let $\varphi$ be a solution of \eqref{eq:extremalproblem}. Then $\widehat{\varphi}$ is nonnegative. Moreover, 
	\begin{enumerate}
		\item[(a)] if $0<p<2$, then $\widehat{\varphi}$ is decreasing on $(0,\pi)$ and $\widehat{\varphi}(\xi) \to 0$ as $\xi \to \pi^-$; 
		\item[(b)] if $2<p<\infty$, then $\widehat{\varphi}$ is increasing on $(0,\pi)$ and $\widehat{\varphi}(\xi) \to \infty$ as $\xi \to \pi^-$. 
	\end{enumerate}
\end{conjecture}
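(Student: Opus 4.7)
My plan restricts first to the strictly convex range $1<p<\infty$, where the extremal $\varphi=\varphi_p$ is unique, real, and even by Lemma~\ref{lem:unique} and Lemma~\ref{lem:real}, so that $\widehat{\varphi}$ is automatically a real, even tempered distribution supported in $[-\pi,\pi]$. For $1<p\leq 2$, Hausdorff--Young already promotes $\widehat{\varphi}$ to a function in $L^{p/(p-1)}(-\pi,\pi)$; for $p>2$ one first needs Conjecture~\ref{conj:cont} or a weaker substitute to know $\widehat{\varphi}$ is a function at all. The endpoint $p=1$ and the non-convex range $0<p<1$ would rely on the uniqueness and evenness conjectures listed earlier in the section.

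For nonnegativity of $\widehat{\varphi}$, the first candidate competitor is the Fourier-side symmetrization $\psi := \mathcal{F}^{-1}\bigl(|\widehat{\varphi}|\,\chi_{[-\pi,\pi]}\bigr)$, which lies in $PW^p$ and satisfies $\psi(0)\geq\varphi(0)=1$; the issue is to compare $\|\psi\|_p$ with $\|\varphi\|_p$, which for $p\neq 2$ is a ``rearrangement of a Fourier transform'' question in the spirit of Brascamp--Lieb--Luttinger. A less structural alternative is a variational approach mirroring Lemma~\ref{lem:variational}: for a real $g\in \mathscr{S}\cap PW^p$ with $g(0)=0$ and $\widehat{g}$ concentrated on a region where $\widehat{\varphi}$ is negative, one hopes to find a sign of $\varepsilon$ for which $\|\varphi+\varepsilon g\|_p<\|\varphi\|_p$, contradicting extremality. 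Making this rigorous requires expressing $\int |\varphi|^{p-2}\varphi\,g\,dx$ via Plancherel and controlling the Fourier transform of $\Phi$ outside $(-\pi,\pi)$---information beyond what Corollary~\ref{cor:otherside} provides.

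With nonnegativity in hand, I would attack monotonicity by rearranging on the Fourier side: for $p<2$ replace $\widehat{\varphi}$ by its symmetric decreasing rearrangement on $(-\pi,\pi)$, and for $p>2$ by the symmetric rearrangement that is decreasing \emph{outward from $\pm\pi$} (equivalently, increasing on $(0,\pi)$). In both cases the value $\varphi(0)=\frac{1}{2\pi}\int\widehat{\varphi}$ is preserved, and the key ingredient is an $L^p$ rearrangement inequality for the inverse Fourier transform in the appropriate direction; uniqueness of the extremal would then force $\widehat{\varphi}$ to coincide with its rearrangement. The boundary behavior at $\pi$ would follow: for $p<2$, vanishing combines monotonicity with the Riemann--Lebesgue lemma on $L^{p/(p-1)}(-\pi,\pi)$; for $p>2$, blow-up should be extracted by contradiction from Corollary~\ref{cor:x2infty} and Theorem~\ref{thm:midpointest}, since a bounded $\widehat{\varphi}$ with sufficiently regular boundary behavior would place $\varphi$ into a space where $\int x^2|\varphi|^p\,dx$ is forced to be finite.

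The dominant obstacle will be the nonnegativity of $\widehat{\varphi}$. The first-order variational machinery of Section~\ref{subsec:ortho} produces only linear constraints and provides no direct sign information on the Fourier side; even in the benchmark case $p=1$, where $\Phi=\operatorname{sgn}(\varphi_1)/\|\varphi_1\|_1$ is piecewise constant, positivity of $\widehat{\varphi_1}$ was established only numerically in \cite{HB93}. I expect a complete proof will require a new structural input, perhaps built from the de Branges space $B_p$ of Theorem~\ref{thm:deBranges} or from a sharper relation between $\varphi_p$ and the family $g_\alpha$ of Section~\ref{subsec:gamma}, that encodes positivity of the Fourier transform from the start.
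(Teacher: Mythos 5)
This statement is a \emph{conjecture} in the paper, not a theorem; the authors offer no proof, and to my knowledge none exists. Your submission is therefore not being measured against a hidden argument — it is being measured against the gap itself. Read in that light, what you have written is a thoughtful plan of attack, not a proof, and you are candid about this. The review below flags the places where the plan would fail as stated.

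The nonnegativity step, which you correctly single out as the dominant obstacle, is genuinely open and your two proposed routes both stall. The symmetrization competitor $\psi=\mathcal{F}^{-1}\bigl(|\widehat{\varphi}|\chi_{[-\pi,\pi]}\bigr)$ does give $\psi(0)\geq\varphi(0)$, but for $p\neq 2$ there is no reason that $\|\psi\|_p\leq\|\varphi\|_p$: moving from $\widehat{\varphi}$ to $|\widehat{\varphi}|$ preserves the $L^2$ norm of $\varphi$ by Plancherel but can increase or decrease the $L^p$ norm, and no comparison principle of Brascamp--Lieb--Luttinger type applies here because the modulus operation on the Fourier side is not a rearrangement. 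The alternative variational route runs into exactly the wall you name: the first-order conditions (Lemma~\ref{lem:variational}, Theorem~\ref{thm:prpk}, Corollary~\ref{cor:otherside}) pin down $\widehat{\Phi}$ only on $(-\pi,\pi)$, and the sign information you would need lives in $\widehat{\Phi}$ restricted there, which is not directly accessible except for $p=2$.

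The monotonicity step has a more serious structural flaw. You propose an ``$L^p$ rearrangement inequality for the inverse Fourier transform,'' but no such inequality exists in the generality needed, and the direction you require actually reverses between the two cases: for $p<2$ you want rearranging $\widehat{\varphi}$ to the symmetric decreasing profile to not increase $\|\varphi\|_p$, while for $p>2$ you want the opposite rearrangement (increasing outward) to not increase $\|\varphi\|_p$. Each of these would be a deep theorem in its own right, and the known interaction between rearrangement and the Fourier transform (e.g.\ the failure of the Hardy--Littlewood majorant property for $p$ not an even integer) suggests caution rather than optimism. Even granting both nonnegativity and monotonicity, your boundary-behavior argument for $p>2$ is incomplete: Corollary~\ref{cor:x2infty} rules out $\int x^2|\varphi_p|^p<\infty$, but a bounded nonnegative $\widehat{\varphi}$ on $(-\pi,\pi)$ is perfectly compatible with $\int x^2|\varphi_p|^p=\infty$ (the $\sinc$ function itself shows this at $p=2$), so boundedness of $\widehat{\varphi}_p$ near $\pi$ is not immediately contradicted by that corollary. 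Finally, in the range $p>2$ the plan relies on Conjecture~\ref{conj:cont} before $\widehat{\varphi}_p$ is even known to be a function rather than a distribution, which compounds the conditionality. None of this is a criticism of your judgment --- you identified the right obstructions and the right candidate tools (the de Branges space $B_p$, the $g_\alpha$ family, P\'olya's positivity criterion cited by the authors just after the statement) --- but the plan as written would not compile into a proof.
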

By a result of P\'{o}lya \cite[p.~373]{Polya18}, part (b) would imply that
\[ \frac{2n-1}{2} < t_n < \frac{2n+1}{2} \]
for $2<p<\infty$. This is in particular consistent with the first part of Conjecture~\ref{con:pasymp}. If we in addition to (b) assume that $\widehat{\varphi}$ is strictly convex, then it follows from \cite[p.~373]{Polya18} that we have
\[\frac{2n-1}{2} < t_n < n, \]
in accordance with the more precise conjecture \eqref{eq:pbigger2}. Presumably, other and more precise consequences for $\mathscr{Z}(\varphi)$ could be deduced from properties that we expect $\widehat{\varphi}$ to have.

In the next section, we will discuss another problem related to the Fourier transforms of our extremal functions.

\subsection{A problem about \texorpdfstring{$n$-fold}{n-fold} convolutions} \label{subsec:conv}

\begin{figure}
	\centering
	\begin{tikzpicture}[scale=1.5]
		\begin{axis}
			[axis equal image,
			axis lines=middle,
			axis line style=thin,
			hide y axis,
			y post scale=3.14,
			xmin=-10,
			xmax=10,
			xtick={-9.42,-3.14,3.14,9.42},
			xticklabels={$\scriptstyle -3\pi$,$\scriptstyle -\pi$,$\scriptstyle \pi$,$\scriptstyle 3\pi$},
			ymin=0,
			ymax=2,
			every axis x label/.style={ at={(ticklabel* cs:1.025)}, 
				anchor=west,},
			every axis y label/.style={ at={(ticklabel* cs:1.025)}, 
				anchor=south,},
			axis line style={->}]
	
			\addplot[thin,densely dotted] coordinates {(-9.42,1) (94.1,1)};
			\node at (-9.75,1) {$\scriptstyle 1$};
			\input{convolution1.tex}
		\end{axis}
	\end{tikzpicture}
	\caption{Plot of $\widehat{\psi}\ast\widehat{\psi}\ast\widehat{\psi}$ for $\widehat{\psi}$ as in \eqref{eq:bessel4}.  See Figure~\ref{fig:conv2} for a close-up of the part of the plot on {\color{red} the interval $(-\pi,\pi)$}.}
	\label{fig:conv1}
\end{figure}

To conform with the normalization of the Fourier transform given in Section~\ref{subsec:convex}, we define the convolution of $\widehat{f}$ and $\widehat{g}$ as follows:
\[\widehat{f}\ast\widehat{g}\,(\eta) := \int_{-\infty}^\infty \widehat{f}(\xi)\,\widehat{g}(\eta-\xi)\,\frac{d\xi}{2\pi}.\]
We will restrict our attention to integrable functions $\widehat{\psi}$ that are supported on $[-\pi,\pi]$ and continuous on $(-\pi,\pi)$. Such $\widehat{\psi}$ will be referred to as admissible functions. We let the $n$-fold convolution operator $\operatorname{C}_n$ act on admissible functions $\widehat{\psi}$ so that
\[\operatorname{C}_0\widehat{\psi} := \widehat{\psi}, \qquad \operatorname{C}_1\widehat{\psi} := \widehat{\psi} \ast \widehat{\psi}, \qquad \operatorname{C}_2\widehat{\psi} := \widehat{\psi} \ast \widehat{\psi} \ast \widehat{\psi}, \qquad \operatorname{C}_3\widehat{\psi} := \widehat{\psi} \ast \widehat{\psi} \ast \widehat{\psi} \ast \widehat{\psi}, \qquad\ldots\]
We now consider the following problem: Is there an admissible function $\widehat{\psi}$ such that 
\begin{equation}\label{eq:conveq} 
	\operatorname{C}_n{\widehat{\psi}}(\xi) = 1 
\end{equation}
for every $-\pi\leq \xi \leq \pi$? If such a function exists, we will refer to it as an admissible solution of \eqref{eq:conveq}. In the case $n=0$, the unique admissible solution is $\widehat{\psi} = \chi_{[-\pi,\pi]}$. Consider next the function supported on $[-\pi,\pi]$ and defined on $(-\pi,\pi)$ by 
\begin{equation}\label{eq:bessel4} 
	\widehat{\psi}(\xi) = \left(\frac{1}{c}\right)^{\frac{1}{3}} \left(1-\frac{\xi^2}{\pi^2}\right)^{-\frac{1}{2}} 
\end{equation}
for $c=1.7400645117$. Note that $\widehat{\psi}$ is equal to the function $\widehat{f}_4$ from \eqref{eq:fp} multiplied by $c^{-1/3}$ for $c = \|f_4\|_4^4$. See Figure~\ref{fig:conv1} for a plot of $\operatorname{C}_2 \widehat{\psi}$ computed with the packages \texttt{special.ellipk} and \texttt{integrate} from SciPy. At first glance, it would seem that \eqref{eq:bessel4} is an admissible solution of \eqref{eq:conveq} for $n=2$. A closer look---see Figure~\ref{fig:conv2}---reveals that this is not the case.

\begin{figure}
	\centering
	\begin{tikzpicture}[scale=1.5]
		\begin{axis}
			[axis equal image,
			axis lines=middle,
			axis line style=thin,
			xmin=0,
			xmax=8.3,
			xtick={1,4.14,7.28},
			xticklabels={$\scriptstyle -\pi$,$\scriptstyle 0$,$\scriptstyle \pi$},
			ymin=0,
			ymax=2,
			ytick={0.30731,1,1.71448},
			yticklabels={$\scriptstyle 0.993$,$\scriptstyle 1$,$\scriptstyle 1.007$},
			every axis x label/.style={ at={(ticklabel* cs:1.025)}, 
				anchor=west,},
			every axis y label/.style={ at={(ticklabel* cs:1.025)}, 
				anchor=south,},
			axis line style={->}]
			\addplot[thin,densely dotted] coordinates {(1,1) (7.28,1)};
			\input{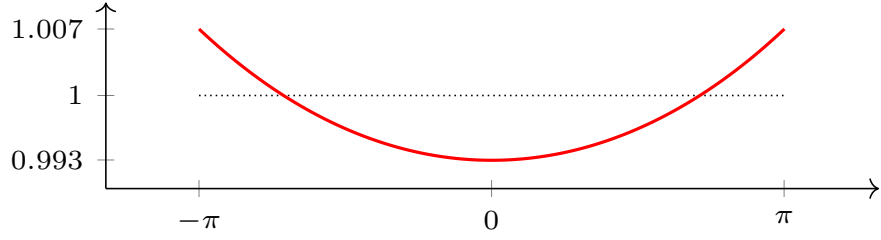}
		\end{axis}
	\end{tikzpicture}
	\caption{Close-up of the plot from Figure~\ref{fig:conv1} on {\color{red} the interval $(-\pi,\pi)$}.}
	\label{fig:conv2}
\end{figure}

We will next explain how this can be interpreted as evidence that $f_4$ is close to the extremal function $\varphi_4$. Let $p=2(k+1)$ for a nonnegative integer $k$ and let $\varphi_p$ denote the unique solution of our extremal problem \eqref{eq:extremalproblem}. As $\varphi_p$ is real by Lemma~\ref{lem:real}, it follows that
\[\Phi_p(x) = \frac{|\varphi_p(x)|^{p-2} \varphi_p(x)}{\|\varphi_p\|_p^p} = \frac{(\varphi_p(x))^{2k+1}}{\|\varphi_p\|_p^p}.\]
By Corollary~\ref{cor:otherside}, the convolution theorem for Fourier transforms, and Conjecture~\ref{conj:cont}, we find that if $p=2(k+1)$, then
\[\widehat{\psi}(\xi) := \|\varphi\|_p^{-\frac{p}{p-1}}\widehat{\varphi}_p(\xi)\]
is an admissible solution of \eqref{eq:conveq} for $n=2k$. On the other hand, if $\widehat{\psi}$ is an admissible solution of \eqref{eq:conveq} for $n=2k$, then
\[\psi(0) = \int_{-\infty}^\infty \psi^{2(k+1)}(x)\,dx.\]
If we also assume that $\widehat{\psi}$ is even so that $\psi$ is real, then this implies that $\psi$ is in $PW^{2(k+1)}$. It then follows from the fact that $\widehat{\psi}$ is a solution of \eqref{eq:conveq} and Theorem~\ref{thm:prpk} that $\psi$ be a multiple of $\varphi_p$ for $p=2(k+1)$.

We have just seen evidence that the equation \eqref{eq:conveq} has an admissible solution if $n$ is an even integer. We will next present evidence that the equation has no solution if $n$ is an odd integer. Suppose that $\widehat{\psi}$ solves the equation \eqref{eq:conveq} for $n=2k+1$ and has the desired properties. Since $\widehat{\psi}$ is integrable, it follows that $\psi$ is in $L^\infty(\mathbb{R})$. Consequently, $\psi^{2k+2}$ is also in $L^\infty(\mathbb{R})$. By the convolution theorem for Fourier transforms and the assumption that $\widehat{\psi}$ is a solution of \eqref{eq:conveq}, it follows that 
\begin{equation}\label{eq:psicontradictme} 
	f(0) = \int_{-\infty}^\infty f(x) \,\psi^{2k+2}(x)\,dx 
\end{equation}
for every $f$ in $PW^1$. If $\psi$ were real-valued on $\mathbb{R}$, then we would obtain a contradiction to \eqref{eq:psicontradictme} from the function
\[f(z):= \sinc^2\left(\frac{\pi}{2}(z+2)\right)\]
which is in $PW^1$, is nonnegative on $\mathbb{R}$, and satisfies $f(0)=0$. It follows from this analysis that the equation \eqref{eq:conveq} cannot have solutions $\widehat{\psi}$ that are real-valued and even. From this discussion, we are led to the following conjecture. 
\begin{conjecture}
	\mbox{} 
	\begin{enumerate}
		\item[(a)] There is exactly one real-valued admissible solution of \eqref{eq:conveq} when $n$ is an even integer, namely $\widehat{\psi}=\widehat{\varphi}_{n+2}/\|\varphi_{n+2}\|_{n+2}^{1+1/(n+1)}$. 
		\item[(b)] There are no real-valued admissible solutions of \eqref{eq:conveq} when $n$ is an odd integer. 
	\end{enumerate}
\end{conjecture}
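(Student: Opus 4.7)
My plan separates the two parts. For existence in (a), the candidate $\widehat{\psi}_0 := \widehat{\varphi}_{n+2}/\|\varphi_{n+2}\|_{n+2}^{1+1/(n+1)}$ is real-valued and even because $\varphi_{n+2}$ is real by Lemma~\ref{lem:real} and even by Lemma~\ref{lem:unique}, and it is admissible granting Conjecture~\ref{conj:cont}. Since $n$ is even, $n+2$ is even and $\varphi_{n+2}$ is real, so $|\varphi_{n+2}|^{n}\varphi_{n+2} = \varphi_{n+2}^{n+1}$; hence $\psi_0^{n+1}$ is a scalar multiple of $\Phi_{n+2}$ (as in \eqref{eq:holderfriend}), and Corollary~\ref{cor:otherside} together with the convolution theorem gives $\operatorname{C}_n\widehat{\psi}_0 = 1$ on $(-\pi,\pi)$.

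For uniqueness in (a), suppose $\widehat{\psi}$ is any real-valued admissible solution. The convolution theorem and \eqref{eq:PWpoint} yield the reproducing formula
\[ f(0) = \int_{-\infty}^{\infty} f(x)\,\psi^{n+1}(x)\,dx \qquad (f \in PW^{1}). \]
I would then extend this identity by density to $f \in PW^{n+2}$; the main ingredient is to show that $\psi^{n+1}\in L^{(n+2)/(n+1)}$, which I expect to follow from admissibility of $\widehat{\psi}$ combined with a Plancherel--P\'{o}lya type estimate that exploits the fact that $\widehat{\psi^{n+1}}$ equals $1$ on $(-\pi,\pi)$ and is compactly supported. Once the reproducing formula is valid on $PW^{n+2}$, the function $\psi^{n+1}/\int\psi^{n+2}$ satisfies the characterising property of $\Phi_{n+2}$ in Theorem~\ref{thm:prpk}, so by the uniqueness statement of Lemma~\ref{lem:unique} we obtain $\psi^{n+1}=C\,\varphi_{n+2}^{n+1}$, and since $n+1$ is odd this pins $\psi$ down to the stated multiple of $\varphi_{n+2}$.

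For (b), the argument in the paragraph containing \eqref{eq:psicontradictme} would conclude the proof if $\psi$ were known to be real on $\mathbb{R}$: applying the reproducing formula with the nonnegative test function $f(z) := z^{2}\sinc^{4}(\pi z/4) \in PW^{1}$, which satisfies $f(0)=0$ and $f>0$ on $\mathbb{R}\setminus\{0\}$, together with $\psi^{n+1}=\psi^{2k+2}\geq 0$, would force $\psi\equiv 0$, contradicting $\widehat{\psi}\not\equiv 0$. The genuine obstacle is that a real-valued $\widehat{\psi}$ only implies the Hermitian symmetry $\overline{\psi(x)}=\psi(-x)$, so $\psi$ is typically complex on the line. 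My plan is to reduce to the real case by showing that every real-valued admissible solution is in fact even: either by applying the uniqueness argument of (a) to the reflected solution $\widetilde{\widehat{\psi}}(\xi):=\widehat{\psi}(-\xi)$, which is plainly another real-valued admissible solution, or by writing $\psi=a+ib$ on $\mathbb{R}$ with $a$ even and $b$ odd and using a combined family of even nonnegative and odd test functions in $PW^{1}$ vanishing at the origin to control $\mathrm{Re}(\psi^{n+1})$ and $\mathrm{Im}(\psi^{n+1})$ separately.

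The main obstacle I anticipate is precisely this reduction to the real case in (b): the scalar positivity argument that handles the real case does not survive passage to the complex setting, and it is a priori unclear whether the reflection/uniqueness route or the direct test-function route is easier. The density step extending the reproducing formula from $PW^{1}$ to $PW^{n+2}$ in the uniqueness half of (a) is a secondary but genuine technical difficulty, whose resolution would likely also be used in the reflection route for (b).
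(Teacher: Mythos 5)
The statement you are asked about is a \emph{conjecture}, and the paper does not prove it; it is labelled a conjecture precisely because the discussion in Section~\ref{subsec:conv} only establishes two weaker facts: assuming Conjecture~\ref{conj:cont}, the indicated $\widehat{\psi}$ is a real-valued \emph{even} admissible solution of \eqref{eq:conveq} when $n$ is even, while uniqueness (for even $n$) and nonexistence (for odd $n$) are derived only under the additional hypothesis that $\widehat{\psi}$ is even, equivalently that $\psi$ is real on $\mathbb{R}$. Your proposal reproduces this partial evidence faithfully and correctly names the two remaining obstacles, but it does not close them, so it is a plan rather than a proof.

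The gaps are genuine and your suggested repairs do not hold up. For uniqueness in (a), one must first place $\psi$ in $PW^{n+2}$ before Theorem~\ref{thm:prpk} applies; the paper does this from $\psi(0)=\int\psi^{n+2}$ combined with $\psi^{n+2}\geq 0$, and the positivity is exactly where $\psi$ real is used. Your alternative, to show $\psi^{n+1}\in L^{(n+2)/(n+1)}$ from admissibility together with the compact support and constancy of $\widehat{\psi^{n+1}}$ on $(-\pi,\pi)$, is not substantiated: admissibility gives $\widehat{\psi}\in L^1$, hence only $\psi\in L^\infty$, and boundedness together with compact Fourier support does not by itself yield membership in any $L^q$ with $q<\infty$; the Plancherel--P\'{o}lya machinery compares $L^p$ norms of band-limited functions already known to lie in $L^p$, it does not manufacture $L^p$ membership. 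For (b), the proposed reduction to the real case is circular: the reflection route would have to invoke a uniqueness statement that concerns even $n$ and that is itself only available once evenness is assumed, which is the very point at issue. The ``direct'' route via $\psi=a+ib$ does not rescue the positivity argument either, since $\mathrm{Re}(\psi^{n+1})$ is a general real polynomial in $a$ and $b$ with no definite sign, so testing against a nonnegative $f\in PW^1$ with $f(0)=0$ no longer forces a contradiction.
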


\bibliographystyle{amsplain} 
\bibliography{pwpeval}

\end{document}